\documentclass[12pt]{amsart}

\usepackage[T1]{fontenc}
\usepackage{amssymb}
\usepackage{bm}
\usepackage{ytableau}
\usepackage{amsmath}
 \usepackage{mathpple}
\usepackage{boondox-cal}
\parskip=7pt

\linespread{1}
\setlength{\headsep}{20pt}
\setlength{\voffset}{-0.1in}

\usepackage{amsmath,amsfonts, amscd,amsthm, graphicx}
  \usepackage{amsrefs}
\usepackage[all]{xy}
\usepackage{color}
\usepackage{tikz-cd}

\usepackage{hyperref}

\def\XXint#1#2#3{{\setbox0=\hbox{$#1{#2#3}{\int}$}
\vcenter{\hbox{$#2#3$}}\kern-.5\wd0}}

\oddsidemargin 0mm \evensidemargin 0mm \textwidth 163mm \textheight 215mm
\addtolength{\headsep}{-3mm}

\numberwithin{equation}{section}




\newcommand{\mbf}{\mathbf}
\newcommand{\mbb}{\mathbb}
\newcommand{\mf}{\mathfrak}
\newcommand{\mc}{\mathcal}

\renewcommand{\S}{\mathcal S}


\theoremstyle{plain}
\newtheorem{thm}{Theorem}[section]

\newtheorem{thm-defn}{Theorem/Definition}[section]
\newtheorem{lem}[thm]{Lemma}
\newtheorem{lem-defn}[thm]{Lemma/Definition}
\newtheorem{prop}[thm]{Proposition}

\newtheorem{prop-defn}[thm]{Proposition-Definition}

\newtheorem{defn}[thm]{Definition}

\newtheorem{thm-alg}[thm]{Theorem/Algorithm}




\allowdisplaybreaks[4]  

\begin{document}

 \title{Quantum difference equations and Maulik-Okounkov quantum affine algebras of affine type $A$ }
  \author{Tianqing Zhu}
  \dedicatory{To my grandpa Zhang Gongfu with highest admiration}
  \address{Yau Mathematical Sciences Center}
\email{ztq20@mails.tsinghua.edu.cn}

  \date{}

  \maketitle
\begin{abstract} In this paper we prove the isomorphism of the positive half of the quantum toroidal algebra and the positive half of the Maulik-Okounkov quantum affine algebra of affine type $A$ via the monodromy representation for the Dubrovin connection. The main tool is on the proof of the fact that the degeneration limit of the algebraic quantum difference equation is the same as that of the Okounkov-Smirnov geometric quantum difference equation.
\end{abstract}

\tableofcontents
\section{\textbf{Introduction}}

\subsection{Maulik-Okounkov quantum algebras}
Stable envelopes were first introduced by Maulik and Okounkov in \cite{MO12}. It is a Steinberg correspondence class in the equivariant cohomology $H_{G}^*(X\times X^A)$. Here $X$ is a symplectic resolution, $X^A$ is the torus $A$-fixed point over $X$, and $A\subset G$ is a subtorus of a Lie group $G$. The stable envelope is a unique class determined by the root chamber $\mc{C}$ of the corresponding Lie group $G$, the supporting condition and the degree condition.

In the case of the quiver varieties $M_{Q}(\mbf{v},\mbf{w})$ of quiver type $Q$, stable envelopes can be used to construct the geometric $R$-matrix $R_{Q,\mc{C}_{ij}}(u_i-u_j)$. The geometric $R$-matrix satisfies the rational Yang-Baxter equation with spectral parametres. Using the RTT formalism, we can use the geometric $R$-matrix to construct the Maulik-Okounkov Yangian algebra $Y_{\hbar}^{MO}(\mf{g}_{Q})$. The MO Yangian algebra $Y_{\hbar}^{MO}(\mf{g}_{Q})$ contains the Maulik-Okounkov Lie algebra $\mf{g}_{Q}^{MO}$, which is generated by the first-order expansion coefficients $r_{Q}$ of the geometric $R$-matrix:
\begin{align}
R_{Q}(u)=\text{Id}+\frac{r_{Q}}{u}+O(u^{-2})
\end{align}
This gives a natural filtration on $Y_{\hbar}^{MO}(\mf{g}_{Q})$ such that $\text{gr }Y_{\hbar}^{MO}(\mf{g}_{Q})\cong U(\mf{g}_{Q}[u])$ as mentioned in \cite{MO12}.

The Maulik-Okounkov Lie algebra admits the root decomposition written as:
\begin{align}
\mf{g}_{Q}=\mf{h}\oplus\bigoplus_{\alpha\in\mbb{Z}^I}\mf{g}_{\alpha}=\mf{h}\oplus\mf{n}_{Q}^{MO}\oplus\mf{n}_{Q}^{MO,-}
\end{align}

Here $\mf{n}_{Q}^{MO}$ is the Lie subalgebra consisting of all the positive root vectors. It was proved in \cite{BD23} that $\mf{g}_{Q}^{MO}$ is isomorphic to the the Borcherd-Kac-Moody algebra $\mf{g}_{Q}$ of the corresponding quiver type $Q$ using the BPS Lie algebra and the perverse filtration over the preprojective cohomological Hall algebra. It was also proved in \cite{SV23} that the positive half of the MO Yangian is isomorphic to the preprojective cohomological Hall algebra of the corresponding quiver type $Q$.

Moreover, stable envelopes can also be defined in the settings of the equivariant $K$-theory. It was introduced in \cite{O15} and \cite{OS22}, and the existence was proved in \cite{AO21} and \cite{O21}.  It is a class $\text{Stab}_{\mc{C},s}$ in $K_{G}(X\times X^A)$ with similar restriction condition as in the case of the equivariant cohomology, and it not only is determined by the root chamber $\mc{C}$, but also determined by the slope parametre $s\in\text{Pic}(X)\otimes\mbb{Q}$. 

Similarly we can define the $K$-theoretic geometric $R$-matrix $\mc{R}_{Q,\mc{C}_{ij}}^s(u_i/u_j)$ of slope $s$ from the $K$-theoretic stable envelope of slope $s$. Also one can use the RTT formalism to construct the Maulik-Okounkov quantum affine algebra $U_{q}^{MO}(\hat{\mf{g}}_Q)$. It has been proved in \cite{N23} that there is an algebra embedding of the Drinfeld double $\mc{A}$ of the preprojective $K$-theoretic Hall algebra into the MO quantum affine algebra:
\begin{align}
\mc{A}\hookrightarrow U_{q}^{MO}(\hat{\mf{g}}_Q)
\end{align}

In the case when $Q$ is the affine type $A$, the Drinfeld double $\mc{A}$ is isomorphic to the quantum toroidal algebra $U_{q,t}(\hat{\hat{\mf{sl}}}_n)$, so we have the algebra embedding:
\begin{align}
U_{q,t}(\hat{\hat{\mf{sl}}}_n)\hookrightarrow U_{q}^{MO}(\hat{\mf{g}}_Q)
\end{align}

\subsection{Quantum difference equations}

Quantum difference equation was first introduced in \cite{O15} as the difference equation of the Kahler variable $z$ for the capping operator $\mbf{J}(u,z)$. The capping operator appears in the theory of  quasimap counting from $\mbb{P}^1$ to the quiver varieties $M(\mbf{v},\mbf{w})$. The difference equation is written as:
\begin{align}
\mbf{J}(p^{\mc{L}}z,u)\mc{L}=\mbf{M}_{\mc{L}}(u,z)\mbf{J}(z,u)
\end{align}

The operator $\mbf{M}_{\mc{L}}(u,z)$ is called the geometric quantum difference operator. It turns out that \cite{OS22} the geometric quantum difference operator $\mbf{M}_{\mc{L}}(u,z)$ is conjugate to the ordered product of the geometric monodromy operators $\mbf{B}^{MO}_{w}(z)$ via the stable envelope $\text{Stab}_{\mc{C},s}$ of slope $s$:
\begin{align}
\mc{B}_{\mc{L}}^{MO,s}(z)=\mc{L}\prod^{\leftarrow}_{w\in[s,s-\mc{L})}\mbf{B}_{w}^{MO}(z)
\end{align}
with each monodromy operator $\mbf{B}_{w}^{MO}(z)\in\widehat{U_{q}^{MO}(\mf{g}_w)}$ an element in the completion of the MO wall subalgebra.

It is also proved in\cite{OS22} that the difference operators $\mc{A}^{s}_{\mc{L}}:=T_{\mc{L}}^{-1}\mbf{M}_{\mc{L}}(u,z)$ commute with each other for arbitrary $\mc{L}\in\text{Pic}(M(\mbf{v},\mbf{w}))$, and it is independent of the choice of the path from $s$ to $s-\mc{L}$.

There is also the algebraic analog of the quantum difference equation. In \cite{Z23}, we have constructed the algebraic quantum difference equation for the quantum toroidal algebra $U_{q,t}(\hat{\hat{\mf{sl}}}_n)$. It is constructed via constructing the monodromy operators $\mbf{B}_{\mbf{m}}(z)\in\widehat{\mc{B}_{\mbf{m}}}$ in each slope subalgebras $\mc{B}_{\mbf{m}}$. It is proved in \cite{Z23} that the algebraic quantum difference operator:
\begin{align}
\mc{B}_{\mc{L}}^{s}=\mc{L}\prod^{\leftarrow}_{\mbf{m}\in[s,s-\mc{L})}\mbf{B}_{\mbf{m}}(z)
\end{align}
is independent of the choice of the path from $s$ to $s-\mc{L}$.

\subsection{Main result of the paper}
The main result of the paper is the isomorphism of the positive half of the MO quantum affine algebra of affine type $A$ and the positive half of the quantum toroidal algebras:
\begin{thm}{(See Theorem \ref{main-theorem-1})}
The positive half of the quantum toroidal algebra $U_{q,t}^{+}(\hat{\hat{\mf{sl}}}_n)$ is isomorphic to the positive half of the Maulik-Okounkov quantum affine algebra $U_{q}^{MO,+}(\hat{\mf{g}}_{Q})$ for the affine type $A$.
\end{thm}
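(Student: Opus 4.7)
The strategy is to promote the algebra embedding $U_{q,t}^{+}(\hat{\hat{\mathfrak{sl}}}_n)\hookrightarrow U_{q}^{MO,+}(\hat{\mathfrak{g}}_{Q})$ of \cite{N23} to an isomorphism by showing it is surjective on each piece of a slope/wall decomposition. On the MO side, $U_q^{MO,+}(\hat{\mathfrak{g}}_Q)$ is generated (over its toral part) by the positive halves of its wall subalgebras $U_q^{MO,+}(\mathfrak{g}_w)$; dually, $U_{q,t}^{+}(\hat{\hat{\mathfrak{sl}}}_n)$ is generated by the positive parts of the slope subalgebras $\mathcal{B}_{\mathbf{m}}$ from \cite{Z23}. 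It therefore suffices to establish, for every matched wall $w$ and slope $\mathbf{m}$, that the Negut embedding maps $\mathcal{B}_{\mathbf{m}}^{+}$ onto $U_q^{MO,+}(\mathfrak{g}_w)$ inside $U_q^{MO,+}(\hat{\mathfrak{g}}_Q)$.

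The bridge will be a comparison of the two families of quantum difference operators: the algebraic operator $\mathcal{B}_{\mathcal{L}}^{s}=\mathcal{L}\prod^{\leftarrow}_{\mathbf{m}\in[s,s-\mathcal{L})}\mathbf{B}_{\mathbf{m}}(z)$ and the geometric operator $\mathcal{B}_{\mathcal{L}}^{MO,s}(z)=\mathcal{L}\prod^{\leftarrow}_{w\in[s,s-\mathcal{L})}\mathbf{B}_{w}^{MO}(z)$. I would evaluate both inside the monodromy representation of the Dubrovin connection for the quiver variety $M(\mathbf{v},\mathbf{w})$ and compare their degeneration limit, taking $\mathcal{L}$ to cross a single wall (or, equivalently, a $z\to 0$ limit that isolates one wall factor). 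By the path-independence results of \cite{OS22} and \cite{Z23}, the degeneration isolates a single elementary monodromy factor on each side, and matching these factors forces the identification $\mathbf{B}_{\mathbf{m}}(z)=\mathbf{B}_{w}^{MO}(z)$ inside the completion corresponding to the wall $w=\mathbf{m}$. Expanding in $z$ then identifies the generators of $\mathcal{B}_{\mathbf{m}}^{+}$ with those of $U_q^{MO,+}(\mathfrak{g}_w)$, which gives the desired surjectivity, and hence the isomorphism.

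The main obstacle will be the degeneration analysis itself. The algebraic $\mathbf{B}_{\mathbf{m}}(z)$ is defined through the shuffle/slope presentation of $U_{q,t}^{+}(\hat{\hat{\mathfrak{sl}}}_n)$, while the geometric $\mathbf{B}_{w}^{MO}(z)$ is built from the Okounkov-Smirnov stable envelope formalism acting on the $K$-theory of $M(\mathbf{v},\mathbf{w})$. Showing that the two operators share the same leading asymptotic in the limit requires a careful translation between shuffle-theoretic formulas and stable-envelope expansions; here the affine type $A$ hypothesis is essential, since we use the explicit shuffle description of $U_{q,t}(\hat{\hat{\mathfrak{sl}}}_n)$ together with Negut's identification of the Drinfeld double of the preprojective $K$-theoretic Hall algebra with $U_{q,t}(\hat{\hat{\mathfrak{sl}}}_n)$. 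Once the two degenerations are aligned wall-by-wall, the slope-by-slope surjectivity of the Negut embedding — and thus the positive-half isomorphism — follows by assembling the wall subalgebras.
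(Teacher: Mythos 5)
Your route is the one the paper actually follows: degenerate both the algebraic and the Okounkov--Smirnov quantum difference equations to the Dubrovin connection, match the monodromy of that connection wall by wall, and then assemble the positive halves through the slope/wall factorisations. However, there is a genuine gap at the decisive step. What the monodromy comparison yields is only the equality of the Casimir-type elements $\mathbf{m}((1\otimes S_{w})R_{w}^{MO})=\mathbf{m}((1\otimes S_{w})R_{w})$ in $\operatorname{End}(K(\mathbf{v},\mathbf{w}))$ (Theorem \ref{same-monodromy}): the monodromy operators $\mathbf{B}_{\mathbf{m}}$, $\mathbf{B}_{w,MO}$ are by construction images of the wall $R$-matrices under the highly non-injective map $X\mapsto\mathbf{m}((1\otimes S_{w})X)$. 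Your claim that matching these factors ``forces'' $\mathbf{B}_{\mathbf{m}}(z)=\mathbf{B}_{w}^{MO}(z)$ and that ``expanding in $z$'' then identifies the generators of $\mathcal{B}_{\mathbf{m}}^{+}$ with those of $U_{q}^{MO,+}(\mathfrak{g}_{w})$ does not follow: the MO wall subalgebra is generated by all matrix coefficients $\operatorname{Tr}_{V_0}((1\otimes m)R_{w}^{MO})$, and a priori it could be strictly larger than the image of $\mathcal{B}_{\mathbf{m}}^{+}$ while producing the same Casimir element. The paper closes exactly this gap with two ingredients you do not supply: (i) Proposition \ref{inclusion-slope}, a nontrivial stable-basis/Pieri-rule computation showing that $\mathcal{B}_{\mathbf{m}}\hookrightarrow U_{q}^{MO}(\hat{\mathfrak{g}}_{Q})$ is a \emph{Hopf} embedding restricting to $U_{q}(\mathfrak{g}_{w})\hookrightarrow U_{q}^{MO}(\mathfrak{g}_{w})$, so that both wall algebras are Drinfeld doubles over one and the same nondegenerate pairing; and (ii) Lemma \ref{important-lemma}, the linear-algebra statement (proved via compatible dual bases in the appendix) that for nested Drinfeld doubles $D(B)\subset D(A)$ the equality $\mathbf{m}(R_{A})=\mathbf{m}(R_{B})$ already forces $R_{A}=R_{B}$. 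Only then does $R_{w}^{MO}=R_{w}$, and hence the wall-by-wall surjectivity, follow.

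A second, smaller omission: the degeneration of the geometric difference equation to the Dubrovin connection (Theorem \ref{degeneration-of-geometric-qde}) and the matching of the fundamental solutions at $z=0,\infty$ are not a purely formal ``translation between shuffle formulas and stable envelopes''; they use the Botta--Davison identification of the MO wall Lie algebras, $\mathfrak{n}_{w}^{MO}\cong\bigoplus_{k\geq0}\mathfrak{n}_{k\alpha}$ (Theorem \ref{confirmation-of-generators}), to know that the classical limits $r_{w}^{MO}$ and $r_{\mathbf{m}}$ coincide. You acknowledge the degeneration analysis as the main obstacle, but without this BPS-Lie-algebra input the wall-by-wall alignment of leading asymptotics you rely on is not available.
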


The strategy of the proof depends on the analysis of the degeneration limit, or cohomological limit, of the geometric quantum difference equation of the affine type $A$. Then we do the comparison with the degeneration limit of the algebraic quantum difference equation studied in \cite{Z23}\cite{Z24}.

Here we give the sketch of the proof of the Theorem \ref{main-theorem-1} in this paper:
\begin{enumerate}
\item The property of both MO quantum affine algebra and quantum toroidal algebra that is used in the proof is the factorisation property \cite{OS22}\cite{N15}, which means that they are all generated by the wall $R$-matrices $R_{w}^{MO}$ and $R_{w}$ and the Cartan generators. For the positive half, we only need to focus on the wall $R$-matrices in both quantum toroidal algebra and MO quantum affine algebra. We need to prove that $R_{w}^{MO}=R_{w}$.
\item In \cite{Z24}, we have proved that the degeneration limit of the algebraic quantum difference equation of affine type $A$ is the Dubrovin connection of the quantum equivariant cohomology of the affine type $A$ quiver varieties. The Dubrovin connection of affine type $A$ is written as:
\begin{align}\label{dubrovin-baby}
\nabla_{\lambda}:=d_{\lambda}-Q(\lambda)=d_{\lambda}-c_1(\lambda)\cup--(\sum_{i<j}\frac{(\lambda\cdot[i,j))}{1-q^{-[i,j)}}E_{[i,j)}E_{-[i,j)}),\qquad E_{\pm[i,j)}\in\hat{\mf{sl}}_n
\end{align}

This connection has the regular singularities around $q^{-[i,j)}=1$ and $q=0,\infty$. Using the result from the quantum difference equations, one can show that \cite{Z24} the monodromy representation of the flat section of \ref{dubrovin-baby} is generated by $\mbf{m}((1\otimes S_{\mbf{m}})(R_{\mbf{m}}^{-})^{-1})$.

\item In this paper we first show that we have the similar degeneration limit of the geometric quantum difference equation to the Dubrovin connections:
\begin{thm}{(See Theorem \ref{degeneration-of-geometric-qde})}
The degeneration limit $\mc{A}^{s,coh}_{\mc{L}}$ of the Okounkov-Smirnov quantum difference equation $\mc{A}^{s}_{\mc{L}}$ of the affine type $A$ quiver varieties is the Dubrovin connection of the affine type $A$ quiver varieties.
\end{thm}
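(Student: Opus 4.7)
The plan is to compute the cohomological limit of the Okounkov-Smirnov wall factorization
\begin{align*}
\mc{A}_{\mc{L}}^s = T_{\mc{L}}^{-1}\,\mc{L}\prod^{\leftarrow}_{w\in[s,s-\mc{L})}\mbf{B}_w^{MO}(z)
\end{align*}
and to identify the result with the Dubrovin connection $\nabla_\lambda$. In the cohomological degeneration, $T_{\mc{L}}^{-1}$ expands as $1 - \hbar\,d_\lambda + O(\hbar^2)$ (where $\hbar$ is the equivariant parameter used to rescale $K$-theory to cohomology), multiplication by the line bundle $\mc{L}$ contributes the first Chern class $c_1(\lambda)\cup$, and each $K$-theoretic wall subalgebra $U_q^{MO}(\mf{g}_w)$ degenerates to the enveloping algebra $U(\mf{g}_w^{MO})$ of the corresponding wall Lie subalgebra sitting inside $\mf{g}_Q^{MO}$.

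Next I would use the explicit wall/slope structure of affine type $A$ quiver varieties to compute the degeneration of each wall operator. For each wall $w$ one expects an asymptotic expansion of the form
\begin{align*}
\mbf{B}_w^{MO}(z) = 1 + \hbar\,\frac{(\lambda \cdot [i,j))}{1-q^{-[i,j)}}\,E_{[i,j)}E_{-[i,j)} + O(\hbar^2),
\end{align*}
where $[i,j)$ is the positive root of $\hat{\mf{sl}}_n$ attached to $w$ and $E_{[i,j)}E_{-[i,j)}$ is the classical $r$-matrix of the wall Lie subalgebra $\mf{g}_w^{MO}$. The coefficient $(\lambda\cdot[i,j))$ encodes the rate at which the straight-line path from $s$ to $s-\mc{L}$ crosses the wall, and the $\tfrac{1}{1-q^{-[i,j)}}$ factor arises from the Kähler pole of the $K$-theoretic wall $R$-matrix. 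By the path-independence of $\mc{A}_{\mc{L}}^s$ established in \cite{OS22}, we may freely interpolate along a straight line, so the ordered product telescopes into a sum indexed by the positive real roots of $\hat{\mf{sl}}_n$.

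Assembling all contributions gives
\begin{align*}
\mc{A}_{\mc{L}}^{s,coh} = d_\lambda - c_1(\lambda)\cup - \sum_{i<j}\frac{(\lambda\cdot[i,j))}{1-q^{-[i,j)}}E_{[i,j)}E_{-[i,j)} = \nabla_\lambda,
\end{align*}
which is precisely the Dubrovin connection of \eqref{dubrovin-baby}.

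The main obstacle is a precise asymptotic analysis of the $K$-theoretic wall $R$-matrix $\mc{R}_w^s$ in the cohomological limit: one must extract the pole structure $\tfrac{1}{1-q^{-[i,j)}}$ and identify the leading residue with the classical $r$-matrix of the wall subalgebra, independently of the choice of slope $s$ on either side of $w$. A secondary technical point is the treatment of rational walls of affine type $A$, where $\mf{g}_w^{MO}$ contains imaginary root contributions coming from the vertical Heisenberg-type factors; one must verify that these degenerate into the expected diagonal terms of $\nabla_\lambda$ and do not introduce spurious off-diagonal corrections. Both steps rely on the identification $\mf{g}_Q^{MO}\cong \mf{g}_Q$ of \cite{BD23} together with the wall calculus developed in \cite{OS22}.
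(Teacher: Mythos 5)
Your overall strategy --- degenerate the wall factorization operator by operator, identify the classical limits of the wall $R$-matrices with root pieces of the classical $r$-matrix using \cite{BD23}, and assemble the sum over walls into \ref{dubrovin-baby} --- is the same as the paper's. But the step you defer as ``the main obstacle'' is precisely the content of the paper's proof, and you have not supplied it. The paper extracts the asymptotics of $\mbf{B}_{w,MO}(\lambda)$ from the ABRR equation: the fusion operator is solved recursively in root components (formula \ref{fusion-recursion}), giving a closed expression for $\mbf{B}_{w,MO}(\lambda)$ as a sum over compositions $k_1+\cdots+k_l=k$ with explicit prefactors $\prod_{i}(z^{-k_i\alpha}q^{-m}p^{-k_i\alpha\cdot\mc{L}_w}-1)^{-1}$ acting on products of root components of $(R_w^{-,MO})^{-1}$; inserting $R_w^{-,MO}=\mathrm{Id}+(q-1)r_w^{-,MO}+O((q-1)^2)$ then yields $\mbf{B}^{coh}_{w,MO}(\lambda)=\frac{\hbar}{1-\mathrm{Ad}_{q_{(1)}^{\lambda}}}\mbf{m}(r_w^{-})$. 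The other missing ingredient is Theorem \ref{confirmation-of-generators} (which rests on the Hopf embedding of Proposition \ref{inclusion-slope} combined with \cite{BD23}): it identifies $\mf{n}_w^{MO}\cong\bigoplus_{k\geq0}\mf{n}_{k\alpha}$, and this is what licenses rewriting $r_w^{-}$ in terms of $E_{\pm[i,j)}$. Appealing to $\mf{g}_Q^{MO}\cong\mf{g}_Q$ in the abstract does not by itself tell you what the classical limit of a given wall $R$-matrix is, which is the identification you need.

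In addition, your asymptotic ansatz for a single wall operator is incorrect as stated. An individual $\mbf{B}_w^{MO}(z)$ carries no factor $(\lambda\cdot[i,j))$, and its degeneration is not $1+\hbar\,\frac{(\cdots)}{1-q^{-[i,j)}}E_{[i,j)}E_{-[i,j)}$ with a single term: it contributes the full series $-\hbar\sum_{k\geq0}\frac{1}{1-z^{-k\alpha}}\mbf{m}((1\otimes S_w)r_{w,k\alpha}^{-,MO})$ over all multiples of the root supported on $w$. The multiplicity $\langle\alpha,\mc{L}\rangle=(\lambda\cdot[i,j))$ in \ref{dubrovin-baby} appears only after summing over the walls crossed in $[s,s-\mc{L})$: there are exactly $\langle\alpha,\mc{L}\rangle$ parallel walls with root $\alpha$ on that path, each contributing once, and the translation property $R_{w+\mc{L}}^{\pm}=\mc{L}R_w^{\pm}\mc{L}^{-1}$ is what makes their contributions coincide. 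So the ``telescoping'' must be organized as a count of parallel walls rather than as a rate built into each operator; with that correction, and with the ABRR computation and Theorem \ref{confirmation-of-generators} actually carried out, your outline becomes the paper's argument.
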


Using the similar analysis of the algebraic quantum difference equations. We can compute the generators of the monodromy representations in terms of the MO quantum affine algebras:
\begin{thm}{(See Theorem \ref{geometry-monodromy-rep-casimir})}
The monodromy representation:
\begin{align}
\pi_{1}(\mbb{P}^n\backslash\textbf{Sing},0^{+})\rightarrow\text{Aut}(H_{T}(M(\mbf{v},\mbf{w})))
\end{align}
of the Dubrovin connection is generated by $\mbf{B}_{w,MO}^*$ with $q_1=e^{2\pi i\hbar_1},q_2=e^{2\pi i\hbar_2}$, i.e. the monodromy operators $\mbf{B}_{w,MO}=\mbf{m}((1\otimes S_{w})(R_{w}^{-,MO})^{-1})$ in the fixed point basis. The based point $0^+$ is a point sufficiently close to $0\in\mbb{P}^n$.
\end{thm}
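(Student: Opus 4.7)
The plan is to import to the geometric setting the argument carried out in \cite{Z24} for the algebraic quantum difference equation. Three ingredients are in hand: Theorem \ref{degeneration-of-geometric-qde}, which identifies the cohomological limit of $\mc{A}^{s}_{\mc{L}}$ with the Dubrovin connection \ref{dubrovin-baby}; the wall factorisation
$$\mc{B}_{\mc{L}}^{MO,s}(z)=\mc{L}\prod^{\leftarrow}_{w\in[s,s-\mc{L})}\mbf{B}_{w}^{MO}(z),$$
with each factor lying in $\widehat{U_{q}^{MO}(\mf{g}_w)}$; and the path-independence of $\mc{A}^{s}_{\mc{L}}$ established in \cite{OS22}.

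First I would analyse the singular locus. The singularities of the Dubrovin connection \ref{dubrovin-baby} sit along the hyperplanes $q^{-[i,j)}=1$ together with $z=0,\infty$, which are precisely the walls $w$ of the slope decomposition after taking the cohomological limit. Hence $\pi_{1}(\mbb{P}^n\backslash\textbf{Sing},0^{+})$ is generated by small loops encircling each wall, and the problem reduces to computing the local monodromy around each such wall.

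Next I would isolate one wall $w$ and focus on the corresponding factor $\mbf{B}_{w}^{MO}(z)$. Passing to the limit $q_{i}=e^{2\pi i\hbar_{i}}$ inside the wall subalgebra $\widehat{U_{q}^{MO}(\mf{g}_w)}$, the local model becomes a rational KZ-type connection driven by the classical $r$-matrix of $\mf{g}_w^{MO}$. The Casimir-type computation for such connections, performed in \cite{Z24} on the algebraic side, identifies the local monodromy in the fixed point basis of $H_{T}(M(\mbf{v},\mbf{w}))$ with the action of
$$\mbf{B}_{w,MO}=\mbf{m}\bracket{(1\otimes S_{w})(R_{w}^{-,MO})^{-1}}.$$
Path-independence of $\mc{A}^{s}_{\mc{L}}$ together with the factorisation property of the MO quantum affine algebra \cite{OS22}\cite{N15} then ensures that the collection $\{\mbf{B}_{w,MO}^{*}\}_{w}$ is compatible with the relations of $\pi_{1}(\mbb{P}^n\backslash\textbf{Sing},0^{+})$ and exhausts the image of the monodromy representation.

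The hard part will be the precise execution of the second step: verifying that the cohomological degeneration of $\mbf{B}_{w}^{MO}(z)$ produces $\mbf{B}_{w,MO}$ on the nose. This requires tracking the conjugation by the $K$-theoretic stable envelope $\text{Stab}_{\mc{C},s}$ as $q_{i}\to 1$, controlling the appearance of the antipode $S_{w}$ and of the inverse $R$-matrix $(R_{w}^{-,MO})^{-1}$ with the correct ordering, and then transporting the answer from the stable basis, in which the Okounkov--Smirnov construction is natural, to the fixed point basis in which the statement of the theorem is formulated. The algebraic template from \cite{Z24} dictates the expected answer; the geometric work is to check the matching identities at each wall for the Okounkov-Smirnov wall construction.
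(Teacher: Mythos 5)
Your global scaffolding (Theorem \ref{degeneration-of-geometric-qde}, wall factorisation, path independence) matches the paper's, but the mechanism you propose for the key step is not the one the paper uses, and as written it has a genuine gap. The paper never computes local monodromies around the singular hyperplanes by a local KZ/Casimir model. Instead it builds the fundamental solutions $\Psi^{MO}_{0,\infty}(z)$ of the Okounkov--Smirnov difference equation, computes the $p\to 0$ asymptotics of their connection matrix in terms of ordered products of $\mbf{B}^{*}_{w,MO}$ (Theorem \ref{geo-p0-limit-connection}, resting on Proposition \ref{asymptotic-of-geo-mon}), shows via Theorem \ref{degeneration-of-geometric-qde} that the $\tau\to0$ limits $\psi^{MO}_{0,\infty}$ solve the Dubrovin connection, and then reads off the monodromy generators as wall-crossing jumps of the transports, $\text{Trans}^{MO}(s')^{-1}\text{Trans}^{MO}(s)=\mbf{B}^{*}_{w,MO}$. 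In other words, the identification of the monodromy with the quantum operators is extracted from global solution asymptotics of the $q$-difference equation, not from a residue computation at each wall.

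The concrete gap in your plan is the claim that ``the cohomological degeneration of $\mbf{B}^{MO}_{w}(z)$ produces $\mbf{B}_{w,MO}$ on the nose.'' It does not: the cohomological limit of $\mbf{B}^{MO}_{w}(z)$ is the Casimir-type term $\frac{\hbar}{1-\mathrm{Ad}_{q^{\lambda}_{(1)}}}\mbf{m}(r_{w}^{-})$ entering the Dubrovin connection (Section 6.1), whereas the operator $\mbf{B}_{w,MO}=\mbf{m}((1\otimes S_{w})(R_{w}^{-,MO})^{-1})$ evaluated at $q_{1}=e^{2\pi i\hbar_{1}}$, $q_{2}=e^{2\pi i\hbar_{2}}$ arises from a different limit, namely the $p\to0$ asymptotics of $\mbf{B}^{MO}_{w}(p^{s}z)$ inside the connection matrix. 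Identifying the local monodromy of the limiting regular-singular connection (which a naive local analysis only expresses as an exponential of a residue conjugated by transports to the base point $0^{+}$) with the quantum element at exponentiated parameters is precisely the nontrivial content of the theorem; it cannot be imported as a ready-made ``Casimir-type computation'' from \cite{Z24}, because there too it is proved by the transport/connection-matrix method rather than locally. So your reduction to small loops around each wall either presupposes the statement to be proved or requires an independent monodromy-conjecture-type argument that your outline does not supply; the workable route is the one the paper takes, i.e.\ run the analysis of Section \ref{analysis-geometric} and Theorem \ref{geo-p0-limit-connection} and obtain the generators as differences of transports across a single wall.
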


\item We can also show that the fundmental solution $\Psi_{0,\infty}^{(MO)}(z)$ at $z=0,\infty$ of both algebraic and geometric quantum difference equation have the same degeneration limit solution $\psi_{0,\infty}(z)$ of the Dubrovin connection. Using the result, we can obtain that the monodromy representation from algebraic and geometric sides are the same:
\begin{thm}{(See Theorem \ref{same-monodromy})}
As the element in $\text{End}(K(\mbf{v},\mbf{w}))$, we have that:
\begin{align}
\mbf{m}((1\otimes S_{w})R_{w}^{MO})=\mbf{m}((1\otimes S_{w})R_{w})
\end{align}
\end{thm}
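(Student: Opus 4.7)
The plan is to conclude the equality by showing that both operators act as the monodromy of one and the same Dubrovin connection on $H_T(M(\mbf{v},\mbf{w}))$, and hence must coincide as endomorphisms of $K(\mbf{v},\mbf{w})$.

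First I would combine Theorem~\ref{degeneration-of-geometric-qde} with the corresponding degeneration statement from \cite{Z24} for the algebraic operator $\mc{B}_{\mc{L}}^{s}$: both quantum difference equations have cohomological limit equal to the Dubrovin connection \eqref{dubrovin-baby}. I would then compare the fundamental solutions on the two sides. In both theories the fundamental solutions $\Psi^{MO}_{0,\infty}(z)$ and $\Psi_{0,\infty}(z)$ are constructed as ordered products over walls and are pinned down by their prescribed asymptotic behaviour as $z\to 0$ and $z\to\infty$ respectively. Their cohomological limits $\psi_{0,\infty}(z)$ are then canonical fundamental solutions of \eqref{dubrovin-baby} near the same regular singularities with the same leading asymptotics, and the uniqueness of such solutions forces the two limits to coincide.

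Next I would express the monodromy around each wall singularity $q^{-[i,j)}=1$ as the connection matrix between $\psi_0$ and $\psi_\infty$ along the corresponding loop in $\mbb{P}^n\backslash\textbf{Sing}$. On the geometric side, Theorem~\ref{geometry-monodromy-rep-casimir} identifies the generators of this connection matrix with the fixed-point images of $\mbf{B}_{w,MO}=\mbf{m}((1\otimes S_w)(R_w^{-,MO})^{-1})$; the parallel computation performed in \cite{Z24} on the algebraic side produces the same formula with $R_w^{-}$ in place of $R_w^{-,MO}$. Because both expressions realise the monodromy of the same flat connection along the same generator of $\pi_1$, they must represent the same element of $\End(K(\mbf{v},\mbf{w}))$. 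The statement in the form $\mbf{m}((1\otimes S_w)R_w^{MO})=\mbf{m}((1\otimes S_w)R_w)$ then follows by applying, on both sides, the Drinfeld-double factorisation of the wall $R$-matrix into its positive and negative components, which relates $R_w^{-}$ to $R_w$ via the same algebraic manipulation in the MO and the algebraic setting.

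The main obstacle I anticipate is matching the two normalisations of the fundamental solutions. The geometric $\Psi^{MO}_{0,\infty}$ arises from the quasimap vertex function and carries slope-dependent Cartan-type prefactors, while the algebraic $\Psi_{0,\infty}$ is assembled from the monodromy operators of the slope subalgebras $\mc{B}_{\mbf{m}}$; ensuring that the two prefactors share the same $\hbar\to 0$ limit \emph{as operators}, so that the cohomological solutions of the Dubrovin connection are literally equal and not merely gauge-equivalent, is the most delicate technical input. Once this matching is in place, the identification of the monodromy generators is forced by the uniqueness of fundamental solutions with prescribed asymptotics, and the equality in the theorem follows.
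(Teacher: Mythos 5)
Your overall route is the same as the paper's: degenerate both quantum difference equations to the Dubrovin connection, show the fundamental solutions at $z=0,\infty$ have the same cohomological limits, deduce that the transports and hence the monodromy generators coincide, and read off the identity. However, you have left the decisive step unproven, and it is precisely the step that cannot be dismissed as a normalisation technicality. Knowing that both $q$-difference equations limit to the same flat connection, together with uniqueness of fundamental solutions with prescribed asymptotics, only determines each solution once its normalisation data at the singular point is fixed; at $z=\infty$ the two normalisations are built from \emph{a priori different} operators, $\mbf{M}_{\mc{L}}(\infty)=\mc{L}\prod^{\leftarrow}_{\mbf{m}}\mbf{B}_{\mbf{m}}$ on the algebraic side and $\mc{L}\prod^{\leftarrow}_{w}\mbf{B}_{w}^{MO}$ on the geometric side, with eigenvector matrices $H$, $H^{MO}$ and eigenvalue matrices $\mbf{E}_{\infty}$, $\mbf{E}_{\infty,MO}$. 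Unless one proves that the cohomological limits of these operators are literally equal, your uniqueness argument only yields $\psi_{\infty}^{MO}=\psi_{\infty}\cdot C$ for some constant matrix $C$, and then the two monodromy representations agree only up to conjugation by $C$ — not as the same elements of $\text{End}(K(\mbf{v},\mbf{w}))$, which is what the theorem asserts. You flag this as ``the most delicate technical input'' but do not supply it, so the proof is incomplete exactly where the content lies.

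In the paper this gap is closed by inputs you never invoke: the Hopf algebra embedding $\mc{B}_{\mbf{m}}\hookrightarrow U_{q}^{MO}(\hat{\mf{g}}_{Q})$ (Proposition \ref{inclusion-slope}) and Theorem \ref{confirmation-of-generators} (resting on the Botta--Davison isomorphism), which identify the classical limits of the wall $R$-matrices, $r_{w}^{MO}=r_{\mbf{m}}$ for $\mbf{m}$ on the wall $w$. This gives $\mbf{M}_{\mc{L}}^{coh}(\infty)=c_1(\mc{L})+\sum\mbf{m}(r_{\mbf{m}})=c_1(\mc{L})+\sum\mbf{m}(r_{w})=\mbf{M}_{\mc{L}}^{MO,coh}(\infty)$, so that $P_{\infty}$ and $P_{\infty}^{MO}$ can be chosen with the same limit; the equality of the limits of the regular parts is then checked not by abstract uniqueness but by a term-by-term analysis of the infinite products $\prod_k\mbf{E}_{\infty}^{-k-1}H^{-1}\mbf{B}(p^kz)H\mbf{E}_{\infty}^{k}$, where again only the surviving terms are those whose wall $R$-matrix components have the common classical limit (Propositions \ref{degeneration-prop-0} and \ref{degeneration-prop-infty}). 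A secondary, smaller point: your closing appeal to ``the Drinfeld-double factorisation relating $R_{w}^{-}$ to $R_{w}$'' is not what converts the monodromy identity into the stated equation; the passage from $\mbf{m}((1\otimes S_{w})R_{w}^{MO})=\mbf{m}((1\otimes S_{w})R_{w})$ to $R_{w}^{MO}=R_{w}$ is done afterwards by Lemma \ref{important-lemma} and is not part of this theorem, so that step of yours is both vague and misplaced. To repair your proposal you must import the classical-limit identification of the wall subalgebras (Section 4 of the paper) before the uniqueness/monodromy argument can deliver equality of operators rather than conjugacy.
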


Thus using the lemma \ref{important-lemma}, we obtain that $R_{w}^{MO}=R_{w}$. Since $U_{q}^{+,MO}(\hat{\mf{g}}_{Q})$ is generated by $R_{w}^{MO}$, $U_{q,t}^{+}(\hat{\hat{\mf{sl}}}_n)$ is generated by $R_{w}$ because of the slope factorisation. We obtain the main result.

Note that the computation for the Jordan case is a little bit different, and we leave all the computation in another paper\cite{Z24-2}.

\end{enumerate}

\subsection{Outline of the paper}
The paper is organised as follows: In section two we introduce the quantum toroidal algebra $U_{q,t}(\hat{\hat{\mf{sl}}}_n)$, its slope subalgebra and the slope factorisation. We also introduce the wall subalgebra in the slope subalgebra, and the quantum toroidal algebra action on the equivariant $K$-theory of the Nakajima quiver varieties. 

In section three we introduce the Maulik-Okounkov quantum affine algebra and its corresponding Maulik-Okounkov wall subalgebra. We will explain how to get the degeneration limit, or cohomological limit, of the MO wall subalgebra, to the Lie subalgebra of the MO Lie algebra.

In section four we will provide key technique in the paper, we will show that the wall subalgebra of the quantum toroidal algebra is the Hopf subalgebra of the Maulik-Okounkov wall subalgebra. Using the result we prove that the degeneration limit of the wall subalgebra and the MO wall subalgebra are the same.

In section five we introduce the Okounkov-Smirnov geometric quantum difference equation and the algebraic quantum difference equation. We will give the analysis of the solution of the Okounkov-Smirnov quantum difference equation.

In section six we will show that the degeneration limit of the geometric quantum difference equation is the Dubrovin connection. Using this fact we will compute the monodromy representation of the Dubrovin connection in terms of the wall $R$-matrices of the MO wall subalgebra.

In section seven we will combine all the results above to prove the main theorem.

\subsection{Further directions}
The main theorem \ref{main-theorem-1} implies that the algebraic quantum difference equation is the same as the Okounkov-Smirnov quantum difference equation of affine type $A$. This implies that the regular part of the fundamental solution of the algebraic quantum difference equation is the capping operator of the affine type $A$ quiver varieties.

One further direction is the generalisation of the theorem to arbitrary quiver type. This requires the further understanding about the slope subalgebra for the Drinfeld double of arbitrary preprojective $K$-theoretic Hall algebra. The slope subalgebra of arbitrary preprojective KHA was defined in \cite{N22}. The techniques about dealing with the degeneration limit of the slope subalgebra of arbitrary type is different, and this is a work in progress \cite{Zhu}

The second further direction is the computation of the $R$-matrix of the quantum toroidal algebra valued in the Fock module. The computation via the generators of the quantum toroidal algebra can be complicated via the bosonic operators, while the computation on the geometric side can be reduced via the techniques of "the universal cover of the quiver" mentioned in \cite{MO12}. It turns out that in this case we can write down the geometric $R$-matrix in terms of the fermionic operators as in \cite{S16}. The computation on both sides would reveal some boson/fermion correspondence for the $R$-matrix. This might reveal new structures in the theory of symmetric polynomials. We will put it in our future work.

The third further direction is on the difference equation related to the vertex function associated to the affine type $A$ quiver varieties. For the bare vertex function, there is a strong connection between the qKZ equation and the bare vertex functions. As it was shown in \cite{KPSZ21}\cite{PSZ20} that the vertex function for the cotangent bundle of flag varieties are related to the trigonometric Ruijsenaars-Schneider model, which can be constructed via the type $A$ trigonometric $R$-matrix. We expect that the work would give light to the details of the connection between the vertex function and the quantum integrable system related to the quantum toroidal algebra. For the capped and capping vertex function, one can also use the quantum toroidal algebra representation theory methods to do the calculation, see \cite{AD24} for the computation of the capped vertex functions.

The vertex function can also been interpreted as the "inner product" of Whittaker vectors for Verma modules of the quantised Coulomb Branch. It has been conjectured that the equivariant $K$-theory of the quasimap moduli space from $\mbb{P}^1$ to quiver varieties is the Verma module for the $K$-theoretic quantised Coulomb Branch of the corresponding quiver type. The conjecture is partially confirmed for hypertoric varieties in \cite{ZZ23}. We expect that the isomorphism would provide tools for proving the conjecture.

\subsection*{Acknowledgments.}The author would like to thank Andrei Negu\c{t}, Andrei Okounkov, Andrey Smirnov, Hunter Dinkins and Nicolai Reshetikhin for their helpful discussions on shuffle algebras, quantum difference equations and qKZ equations. The author is supported by the international collaboration grant BMSTC and ACZSP (Grant no. Z221100002722017). Part of this work was done while the author was visiting Department of Mathematics at Columbia University. The author thanks for their hospitality and provisio of excellent working environment.

\section{\textbf{Quantum toroidal algebra and quiver varieties}}

\subsection{Quantum toroidal algebra $U_{q,t}(\hat{\hat{\mf{sl}}}_{n})$}

The quantum toroidal algebra $U_{q,t}(\hat{\hat{\mf{sl}}}_{n})$ is a $\mbb{Q}(q,t)$-algebra defined as:
\begin{align}
U_{q,t}(\hat{\hat{\mf{sl}}}_{n})=\mbb{Q}(q,t)\langle\{e_{i,d}^{\pm}\}_{1\leq i\leq n}^{d\in\mbb{Z}},\{\varphi_{i,d}^{\pm}\}_{1\leq i\leq n}^{d\in\mbb{N}_{0}}\rangle/(~)
\end{align}
The relation between the generators can be described in the generating functions:
\begin{equation}
e_i^{ \pm}(z)=\sum_{d \in \mathbb{Z}} e_{i, d}^{ \pm} z^{-d} \quad \varphi_i^{ \pm}(z)=\sum_{d=0}^{\infty} \varphi_{i, d}^{ \pm} z^{\mp d}
\end{equation}

with $\varphi_{i,d}^{\pm}$ commute among themselves and:
\begin{equation}
\begin{gathered}
e_i^{ \pm}(z) \varphi_j^{ \pm^{\prime}}(w) \cdot \zeta\left(\frac{w^{ \pm 1}}{z^{ \pm 1}}\right)=\varphi_j^{ \pm^{\prime}}(w) e_i^{ \pm}(z) \cdot \zeta\left(\frac{z^{ \pm 1}}{w^{ \pm 1}}\right) \\
e_i^{ \pm}(z) e_j^{ \pm}(w) \cdot \zeta\left(\frac{w^{ \pm 1}}{z^{ \pm 1}}\right)=e_j^{ \pm}(w) e_i^{ \pm}(z) \cdot \zeta\left(\frac{z^{ \pm 1}}{w^{ \pm 1}}\right) \\
{\left[e_i^{+}(z), e_j^{-}(w)\right]=\delta_i^j \delta\left(\frac{z}{w}\right) \cdot \frac{\varphi_i^{+}(z)-\varphi_i^{-}(w)}{q-q^{-1}}}
\end{gathered}
\end{equation}

Here $i,j\in\{1,\cdots,n\}$ and $z$,$w$ are variables of color $i$ and $j$. Here:
\begin{equation}
\zeta\left(\frac{x_i}{x_j}\right)=\frac{\left[\frac{x_j}{q t x_i}\right]^{\delta_{j-1}^i}\left[\frac{t x_j}{q x_i}\right]^{\delta_{j+1}^i}}{\left[\frac{x_j}{x_i}\right]^{\delta_j^i}\left[\frac{x_j}{q^2 x_i}\right]^{\delta_j^i}}
\end{equation}

with the Serre relation:
\begin{equation}
\begin{aligned}
& e_i^{ \pm}\left(z_1\right) e_i^{ \pm}\left(z_2\right) e_{i \pm^{\prime} 1}^{ \pm}(w)+\left(q+q^{-1}\right) e_i^{ \pm}\left(z_1\right) e_{i \pm^{\prime} 1}^{ \pm}(w) e_i^{ \pm}\left(z_2\right)+e_{i \pm^{\prime} 1}^{ \pm}(w) e_i^{ \pm}\left(z_1\right) e_i^{ \pm}\left(z_2\right)+\\
& +e_i^{ \pm}\left(z_2\right) e_i^{ \pm}\left(z_1\right) e_{i \pm^{\prime} 1}^{ \pm}(w)+\left(q+q^{-1}\right) e_i^{ \pm}\left(z_2\right) e_{i \pm^{\prime} 1}^{ \pm}(w) e_i^{ \pm}\left(z_1\right)+e_{i \pm^{\prime} 1}^{ \pm}(w) e_i^{ \pm}\left(z_2\right) e_i^{ \pm}\left(z_1\right)=0
\end{aligned}
\end{equation}

The standard coproduct structure is imposed as:
\begin{equation}
\Delta: U_{q,t}(\hat{\hat{\mf{sl}}}_{n})\longrightarrow U_{q,t}(\hat{\hat{\mf{sl}}}_{n})\widehat{\otimes} U_{q,t}(\hat{\hat{\mf{sl}}}_{n})
\end{equation}

\begin{equation}
\begin{array}{ll}
\Delta\left(e_i^{+}(z)\right)=\varphi_i^{+}(z) \otimes e_i^{+}(z)+e_i^{+}(z) \otimes 1 & \Delta\left(\varphi_i^{+}(z)\right)=\varphi_i^{+}(z) \otimes \varphi_i^{+}(z) \\
\Delta\left(e_i^{-}(z)\right)=1 \otimes e_i^{-}(z)+e_i^{-}(z) \otimes \varphi_i^{-}(z) & \Delta\left(\varphi_i^{-}(z)\right)=\varphi_i^{-}(z) \otimes \varphi_i^{-}(z)
\end{array}
\end{equation}

\subsection{Quantum affine algebra $U_{q}(\hat{\mf{gl}}_{n})$}
The quantum affine algebra $U_{q}(\hat{\mf{gl}}_{n})$ is a $\mbb{Q}(q)$-algebra generated by:
\begin{align}
\mbb{Q}(q)\langle e_{\pm[i;j]},\psi_{s}^{\pm1},c^{\pm1}\rangle^{s\in\{1,\cdots,n\}}_{(i<j)\in\mbb{Z}^2/(n,n)\mbb{Z}}
\end{align}
The generators $e_{\pm[i;j)}$ satisfy the well-known RTT relation:
\begin{align}
R(\frac{z}{w})T_{1}^{+}(z)T_{2}^{+}(w)=T_{2}^{+}(w)T_{1}^{+}(z)R(\frac{z}{w})
\end{align}
\begin{align}
R(\frac{z}{w})T_{1}^{-}(z)T_{2}^{-}(w)=T_{2}^{-}(w)T_{1}^{-}(z)R(\frac{z}{w})
\end{align}
\begin{align}
R(\frac{z}{wc})T_{2}^{-}(w)T_{1}^{+}(z)=T_{1}^{+}(z)T_{2}^{-}(w)R(\frac{zc}{w})
\end{align}

Here $T^{\pm}(z)$ are the generating function for $e_{\pm[i;j)}$:
\begin{equation}
\begin{gathered}
T^{+}(z)=\sum_{1 \leq i \leq n}^{i \leq j} e_{[i ; j)} \psi_i \cdot E_{j \bmod n, i} z^{\left\lceil\frac{j}{n}\right\rceil-1} \\
T^{-}(z)=\sum_{1 \leq i \leq n}^{i \leq j} e_{-[i ; j)} \psi_i^{-1} \cdot E_{i, j \bmod n} z^{-\left\lceil\frac{j}{n}\right\rceil+1}
\end{gathered}
\end{equation}

And $R(z/w)$ is the standard $R$-matrix for $\mf{gl}_{n}$:
\begin{equation}
R\left(\frac{z}{w}\right)=\sum_{1 \leq i, j \leq n} E_{i i} \otimes E_{j j}\left(\frac{z q-w q^{-1}}{w-z}\right)^{\delta_j^i}+\left(q-q^{-1}\right) \sum_{1 \leq i \neq j \leq n} E_{i j} \otimes E_{j i} \frac{w^{\delta_{i>j}} z^{\delta_{i<j}}}{w-z}
\end{equation}

In addition, we have the relations:
\begin{equation}
\psi_k \cdot e_{ \pm[i ; j)}=q^{ \pm\left(\delta_k^i-\delta_k^j\right)} e_{ \pm[i ; j)} \cdot \psi_k \quad \forall \operatorname{arcs}[i ; j) \text { and } k \in \mathbb{Z}
\end{equation}

The algebra $U_{q}(\hat{\mf{gl}}_{n})$ has the coproduct structure given by:
\begin{align}
\Delta(T^{+}(z))=T^{+}(z)\otimes T^{+}(zc_1),\qquad\Delta(T^{-}(z))=T^{-}(zc_2)\otimes T^{-}(z)
\end{align}

and the antipode map:
\begin{align}
S(T^{\pm}(z))=(T^{\pm}(z))^{-1}
\end{align}

The corresponding antipode map elements can be written as:
\begin{equation}
\begin{gathered}
S^{+}(z)=\sum_{1 \leq i \leq n}^{i \leq j}(-1)^{j-i} f_{[i ; j)} \psi_j \cdot E_{j \bmod n, i} z^{\left\lceil\frac{j}{n}\right\rceil-1} \\
S^{-}(z)=\sum_{1 \leq i \leq n}^{i \leq j}(-1)^{j-i} f_{-[i ; j)} \psi_j^{-1} \cdot E_{i, j \bmod n} z^{-\left\lceil\frac{j}{n}\right]+1}
\end{gathered}
\end{equation}

Unwinding the relation we have that:
\begin{align}
\frac{e_{\pm[a,c)}e_{\pm[b,d)}}{q^{\delta^b_a-\delta^d_b+\delta^d_a}}-\frac{e_{\pm[b,d)}e_{\pm[a,c)}}{q^{\delta^b_c-\delta^d_b+\delta^d_c}}=(q-q^{-1})[\sum_{a\leq x<c}^{x\equiv d}e_{\pm[b,c+d-x)}e_{\pm[a,x)}-\sum_{a<x\leq c}^{x\equiv b}e_{\pm[x,c)}e_{\pm[a+b-x,d)}]
\end{align}

\begin{align}\label{gln-relation}
[e_{[a,c)},e_{-[b,d)}]=(q-q^{-1})[\sum^{x\equiv b}_{a\leq x<c}\frac{e_{[c+b-x,d)}e_{[a,x)}}{q^{\delta^b_c+\delta^a_c-\delta^a_b}}\frac{\psi_x}{\psi_c}-\sum_{a<x\leq c}^{x\equiv d}\frac{e_{[x,c)}e_{-[b,a+d-x)}}{q^{-\delta^b_a+\delta^d_b-\delta^d_a}}\frac{\psi_x}{\psi_a}]
\end{align}

The isomorphism $U_{q}(\hat{\mf{gl}}_{n})\cong U_{q}(\hat{\mf{sl}}_{n})\otimes U_{q}(\hat{\mf{gl}}_{1})$ is given by the following:
\begin{align}
e_{[i;i+1)}=x_{i}^{+}(q-q^{-1})\\
e_{-[i;i+1)}=x_{i}^{-}(q^{-2}-1)
\end{align}

For the generators $\{p_{k}\}_{k\in\mbb{Z}}$ of the Heisenberg algebra $U_{q}(\hat{\mf{gl}}_{1})\subset U_{q}(\hat{\mf{gl}}_n)$, they satisfy the following commutation relations:
\begin{align}
[p_k,p_{-k}]=\frac{c^{nk}-c^{-nk}}{n_k},\qquad n_k=\frac{q^k-q^{-k}}{k}
\end{align}

It can be packaged into the following proposition proved by Negu\c{t} in \cite{N19}:
\begin{prop}\label{implicit-heisenberg-generators}
There exists constants $\alpha_1,\alpha_2,\cdots\in\mbb{Q}(q)$ such that:
\begin{equation}
f_{ \pm[i ; j)}=\sum_{k=0}^{\left\lfloor\frac{j-i}{n}\right\rfloor} e_{ \pm[i ; j-n k)} g_{ \pm k}
\end{equation}
Here $\sum_{k=0}^{\infty}g_{\pm k}x^{k}=\exp(\sum_{k=1}^{\infty}\alpha_{k}p_{\pm k}x^k)$
\end{prop}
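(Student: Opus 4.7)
The plan is to exploit the antipode/inversion relation $T^{\pm}(z)\cdot S^{\pm}(z) = \text{Id}$ in $\End(\mbb{C}^n)\otimes U_q(\hat{\mf{gl}}_n)[[z^{\pm 1}]]$, which is equivalent to $S(T^{\pm}(z)) = (T^{\pm}(z))^{-1}$. Expanding this identity entry by entry produces a recursive system relating the elements $f_{\pm[i;j)}$ to the $e_{\pm[i;j')}$ with $i\le j'\le j$ and to the Cartan generators $\psi_s$. The proposed approach is to solve this system inductively on the length $j-i$, and to identify the residual ``correction factor'' that appears at each wrap-around of $\mbb{Z}/n\mbb{Z}$ as an element of the Heisenberg subalgebra $U_q(\hat{\mf{gl}}_1)\subset U_q(\hat{\mf{gl}}_n)$.

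First, in the non-wrapping range $j-i<n$ the interval $[i;j)$ does not close up around the cycle, and the RTT inversion collapses to inverting an upper-triangular matrix of finite $\mf{gl}_n$-type. A direct computation using the commutators in \eqref{gln-relation} then yields $f_{\pm[i;j)}=\pm e_{\pm[i;j)}$ up to the signs already displayed in the definition of $S^{\pm}(z)$, which matches the statement with only the $k=0$ summand present and $g_{\pm 0}=1$. This base case also pins down the normalisation of the constants $g_{\pm k}$.

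Next I would induct on the wrap-around count $K := \lfloor (j-i)/n\rfloor$. Assuming the formula holds for every arc of wrap-around count less than $K$, substituting the inductive expressions into the inversion identity for $(i,j)$ yields, after collecting terms, a decomposition $f_{\pm[i;j)} = \sum_{\ell=0}^{K} e_{\pm[i;j-n\ell)}\, h_{\pm\ell}^{(i,j)}$ with residues $h_{\pm\ell}^{(i,j)}$ a priori depending on $(i,j)$. Two symmetry arguments collapse this to the claimed form. First, the rotational symmetry $i\mapsto i+1$ of the affine Dynkin diagram permutes the generators $e_{\pm[i;j)}$ in a way that the recursion respects, so $h_{\pm\ell}^{(i,j)}$ depends only on $\ell$. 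Second, re-deriving the recursion starting from a different endpoint $(i',j')$ of the same wrap-around count shows $h_{\pm\ell}$ commutes with every Chevalley generator of $U_q(\hat{\mf{sl}}_n)$; hence $h_{\pm\ell}$ lies in the centraliser of $U_q(\hat{\mf{sl}}_n)$ inside $U_q(\hat{\mf{gl}}_n)$, which under the isomorphism $U_q(\hat{\mf{gl}}_n)\cong U_q(\hat{\mf{sl}}_n)\otimes U_q(\hat{\mf{gl}}_1)$ is precisely the Heisenberg factor. Renaming $h_{\pm\ell}=g_{\pm\ell}$ concludes this step.

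Finally, to identify $\sum_{k\ge 0} g_{\pm k}x^k$ with $\exp\bigl(\sum_{k\ge 1}\alpha_k\,p_{\pm k}x^k\bigr)$, I would compute the coproduct of $S^{\pm}(z)$ from $\Delta(T^{\pm}(z)) = T^{\pm}(z)\otimes T^{\pm}(zc^{\pm 1})$ together with the antipode axiom, project onto the Heisenberg tensor factor, and verify that the resulting generating series is group-like with respect to the primitive basis $\{p_{\pm k}\}$ of $U_q(\hat{\mf{gl}}_1)$. In the topological completion of a Heisenberg Hopf algebra every invertible group-like series is the exponential of a primitive element, which produces the desired coefficients $\alpha_k\in\mbb{Q}(q)$. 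I expect the main obstacle to be the third step above: cleanly isolating the ``Heisenberg shadow'' from the genuine $U_q(\hat{\mf{sl}}_n)$-contribution at each wrap-around, since the RTT inversion naturally mixes them. Once this separation is organised --- for example via the shuffle-algebra presentation of $U_q(\hat{\mf{gl}}_n)$ used in \cite{N19}, in which the would-be $g_{\pm k}$ can be written down explicitly as symmetric-function shuffle elements --- the remaining exponential identification is a formal consequence of the Heisenberg Hopf structure.
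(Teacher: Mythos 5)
The paper itself does not prove Proposition \ref{implicit-heisenberg-generators}: it is quoted from \cite{N19}, where the identity is verified from the explicit shuffle-algebra realizations of $e_{\pm[i;j)}$, $f_{\pm[i;j)}$ and of the Heisenberg generators $p_{\pm k}$, so there is no in-paper argument to compare with. Your overall plan --- read $f_{\pm[i;j)}$ off from the matrix inversion $S^{\pm}(z)=(T^{\pm}(z))^{-1}$, peel off a Heisenberg correction at each wrap-around, and finish by the ``group-like $\Rightarrow$ exponential of primitives'' principle --- is a reasonable route in spirit (it is essentially a Gauss-decomposition/quantum-minor argument, in which $g_{\pm}(x)$ arises as an inverse determinant-type series), and the last Hopf-algebraic step is sound modulo the central-charge twist coming from $\Delta(T^{\pm}(z))=T^{\pm}(z)\otimes T^{\pm}(zc_1)$, which makes the relevant series only twisted group-like.

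The genuine gaps are in the two middle steps, which at present restate the proposition rather than prove it. First, in the non-wrapping range the entries of $(T^{\pm}(z))^{-1}$ are alternating sums of products $e_{\pm[i;k_1)}e_{\pm[k_1;k_2)}\cdots$ dressed with $\psi$'s; the claim that these collapse to the single generator $e_{\pm[i;j)}$ is already nontrivial content, and the relation you invoke, \eqref{gln-relation}, is the mixed $[e_{[a,c)},e_{-[b,d)}]$ relation, not a relation inside the positive half, so it cannot drive that computation (you would need the same-sign quadratic relations, or the explicit shuffle formulas as in \cite{N19}). Second, and more seriously, the inductive step asserts that after substitution the inversion identity reorganizes as $f_{\pm[i;j)}=\sum_{\ell}e_{\pm[i;j-n\ell)}h^{(i,j)}_{\pm\ell}$ with right-hand residues, and that these residues are independent of $(i,j)$ and commute with all Chevalley generators of $U_q(\hat{\mf{sl}}_n)$. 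The cyclic symmetry only shows that $h^{(i,j)}_{\pm\ell}$ is carried to $h^{(i+1,j+1)}_{\pm\ell}$ by the shift automorphism, not that it is fixed by it, and it says nothing about dependence on $j-i$; the commutation with the Chevalley generators (which is what places $h_{\pm\ell}$ in the Heisenberg centralizer) is announced via ``re-deriving the recursion'' but never derived, and it is essentially equivalent to the existence of a single universal series $g_{\pm}(x)$ working for all arcs --- i.e.\ to the proposition itself. To close the argument you would have to actually carry out the quantum-minor computation in the RTT picture (identifying $g_{\pm}(x)$ with an explicit central-type series built from $T^{\pm}$), or, as in \cite{N19}, verify the identity directly on the shuffle representatives, where the $g_{\pm k}$ and the constants $\alpha_k\in\mbb{Q}(q)$ can be computed explicitly.
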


\subsection{Shuffle algebra realization of $U_{q,t}(\hat{\hat{\mf{sl}}}_n)$}
Here we review the reconstruction of the quantum toroidal algebra via the shuffle algebra and the induced slope factorization of the quantum toroidal algebra. For details see \cite{N15}.

Fix a fractional field $\mbb{F}=\mbb{Q}(q,t)$ and consider the space of symmetric rational functions:
\begin{align}
\widehat{\text{Sym}}(V):=\bigoplus_{\mbf{k}=(k_1,\cdots,k_{n})\in\mbb{N}^n}\mbb{F}(\cdots,z_{i1},\cdots,z_{ik_i},\cdots)^{\text{Sym}}_{1\leq i\leq n}
\end{align}
Here "Sym" means to symmetrize the rational function for each $z_{i1},\cdots,z_{ik_i}$. We endow the vector space with the shuffle product:
\begin{align}
F*G=\text{Sym}[\frac{F(\cdots,z_{ia},\cdots)G(\cdots,z_{jb},\cdots)}{\mbf{k}!\mbf{l}!}\prod_{1\leq a\leq k_i}^{1\leq i\leq n}\prod_{k_{j}+1\leq b\leq k_{j}+l_{j}}^{1\leq j\leq n}\zeta(\frac{z_{ia}}{z_{jb}})]
\end{align}

We define the subspace $\mc{S}^{\pm}_{\mbf{k}}\subset\widehat{\text{Sym}}(V)$ by:
\begin{align}
\mc{S}^{+}:=\{F(\cdots,z_{ia},\cdots)=\frac{r(\cdots,z_{ia},\cdots)}{\prod_{1\leq a\neq b\leq k_i}^{1\leq i\leq n}(qz_{ia}-q^{-1}z_{ib})}\}
\end{align}

where $r(\cdots,z_{i1},\cdots,z_{ik_i},\cdots)^{1\leq i\leq n}_{1\leq a\leq k_i}$ is any symmetric Laurent polynomial that satisfies the wheel conditions:
\begin{align}
r(\cdots,q^{-1},t^{\pm},q,\cdots)=0
\end{align}
for any three variables of colors $i,\cdots,i\pm1,i$.

The shuffle algebra $\mc{S}^+$ has two natural bigrading given by the number of the variables in each color $\mbf{k}\in\mbb{N}^n$ and the homogeneous degree of the rational functions $d\in\mbb{Z}$:
\begin{align}
\mc{S}^+=\bigoplus_{(\mbf{k},d)\in\mbb{N}^n\times\mbb{Z}}\mc{S}^+_{\mbf{k},d}
\end{align}

Similarly we can define the negative shuffle algebra $\mc{S}^{-}:=(\mc{S}^{+})^{op}$ which is the same as $\mc{S}^{+}$ with the opposite shuffle product. Now we slightly enlarge the positive and negative shuffle algebras by the generators $\{\varphi_{i,d}^{\pm}\}_{1\leq i\leq r}^{d\geq0}$:
\begin{align}
\mc{S}^{\geq}=\langle\mc{S}^+,\{(\varphi^+_{i,d})^{d\geq0}_{1\leq i\leq r}\}\rangle,\qquad \mc{S}^{\leq}=\langle\mc{S}^-,\{(\varphi^-_{i,d})^{d\geq0}_{1\leq i\leq r}\}\rangle
\end{align}
And here $\varphi^{\pm}_{i,d}$ commute with themselves and have the relation with $\mc{S}^{\pm}$ as follows:
\begin{align}\label{bialgebra-pairing-1}
\varphi^{+}_i(w)F=F\varphi^+_i(w)\prod_{1\leq a\leq k_j}^{1\leq j\leq n}\frac{\zeta(w/z_{ja})}{\zeta(z_{ja}/w)}
\end{align}
\begin{align}\label{bialgebra-pairing-2}
\varphi^{-}_i(w)G=G\varphi^-_i(w)\prod_{1\leq a\leq k_j}^{1\leq j\leq n}\frac{\zeta(z_{ja}/w)}{\zeta(w/z_{ja})}
\end{align}

The Drinfeld pairing $\langle-,-\rangle:\mc{S}^{\leq}\otimes\mc{S}^{\geq}\rightarrow\mbb{Q}(q,t)$ between the positive and negative shuffle algebras $\mc{S}^{\geq,\neq}$ is given by:
\begin{align}
&\langle\varphi^{-}_{i}(z),\varphi^{+}_{j}(w)\rangle=\frac{\zeta(w/z)}{\zeta(z/w)}\\
&\langle G,F\rangle=\frac{1}{\mbf{k}!}\int_{\lvert z_{ia}\lvert=1}^{\lvert q\lvert<1\lvert p\lvert}\frac{G(\cdots,z_{ia},\cdots)F(\cdots,z_{ia},\cdots)}{\prod_{a\leq k_i,b\leq k_j}^{1\leq i,j\leq n}\zeta_{p}(z_{ia}/z_{jb})}\prod_{1\leq a\leq k_i}^{1\leq i\leq n}\frac{dz_{ia}}{2\pi iz_{ia}}|_{p\mapsto q}
\end{align}
for $F\in\mc{S}^{+}$, $G\in\mc{S}^{-}$. Here $\zeta_{p}$ is defined as follows:
\begin{align}
\zeta_{p}(\frac{x_i}{x_j})=\zeta(\frac{x_i}{x_j})\frac{[\frac{x_j}{p^2x_i}]^{\delta^{i}_{j}}}{[\frac{x_j}{q^2x_i}]^{\delta^{i}_{j}}}
\end{align}

This defines the shuffle algebra $\mc{S}:=\mc{S}^{\leq}\hat{\otimes}\mc{S}^{\geq}$. Also the coproduct $\Delta:\mc{S}\rightarrow\mc{S}\hat{\otimes}\mc{S}$ is given by:
\begin{align}
&\Delta(\varphi_{i}^{\pm}(w))=\varphi^{\pm}_{i}(w)\otimes\varphi^{\pm}_{i}(w)\\
&\Delta(F)=\sum_{0\leq\mbf{l}\leq\mbf{k}}\frac{[\prod_{1\leq j\leq n}^{b>l_{j}}\varphi_{j}^+(z_{jb})]F(\cdots,z_{i1},\cdots,z_{il_i}\otimes z_{i,l_i+1},\cdots,z_{ik_i},\cdots)}{\prod_{1\leq i\leq n}^{a\leq l_i}\prod_{1\leq j\leq n}^{b>l_j}\zeta(z_{jb}/z_{ia})},\qquad F\in\mc{S}^+\\
&\Delta(G)=\sum_{0\leq\mbf{l}\leq\mbf{k}}\frac{G(\cdots,z_{i1},\cdots,z_{il_i}\otimes z_{i,l_i+1},\cdots,z_{ik_i},\cdots)[\prod_{1\leq j\leq n}^{b>l_{j}}\varphi_{j}^-(z_{jb})]}{\prod_{1\leq i\leq n}^{a\leq l_i}\prod_{1\leq j\leq n}^{b>l_j}\zeta(z_{ia}/z_{jb})},\qquad G\in\mc{S}^-
\end{align}

In human language, the above coproduct formula means the following:For the right hand side of the formula, in the limit $\lvert z_{ia}\lvert<<\lvert z_{jb}\lvert$ for all $a\leq l_i$ and $b>l_{i}$, and then place all monomials in $\{z_{ia}\}_{a\leq l_i}$ to the left of the $\otimes$ symbol and all monomials in $\{z_{jb}\}_{b>l_j}$ to the right of the $\otimes$ symbol. Also we have the antipode map $S:\mc{S}\rightarrow\mc{S}$ which is an anti-homomorphism of both algebras and coalgebras:
\begin{align}
&S(\varphi^+_i(z))=(\varphi^+_i(z))^{-1},\qquad S(F)=[\prod_{1\leq a\leq k_i}^{1\leq i\leq n}(-\varphi_i^+(z_{ia}))^{-1}]*F\\
&S(\varphi^-_i(z))=(\varphi^-_i(z))^{-1},\qquad S(G)=G*[\prod_{1\leq a\leq k_i}^{1\leq i\leq n}(-\varphi_i^-(z_{ia}))^{-1}]
\end{align}

The following theorem has been proved by\cite{N15}:
\begin{thm}
There is a bigraded isomorphism of bialgebras
\begin{align}
Y:U_{q,t}(\hat{\hat{\mf{sl}}}_{n})\rightarrow\mc{S}
\end{align}
given by:
\begin{align}
Y(\varphi_{i,d}^{\pm})=\varphi^{\pm}_{i,d},\qquad Y(e_{i,d}^{\pm})=\frac{z_{i1}^d}{[q^{-2}]}
\end{align}
\end{thm}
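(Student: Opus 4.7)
My plan is to construct $Y$ on generators, verify it respects the defining relations and the coproduct (so it is a bialgebra map), and then separately establish injectivity and surjectivity on each graded piece $\mc{S}^\pm_{\mbf{k},d}$.

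First I would show well-definedness. The Cartan generators $\varphi_{i,d}^\pm$ are sent to themselves, so the relations \eqref{bialgebra-pairing-1}, \eqref{bialgebra-pairing-2} encoding the $\varphi$–$e$ exchange match the defining quantum toroidal relations by direct inspection. The quadratic relation $e_i^\pm(z) e_j^\pm(w) \zeta(w^{\pm 1}/z^{\pm 1}) = e_j^\pm(w) e_i^\pm(z) \zeta(z^{\pm 1}/w^{\pm 1})$ follows by computing the shuffle product $Y(e_{i,a}^\pm) * Y(e_{j,b}^\pm)$ directly from the definition: the symmetrization symmetrizes the two variables, and the prefactor $\zeta(z_{ia}/z_{jb})$ coming from the shuffle product is exactly what the relation requires. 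The mixed commutator $[e_i^+(z),e_j^-(w)]$ collapses to a delta function on the Heisenberg generators by a residue calculation in the bialgebra pairing. The Serre relations are precisely the wheel conditions: specializing three variables of colors $i,i\pm 1,i$ at the forbidden configuration $(q^{-1},t^{\pm 1},q)$ forces a symmetric polynomial identity which translates into the cubic Serre relation among single-variable shuffle elements.

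Next, that $Y$ intertwines the coproducts is a direct check on generators: $\Delta(\varphi_i^\pm(w))$ and $\Delta(e_{i,d}^\pm)$ map under $Y$ to the formulas given in the coproduct on $\mc{S}$ (with the single-variable case reducing to the standard triangular decomposition $\varphi \otimes e + e \otimes 1$ and $1 \otimes e + e \otimes \varphi$). Since generators generate, this extends to all of $U_{q,t}(\hat{\hat{\mf{sl}}}_n)$.

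For injectivity, I would invoke the nondegeneracy of the Drinfeld pairing on both sides, or equivalently produce a PBW-type basis of $U_{q,t}(\hat{\hat{\mf{sl}}}_n)$ and check that its image in $\mc{S}$ is linearly independent (which is a bigraded statement; the bigrading by $(\mbf{k},d)$ is preserved because $Y(e_{i,d}^\pm)$ has exactly one variable of color $i$ and homogeneous degree $d$).

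The main obstacle is surjectivity: one must show that every shuffle element in $\mc{S}^\pm_{\mbf{k}}$, i.e.\ every symmetric Laurent polynomial $r$ satisfying the wheel conditions and divided by the prescribed pole denominator, lies in the subalgebra generated under the shuffle product by the single-variable elements $z_{i1}^d/[q^{-2}]$. The standard route is an inductive argument on the number of variables $|\mbf{k}| = \sum k_i$: given $F \in \mc{S}^+_{\mbf{k}}$, consider its specialization along a ``colored ordering'' of the variables and show that one can subtract off a shuffle product of strictly smaller elements to reduce the pole structure, using the wheel conditions to control the residues. After finitely many steps the remainder must vanish because the leading coefficient is determined by the specialization. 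This reduction argument, together with a dimension count of $\mc{S}^\pm_{\mbf{k},d}$ matching the dimension of the corresponding weight space of $U_{q,t}^\pm(\hat{\hat{\mf{sl}}}_n)$, closes the proof.
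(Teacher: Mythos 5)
The paper does not actually prove this theorem: it is quoted verbatim from Negu\c{t} \cite{N15}, so there is no internal argument to compare yours against; the only fair comparison is with the proof in that reference. Measured against it, the routine parts of your plan are fine: the check that the one-variable elements $z_{i1}^d/[q^{-2}]$ satisfy the quadratic and $\varphi$--$e$ relations, the mixed relation via the Drinfeld-double cross relations, and the compatibility of the coproducts are all direct computations. One small correction: for well-definedness the Serre relation is verified by an explicit symmetrization identity in the shuffle algebra; it is not literally ``the wheel conditions.'' The wheel conditions enter first in checking that shuffle products of one-variable elements stay inside $\mc{S}^+$, and then, much more seriously, in the converse direction.

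That converse direction is where your proposal has a genuine gap. Surjectivity --- that every $F\in\mc{S}^{\pm}_{\mbf{k},d}$ satisfying the wheel conditions lies in the subalgebra generated by the one-variable elements --- is the entire content of the theorem, and your two proposed tools do not close it. The ``dimension count of $\mc{S}^{\pm}_{\mbf{k},d}$ matching the weight space of $U^{\pm}_{q,t}(\hat{\hat{\mf{sl}}}_n)$'' is not an available input: the graded dimension of the space of symmetric Laurent polynomials with prescribed poles and wheel conditions is not known a priori, and computing it is essentially equivalent to the theorem itself (this is exactly why such shuffle-algebra isomorphisms were long conjectural). Likewise, ``subtract off a shuffle product of strictly smaller elements to reduce the pole structure'' needs a concrete mechanism; in \cite{N15} this is supplied by the slope filtration $\mc{S}^{\pm}_{\leq\mbf{m}}$, the evaluation of shuffle elements on specific specializations (strings of variables in geometric progression, as in the leading-term formulas the present paper recalls in Lemma \ref{Pieri-rule}), and an induction organized by slope rather than merely by the number of variables, which is what guarantees that the remainder after subtraction strictly drops in a well-founded order. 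Similarly, injectivity via nondegeneracy of the pairing is the right statement to invoke, but that nondegeneracy is itself a nontrivial theorem proved with the same slope machinery, so it cannot be treated as a free input. Without these ingredients the argument as written does not terminate, so as it stands the proposal establishes a bialgebra homomorphism $Y$ but not the isomorphism.
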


\subsection{Slope subalgebra of the shuffle algebra}
One of the wonderful property for the shuffle algebra is that it admits the slope factorization.
For the positive half of the shuffle algebra $\mc{S}^+$, we define the subspace of slope $\leq\mbf{m}$ as:
\begin{align}
\mc{S}^{+}_{\leq\mbf{m}}:=\{F\in\mc{S}^+|\lim_{\xi\rightarrow\infty}\frac{F(z_{i1},\cdots,z_{ia_i},\xi z_{i,a_i+1},\cdots,\xi z_{i,a_i+b_i})}{\xi^{\mbf{m}\cdot\mbf{b}-\frac{(\mbf{a},\mbf{b})}{2}}}<\infty\text{ }\forall\mbf{a},\mbf{b}\}
\end{align}

Similarly, for the negative half:
\begin{align}
\mc{S}^{-}_{\leq\mbf{m}}=\{G\in\mc{S}^-|\lim_{\xi\rightarrow0}\frac{G(\xi z_{i1},\cdots,\xi z_{ia_i},z_{i,a_i+1},\cdots,z_{i,a_i+b_i})}{\xi^{-\mbf{m}\cdot\mbf{a}+\frac{(\mbf{a},\mbf{b})}{2}}}<\infty\text{ }\forall\mbf{a},\mbf{b}\}
\end{align}

It can be checked that for the elements $F\in\mc{S}_{\leq\mbf{m}}^{+}$, $G\in\mc{S}_{\leq\mbf{m}}^{-}$:
\begin{align}
\Delta(F)=\Delta_{\mbf{m}}(F)+(\text{anything})\otimes(\text{slope}<\mbf{m}),\qquad\Delta_{\mbf{m}}(F)=\sum_{\mbf{a}+\mbf{b}=\mbf{k}}\lim_{\xi\rightarrow\infty}\frac{\Delta(F)_{\mbf{a}\otimes\mbf{b}}}{\xi^{\mbf{m}\cdot\mbf{b}}}
\end{align}

\begin{align}
\Delta(G)=\Delta_{\mbf{m}}(G)+(\text{slope}<\mbf{m})\otimes(\text{anything}),\qquad\Delta_{\mbf{m}}(G)=\sum_{\mbf{a}+\mbf{b}=\mbf{k}}\lim_{\xi\rightarrow\infty}\frac{\Delta(G)_{\mbf{a}\otimes\mbf{b}}}{\xi^{-\mbf{m}\cdot\mbf{a}}}
\end{align}

The slope subalgebra $\mc{B}_{\mbf{m}}^{\pm}$ is defined as:
\begin{align}
\mc{B}^{\pm}_{\mbf{m}}:=\bigoplus_{(\mbf{k},d)\in\mbb{N}^n\times\mbb{Z}}^{\mbf{m}\cdot\mbf{k}=d}\mc{S}^{\pm}_{\leq\mbf{m}}\cap\mc{S}^{\pm}_{\pm\mbf{k},d}
\end{align}

It is easy to see that the morphism $\Delta_{\mbf{m}}$ descends to a true coproduct on $\mc{B}_{\mbf{m}}^{\geq,\leq}:=\mc{B}_{\mbf{m}}^{\pm}\otimes\mbb{F}[\varphi_i^{\pm1}]$. Using the Drinfeld double defined for the shuffle algebra. We eventually obtain the slope subalgebra $\mc{B}_{\mbf{m}}\subset\mc{S}$.

It is known that the slope subalgebra $\mc{B}_{\mbf{m}}$ for the quantum toroidal algebra $U_{q,t}(\widehat{\widehat{\mf{sl}}}_{n})$ is isomorphic to:
\begin{align}\label{rootquantum}
\mc{B}_{\mbf{m}}\cong\bigotimes^{g}_{h=1}U_{q}(\widehat{\mf{gl}}_{l_h})
\end{align}

For the proof of the above isomorphism \ref{rootquantum} see \cite{N15}. The isomorphism is constructed in the following way. For $\mbf{m}\in\mbb{Q}^{n}$, it is proved in \cite{N15} that as the shuffle algebra, 
The generators of the slope algebra $\mc{B}_{\mbf{m}}$ can be written in the following way. For $\mbf{m}\cdot[i;j\rangle\in\mbb{Z}$, we denote the following elements:
\begin{equation}\label{positive-generators}
P_{\pm[i ; j)}^{\pm \mathbf{m}}=\operatorname{Sym}\left[\frac{\prod_{a=i}^{j-1} z_a^{\left\lfloor m_i+\ldots+m_a\right\rfloor-\left\lfloor m_i+\ldots+m_{a-1}\right\rfloor}}{t^{\text {ind } d_{[i, j\rangle}^m q^{i-j}} \prod_{a=i+1}^{j-1}\left(1-\frac{q_2 z_a}{z_{a-1}}\right)} \prod_{i \leq a<b<j} \zeta\left(\frac{z_b}{z_a}\right)\right]\in\mc{S}^{\pm}
\end{equation}
\begin{equation}\label{negative generators}
Q_{\mp[i ; j)}^{\pm \mathbf{m}}=\operatorname{Sym}\left[\frac{\prod_{a=i}^{j-1} z_a^{\left\lfloor m_i+\ldots+m_{a-1}\right\rfloor-\left\lfloor m_i+\ldots+m_a\right\rfloor}}{t^{-\mathrm{ind}_{[i, j)}^m} \prod_{a=i+1}^{j-1}\left(1-\frac{q_1 z_{a-1}}{z_a}\right)} \prod_{i \leq a<b<j} \zeta\left(\frac{z_a}{z_b}\right)\right]\in\mc{S}^{\mp}
\end{equation}

Here $\text{ind}^{\mbf{m}}_{[i;j)}$ is defined as:
\begin{align}
\text{ind}^{\mbf{m}}_{[i;j)}=\sum_{a=i}^{j-1}(m_i+\cdots+m_a-\lfloor m_i+\cdots+m_{a}\lfloor)
\end{align}

The positive and negative part of the slope subalgebra $\mc{B}_{\mbf{m}}^{\pm}$ for the shuffle algebra $\mc{S}^{\pm}$ can be defined as the algebra generated by $\{P^{\mbf{m}}_{\pm[i;j)}\}_{i\leq j}$ and $\{Q^{\mbf{m}}_{\pm[i;j)}\}_{i\leq j}$. We can define the slope subalgebra $\mc{B}_{\mbf{m}}$ as the Drinfeld double of $\mc{B}_{\mbf{m}}^{\pm}$ with the Cartan elements $\{\varphi_{\pm[i;j)}\}$ by the bialgebra pairing defined in \ref{bialgebra-pairing-1} and \ref{bialgebra-pairing-2}.

We can check that the antipode map $S_{\mbf{m}}:\mc{B}_{\mbf{m}}\rightarrow\mc{B}_{\mbf{m}}$ has the following relation:
\begin{align}
S_{\mbf{m}}(P^{\mbf{m}}_{[i;j)})=Q^{\mbf{m}}_{[i;j)},\qquad S_{\mbf{m}}(Q^{\mbf{m}}_{-[i;j)})=P^{\mbf{m}}_{-[i;j)}
\end{align}

\begin{equation}
\Delta_{\mathbf{m}}\left(P_{[i ; j)}^{\mathbf{m}}\right)=\sum_{a=i}^j P_{[a ; j)}^{\mathbf{m}} \varphi_{[i ; a)} \otimes P_{[i ; a)}^{\mathbf{m}} \quad \Delta_{\mathbf{m}}\left(Q_{[i ; j)}^{\mathrm{m}}\right)=\sum_{a=i}^j Q_{[i ; a)}^{\mathbf{m}} \varphi_{[a ; j)} \otimes Q_{[a ; j)}^{\mathbf{m}}
\end{equation}

\begin{equation}
\Delta_{\mathbf{m}}\left(P_{-[i ; j)}^{\mathbf{m}}\right)=\sum_{a=i}^j P_{-[a ; j)}^{\mathbf{m}} \otimes P_{-[i ; a)}^{\mathbf{m}} \varphi_{-[a ; j)} \quad \Delta_{\mathbf{m}}\left(Q_{-[i ; j)}^{\mathbf{m}}\right)=\sum_{a=i}^j Q_{-[i ; a)}^{\mathbf{m}} \otimes Q_{-[a ; j)}^{\mathbf{m}} \varphi_{-[i ; a)}
\end{equation}

Here $\Delta_{\mbf{m}}$ is defined as $(5.27)$ and $(5.28)$ in \cite{N15}. The isomorphism \ref{rootquantum} is given by:
\begin{align}
e_{[i;j)}=P^{\mbf{m}}_{[i;j)_{h}},\qquad e_{-[i;j)}=Q^{\mbf{m}}_{-[i;j)_h},\qquad\varphi_{k}=\varphi_{[k;v_{\mbf{m}}(k))}
\end{align}

We have the factorisation of the quantum toroidal algebra proved in \cite{N15} and \cite{Z23}:
\begin{thm}\label{factorisation}
There is an isomorphism of algebras:
\begin{align}
U_{q,t}^{+}(\hat{\hat{\mf{sl}}}_n)\cong\bigotimes_{\mu\in\mbb{Q}}\mc{B}_{\mbf{m}+\mu\bm{\theta}}^+,\qquad\mbf{m}\in\mbb{Q}^n,\bm{\theta}\in(\mbb{Z}^+)^n
\end{align}

Moreover, 
\begin{align}
U_{q,t}(\hat{\hat{\mf{sl}}}_n)\cong\bigotimes_{\mu\in\mbb{Q}\cup\{\infty\}}\mc{B}_{\mbf{m}+\mu\bm{\theta}},\qquad\mbf{m}\in\mbb{Q}^n,\bm{\theta}\in(\mbb{Z}^+)^n
\end{align}

The isomorphism induces the Hopf algebra embedding $\mc{B}_{\mbf{m}}\hookrightarrow U_{q,t}(\hat{\hat{\mf{sl}}}_n)$.
\end{thm}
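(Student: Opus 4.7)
The plan is to transfer the problem to the shuffle algebra side via the bigraded bialgebra isomorphism $Y : U_{q,t}(\hat{\hat{\mf{sl}}}_n)\cong\mc{S}$, and then exploit the asymptotic analysis that underlies the definition of the slope subspaces $\mc{S}^{\pm}_{\leq\mbf{m}}$. Fix $\mbf{m}\in\mbb{Q}^n$ and $\bm{\theta}\in(\mbb{Z}^+)^n$; the key observation is that along the line $L=\{\mbf{m}+\mu\bm{\theta}:\mu\in\mbb{Q}\}$ the slopes are totally ordered by $\mu$, and for each $\mbf{k}\in\mbb{N}^n$ only a locally finite set of $\mu$'s satisfies $(\mbf{m}+\mu\bm{\theta})\cdot\mbf{k}\in\mbb{Z}$, so ordered products over $\mu$ make sense on every bigraded piece.

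First, I would establish the multiplicativity of the slope filtration: if $F_i\in\mc{B}^{+}_{\mbf{m}+\mu_i\bm{\theta}}$ with $\mu_1>\mu_2>\cdots>\mu_r$, then the shuffle product $F_1*\cdots*F_r$ lies in $\mc{S}^+_{\leq\mbf{m}+\mu_1\bm{\theta}}$, and the leading asymptotic coefficient under $\Delta_{\mbf{m}+\mu_1\bm{\theta}}$ equals $F_1\otimes(F_2*\cdots*F_r)$ times a Cartan factor. This follows from unwinding the definition of $\Delta_{\mbf{m}}$ via the limit $\xi\to\infty$ together with the explicit shuffle product formula and the compatibility between $\Delta_{\mbf{m}}$ and the true coproduct modulo strictly lower slopes. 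Injectivity of the multiplication map $\bigotimes^{\leftarrow}_{\mu}\mc{B}^+_{\mbf{m}+\mu\bm{\theta}}\to U_{q,t}^+(\hat{\hat{\mf{sl}}}_n)$ is then immediate by iterated application of $\Delta_{\mbf{m}+\mu_{\max}\bm{\theta}}$, which isolates the largest-slope tensor factor and forces it to vanish whenever the product is zero.

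The substantive content is surjectivity. Given $F\in\mc{S}^+_{\mbf{k},d}$, I define its slope $\mbf{n}_F\in L$ to be the smallest $\mbf{m}+\mu\bm{\theta}$ with $F\in\mc{S}^+_{\leq\mbf{m}+\mu\bm{\theta}}$ (the existence of such a minimum uses that $\mbf{k}$ is fixed and pairing with $\bm{\theta}$ discretises the attainable slopes). I would then extract the leading asymptotic component of $F$ at slope $\mbf{n}_F$, argue it lands in $\mc{B}^+_{\mbf{n}_F}$ via the explicit description of generators $P^{\mbf{n}_F}_{[i;j)}$ in \eqref{positive-generators} and the wheel conditions, and subtract off a product of the form $(\text{element of }\mc{B}^+_{\mbf{n}_F})*(\text{strictly lower slope})$ to induct on the slope filtration. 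Because each $\mc{S}^+_{\mbf{k},d}$ is finite dimensional and the slope filtration is bounded below for fixed $(\mbf{k},d)$, the induction terminates.

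The extended statement with all $\mu\in\mbb{Q}\cup\{\infty\}$ follows by performing the analogous analysis with both $\mc{S}^{\pm}$ halves together and using the Drinfeld double construction, where the Cartan (the $\mu=\infty$ factor) fills in what is missed by the positive and negative slope subalgebras. Finally, the Hopf embedding $\mc{B}_{\mbf{m}}\hookrightarrow U_{q,t}(\hat{\hat{\mf{sl}}}_n)$ follows because the slope factorisation is a factorisation of algebras together with the fact, already observed in the excerpt, that $\Delta(F)=\Delta_{\mbf{m}}(F)+(\text{lower slope})\otimes(\text{anything})$; this ensures $\Delta$ restricts to $\mc{B}_{\mbf{m}}\otimes\mc{B}_{\mbf{m}}$ and hence identifies $\mc{B}_{\mbf{m}}$ as a genuine Hopf subalgebra, with antipode inherited. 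The principal obstacle is Step 3 (surjectivity/exhaustion), specifically showing that the leading-slope part of an arbitrary shuffle element always lies in the subalgebra generated by the prescribed $P^{\mbf{m}}_{[i;j)}$'s; this is handled by matching with the PBW basis of $\bigotimes_h U_q(\widehat{\mf{gl}}_{l_h})$ via the isomorphism \eqref{rootquantum}.
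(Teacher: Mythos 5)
Your overall route is the same one the paper itself relies on: the paper does not prove Theorem \ref{factorisation} but cites \cite{N15} and \cite{Z23}, and the proof there is exactly the shuffle-algebra argument you sketch (multiplicativity of the slope filtration, injectivity of the ordered multiplication map via the leading term of $\Delta_{\mbf{m}}$, and surjectivity by peeling off the leading-slope piece). You correctly identify surjectivity as the crux; note, though, that in Negu\c{t}'s argument the fact that the leading-slope piece lies in the subalgebra generated by the $P^{\mbf{m}}_{[i;j)}$ is obtained from an independent dimension estimate on $\mc{S}^{+}_{\leq\mbf{m}}\cap\mc{S}^{+}_{\mbf{k},d}$ (via specialization maps and the wheel conditions), not by quoting the isomorphism \ref{rootquantum}, which is established in the same circle of ideas — as written, your appeal to \ref{rootquantum} risks circularity.

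Two concrete points fail as stated. First, your justification of the Hopf embedding is wrong: the filtration identity $\Delta(F)=\Delta_{\mbf{m}}(F)+(\text{lower slope})\otimes(\text{anything})$ does \emph{not} imply that $\Delta$ restricts to $\mc{B}_{\mbf{m}}\otimes\mc{B}_{\mbf{m}}$ — the error terms are nonzero and have components outside $\mc{B}_{\mbf{m}}$. For example, for $F=P^{\mbf{m}}_{[i;i+1)}=e_{i,d}$ with $d=m_i\in\mbb{Z}$ one has
\begin{align}
\Delta(e_{i,d})=e_{i,d}\otimes 1+\sum_{k\geq 0}\varphi_{i,k}\otimes e_{i,d-k},
\end{align}
and the terms with $k>0$ involve $e_{i,d-k}\notin\mc{B}_{\mbf{m}}$. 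The correct statement is that $\mc{B}_{\mbf{m}}$ carries the coproduct $\Delta_{\mbf{m}}$ and becomes a Hopf subalgebra of $U_{q,t}(\hat{\hat{\mf{sl}}}_n)$ only after twisting the coproduct and antipode by the positive-slope $R$-matrices as in \ref{twisted-coproduct}; this is precisely Proposition \ref{twist-of-the-coproduct}, proved in \cite{Z23}, and it requires an actual argument, not the filtration identity alone. Second, your termination argument invokes finite-dimensionality of $\mc{S}^{+}_{\mbf{k},d}$, which is false for $n\geq 2$ (e.g.\ $\mc{S}^{+}_{(1,1,0,\dots,0),d}$ contains $z^{a}w^{d-a}$ for all $a\in\mbb{Z}$); what is finite-dimensional is $\mc{S}^{+}_{\leq\mbf{m}}\cap\mc{S}^{+}_{\mbf{k},d}$, since the slope conditions bound all partial degrees. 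The induction still terminates, but for a different reason: for fixed $\mbf{k}$ the walls $\{\mu:(\mbf{m}+\mu\bm{\theta})\cdot\mbf{k}'\in\mbb{Z},\ 0<\mbf{k}'\leq\mbf{k}\}$ form a discrete set and the minimal slope of a fixed element is bounded below, so only finitely many wall crossings occur. With these two repairs (and the dimension-count input for surjectivity) your sketch matches the cited proof.
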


The theorem gives the factorisation of the universal $R$-matrix:
\begin{align}
R^{\infty}=\prod_{m\in\mbb{Q}}^{\rightarrow}(R_{s+m\bm{\theta}}^+)R_{\infty}=(\prod_{m\in\mbb{Q}}^{\rightarrow}\prod_{h=1}^{g^{(m)}}R_{U_{q}(\hat{\mf{gl}}_{h}^{(m)})}^+)\cdot R_{\text{Heisenberg}}^{\otimes n}
\end{align}

Now we choose arbitrary $\mbf{m}\in\mbb{Q}^n$, we can twist the coproduct by:
\begin{align}\label{twisted-coproduct}
\Delta_{(\mbf{m})}(a)=[\prod^{\rightarrow}_{r\in\mbb{Q}_{>0}\cup\{\infty\}}R_{\mbf{m}+r\bm{\theta}}^+]\cdot\Delta(a)\cdot[\prod^{\rightarrow}_{r\in\mbb{Q}_{>0}\cup\{\infty\}}R_{\mbf{m}+r\bm{\theta}}^+]^{-1}
\end{align}

\begin{align}
R^{\mbf{m}}=\prod^{\leftarrow}_{\mu\in\mbb{Q}_{-}}R_{\mbf{m}+\mu\bm{\theta}}^{-}R_{\infty}\prod^{\leftarrow}_{\mu\in\mbb{Q}_{+}\sqcup\{0\}}R_{\mbf{m}+\mu\bm{\theta}}^{+}
\end{align}

The antipode map $S$ can be twisted as:
\begin{align}
S_{\mbf{m}}(a)=U_{\mbf{m}}S(a)U_{\mbf{m}}^{-1},\qquad U_{\mbf{m}}=\mbf{m}((1\otimes S)[\prod^{\rightarrow}_{r\in\mbb{Q}_{>0}\cup\{\infty\}}R_{\mbf{m}+r\bm{\theta}}^+])
\end{align}

This makes $(U_{q,t}(\hat{\hat{\mf{sl}}}_n),\Delta_{(\mbf{m})},S_{\mbf{m}},\epsilon,\eta)$ a Hopf algebra over $\mbb{Q}(q,t)$.

It turns out that $\mc{B}_{\mbf{m}}$ is a Hopf subalgebra of $(U_{q,t}(\hat{\hat{\mf{sl}}}_n),\Delta_{(\mbf{m})},S_{\mbf{m}},\epsilon,\eta)$:
\begin{prop}\label{twist-of-the-coproduct}
$\Delta_{(\mbf{m})}(a)=\Delta_{\mbf{m}}(a)$ when $a\in\mc{B}_{\mbf{m}}$. Morevoer, this means that $\mc{B}_{\mbf{m}}$ is a Hopf subalgebra of $(U_{q,t}(\hat{\hat{\mf{sl}}}_n),\Delta_{(\mbf{m})},S_{\mbf{m}},\epsilon,\eta)$.
\end{prop}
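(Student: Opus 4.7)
The plan is to reduce the identity $\Delta_{(\mbf{m})}(a) = \Delta_{\mbf{m}}(a)$ on $\mc{B}_{\mbf{m}}$ to an explicit computation involving the wall $R$-matrices of slopes $\mbf{m} + r\bm{\theta}$ with $r > 0$ and the slope filtration of the standard coproduct. First, for $a \in \mc{B}_{\mbf{m}}$, recall that $\Delta(a) = \Delta_{\mbf{m}}(a) + (\text{terms of slope } < \mbf{m})$, where the leading piece $\Delta_{\mbf{m}}(a)$ lies in $\mc{B}_{\mbf{m}} \widehat{\otimes} \mc{B}_{\mbf{m}}$ by construction, and each remainder term has at least one tensor factor supported in a slope subalgebra $\mc{B}_{\mbf{m}'}$ with $\mbf{m}' < \mbf{m}$.

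The second step is to exploit the quasi-triangularity of the universal $R$-matrix and its slope factorization. By Theorem \ref{factorisation}, $R^{\infty}$ factors through $F_{\mbf{m}} := \prod^{\rightarrow}_{r \in \mbb{Q}_{>0} \cup \{\infty\}} R_{\mbf{m}+r\bm{\theta}}^{+}$ on the high-slope side, and $R^{\infty} \Delta(a) = \Delta^{op}(a) R^{\infty}$ as usual. Each wall factor $R_{\mbf{m}+r\bm{\theta}}^{+}$ lives inside a completion of $\mc{B}_{\mbf{m}+r\bm{\theta}}^{\otimes 2}$; since elements of $\mc{B}_{\mbf{m}+r\bm{\theta}}$ with $r > 0$ commute with $\mc{B}_{\mbf{m}}$ up to controlled Cartan decorations governed by the bialgebra pairing of \ref{bialgebra-pairing-1} and \ref{bialgebra-pairing-2}, conjugation of the leading term $\Delta_{\mbf{m}}(a)$ by $F_{\mbf{m}}$ introduces only Cartan factors that telescope to the identity across the factorization. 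On the other hand, conjugation acts non-trivially on the lower-slope remainder of $\Delta(a)$: those contributions are precisely absorbed through the braiding with the higher-slope $R$-factors, producing exactly the cancellation needed. This matching is essentially the factorized Yang-Baxter equation applied slope-by-slope, and can be verified by induction on the slope gap $\mbf{m} - \mbf{m}'$.

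The second assertion, that $\mc{B}_{\mbf{m}}$ is a Hopf subalgebra of $(U_{q,t}(\hat{\hat{\mf{sl}}}_n), \Delta_{(\mbf{m})}, S_{\mbf{m}}, \epsilon, \eta)$, then follows immediately from the first: $\Delta_{\mbf{m}}$ already descends to a genuine coproduct on $\mc{B}_{\mbf{m}}^{\geq,\leq}$ as noted earlier, so $\Delta_{(\mbf{m})}(\mc{B}_{\mbf{m}}) \subset \mc{B}_{\mbf{m}} \widehat{\otimes} \mc{B}_{\mbf{m}}$, and the twisted antipode $S_{\mbf{m}}$ preserves $\mc{B}_{\mbf{m}}$ because $S_{\mbf{m}}(P^{\mbf{m}}_{[i;j)}) = Q^{\mbf{m}}_{[i;j)}$ and $S_{\mbf{m}}(Q^{\mbf{m}}_{-[i;j)}) = P^{\mbf{m}}_{-[i;j)}$ are already recorded above.

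The main obstacle is making the cancellation argument rigorous given that $F_{\mbf{m}}$ is an infinite product over $r \in \mbb{Q}_{>0} \cup \{\infty\}$. A cleaner route is to verify the identity directly on the algebra generators $P^{\mbf{m}}_{\pm[i;j)}$, $Q^{\mbf{m}}_{\pm[i;j)}$ and the Cartans $\varphi_{\pm[i;j)}$ using the explicit formula $\Delta_{\mbf{m}}(P^{\mbf{m}}_{[i;j)}) = \sum_{a=i}^{j} P^{\mbf{m}}_{[a;j)} \varphi_{[i;a)} \otimes P^{\mbf{m}}_{[i;a)}$ together with its three analogs displayed above, then extend by multiplicativity, since both $\Delta_{(\mbf{m})}$ and $\Delta_{\mbf{m}}$ are algebra homomorphisms. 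In this way the problem is reduced to a finite computation on each tensor factor $U_{q}(\widehat{\mf{gl}}_{l_h})$ appearing in the isomorphism \ref{rootquantum}, where the twist by $F_{\mbf{m}}$ restricted to generators reduces to explicit braidings of $U_q(\widehat{\mf{gl}}_{l_h})$-generators with higher-slope generators computable from the shuffle formulas \ref{positive-generators} and \ref{negative generators}.
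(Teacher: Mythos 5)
Your proposal does not close the key step, and it is worth noting that the paper itself gives no inline argument here (it defers the proof to \cite{Z23}), so the burden is entirely on your sketch. The central gap is the claim that elements of $\mc{B}_{\mbf{m}+r\bm{\theta}}$ with $r>0$ ``commute with $\mc{B}_{\mbf{m}}$ up to controlled Cartan decorations,'' from which you conclude that conjugation by $F_{\mbf{m}}=\prod^{\rightarrow}_{r>0}R^{+}_{\mbf{m}+r\bm{\theta}}$ fixes the leading term $\Delta_{\mbf{m}}(a)$ up to telescoping Cartan factors while the lower-slope tail of $\Delta(a)$ is ``absorbed'' by Yang--Baxter. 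This commutation claim is unjustified and is not true in any useful form: the slope factorisation of Theorem \ref{factorisation} is a factorisation of the multiplication map (a vector-space/coalgebra statement), not a statement that distinct slope subalgebras commute up to Cartans, and their cross relations populate many other slope components. Consequently, conjugation by $F_{\mbf{m}}$ does \emph{not} fix $\Delta_{\mbf{m}}(a)$ termwise; the actual content of the proposition is that the cross-terms produced by conjugating the leading piece cancel against the conjugated tail so that the total lands in $\mc{B}_{\mbf{m}}\hat{\otimes}\mc{B}_{\mbf{m}}$ and equals $\Delta_{\mbf{m}}(a)$. You assert this cancellation (``essentially the factorized Yang--Baxter equation applied slope-by-slope'') but never exhibit it, and the proposed ``induction on the slope gap $\mbf{m}-\mbf{m}'$'' is not set up: the relevant slopes are dense in $\mbb{Q}^n$, so there is no minimal gap to induct on (the natural induction is on the bidegree $(\mbf{k},d)$, together with the observation that in a fixed bidegree only finitely many factors $R^{+}_{\mbf{m}+r\bm{\theta}}$ act nontrivially, which you also need in order for the conjugation by the infinite product to be well defined in the graded completion, and which you do not prove).

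Your fallback route --- verify the identity on the generators $P^{\mbf{m}}_{\pm[i;j)}$, $Q^{\mbf{m}}_{\pm[i;j)}$, $\varphi_{\pm[i;j)}$ and extend by multiplicativity, since both $\Delta_{(\mbf{m})}$ and $\Delta_{\mbf{m}}$ are algebra maps --- is a sound skeleton, but the generator computation is precisely where the work lies and it is not carried out: one must control how each wall factor $R^{+}_{\mbf{m}+r\bm{\theta}}$ braids with a fixed slope-$\mbf{m}$ generator and see the infinite product collapse to the displayed formula $\Delta_{\mbf{m}}(P^{\mbf{m}}_{[i;j)})=\sum_{a=i}^{j}P^{\mbf{m}}_{[a;j)}\varphi_{[i;a)}\otimes P^{\mbf{m}}_{[i;a)}$, which is a genuine computation (in \cite{Z23} it is done, and in the present paper the analogous intertwining is only established via the stable basis and the Pieri rule in the proof of Proposition \ref{inclusion-slope}). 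By contrast, your treatment of the second assertion is essentially fine: once $\Delta_{(\mbf{m})}(\mc{B}_{\mbf{m}})\subset\mc{B}_{\mbf{m}}\hat{\otimes}\mc{B}_{\mbf{m}}$ is known, $\mc{B}_{\mbf{m}}$ is a sub-bialgebra, and compatibility of the twisted antipode $S_{\mbf{m}}$ follows from uniqueness of the convolution inverse together with the recorded action on the generators, so the Hopf-subalgebra statement is indeed an immediate corollary of the first identity. The first identity, however, remains unproved in your proposal.
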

For the proof see \cite{Z23}.

\subsection{Wall subalgebra for the slope subalgebra}

The slope factorisation of the quantum toroidal algebra implies that the wall set for affine type $A$ quiver varieties are given as:
\begin{align}
\{\mbf{m}\in\mbb{R}^n|\mbf{m}\cdot[i,j)\in\mbb{Z}\}
\end{align}
This means that each wall hyperplane is indexed by $w=([i,j),n)\in\mbb{Z}^n\times\mbb{Z}$. In this way we can naively "define" the wall subalgebra $U_{q}(\mf{g}_{w})$ as:
\begin{align}
\bigoplus_{k\in\mbb{N}}^{\mbf{m}\cdot[i,j)=n}\mc{S}^{+}_{\leq\mbf{m}}\cap\mc{S}^{+}_{k[i,j),kn}
\end{align}

It is easy to check that this is a subalgebra of $\mc{B}_{\mbf{m}}$ but not a Hopf subalgebra of $\mc{B}_{\mbf{m}}$. So we need some modification of the definition.

To make sense of the wall subalgebra, first we write down the slope factorisation of the universal $R$-matrix of the quantum toroidal algebra:
\begin{align}
R^{\mbf{m}}=\prod^{\leftarrow}_{\mu\in\mbb{Q}_{-}}R_{\mbf{m}+\mu\bm{\theta}}^{-}R_{\infty}\prod^{\leftarrow}_{\mu\in\mbb{Q}_{+}\sqcup\{0\}}R_{\mbf{m}+\mu\bm{\theta}}^{+}
\end{align}

We take the universal $R$-matrix in the representation $K(\mbf{w}_1)\otimes K(\mbf{w}_2)$. Moreover, $R$ is also a linear morphism in the subspace:
\begin{align}
R^{\mbf{m}}:\bigoplus_{\mbf{v}_1+\mbf{v}_2=\mbf{v}}K(\mbf{v}_1,\mbf{w}_1)\otimes K(\mbf{v}_2,\mbf{w}_2)\rightarrow \bigoplus_{\mbf{v}_1+\mbf{v}_2=\mbf{v}}K(\mbf{v}_1,\mbf{w}_1)\otimes K(\mbf{v}_2,\mbf{w}_2)
\end{align}

and so is each slope $R$-matrix $R_{\mbf{m}+\mu\bm{\theta}}^{\pm}$. Since each slope subalgebra $\mc{B}_{\mbf{m}}$ is isomorphic to $\bigotimes_{h=1}^g U_{q}(\hat{\mf{gl}}_{l_h})$, the universal $R$-matrix for $\mc{B}_{\mbf{m}}$ can be factorised as:
\begin{align}
R_{\mbf{m}}^{\pm}=q^{\Omega}\prod^{\leftarrow}_{\alpha}R_{\alpha}^{\pm}
\end{align}
via using the Khoroshkin-Tolstoy factorisation. Here $\alpha$ are the affine roots corresponding to the wall hyperplane intersecting with the slope point $\mbf{m}$ with some order, i.e. $\langle\alpha,\mbf{m}\rangle\in\mbb{Z}$. If we restrict the morphism to $\bigoplus_{\mbf{v}_1+\mbf{v}_2=\mbf{v}}K(\mbf{v}_1,\mbf{w}_1)\otimes K(\mbf{v}_2,\mbf{w}_2)$, we can see that the ordered product $\prod^{\leftarrow}_{\alpha}R_{\alpha}^{\pm}$ will be reduced to finitely many product terms.

Thus restricted to $\bigoplus_{\mbf{v}_1+\mbf{v}_2=\mbf{v}}K(\mbf{v}_1,\mbf{w}_1)\otimes K(\mbf{v}_2,\mbf{w}_2)$, we can formally write down the universal $R$-matrix $R^{\mbf{m}}$ of slope $\mbf{m}$ as:
\begin{align}
R^{\mbf{m}}=\prod^{\leftarrow}_{w<\mbf{m}}R_{w}^-R_{\infty}\prod^{\leftarrow}_{w\geq\mbf{m}}R_{w}^+
\end{align}

It is easy to check that $q^{\Omega}R_{w}^{\pm}$ satisfies the Yang-Baxter equation, and the generators in $R_{w}^{\pm}$ generates a Hopf subalgebra.

\begin{defn}
We define the \textbf{wall subalgebra} $U_{q}(\mf{g}_{w})\subset\mc{B}_{\mbf{m}}$ as the subalgebra generated by the universal $R$-matrix $R_{w}$.
\end{defn}

In our examples, since all the slope subalgebras $\mc{B}_{\mbf{m}}$ is isomorphic to $\bigotimes_{h=1}^{g}U_{q}(\hat{\mf{gl}}_{l_h})$ generated by $\varphi_{[i,j)}$, $P^{\mbf{m}}_{\pm[i,j)}$, an easy computation check for the universal $R$-matrix can show that all the wall subalgebra $U_{q}(\mf{g}_w)$ is either isomorphic to $U_{q}(\mf{sl}_2)$ or $U_{q}(\hat{\mf{gl}}_1)$ generated by $\varphi_{[i,j)}$ and $P^{\mbf{m}}_{[i,i+1)_h}$ with $[i,i+1)_h$ living on the wall $w$ or $p_{k}$ the Heisenberg operators in $\bigotimes_{h=1}^{g}U_{q}(\hat{\mf{gl}}_{l_h})$ determined by the formulax of the Proposition \ref{implicit-heisenberg-generators}.

\subsection{Geometric action on $K_{T}(M(\mbf{w}))$}
We first fix the notation for the Nakajima quiver varieties.

Given a quiver $Q=(I,E)$, consider the following quiver representation space
\begin{align}
\text{Rep}(\mbf{v},\mbf{w}):=\bigoplus_{h\in E}\text{Hom}(V_{i(h)},V_{o(h)})\oplus\bigoplus_{i\in I}\text{Hom}(V_i,W_i)
\end{align}

here $\mbf{v}=(\text{dim}(V_1),\cdots,\text{dim}(V_I))$ is the dimension vector for the vector spaces at the vertex, $\mbf{w}=(\text{dim}(W_1),\cdots,\text{dim}(W_I))$ is the dimension vector for the framing vector spaces.

Denote $G_{\mbf{v}}:=\prod_{i\in I}GL(V_i)$ and $G_{\mbf{w}}:=\prod_{i\in I}GL(W_i)$. There is a natural action of $G_{\mbf{v}}$ and $G_{\mbf{w}}$ on $\text{Rep}(\mbf{v},\mbf{w})$, and thus a natural Hamiltonian action on $T^*\text{Rep}(\mbf{v},\mbf{w})$ with respect to the standard symplectic form $\omega$, now we have the moment map
\begin{align}
\mu:T^*\text{Rep}(\mbf{v},\mbf{w})\rightarrow\mf{g}_{\mbf{v}}^*
\end{align}

\begin{align}
\mu(X_e,Y_e,A_i,B_i)=\sum_{e}X_{i(e)}Y_{i(e)}-Y_{o(e)}X_{o(e)}+A_iB_i
\end{align}

One can define \textbf{Nakajima variety}:
\begin{align}\label{Quiver-variety}
M_{\theta,0}(\mbf{v},\mbf{w}):=\mu^{-1}(0)//_{\theta}G_{\mbf{v}}
\end{align}

with all the $\theta$-stable $G_{\mbf{v}}$-orbits in $\mu^{-1}(0)$. Here we choose the stability condition $\theta=(1,\cdots,1)$.

We will abbreviate $M(\mbf{v},\mbf{w})$ as the Nakajima quiver variety defined in \ref{Quiver-variety}.

In this paper we only consider the affine type $A$ quiver, i.e. the edges are given by $e=(i,i+1)$ with $i=i\text{mod}(n)$.

The action of $\mbb{C}^*_{q}\times\mbb{C}^*_t\times G_{\mbf{w}}$ on $M(\mbf{v},\mbf{w})$ is given by:
\begin{align}
(q,t,U_i)\cdot(X_e,Y_e,A_i,B_i)_{e\in E,i\in I}=(qtX_e,qt^{-1}Y_e,qA_iU_i,qU_i^{-1}B_i)
\end{align}

The fixed points set of the torus action $\mbb{C}^*_q\times\mbb{C}^*_t$ on $M_{\theta,0}(\mbf{v},\mbf{w})$ is indexed by the $|\mbf{w}|$-partitions $\bm{\lambda}=(\lambda_1,\cdots,\lambda_{\mbf{w}})$. For each box $\square\in(\lambda_1,\cdots.\lambda_{\mbf{w}})$, we define the following character function:
\begin{align}
\chi_{\square}=u_iq^{x+y+1}t^{x-y},\qquad\square\in\lambda_i
\end{align}

In our context, we will denote $M(\mbf{v},\mbf{w})$ as $M_{\theta,0}(\mbf{v},\mbf{w})$. We always denote $K_{G_{\mbf{w}}\times \mbb{C}_{q}^*\times\mbb{C}_t^*}(M(\mbf{v},\mbf{w}))$ as the $T_{\mbf{w}}\times \mbb{C}_{q}^*\times\mbb{C}_t^*$-localised equivariant $K$-theory of $M(\mbf{v},\mbf{w})$. It has the basis denoted by the multipartition $|\bm{\lambda}\rangle$.

 If we choose the cocharacter $\sigma:\mbb{C}^*\rightarrow T_{\mbf{w}}$ such that $\mbf{w}=u_1\mbf{w}_1+\cdots+u_k\mbf{w}_k$, the fixed point component can be written as:
\begin{align}
M(\mbf{v},\mbf{w})^{\sigma}=\bigsqcup_{\mbf{v}_1+\cdots+\mbf{v}_k=\mbf{v}}M(\mbf{v}_1,\mbf{w}_1)\times\cdots\times M(\mbf{v}_k,\mbf{w}_k)
\end{align}

We denote:
\begin{align}
K_{T}(M(\mbf{w})):=\bigoplus_{\mbf{w}}K_{\mbb{C}_q^*\times\mbb{C}_t^*\times G_{\mbf{w}}}(M(\mbf{v},\mbf{w}))_{loc}
\end{align}

Thus we can choose the fixed point basis $|\bm{\lambda}\rangle$ of the corresponding partition $\bm{\lambda}$ to span the vector space $K_{T}(M(\mbf{w}))$.

Here we briefly review the geometric action of $U_{q,t}(\hat{\hat{\mf{sl}}}_{n})$ on $K_{T}(M(\mbf{w}))$. The construction is based on Nakajima's simple correspondence. 

Consider a pair of vectors $(\mbf{v}_+,\mbf{v}_-)$ such that $\mbf{v}_{+}=\mbf{v}_{-}+\mbf{e}_{i}$. There is a simple correspondence
\begin{align}
Z(\mbf{v}_+,\mbf{v}_{-},\mbf{w})\hookrightarrow M(\mbf{v}_+,\mbf{w})\times M(\mbf{v}_-,\mbf{w})
\end{align}
parametrises pairs of quadruples $(X^{\pm},Y^{\pm},A^{\pm},B^{\pm})$ that respect a fixed collection of quotients $(V^+\rightarrow V^-)$ of codimension $\delta^{i}_{j}$ with only the semistable and zeros part for the moment map $\mu$ for each $M(\mbf{v}_+,\mbf{w})$. And the variety $Z(\mbf{v}_+,\mbf{v}_{-},\mbf{w})$ is smooth with a tautological line bundle:
\begin{align}
\mc{L}|_{V^+\rightarrow V^-}=\text{Ker}(V_{i}^+\rightarrow V_{i}^{-})
\end{align}
and the natural projection maps:
\begin{equation}
\begin{tikzcd}
&Z(\mbf{v}_+,\mbf{v}_{-},\mbf{w})\arrow[ld,"\pi_{+}"]\arrow[rd,"\pi_{-}"]&\\
M(\mbf{v}_+,\mbf{w})&&M(\mbf{v}_-,\mbf{w})
\end{tikzcd}
\end{equation}

Using this we could consturct the operator:
\begin{align}
e_{i,d}^{\pm}:K_{T}(M(\mbf{v}^{\mp},\mbf{w}))\rightarrow K_{T}(M(\mbf{v}^{\pm},\mbf{w})),\qquad e_{i,d}^{\pm}(\alpha)=\pi_{\pm*}(\text{Cone}(d\pi_{\pm})\mc{L}^d\pi_{\mp}^{\pm}(\alpha))
\end{align}

and take all $\mbf{v}$ we have the operator $e_{i,d}^{\pm}:K_{T}(M(\mbf{w}))\rightarrow K_{T}(M(\mbf{w}))$. Also we have the action of $\varphi^{\pm}_{i,d}$ given by the multiplication of the tautological class, which means that:
\begin{align}
\varphi_{i}^{\pm}(z)=\overline{\frac{\zeta{(\frac{z}{X})}}{\zeta(\frac{X}{z})}}\prod^{u_{j}\equiv i}_{1\leq j\leq\mbf{w}}\frac{[\frac{u_j}{qz}]}{[\frac{z}{qu_{j}}]}
\end{align}

In particular, the element $\varphi_{i,0}$ acts on $K_{T}(M(\mbf{v},\mbf{w}))$ as $q^{\alpha_{i}^T(\mbf{w}-C\mbf{v})}$.

The above construction gives the following well-known result \cite{N01}\cite{N15}:
\begin{thm}
For all $\mbf{w}\in\mbb{N}^r$, the operator $e_{i,d}^{\pm}$ and $\varphi^{\pm}_{i,d}$ give rise to an action of $U_{q,t}(\hat{\hat{\mf{sl}}}_{n})$ on $K_{T}(M(\mbf{w}))$.
\end{thm}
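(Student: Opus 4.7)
The plan is to verify the defining relations of $U_{q,t}(\hat{\hat{\mf{sl}}}_n)$ directly on the fixed point basis $|\bm{\lambda}\rangle$ of $K_{T}(M(\mbf{w}))$, and then invoke the shuffle algebra presentation recalled above to reduce the Serre-type relations to a combinatorial check. The first step is an equivariant localization computation on the Nakajima simple correspondence $Z(\mbf{v}_+,\mbf{v}_-,\mbf{w})$. The $T$-fixed points of $Z$ are pairs $(\bm{\lambda}^+,\bm{\lambda}^-)$ differing by a single box $\square$ of color $i$, and the localization formula for $e_{i,d}^{\pm}(\alpha)=\pi_{\pm*}(\op{Cone}(d\pi_\pm)\mc{L}^d\pi_{\mp}^*(\alpha))$ evaluates to
\begin{align}
\langle\bm{\lambda}+\square|e_{i,d}^{+}|\bm{\lambda}\rangle \;=\; \chi_{\square}^{d}\cdot\prod_{\square'\in\bm{\lambda}}\zeta\!\left(\frac{\chi_{\square'}}{\chi_{\square}}\right)
\end{align}
up to a sign-and-monomial normalization coming from the virtual tangent bundle, and symmetrically for $e_{i,d}^{-}$. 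The action of $\varphi_{i}^{\pm}(z)$ is diagonal on $|\bm{\lambda}\rangle$ and given by the explicit rational function of tautological classes written above.

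Next I would verify the Cartan–Heisenberg relations and the mixed bracket relation directly from these formulas. The relations $e_i^\pm(z)\varphi_j^{\pm'}(w)=\varphi_j^{\pm'}(w)e_i^\pm(z)\,\zeta(z/w)/\zeta(w/z)$ become an identity of rational functions comparing the character of $\square$ being added against the Chern roots of the tautological bundle, which is standard. The commutator $[e_i^{+}(z),e_j^{-}(w)]$ vanishes for $i\neq j$ because the correspondences sit over disjoint components of fixed points, and for $i=j$ the only non-zero contributions come from adding then removing the same box $\square$; the two composition paths differ by the ratio of tangent weights at $\square$, which collapses to the delta $\delta(z/w)$ times the prescribed difference $(\varphi_i^{+}(z)-\varphi_i^{-}(w))/(q-q^{-1})$ via a residue computation at $z=\chi_\square=w$.

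The quadratic relations between the $e_i^{\pm}$ and the Serre relations are cleanest via the shuffle algebra realization recalled in Theorem~2.X of the excerpt. An arbitrary product $e_{i_1,d_1}^{+}\cdots e_{i_k,d_k}^{+}$ applied to $|\bm{\lambda}\rangle$ produces, via iterated localization, a matrix coefficient that is the symmetrization of a rational function of $z_{i,a}=\chi_{\square}$ weighted by the product of $\zeta$-factors — that is, precisely a shuffle product in $\mc{S}^{+}$. Thus the map $e_{i,d}^{\pm}\mapsto z_{i1}^{d}/[q^{-2}]$ from the shuffle realization extends to an action of $\mc{S}^{\pm}$ on $K_{T}(M(\mbf{w}))$, and once this is established the Serre relations follow automatically because the matrix coefficient is a genuine element of $\mc{S}^{+}$: its defining vanishing at configurations $(q^{-1},t^{\pm 1},q)$ (the wheel conditions) is forced by the absence of such tangent weight combinations among tautological characters of a single multipartition. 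Combining this with the bialgebra pairing and the Drinfeld double construction produces the full action of $U_{q,t}(\hat{\hat{\mf{sl}}}_n)$.

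The main obstacle is the precise bookkeeping of signs and equivariant normalizations coming from $\op{Cone}(d\pi_\pm)$ and from the difference between $\pi_{+}^*$ and $\pi_{-}^*$: these must produce exactly the denominator $[q^{-2}]$ and the symmetrization measure $\zeta(z_{ia}/z_{jb})$ appearing in the shuffle algebra formulas, so that the shuffle isomorphism $Y$ intertwines the geometric action with the abstract action. Once the single-operator matrix coefficient is normalized correctly, the multi-operator statement and the reduction to the wheel conditions are essentially formal; the rest of the argument is then an application of the bialgebra structure compatible with the slope factorization discussed in the subsequent sections.
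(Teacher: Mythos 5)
The paper does not actually prove this theorem: it is quoted as a known result and attributed to \cite{N01} and \cite{N15}. Your sketch reconstructs the standard strategy of those references — compute the matrix coefficients of the simple (Hecke) correspondences by equivariant localization at the fixed points $|\bm{\lambda}\rangle$, check the Cartan and mixed relations directly, and deduce the quadratic and Serre relations by identifying iterated matrix coefficients with evaluations of shuffle products as in \eqref{shuffle-formula-1}--\eqref{shuffle-formula-2} — so in outline you are on the right track.

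However, the places you treat as routine are exactly where the content lies, and two of your intermediate claims are off as stated. First, your localization formula for $\langle\bm{\lambda}+\square|e_{i,d}^{+}|\bm{\lambda}\rangle$ omits the framing contribution $\prod_{k=1}^{\mbf{w}}\bigl[\tfrac{u_k}{q\chi_{\square}}\bigr]$ that appears in \eqref{shuffle-formula-1}; this is not a ``sign-and-monomial normalization'' (it is a product of binomial factors in the framing parameters), and without it the relation $[e_i^{+}(z),e_i^{-}(w)]=\delta(z/w)\,(\varphi_i^{+}(z)-\varphi_i^{-}(w))/(q-q^{-1})$ cannot close, since $\varphi_i^{\pm}(z)$ contains precisely these framing factors. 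Second, the vanishing of $[e_i^{+}(z),e_j^{-}(w)]$ for $i\neq j$ is not because the correspondences ``sit over disjoint components of fixed points'': both compositions connect the same pairs $\bm{\lambda}\rightarrow\bm{\lambda}+\square_i-\square_j$, and the standard argument is that the two composed correspondences coincide as cycles (a transversality statement), equivalently an identity of localization weights that must be verified, not assumed. Third, the assertion that the Serre relations ``follow automatically'' presupposes what you defer: that the matrix coefficient of an arbitrary word in the $e_{i,d}^{\pm}$ equals the evaluation of the corresponding shuffle product with the correct $\zeta$-measure and the $[q^{-2}]$ normalization, i.e.\ that \eqref{shuffle-formula-1}--\eqref{shuffle-formula-2} really define an action of $\mc{S}^{\pm}$; and it also uses that the Serre element vanishes in the shuffle algebra, which is part of the nontrivial isomorphism $Y$ of \cite{N15} rather than a formal consequence of the wheel conditions. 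So the proposal is a correct outline of the known proof, but as written it defers the genuinely nontrivial verifications to ``bookkeeping.''
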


In terms of the shuffle algebra, we can give the explicit formula of the action of the quantum toroial algebra $U_{q,t}(\hat{\hat{\mf{sl}}}_{n})$ on $K_{T}(M(\mbf{w}))$:

Given $F\in\mc{S}_{\mbf{k}}^+$, we have that
\begin{align}\label{shuffle-formula-1}
\langle\bm{\lambda}|F|\bm{\mu}\rangle=F(\chi_{\bm{\lambda}\backslash\bm{\mu}})\prod_{\blacksquare\in\bm{\lambda}\backslash\bm{\mu}}[\prod_{\square\in\bm{\mu}}\zeta(\frac{\chi_{\blacksquare}}{\chi_{\square}})\prod_{i=1}^{\mbf{w}}[\frac{u_i}{q\chi_{\blacksquare}}]]
\end{align}

Similarly, for $G\in\mc{S}_{-\mbf{k}}^{-}$, we have
\begin{align}\label{shuffle-formula-2}
\langle\bm{\mu}|G|\bm{\lambda}\rangle=G(\chi_{\bm{\lambda}\backslash\bm{\mu}})\prod_{\blacksquare\in\bm{\lambda}\backslash\bm{\mu}}[\prod_{\square\in\bm{\lambda}}\zeta(\frac{\chi_{\square}}{\chi_{\blacksquare}})\prod_{i=1}^{\mbf{w}}[\frac{\chi_{\blacksquare}}{qu_i}]]^{-1}
\end{align}

Here $F(\chi_{\bm{\lambda}\backslash\bm{\mu}})$ and $G(\chi_{\bm{\lambda}\backslash\bm{\mu}})$ are the rational symmetric function evaluation at the box $\chi_{\square}$ with $\square\in\bm{\lambda}\backslash\bm{\mu}$.

\section{\textbf{Maulik-Okounkov quantum affine algebra}}

\subsection{Stable envelopes}

We first review the definition of the $K$-theoretic stable envelopes for the quiver varieties.

Given $M(\mbf{v},\mbf{w})$ a Nakajima quiver variety and $G_{\mbf{v}}\times A_{E}$ acting on $M(\mbf{v},\mbf{w})$. Given a subtorus $T\subset G_{\mbf{v}}\times A_{E}$ in the kernel of $q$. By definition, the $K$-theoretic stable envelope is a $K$-theory class
\begin{align}
\text{Stab}_{\mc{C}}^{s}\subset K_{G}(X\times X^T)
\end{align}

such that it induces the morphism
\begin{align}
\text{Stab}_{\mc{C},s}:K_{G}(X^T)\rightarrow K_{G}(X)
\end{align}

such that if we write $X^T=\sqcup_{\alpha}F_{\alpha}$ into components:
\begin{itemize}
	\item The diagonal term is given by the structure sheaf of the attractive space:
	\begin{align}
	\text{Stab}_{\mc{C},s}|_{F_{\alpha}\times F_{\alpha}}=(-1)^{\text{rk }T_{>0}^{1/2}}(\frac{\text{det}(\mc{N}_{-})}{\text{det}T_{\neq0}^{1/2}})^{1/2}\otimes\mc{O}_{\text{Attr}}|_{F_{\alpha}\times F_{\alpha}}
	\end{align}

	\item The $T$-degree of the stable envelope has the bounding condition for $F_{\beta}\leq F_{\alpha}$:
	\begin{align}
	\text{deg}_{T}\text{Stab}_{\mc{C},s}|_{F_{\beta}\times F_{\alpha}}+\text{deg}_{T}s|_{F_{\alpha}}\subset\text{deg}_{T}\text{Stab}_{\mc{C},s}|_{F_{\beta}\times F_{\beta}}+\text{deg}_{T}s|_{F_{\beta}}
	\end{align}

	We require that for $F_{\beta}<F_{\alpha}$, the inclusion $\subset$ is strict.
\end{itemize}

The uniqueness and existence of the $K$-theoretic stable envelope was given in \cite{AO21} and \cite{O21}. In \cite{AO21}, the consturction is given by the abelinization of the quiver varieties. In \cite{O21}, the construction is given by the stratification of the complement of the attracting set, which is much more general.

The stable envelope has the factorisation property called the triangle lemma \cite{O15}. Given a subtorus $T'\subset T$ with the corresponding chamber $\mc{C}_{T'},\mc{C}_{T}$, we have the following diagram commute:
\begin{equation}\label{triangle-lemma}
\begin{tikzcd}
K_{G}(X^T)\arrow[rr,"\text{Stab}_{\mc{C}_T,s}"]\arrow[dr,"\text{Stab}_{\mc{C}_{T/T'},s}"]&&K_{G}(X)\\
&K_{G}(X^{T'})\arrow[ur,"\text{Stab}_{\mc{C}_{T'},s}"]&
\end{tikzcd}
\end{equation}

\subsection{Maulik-Okounkov quantum algebra and wall subalgebras}

Let us focus on the case of the quiver varieties $M(\mbf{v},\mbf{w})$. Choose the framing torus $\sigma:\mbb{C}^*\rightarrow A_{\mbf{w}}\subset G_{\mbf{w}}$ such that:
\begin{align}
\mbf{w}=a_1\mbf{w}_1+\cdots+a_k\mbf{w}_k
\end{align}

In this case the fixed point is given by:
\begin{align}
M(\mbf{v},\mbf{w})^{\sigma}=\bigsqcup_{\mbf{v}_1+\cdots+\mbf{v}_k=\mbf{v}}M(\mbf{v}_1,\mbf{w}_1)\times\cdots\times M(\mbf{v}_k,\mbf{w}_k)
\end{align}

Denote $K(\mbf{w}):=\bigoplus_{\mbf{v}}K_{G_{\mbf{w}}}(M(\mbf{v},\mbf{w}))$, it is easy to see that the stable envelope $\text{Stab}_{s}$ gives the map:
\begin{align}
\text{Stab}_{\mc{C},s}:K(\mbf{w}_1)\otimes\cdots\otimes K(\mbf{w}_k)\rightarrow K(\mbf{w}_1+\cdots+\mbf{w}_k)
\end{align}

Using the $K$-theoretic stable envelope, we can define the geometric $R$-matrix as:
\begin{align}
\mc{R}^{s}_{\mc{C}}:=\text{Stab}_{-\mc{C},s}^{-1}\circ\text{Stab}_{\mc{C},s}:K(\mbf{w}_1)\otimes\cdots\otimes K(\mbf{w}_k)\rightarrow K(\mbf{w}_1)\otimes\cdots\otimes K(\mbf{w}_k)
\end{align}

Written in the component of the weight subspaces, the geometric $R$-matrix can be written as:
\begin{equation}
\begin{aligned}
&\mc{R}^{s}_{\mc{C}}:=\text{Stab}_{-\mc{C},s}^{-1}\circ\text{Stab}_{\mc{C},s}:\bigoplus_{\mbf{v}_1+\cdots+\mbf{v}_k=\mbf{v}}K(\mbf{v}_1,\mbf{w}_1)\otimes\cdots\otimes K(\mbf{v}_k,\mbf{w}_k)\\
&\rightarrow \bigoplus_{\mbf{v}_1+\cdots+\mbf{v}_k=\mbf{v}}K(\mbf{v}_1,\mbf{w}_1)\otimes\cdots\otimes K(\mbf{v}_k,\mbf{w}_k)
\end{aligned}
\end{equation}

From the triangle diagram \ref{triangle-lemma} of the stable envelope, we can further factorise the geometric $R$-matrix into the smaller parts:
\begin{align}\label{abstract-decomposition}
\mc{R}^s_{\mc{C}}=\prod_{1\leq i<j\leq k}\mc{R}^s_{\mc{C}_{ij}}(\frac{a_i}{a_j}),\qquad \mc{R}^s_{\mc{C}_{ij}}(\frac{a_i}{a_j}):K(\mbf{w}_i)\otimes K(\mbf{w}_j)\rightarrow K(\mbf{w}_i)\otimes K(\mbf{w}_j)
\end{align}

Each $\mc{R}^{s}_{\mc{C}_{ij}}(u)$ satisfies the trigonometric Yang-Baxter equation with the spectral parametres.

In the language of the integrable model, we denote $V_{i}(u_i)$ as the modules of type $K(\mbf{w})$ defined above with the spectral parametre $u_i$. The formula \ref{abstract-decomposition} means that:
\begin{align}
\mc{R}^s_{\bigotimes^{\leftarrow}_{i\in I}V_i(u_i),\bigotimes^{\leftarrow}_{j\in J}V_j(u_j)}=\prod_{i\in I}^{\rightarrow}\prod_{j\in J}^{\leftarrow}\mc{R}^{s}_{V_i,V_j}(\frac{u_i}{u_j}):
\end{align}

We can also consider the dual module $V_i^*(u_i)$ as the module isomorphic to $V_i(u_i)$ as graded vector space, with the $R$-matrices defined as:
\begin{equation}
\begin{aligned}
&\mc{R}^s_{V_1^*,V_2}=((\mc{R}^s_{V_1,V_2})^{-1})^{*_1}\\
&\mc{R}^s_{V_1,V_2^*}=((\mc{R}^s_{V_1,V_2})^{-1})^{*_2}\\
&\mc{R}^s_{V_1^*,V_2^*}=(\mc{R}^s_{V_1,V_2})^{*_{12}}
\end{aligned}
\end{equation}
$*_{k}$ means transpose with respect to the $k$-th factor.

\begin{defn}
The Maulik-Okounkov quantum affine algebra $U_{q}^{MO}(\hat{\mf{g}}_{Q})$ is the subalgebra of $\prod_{\mbf{w}}\text{End}(K(\mbf{w}))$ generated by the matrix coefficients of $\mc{R}^{s}_{\mc{C}}$.
\end{defn}

In other words, given an auxillary space $V_0=\bigotimes_{\mbf{w}}K(\mbf{w})$, $V=\bigotimes_{\mbf{w}'}K(\mbf{w}')$ and a finite rank operator
\begin{align}
m(a_0)\in\text{End}(V_0)(a_0)
\end{align}
Now the element of $U_{q}^{MO}(\hat{\mf{g}}_{Q})$ is generated by the following operators:
\begin{align}
\oint_{a_0=0,\infty}\frac{da_0}{2\pi ia_0}\text{Tr}_{V_0}((1\otimes m(a_0))\mc{R}^{s}_{V,V_0}(\frac{a}{a_0}))\in\text{End}(V(a))
\end{align}

The coproduct structure, antipode map and the counit map can be defined as follows:

For the coproduct $\Delta_{s}$ on $U_{q}^{MO}(\hat{\mf{g}}_{Q})$ is defined via the conjugation by $\text{Stab}_{\mc{C},s}$, i.e. for $a\in U_{q}^{MO}(\hat{\mf{g}}_{Q})$ as $a:K(\mbf{w})\rightarrow K(\mbf{w})$, $\Delta_{s}(a)$ is defined as:
\begin{equation}\label{coproduct-geometric}
\begin{tikzcd}
K(\mbf{w}_1)\otimes K(\mbf{w}_2)\arrow[r,"\text{Stab}_{\mc{C},s}"]&K(\mbf{w}_1+\mbf{w}_2)\arrow[r,"a"]&K(\mbf{w}_1+\mbf{w}_2)\arrow[r,"\text{Stab}_{\mc{C},s}^{-1}"]&K(\mbf{w}_1)\otimes K(\mbf{w}_2)
\end{tikzcd}
\end{equation}

The antipode map $S_{s}$ is defined as follows. We have the isomorphism of the graded vector space $V_i\cong V_i^*$, and this isomorphism induces the isomorphism:
\begin{align}
\prod_{i}\text{End}(V_i)\cong\prod_{i}\text{End}(V_i^*)
\end{align}

The antipode map $S_{s}:U_{q}^{MO}(\hat{\mf{g}}_{Q})\rightarrow U_{q}^{MO}(\hat{\mf{g}}_{Q})$ is given by:
\begin{align}
\oint_{a_0=0,\infty}\frac{da_0}{2\pi ia_0}\text{Tr}_{V_0}((1\otimes m(a_0))\mc{R}^{s}_{V,V_0}(\frac{a}{a_0}))\mapsto\oint_{a_0=0,\infty}\frac{da_0}{2\pi ia_0}\text{Tr}_{V_0}((1\otimes m(a_0))\mc{R}^{s}_{V^*,V_0}(\frac{a}{a_0}))
\end{align}

The projection of the module $V$ to the trivial module $\mbb{C}$ induce the counit map:
\begin{align}
\epsilon:U_{q}^{MO}(\hat{\mf{g}}_{Q})\rightarrow\mbb{C}
\end{align}

Since $M(0,\mbf{w})$ is just a point, we denote the vector in $K_{\mbf{0},\mbf{w}}$ as $v_{\varnothing}$, and we call it the \textbf{vacuum vector}.  We define the evaluation map:
\begin{align}\label{evaluation-module}
\text{ev}:U_{q}^{MO}(\hat{\mf{g}}_{Q})\rightarrow\prod_{\mbf{w}}K(\mbf{w}),\qquad F\mapsto Fv_{\varnothing}
\end{align}

\begin{defn}
We define the \textbf{positive half of the Maulik-Okounkov quantum affine algebra} $U_{q}^{MO,+}(\hat{\mf{g}}_{Q})$ as the quotient by the kernel of the evaluation map:
\begin{align} 
U_{q}^{MO,+}(\hat{\mf{g}}_{Q}):=U_{q}^{MO}(\hat{\mf{g}}_{Q})/\text{Ker}(ev)
\end{align}
\end{defn}

\subsubsection{\textbf{Wall subalgebra}}

It is known that the $K$-theoretic stable envelope $\text{Stab}_{s}$ is locally constant on $s\in\text{Pic}(X)\otimes\mbb{Q}$. It changes as $s$ crosses certain ratioanl hyperplanes:
\begin{prop}
The $K$-theoretical stable envelope $\text{Stab}_{\mc{C},s}$ is locally constant on $s$ if and only if $s$ crosses the following hyperplanes.
\begin{align}
w=\{s\in\text{Pic}(X)\otimes\mbb{R}|(s,\alpha)+n=0,\forall\alpha\in\text{Pic}(X)\}\subset\text{Pic}(X)\otimes\mbb{R}
\end{align}
\end{prop}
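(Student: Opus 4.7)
The plan is to exploit the fact that, in the three-part characterization of $\text{Stab}_{\mc{C},s}$, only the degree condition involves the slope $s$; the support condition and diagonal normalization are manifestly $s$-independent. By the uniqueness of the $K$-theoretic stable envelope (proved in \cite{AO21},\cite{O21}), it suffices to analyze how the set of K-theory classes satisfying the degree inequality varies as $s$ moves in $\text{Pic}(X)\otimes\mbb{R}$.

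First I would translate the degree condition for each pair $F_\beta<F_\alpha$ into a Newton polytope condition: the $T$-weights of $\text{Stab}_{\mc{C},s}|_{F_\beta\times F_\alpha}$ are forced to lie in a half-open window
\begin{equation}
W_s(F_\alpha,F_\beta)\subset T_T^\vee\otimes\mbb{R},
\end{equation}
whose bounding affine hyperplanes have the form $(\cdot,\alpha)+(s,\alpha)+n\in\{0,1,\dots\}$ for characters $\alpha$ of $T$ running over the normal weights of the relevant attracting strata and $n\in\mbb{Z}$. Consequently, the \emph{integral points} of $W_s(F_\alpha,F_\beta)$ and hence the set of admissible representative classes are combinatorially unchanged as long as no lattice point crosses the boundary of $W_s$.

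Next I would observe that a lattice point enters or leaves $W_s$ precisely when $s$ crosses a hyperplane on which one of the defining inequalities becomes a lattice equality, i.e.\ $(s,\alpha)+n=0$ for some $\alpha$ occurring as a normal character (these are the classes in $\text{Pic}(X)$ whose pairings enter the bound) and some $n\in\mbb{Z}$. Thus on each connected component of the complement of the hyperplane arrangement
\begin{equation}
\{s\in\text{Pic}(X)\otimes\mbb{R}\mid (s,\alpha)+n=0\}
\end{equation}
the admissible window is constant. Combined with uniqueness, this forces $\text{Stab}_{\mc{C},s}$ itself to be locally constant in $s$ away from these hyperplanes. For the converse, i.e.\ that these are the only walls along which $\text{Stab}_{\mc{C},s}$ actually can jump, one shows that crossing such a hyperplane adds or removes exactly one lattice point, so the class is genuinely allowed to change; non-triviality of the jump is then witnessed by the corresponding wall $R$-matrix appearing in the slope factorization of the universal $R$-matrix used elsewhere in the paper.

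The main obstacle is Step~1: converting the degree-inclusion condition as stated in the definition of $\text{Stab}_{\mc{C},s}$ into a clean Newton polytope description, keeping track of the half-open boundaries, the normalizing $T^{1/2}$ shifts, and the strict-vs.-non-strict inclusions for $F_\beta<F_\alpha$ versus $F_\beta=F_\alpha$. Once this dictionary is set up carefully, the rest reduces to a combinatorial statement about when integer points of a parameterized half-open polytope change, which is standard.
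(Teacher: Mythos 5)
Your proposal is correct and follows essentially the same route as the paper: reduce the $s$-dependence to the degree (Newton polytope) condition, observe that the polytope only translates linearly with $s$, and identify the walls with the loci where lattice points meet its boundary. The paper's proof simply carries out the dictionary you defer to Step~1 by computing $\deg_{A}\mc{L}|_{M(\mbf{v}_1,\mbf{w}_1)\times\cdots\times M(\mbf{v}_k,\mbf{w}_k)}=(\mbf{m}\cdot\mbf{v}_1,\cdots,\mbf{m}\cdot\mbf{v}_k)$, so the window shifts by $(\mbf{m}\cdot\mbf{a}_1,\cdots,\mbf{m}\cdot\mbf{a}_k)$ with $\sum_i\mbf{a}_i=0$ and the critical slopes are exactly $\{\mbf{m}\mid\mbf{m}\cdot\mbf{a}\in\mbb{Z}\}$.
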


\begin{proof}
We choose the subtorus $\sigma:\mbb{C}^*\rightarrow T_{\mbf{w}}$ such that:
\begin{align}
\mbf{w}=u_1\mbf{w}_1+\cdots+u_k\mbf{w}_k
\end{align}
such that:
\begin{align}
M(\mbf{v},\mbf{w})^{A}=\bigsqcup_{\mbf{v}_1+\cdots+\mbf{v}_k=\mbf{v}}M(\mbf{v}_1,\mbf{w}_1)\times\cdots\times M(\mbf{v}_k,\mbf{w}_k)
\end{align}

So now given the tautological line bundle $\mc{L}_i$ over $M(\mbf{v},\mbf{w})$, the restriction of $\mc{L}_{i}$ to $M(\mbf{v}_1,\mbf{w}_1)\times\cdots\times M(\mbf{v}_k,\mbf{w}_k)$ has the $A$-degree is given by:
\begin{align}
\mc{L}_i|_{M(\mbf{v}_1,\mbf{w}_1)\times\cdots\times M(\mbf{v}_k,\mbf{w}_k)}=u_1^{v_1^{(i)}}\cdots u_k^{v_k^{(i)}}(\cdots)
\end{align}

Thus the Newton polytope of the degree $A$ is a point:
\begin{align}
\text{deg}_{A}(\mc{L}_i|_{M(\mbf{v}_1,\mbf{w}_1)\times\cdots\times M(\mbf{v}_k,\mbf{w}_k)})=(v_1^{(i)},\cdots,v_k^{(i)})
\end{align}

Thus given the fractional line bundle $\mc{L}=\mc{L}_1^{m_1}\cdots\mc{L}_{n}^{m_n}$, we have that:
\begin{align}
\text{deg}_{A}(\mc{L}|_{M(\mbf{v}_1,\mbf{w}_1)\times\cdots\times M(\mbf{v}_k,\mbf{w}_k)})=(\mbf{m}\cdot\mbf{v}_1,\cdots,\mbf{m}\cdot\mbf{v}_k)\in\mbb{R}^k
\end{align}

Thus $\text{deg}_{A}$ gives a linear map $\text{deg}_{A}:\text{Pic}(M(\mbf{v},\mbf{w}))\otimes\mbb{R}\rightarrow\mbb{R}^k$.

Now given $F_{\beta}=M(\mbf{v}_1,\mbf{w}_1)\times\cdots\times M(\mbf{v}_k,\mbf{w}_k)$, $F_{\alpha}=M(\mbf{v}_1+\mbf{a}_1,\mbf{w}_1)\times\cdots\times M(\mbf{v}_k+\mbf{a}_k,\mbf{w}_k)$ such that $\sum_i\mbf{a}_i=0$. By the degree bounding condition:
\begin{align}
\text{deg}_{A}\text{Stab}_{\mc{C},\mc{L}}|_{F_{\beta}\times F_{\alpha}}+\text{deg}_{A}\mc{L}|_{F_{\alpha}}\subset\text{deg}_{A}\text{Stab}_{\mc{C},\mc{L}}|_{F_{\beta}\times F_{\beta}}+\text{deg}_{A}\mc{L}|_{F_{\beta}}
\end{align}

Here we describe the Newton polytope:
\begin{align}
\text{deg}_{A}\text{Stab}_{\mc{C},\mc{L}}|_{F_{\beta}\times F_{\beta}}+\text{deg}_{A}\mc{L}|_{F_{\beta}}-\text{deg}_{A}\mc{L}|_{F_{\alpha}}
\end{align}

Let us describe:
\begin{align}
\text{Stab}_{\mc{C},\mc{L}}|_{F_{\beta}\times F_{\beta}}\otimes\mc{L}|_{F_{\beta}}\otimes\mc{L}^{-1}|_{F_{\alpha}}=\sum_{\bm{\mu}\in\mbb{Z}^k}a_{\bm{\mu}}u_1^{\mu_1+\mbf{m}\cdot\mbf{a}_1}\cdots u_k^{\mu_k+\mbf{m}\cdot\mbf{a}_k},\qquad\sum_{i}\mbf{a}_i=0
\end{align}
This equations stands for the Newton Polytope:
\begin{equation}
\begin{aligned}
&P_{A}(\text{Stab}_{\mc{C},\mc{L}}|_{F_{\beta}\times F_{\beta}}\otimes\mc{L}|_{F_{\beta}}\otimes\mc{L}^{-1}|_{F_{\alpha}})\\
=&\mbb{Z}^k\cap[\text{Convex hull of}(\bm{\mu}\in\mbb{Z}^k|a_{\bm{\mu}}\neq0)+\text{translation as }(\mbf{m}\cdot\mbf{a}_1,\cdots,\mbf{m}\cdot\mbf{a}_k)]
\end{aligned}
\end{equation}

We say that the Newton polytope $P_A$ is \textbf{special} if the boundary of the Newton polytope intersects with the integer points For now we choose some special $\mbf{m}$ such that $P_{A}$ is special. It is clear that in this case $P_{A}$ is at the "critical" level such that it contains an integer point or not. In this way if we fix the Newton polytope $P_{A}$ such that it contains the fixed integer points at the boundary, we can move $\mbf{m}$ in certain directions such that the boundary integer points are still on the boundary. 

Also we suppose that $\partial P_{A}$ consists of the union of several linearly independent different hyperplanes $H_1,\cdots, H_{l}$ in $\mbb{R}^k$ up to translation, and each hyperplane is determined by a vector $\mbf{n}_i\in\mbb{R}^k$. In order to satisfy the above conditions, this means that the fractional line bundle $\mbf{m}$ should be chosen on the intersection of the hyperplanes $H_1\cap\cdots\cap H_s$ containing the integer points. which means that:
\begin{align}
((\mbf{m}'-\mbf{m})\cdot\mbf{a}_1,\cdots,(\mbf{m}'-\mbf{m})\cdot\mbf{a}_k)\in H_1\cap\cdots\cap H_s
\end{align}

In conclusion, we can see that the critical fractional line bundles $\mbf{m}$ lives in the set of hyperplanes such that:
\begin{align}
\{\mbf{m}\in\mbb{R}^n|(\mbf{m}\cdot\mbf{a}_1,\cdots,\mbf{m}\cdot\mbf{a}_k)\in\mbb{Z}^k,\sum_{i}\mbf{a}_i=0\}
\end{align}
for some $\mbf{a}_1,\cdots,\mbf{a}_k$. The choice of $\mbf{a}_1$ depends on the boundary hyperplane of the Newton polytope $\partial P_{A}$, thus the choice of $(\mbf{a}_1,\cdots,\mbf{a}_k)$ is finite.

Now if we choose $A\subset T_{\mbf{w}}$ such that $\mbf{w}=u_1\mbf{w}_1+u_2\mbf{w}_2$, we can have that the critical fractional line bundle should be in:
\begin{align}
\{\mbf{m}\in\mbb{R}^n|(\mbf{m}\cdot\mbf{a},-\mbf{m}\cdot\mbf{a})\in\mbb{Z}^2\}\cong\{\mbf{m}\in\mbb{R}^n|\mbf{m}\cdot\mbf{a}\in\mbb{Z}\}
\end{align}

This finishes the proof of the proposition.
\end{proof}

Now for each quiver variety $M(\mbf{v},\mbf{w})$, we can associate the wall set $w(\mbf{v},\mbf{w})$, it is clear that via the identification $\text{Pic}(M(\mbf{v},\mbf{w}))\otimes\mbb{R}=\mbb{R}^n$ for different $\mbf{v}$, we can define the \textbf{wall set} of $M(\mbf{w}):=\sqcup_{\mbf{v}}M(\mbf{v},\mbf{w})$ as $w(\mbf{w}):=\sqcup w(\mbf{v},\mbf{w})$.

Now fix the slope $\mbf{m}$ and the cocharacter $\sigma:\mbb{C}^*\rightarrow A_{\mbf{w}}$. We choose an ample line bundle $\mc{L}\in\text{Pic}(X)$ with $X=M(\mbf{v},\mbf{w}_1+\mbf{w}_2)$ and a suitable small number $\epsilon$ such that $\mbf{m}$ and $\mbf{m}+\epsilon\mc{L}$ are separated by just one wall $w$, we define the \textbf{wall $R$-matrices} as:
\begin{align}
R_{w}^{\pm}:=\text{Stab}_{\sigma,\mbf{m}+\epsilon\mc{L}}^{-1}\circ\text{Stab}_{\sigma,\mbf{m}}:\bigoplus_{\mbf{v}_1+\mbf{v}_2=\mbf{v}}K(\mbf{v}_1,\mbf{w}_1)\otimes K(\mbf{v}_2,\mbf{w}_2)\rightarrow\bigoplus_{\mbf{v}_1+\mbf{v}_2=\mbf{v}}K(\mbf{v}_1,\mbf{w}_1)\otimes K(\mbf{v}_2,\mbf{w}_2)
\end{align}

It is an integral $K$-theory class in $K_{T}(X^A\times X^A)$. Note that the choice of $\epsilon$ depends on $M(\mbf{v},\mbf{w}_1+\mbf{w}_2)$ just to make sure that there is only one wall between $\mbf{m}$ and $\mbf{m}+\epsilon\mc{L}$ corresponding to $w$. By definition it is easy to see that $R_{w}^{+}$ is upper-triangular, and $R_{w}^{-}$ is lower triangular.

Given a finite-rank operator $m\in\text{End}(V_0)$. We define the \textbf{positive half of the wall subalgebra} $U_{q}^{MO,+}(\mf{g}_{w})$ as the algebra generated by the operators:
\begin{align}
\text{Tr}_{V_0}((1\otimes m)(R^{-}_{w})_{V,V_0}^{-1}|_{a_0=1})\in\text{End}(V)
\end{align}

Similarly, the \textbf{negative half of the wall subalgebra} $U_{q}^{MO,-}(\mf{g}_{w})$ is defined as the algebra generated by the operators:
\begin{align}
\text{Tr}_{V_0}((1\otimes m)(R^{+}_{w})_{V,V_0}|_{a_0=1})\in\text{End}(V)
\end{align}

Since $R_{w}^{\pm}$ is a strictly upper(lower) triangular matrix, we can see that the algebra $U_{q}^{MO,\pm}(\mf{g}_{w})$ is generated by the operators $m$ such that:
\begin{align}
m:K(\mbf{v},\mbf{w})\rightarrow K(\mbf{v}\pm\mbf{n},\mbf{w}),\qquad\mbf{n}\in\mbb{N}^I
\end{align}

Moreover, it can be shown that the operators $q^{\Omega}R_{w}^{\pm}$ satisfy the Yang-Baxter equation. In this way, similarly we can define the non-negative(non-positive) half of the wall subalgebra $U_{q}^{MO,\geq(\leq)}(\mf{g}_{w})$ and also the whole wall subalgebra $U_{q}^{MO}(\mf{g}_{w})$. Obviously we have $U_{q}^{MO,\pm}(\mf{g}_{w})\subset U_{q}^{MO,\geq(\leq)}(\mf{g}_{w})$.

One could define the graded pieces of $U_{q}^{MO}(\mf{g}_w)$ as:
\begin{align}
U_{q}^{MO,\pm}(\mf{g}_w)=\bigoplus_{\mbf{n}\in\mbb{N}^{I}}U_{q}^{MO,\pm}(\mf{g}_{w})_{\pm\mbf{n}}
\end{align}
with $a\in U_{q}^{MO,\pm}(\mf{g}_{w})_{\pm\mbf{n}}$ the elements such that $a:K(\mbf{v},\mbf{w})\rightarrow K(\mbf{v}\pm\mbf{n},\mbf{w})$.

\begin{lem}
Each graded pieces $U_{q}^{MO,\pm}(\mf{g}_{w})_{\pm\mbf{n}}$ is finite-dimensional.
\end{lem}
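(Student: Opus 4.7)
The plan is to exploit the triangular structure of the wall $R$-matrix $R_w^{\pm}$ together with the wall-factorization of the universal $R$-matrix, reducing the finiteness claim to a finite-rank statement at each weight.

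First, I would reduce the claim to a rank statement on $R_w^{\pm}$. Every element of $U_q^{MO,\pm}(\mf{g}_w)_{\pm\mbf{n}}$ arises as a partial trace $\Tr_{V_0}\bigl((1\otimes m)(R_w^\mp)^{\mp 1}\bigr)$ that shifts the dimension vector on $V$ by $\pm\mbf{n}$. Such a trace only ``sees'' the $\pm\mbf{n}$-graded component $R_{w,\pm\mbf{n}}^{\pm}$ of the wall $R$-matrix, so the assignment $m \mapsto \Tr_{V_0}\bigl((1\otimes m)(R_w^\mp)^{\mp 1}\bigr)$ factors through the single tensor $R_{w,\pm\mbf{n}}^{\pm}$. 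It therefore suffices to confine $R_{w,\pm\mbf{n}}^{\pm}$ to a finite-dimensional subspace of $\End(V)\otimes\End(V_0)$.

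Second, I would invoke the wall-factorization of Okounkov-Smirnov. Across a single wall, the MO wall $R$-matrix admits a Khoroshkin-Tolstoy-type decomposition
\[
R_w^{\pm} = q^{\Omega}\prod^{\leftarrow}_{\alpha\text{ on }w} R_\alpha^{\pm},
\]
with $\alpha$ running over the positive affine roots on $w$ and each $R_\alpha^{\pm}$ a $q$-exponential of root vectors in weight $\pm\alpha$. For affine type $A$, the roots on any given wall form a rank-one sub-root system: an $\mf{sl}_2$-chain for real walls, or a Heisenberg chain along $\delta$ for imaginary walls. Consequently, only finitely many positive multi-indices in these root vectors can sum to total weight $\pm\mbf{n}$. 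Expanding the ordered product and collecting the finitely many terms of total weight $\pm\mbf{n}$ shows that $R_{w,\pm\mbf{n}}^{\pm}$ lies in a finite-dimensional subspace, of dimension bounded by the number of partitions of $\pm\mbf{n}$ into wall roots. Its partial traces against $m\in\End(V_0)_{\mp\mbf{n}}$ then span a finite-dimensional subspace of $\End(V)$, proving the lemma.

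The main obstacle will be to establish the wall-factorization on the MO side \emph{intrinsically}, i.e., without already identifying the MO wall subalgebra $U_q^{MO}(\mf{g}_w)$ with its algebraic counterpart $U_q(\mf{g}_w)$, since that identification is essentially the paper's main theorem. A circumspect route is to invoke the general theory of \cite{OS22}, where the wall factorization is derived from the stratification of the attracting/repelling loci near the wall divisor and is independent of any algebraic model. Alternatively, one could use the Bozec-Davison result \cite{BD23} that the MO Lie algebra $\mf{g}_Q^{MO}$ is a Borcherds-Kac-Moody algebra with finite-dimensional root spaces, so that its quantization inherits the same property on its wall subalgebras.
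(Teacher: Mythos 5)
Your reduction in the first step is fine: the generators of the $\pm\mbf{n}$-graded piece are traces against a single weight block of the wall $R$-matrix, so it suffices to show that block is a finite sum of decomposable tensors. The gap is in the second step. The Khoroshkin--Tolstoy-type factorization $R_w^{\pm}=q^{\Omega}\prod^{\leftarrow}_{\alpha}R_{\alpha}^{\pm}$ into $q$-exponentials of root vectors, together with the statement that the roots on a given wall form a rank-one ($\mf{sl}_2$ or Heisenberg) system, is exactly the kind of structural knowledge of $U_q^{MO}(\mf{g}_w)$ that is \emph{not} available on the Maulik--Okounkov side at this point; in the paper it is a feature of the algebraic slope subalgebras $\mc{B}_{\mbf{m}}\cong\bigotimes_h U_q(\hat{\mf{gl}}_{l_h})$, and transporting it to $U_q^{MO}(\mf{g}_w)$ presupposes the comparison that is the paper's main theorem. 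Your two proposed escapes do not close this. What \cite{OS22} supplies intrinsically is the factorization of the full slope $R$-matrix into \emph{wall} $R$-matrices, plus triangularity, integrality, and the Yang--Baxter property of $q^{\Omega}R_w^{\pm}$ -- not a further factorization of a single wall $R$-matrix into root pieces generated by finite-dimensional root spaces. And \cite{BD23} controls only the cohomological MO Lie algebra $\mf{g}_Q^{MO}$: finite-dimensionality of its root spaces does not bound the quantum ($K$-theoretic) graded pieces, because specialization only gives the inequality $\dim U(\mf{n}_w^{MO})_{\mbf{k}}\leq\dim U_q(\mf{n}_w^{MO})_{\mbf{k}}$ (this is precisely how the paper uses it later), which is the wrong direction for your purpose; likewise the rank-one description of walls, Theorem \ref{confirmation-of-generators}, is proven later, only at the Lie-algebra level, and uses the Hopf embedding, so invoking it here would be circular.

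Note also that the paper's own proof is far more elementary and avoids all of this: since $R_w^{\pm}$ is an \emph{integral} $K$-theory class in $K_T(X^A\times X^A)$, each matrix block is of the form $Au^{k}$ with $A$ a Laurent polynomial in $q,t$ (and the $u$-exponent determined by the weights via $\mc{L}_w$), and this integrality/monomiality already forces each graded piece $U_q^{MO,\pm}(\mf{g}_w)_{\pm\mbf{n}}$, spanned by traces against the corresponding block, to be finite-dimensional. If you want to salvage your argument, you would need to replace the root factorization by an argument of this kind that uses only the properties of $R_w^{\pm}$ established geometrically in \cite{OS22}.
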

\begin{proof}
The fact that $R_{w}^{\pm}$ is an integral $K$-theory class implies that each component of $R_w^{\pm}$ is written as $Au^{k}$ with the multiplication $A$ given as the Laurent polynomial of $q$ and $t$, thus each graded pieces $U_{q}^{MO,\pm}(\mf{g}_{w})_{\pm\mbf{n}}$ is finite-dimensional.
\end{proof}

The following proposition was proved in \cite{OS22}:
\begin{prop}
\begin{align}
R_{w}^{\mp}=(R_{w}^{\pm})_{21}|_{u=u^{-1}}
\end{align}
\end{prop}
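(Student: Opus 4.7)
The strategy is to reduce the identity to the fundamental chamber-reversal symmetry of $K$-theoretic stable envelopes. The two wall $R$-matrices $R_w^+$ and $R_w^-$ differ by the choice of sign of the cocharacter (equivalently, by the side from which the wall is crossed), so the relation $R_w^\mp = (R_w^{\pm})_{21}|_{u\to u^{-1}}$ should descend from an analogous relation at the level of the stable envelope itself.

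First I would establish the stable-envelope duality: for the framing cocharacter $\sigma:\mbb{C}^*\to A_{\mbf{w}}$ with $\mbf{w}=u_1\mbf{w}_1+u_2\mbf{w}_2$ and spectral parameter $u=u_1/u_2$, the flip $\sigma\mapsto-\sigma$ interchanges the attracting and repelling normal bundles at every fixed component, and reverses the partial order on fixed components of $X^A$. By the uniqueness of the $K$-theoretic stable envelope (imposed by the diagonal normalization together with the degree-bounding condition in $s$), this forces
\[
\text{Stab}_{-\sigma,\,s}\;=\;\tau_{12}\circ\text{Stab}_{\sigma,\,s}\big|_{u\to u^{-1}}\circ\tau_{12}^{-1},
\]
where $\tau_{12}:K(\mbf{w}_1)\otimes K(\mbf{w}_2)\to K(\mbf{w}_2)\otimes K(\mbf{w}_1)$ swaps tensor factors. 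Inserting this into the definition
\[
R_w^{\pm}\;=\;\text{Stab}_{\pm\sigma,\,\mbf{m}+\epsilon\mc{L}}^{-1}\circ\text{Stab}_{\pm\sigma,\,\mbf{m}},
\]
and using that the slope parameter $\mbf{m}$ and the wall $w$ separating $\mbf{m}$ from $\mbf{m}+\epsilon\mc{L}$ are intrinsic to the geometry (independent of the sign of $\sigma$), yields directly the claimed identity $R_w^\mp=(R_w^\pm)_{21}|_{u\to u^{-1}}$.

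The main obstacle is a careful bookkeeping of the diagonal normalizations under $\sigma\to-\sigma$. Concretely, the diagonal block of $\text{Stab}_{\mc{C},s}$ is
\[
(-1)^{\text{rk}\,T^{1/2}_{>0}}\Bigl(\tfrac{\det\mc{N}_-}{\det T^{1/2}_{\neq 0}}\Bigr)^{1/2}\otimes\mc{O}_{\text{Attr}},
\]
and upon reversing $\sigma$ the roles of $\mc{N}_-$ and $\mc{N}_+$ swap, the sign $(-1)^{\text{rk}\,T^{1/2}_{>0}}$ may change, and $\mc{O}_{\text{Attr}}$ turns into the structure sheaf of the opposite attracting cell. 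One must verify that the chosen polarization $T^{1/2}$ is covariant under chamber reversal in such a way that these transformations combine to produce exactly the factor-swap and spectral inversion, with no residual twist. This compatibility is precisely the one built into the polarized setup of \cite{OS22}, and once it is confirmed, Steps 1 and 2 above close the argument.
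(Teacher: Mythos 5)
The paper does not contain its own proof of this proposition: it is quoted directly from \cite{OS22}, so there is no internal argument to compare yours against. Your chamber-reversal strategy is nonetheless the natural route and its core is sound: reversing the framing cocharacter $\sigma\mapsto-\sigma$ is literally the same defining data (torus, chamber, slope, polarization of $X$) as keeping $\sigma$ but relabelling the two framing blocks and inverting the $A$-equivariant parameter $u=u_1/u_2$, while the slope points $\mbf{m}$, $\mbf{m}+\epsilon\mc{L}$ and the wall $w$ live in $\text{Pic}(X)\otimes\mbb{Q}$ and are blind to the relabelling; feeding this into $R_{w}^{\pm}=\text{Stab}_{\pm\sigma,\mbf{m}+\epsilon\mc{L}}^{-1}\circ\text{Stab}_{\pm\sigma,\mbf{m}}$ gives exactly $R_{w}^{\mp}=(R_{w}^{\pm})_{21}|_{u\to u^{-1}}$.

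Two points should be repaired before this counts as a complete argument. First, your displayed duality $\text{Stab}_{-\sigma,s}=\tau_{12}\circ\text{Stab}_{\sigma,s}|_{u\to u^{-1}}\circ\tau_{12}^{-1}$ does not typecheck: the target of $\text{Stab}$ is $K(\mbf{w}_1+\mbf{w}_2)$, not a tensor product, so $\tau_{12}$ cannot be composed there. The correct statement is at the level of the identification $K_{G}(X^A)\cong K(\mbf{w}_1)\otimes K(\mbf{w}_2)$: the envelope for $(-\sigma,s)$, read in the original ordering, equals the envelope for the positive chamber of the relabelled problem, with $u\mapsto u^{-1}$ the induced substitution on equivariant parameters; the swap of factors only reappears once you form the composition $\text{Stab}^{-1}\circ\text{Stab}$, which is an endomorphism of the tensor product. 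Second, you defer the decisive normalization/polarization compatibility to \cite{OS22} (``once it is confirmed''), which is circular here since the proposition itself is taken from that paper. This gap is closable, and cheaply: because both envelopes entering $R_{w}^{\pm}$ share the same chamber and the same polarization, and the diagonal term of $\text{Stab}_{\mc{C},s}$ is slope-independent, the worst a normalization mismatch could do is conjugate $R_w^{\mp}$ by a slope-independent diagonal operator; and in fact no mismatch occurs, because the chamber-reversed recipe (with $\mc{N}_-$, $T^{1/2}_{>0}$, $\mc{O}_{\text{Attr}}$ taken for $-\sigma$) is verbatim the recipe of the relabelled problem with the unchanged polarization of $X$ — so uniqueness is being applied to identical data and no residual twist can arise. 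With these two adjustments, your argument stands on its own rather than leaning on the source of the statement.
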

This implies that $U_{q}^{MO,\geq}(\mf{g}_{w})$
is isomorphic to $U_{q}^{MO,\leq}(\mf{g}_{w})$
as graded vector spaces.

Usually we take $R_{w}^{\pm}$ as the universal $R$-matrix for $U_{q}^{MO}(\mf{g}_{w})$ without spectral parametres with respect to the coproduct $\Delta_{w}$ and $\Delta_{w}^{op}$. Now if we write:
\begin{align}
R_{w}^{+}=\sum_{i}a_i\otimes b_i\in U_{q}^{MO}(\mf{g}_{w})\hat{\otimes} U_{q}^{MO}(\mf{g}_{w})
\end{align}

since $R_{w}^{-}(u)=(R_{w}^{+})_{21}(u^{-1})$, this implies that:
\begin{align}
R_{w}^{-}=\sum_{i}b_i\otimes a_i\in U_{q}^{MO}(\mf{g}_{w})\hat{\otimes} U_{q}^{MO}(\mf{g}_{w})
\end{align}

This implies that one could realize $U_{q}^{MO}(\mf{g}_w)$ as the bialgebra pairing between $U_{q}^{MO,\geq}(\mf{g}_{w})$ and $U_{q}^{MO,\leq}(\mf{g}_{w})$ such that:
\begin{align}
\langle a_i,b_j\rangle=\delta_{ij}
\end{align}

In this way we could define the \textbf{wall subalgebra} $U_{q}^{MO}(\mf{g}_{w})$ as:
\begin{align}
U_{q}^{MO}(\mf{g}_{w}):=U_{q}^{MO,\geq}(\mf{g}_{w})\otimes U_{q}^{MO,\leq}(\mf{g}_{w})/(q^{\Omega}\otimes q^{-\Omega}=1)
\end{align}

Since $R_{w}^{MO}$ is an integral $K$-theory class, this is an algebra over $\mbb{Z}[q^{\pm1}]$. It has the universal $R$-matrix $R^{univ}_{w}$ which can be written as $\sum_{i}F_{i}\otimes G_{i}$ with $\{F_{i}\}$ and $\{G_{i}\}$ the dual bases, and the evaluation in $\text{End}(K(\mbf{w}_1)\otimes K(\mbf{w}_2))$ is equal to the MO wall $R$-matrix $R_{w}^{MO}$.

Throughout the paper we will assume that $U_{q}^{MO}(\mf{g}_{w})$ is localised over $\mbb{Q}$ with $q$ being valued outside of the roots of unity.

\subsection{Factorisation of geometric $R$-matrices}

Fix the stable envelope $\text{Stab}_{\sigma,\mbf{m}}$ and $\text{Stab}_{\sigma,\infty}$, we can have the following factorisation of $\text{Stab}_{\pm,\mbf{m}}$ near $u=0,\infty$:
\begin{equation}
\begin{aligned}
\text{Stab}_{\sigma,\mbf{m}}=&\text{Stab}_{\sigma,-\infty}\cdots\text{Stab}_{\sigma,\mbf{m}_{-2}}\text{Stab}_{\sigma,\mbf{m}_{-2}}^{-1}\text{Stab}_{\sigma,\mbf{m}_{-1}}\text{Stab}_{\sigma,\mbf{m}_{-1}}^{-1}\text{Stab}_{\sigma,\mbf{m}}\\
=&\text{Stab}_{\sigma,-\infty}\cdots R_{\mbf{m}_{-2},\mbf{m}_{-1}}^+R_{\mbf{m}_{-1},\mbf{m}}^+
\end{aligned}
\end{equation}

\begin{equation}
\begin{aligned}
\text{Stab}_{-\sigma,\mbf{m}}=&\text{Stab}_{-\sigma,\infty}\cdots\text{Stab}_{-\sigma,\mbf{m}_2}\text{Stab}_{-\sigma,\mbf{m}_2}^{-1}\text{Stab}_{-\sigma,\mbf{m}_1}\text{Stab}_{-\sigma,\mbf{m}_1}^{-1}\text{Stab}_{-\sigma,\mbf{m}}\\
=&\text{Stab}_{-\sigma,\infty}\cdots R_{\mbf{m}_2,\mbf{m}_1}^{-}R_{\mbf{m}_1,\mbf{m}}^{-}
\end{aligned}
\end{equation}

Here $R_{\mbf{m}_1,\mbf{m}_2}^{\pm}=\text{Stab}_{\pm\sigma,\mbf{m}_1}^{-1}\text{Stab}_{\pm\sigma,\mbf{m}_2}$ is the wall $R$-matrix. For simplicity we always choose generic slope points $\mbf{m}_i$ such that there is only one wall between $\mbf{m}_1$ and $\mbf{m}_2$. In this case we use $R_{w}^{\pm}$ as $R_{\mbf{m}_1,\mbf{m}_2}^{\pm}$. Note that this notation does not mean that $R_{w}$ only depends on the wall $w$, but we still use the notation for simplicity.

This gives the factorisation of the geometric $R$-matrix:
\begin{align}\label{factorisation-geometry}
\mc{R}^{s}(u)=\prod_{i<0}^{\leftarrow}R_{w_i}^{-}R_{\infty}\prod_{i\geq0}^{\leftarrow}R_{w_i}^{+}
\end{align}

This has been proved in \cite{OS22} that this factorisation is well-defined in the topology of the Laurent formal power series in the spectral parametre $u$.

The factorisation \ref{factorisation-geometry} gives the generators of the positive half of the MO quantum affine algebra:
\begin{lem}
$U_{q}^{MO,+}(\hat{\mf{g}}_{Q})$ is generated by the positive half of the wall subalgebras $U_{q}^{MO,+}(\mf{g}_{w})$ for arbitrary walls $w$.
\end{lem}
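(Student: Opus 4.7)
The plan is to combine the slope factorization \ref{factorisation-geometry} of the geometric $R$-matrix with the triangularity of the wall $R$-matrices, exploiting the fact that the vacuum vector $v_{\varnothing}\in K(\mbf{0},\mbf{w})$ sits at the bottom of the $\mbf{v}$-grading. First, recall that every generator of $U_{q}^{MO}(\hat{\mf{g}}_{Q})$ is of the form
$$F_{m}=\oint_{a_0=0,\infty}\frac{da_0}{2\pi i a_0}\Tr_{V_0}\bigl((1\otimes m(a_0))\mc{R}^{s}_{V,V_0}(a/a_0)\bigr).$$
Substituting the factorization $\mc{R}^{s}=\prod_{i<0}^{\leftarrow}R_{w_i}^{-}\cdot R_{\infty}\cdot\prod_{i\geq0}^{\leftarrow}R_{w_i}^{+}$ into this expression expands $F_{m}$ as a sum of products of matrix coefficients of individual wall $R$-matrices, which on any fixed weight subspace $K(\mbf{v}_1,\mbf{w}_1)\otimes K(\mbf{v}_2,\mbf{w}_2)$ reduces to a finite ordered product.

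Next, I would pass to $U_{q}^{MO,+}(\hat{\mf{g}}_{Q})=U_{q}^{MO}(\hat{\mf{g}}_{Q})/\Ker(ev)$ by applying $F_{m}$ to the vacuum $v_{\varnothing}$. The key observation is that on vectors of the form $v_{\varnothing}\otimes(-)\in K(\mbf{0},\mbf{w})\otimes V_0$, each negative wall $R$-matrix $R_{w_i}^{-}$ acts as the identity: since $R_{w_i}^{-}-\text{Id}$ strictly decreases the dimension vector of the first tensor factor by a nonzero element of $\mbb{N}^I$, it annihilates $v_{\varnothing}$ for purely dimensional reasons. The factor $R_{\infty}$ preserves the weight grading and acts on the vacuum slot only through a Cartan/Heisenberg contribution that can be absorbed into the auxiliary operator $m(a_0)$. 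Consequently $F_{m}v_{\varnothing}$ agrees with the corresponding matrix coefficient of $\prod_{i\geq 0}^{\leftarrow}R_{w_i}^{+}$ applied to $v_{\varnothing}$.

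By the definition of $U_{q}^{MO,+}(\mf{g}_{w})$ together with the identity $R_{w}^{-}=(R_{w}^{+})_{21}|_{u=u^{-1}}$, the matrix coefficients of $R_{w_i}^{+}$ are exactly generators of $U_{q}^{MO,+}(\mf{g}_{w_i})$. Expanding the ordered product of the $R_{w_i}^{+}$'s and pairing against $(1\otimes m(a_0))$ produces a sum of products of such generators, giving the nontrivial inclusion that $ev(F_{m})$ lies in the subalgebra generated by the $U_{q}^{MO,+}(\mf{g}_{w})$. The reverse inclusion $U_{q}^{MO,+}(\mf{g}_{w})\subset U_{q}^{MO,+}(\hat{\mf{g}}_{Q})$ is immediate from $U_{q}^{MO,+}(\mf{g}_{w})\subset U_{q}^{MO}(\hat{\mf{g}}_{Q})$.

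The main obstacle I anticipate is controlling the interplay between the contour integration $\oint da_0/(2\pi i a_0)$ and the factorization of $\mc{R}^{s}(a/a_0)$: each $R_{w_i}^{\pm}(a/a_0)$ is a Laurent expansion with its own region of convergence, so one must argue weight subspace by weight subspace that the relevant ordered product is finite and that the $a_0$-Fourier modes produced by the factorization correctly reassemble into those of $\mc{R}^{s}$ itself. The triangular vanishing of the $R_{w_i}^{-}$ contributions on $v_{\varnothing}$ is precisely what makes this manipulation well-defined and reduces the analysis to a finite product of positive wall factors.
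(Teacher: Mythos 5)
Your overall strategy---substitute the factorisation \ref{factorisation-geometry} into the generating integral, apply to $v_{\varnothing}$, kill half of the wall factors by a dimension argument, and identify the surviving coefficients with wall-subalgebra generators---is the same as the paper's, but you drop the wrong half of the factorisation, and this is a genuine error rather than a harmless convention choice. By the paper's own definitions, $U_{q}^{MO,+}(\mf{g}_{w})$ is generated by $\text{Tr}_{V_0}((1\otimes m)(R_{w}^{-})_{V,V_0}^{-1})$ and its elements \emph{raise} the dimension vector on $V$, while the matrix coefficients of $R_{w}^{+}$ generate the \emph{negative} half and lower it. Hence it is $R_{w_i}^{+}-\text{Id}$, not $R_{w_i}^{-}-\text{Id}$, whose first-factor components annihilate $v_{\varnothing}$; your claim that $R_{w_i}^{-}-\text{Id}$ strictly decreases $\mbf{v}$ in the first slot is backwards. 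Moreover, even granting your triangularity, the reduction fails for ordering reasons: in $\prod_{i<0}^{\leftarrow}R_{w_i}^{-}\,R_{\infty}\prod_{i\geq0}^{\leftarrow}R_{w_i}^{+}$ the negative factors act \emph{last}, on a vector which is no longer of the form $v_{\varnothing}\otimes(-)$ once the positive factors have acted, so a vacuum-annihilation argument can only remove the factors adjacent to the vacuum, i.e.\ the rightmost ones. There is also an internal inconsistency: if the $R_{w_i}^{-}$ could be dropped and the surviving $R_{w_i}^{+}$-coefficients lower $\mbf{v}$, then the image of $\text{ev}$ would be essentially the span of $v_{\varnothing}$, which is absurd.

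The correct reduction, which is the paper's, goes the other way: the $R_{w_i}^{+}$ fix $v_{\varnothing}$ in the $V$-slot (their off-diagonal parts lower $\mbf{v}_1=0$) and $R_{\infty}$ acts diagonally there (its contribution on $V_0$ can be absorbed into $m(a_0)$, as you correctly note), so $F_{m}v_{\varnothing}$ reduces to matrix coefficients of $\prod_{i<0}^{\leftarrow}R_{w_i}^{-}$ paired against $(1\otimes m(a_0))$, and these are by definition products of generators of the positive halves $U_{q}^{MO,+}(\mf{g}_{w_i})$ applied to $v_{\varnothing}$. Your attempted repair via $R_{w}^{-}=(R_{w}^{+})_{21}|_{u=u^{-1}}$ does not close this gap: the trace is taken only over the auxiliary factor $V_0$, so flipping the tensor factors and inverting $u$ does not convert coefficients of $R_{w}^{+}$ in the $V$-slot into generators of the positive half. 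With the roles of $R_{w}^{+}$ and $R_{w}^{-}$ interchanged throughout, your argument becomes the paper's proof; as written, the key step is false.
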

\begin{proof}
Note that the elements of $U_{q}^{MO}(\hat{\mf{g}}_{Q})$ are of the form:
\begin{align}
\oint_{a_0=0,\infty}\frac{da_0}{2\pi ia_0}\text{Tr}_{V_0}((1\otimes m(a_0))\mc{R}^{s}_{V,V_0}(\frac{a}{a_0}))\in\text{End}(V(a))
\end{align}
Now via the mapping $a\mapsto av_{\varnothing}$ we have that:
\begin{equation}
\begin{aligned}
&\oint_{a_0=0,\infty}\frac{da_0}{2\pi ia_0}\text{Tr}_{V_0}((1\otimes m(a_0))\mc{R}^{s}_{V,V_0}(\frac{a}{a_0}))v_{\varnothing}\\
=&\oint_{a_0=0,\infty}\frac{da_0}{2\pi ia_0}\text{Tr}_{V_0}((1\otimes m(a_0))\prod_{i<0}^{\leftarrow}R_{w_i}^{-}R_{\infty}\prod_{i\geq0}^{\leftarrow}R_{w_i}^{+})v_{\varnothing}\\
=&\oint_{a_0=0,\infty}\frac{da_0}{2\pi ia_0}\text{Tr}_{V_0}((1\otimes m(a_0))\prod_{i<0}^{\leftarrow}R_{w_i}^{-}v_{\varnothing}
\end{aligned}
\end{equation}

This result implies that the image of $\text{ev}$ in \ref{evaluation-module} contains only the coefficients of $R_{w_i}^{-}$, which implies that the lemma.
\end{proof}

\subsection{Cohomological stable envelope and Maulik-Okounkov Yangian}

In this subsection we review the original definition of the stable envelope in the case of the equivariant cohomology and the definition of the Maulik-Okounkov Yangian algebra in \cite{MO12}.

We still fix the notation as above, given the Nakajima quiver variety $M(\mbf{v},\mbf{w})$ with the group action $G_{\mbf{v}}\times A_{E}$ and $T\subset\text{Ker}(q)\subset G_{\mbf{v}}\times A_{E}$. The cohomological stable envelope is a cohomlogy class
\begin{align}
\text{Stab}_{\mc{C}}\subset H_{G}(X\times X^T)
\end{align}
which induces the morphism 
\begin{align}
\text{Stab}_{\mc{C}}:H_{G}(X^T)\rightarrow H_{G}(X)
\end{align}

such that:
\begin{itemize}
	\item The support of $\text{Stab}_{\mc{C}}$ is on $\text{Attr}^f$.
	\item
	\begin{align}
	\text{Stab}_{\mc{C}}|_{F_{\alpha}\times F_{\alpha}}=\pm e(N^{-}_{F_{\alpha}/X})
	\end{align}
	\item
	\begin{align}
	\text{deg}_{T}\text{Stab}_{\mc{C}}|_{F_{\alpha}\times F_{\beta}}<\frac{1}{2}\text{codim}_{\mbb{C}}(F_{\beta}),\qquad\forall F_{\beta}<F_{\alpha}
	\end{align}
\end{itemize}

The cohomological stable envelope also has the triangle lemma as \ref{triangle-lemma}

Similarly, we can choose $\sigma:\mbb{C}^*\rightarrow A_{\mbf{w}}$ in some chamber $\mc{C}$ such that:
\begin{align}
\mbf{w}=u_1\mbf{w}_1+\cdots+u_k\mbf{w}_k
\end{align}

We use the notation $H(\mbf{w}):=\bigoplus_{\mbf{v}}H_{G_{\mbf{w}}\times\mbb{C}_{\hbar}^*}^*(M(\mbf{v},\mbf{w}))$. This gives the stable envelope map of the form:
\begin{align}
\text{Stab}_{\pm\mc{C}}:H(\mbf{w}_1)\otimes\cdots\otimes H(\mbf{w}_k)\rightarrow H(\mbf{w}_1+\cdots+\mbf{w}_k)
\end{align}

Similarly we can define the cohomological geometric $R$-matrix:
\begin{align}
R_{\mc{C}}:=\text{Stab}_{-\mc{C}}^{-1}\circ\text{Stab}_{\mc{C}}
\end{align}
Similar as the case of the $K$-theoretical geometric $R$-matrix, the cohomological geometric $R$-matrix can also be factorised as:
\begin{align}
\mc{R}_{\mc{C}}=\prod_{1\leq i<j\leq k}\mc{R}_{\mc{C}_{ij}}(u_i-u_j),\qquad\mc{R}_{\mc{C}_{ij}}(u_i-u_j):H(\mbf{w}_i)\otimes H(\mbf{w}_j)\rightarrow H(\mbf{w}_i)\otimes H(\mbf{w}_j)
\end{align}
via the triangle lemma. Each $\mc{R}_{\mc{C}_{ij}}(u_i-u_j)$ satisfy the rational Yang-Baxter equation with the spectral parametres.

\subsubsection{Maulik-Okounkov Yangian algebra}
The Maulik-Okounkov Yangian algebra $Y_{\hbar}^{MO}(\mf{g}_{Q})$ is defined in the following way: We consider arbitrary finite-rank operator $m(u)\in\text{End}(F_0)[u]$ with $F_0=\bigotimes_{\mbf{w}}H(\mbf{w})$, the Maulik-Okounkov Yangian algebra $Y_{\hbar}(\mf{g}_{Q})$ is generated by the operators in $\prod_{\mbf{w}}\text{End}(H(\mbf{w}))$ of the form:
\begin{align}
E(m(u))=-\frac{1}{\hbar}\text{Res}_{u=\infty}\text{tr}_{F_0}((m(u)\otimes1)R_{F_0(u),W})\in\text{End}(W),\qquad W=\prod_{\mbf{w}}\text{End}(H(\mbf{w}))
\end{align}

i.e. $Y_{\hbar}(\mf{g}_{Q})$ is generated by the matrix coefficients of the geometric $R$-matrix $R_{F_0,W}$ under the expansion of $u\rightarrow\infty$.

It has also been proved in \cite{MO12} that the geometric $R$-matrix admits the expansion around $u=\infty$:
\begin{align}
R_{\mc{C}_{ij}}(u_i-u_j)=1+\frac{\hbar}{u_i-u_{j}}r_{\mc{C}_{ij}}+O((u_i-u_j)^{-2})
\end{align}

where the operator $r_{\mc{C}_{ij}}:H(\mbf{w}_i)\otimes H(\mbf{w}_j)\rightarrow H(\mbf{w}_i)\otimes H(\mbf{w}_j)$ satisfies the classical Yang-Baxter equation. In this way similarly, we could define the Maulik-Okounkov Lie algebra $\mf{g}_{Q}^{MO}$ as the Lie algebra generated by the operator $E(m_0)$ for $m_0\in\text{End}(F_0)$ constant in $u$. This is equivalent to say that $\mf{g}_{Q}^{MO}$ is generated by the matrix coefficients of the classical $r$-matrix $r_{\mc{C}}=\sum_{i<j}r_{\mc{C}_{ij}}$ by the following formula of brackets:
\begin{align}
[E(m_1),E(m_2)]=E((\text{tr}\otimes1)[r_{VV},m_1\otimes m_2]),\qquad m_1,m_2\in\text{End}(V)
\end{align}

The Maulik-Okounkov Lie algebra $\mf{g}_{Q}^{MO}$ admits the root decomposition:
\begin{align}
\mf{g}_{Q}^{MO}=\mf{h}\oplus\bigoplus_{\alpha\in\mbb{Z}^I}\mf{g}_{\alpha}^{MO}
\end{align}
with $\mf{g}_{\alpha}^{MO}$ spanned by the vector $e_{\alpha}$ such that:
\begin{align}
e_{\alpha}:H(\mbf{v},\mbf{w})\rightarrow H(\mbf{v}+\alpha,\mbf{w})
\end{align}

We denote the set $\Phi=\{\alpha\in\mbb{Z}^I|\mf{g}_{\alpha}\neq0\}$ as the set of roots. It has been known that if $\alpha\in\Phi$, $-\alpha\in\Phi$, and thus $\Phi=\Phi^+\sqcup-\Phi^{+}$. We define the nilpotent Maulik-Okounkov Lie algebra $\mf{n}_{Q}^{MO}$ as:
\begin{align}
\mf{n}_{Q}^{MO}:=\bigoplus_{\alpha\in\Phi^+}\mf{n}_{\alpha}^{MO}
\end{align}
It is a Lie subalgebra of $\mf{g}_{Q}^{MO}$ spanned by the root vectors of positive roots.

The theorem given by Botta and Davison \cite{BD23} states that the nilpotent Maulik-Okounkov Lie algebra is isomorphic to the corresponding BPS Lie algebra of quiver type $Q$:
\begin{thm}{(Botta-Davison,\cite{BD23})}
There is an isomorphism between the nilpotent Maulik-Okounkov Lie algebra and the BPS Lie algebra of quiver type $Q$:
\begin{align}
\mf{n}_{Q}^{MO}\cong\mf{n}_{Q}
\end{align}
\end{thm}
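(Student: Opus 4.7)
The plan is to use the preprojective cohomological Hall algebra $\mc{A}_{HA}(Q)$ as the common ambient Hopf algebra in which both Lie algebras appear as primitive elements. Three ingredients drive the argument. First, the Schiffmann--Vasserot isomorphism $Y^{MO,+}_{\hbar}(\mf{g}_Q)\cong\mc{A}_{HA}^+(Q)$ identifies the positive half of the MO Yangian with the preprojective CoHA. Second, the PBW-type filtration on $Y^{MO}_{\hbar}(\mf{g}_Q)$ induced by the expansion $\mc{R}_{\mc{C}_{ij}}(u_i-u_j)=1+\hbar r_{\mc{C}_{ij}}/(u_i-u_j)+O((u_i-u_j)^{-2})$ at $u=\infty$ has associated graded $U(\mf{g}_Q^{MO}[u])$, and $\mf{n}_Q^{MO}$ appears in positive weight as the primitives of the first graded piece. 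Third, Davison's work identifies $\mf{n}_Q$ with the first level of the perverse filtration on $\mc{A}_{HA}^+(Q)$, and cohomological integrality gives a PBW isomorphism $\mc{A}_{HA}^+(Q)\cong\Sym(\mf{n}_Q\otimes H^*(\text{pt}))$.

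Granted these ingredients, I would first transport the $u$-adic filtration on $Y^{MO,+}_{\hbar}$ to the CoHA through the SV isomorphism and then show that its first piece coincides with the BPS subspace cut out by the perverse filtration. Once the two filtrations are aligned, the weight decompositions $\bigoplus_{\alpha\in\Phi^+}(-)_\alpha$ match automatically, since both Lie algebras are graded by the dimension vector $\alpha\in\mbb{N}^I$ and the matched $(-)_{\alpha}$ pieces are finite-dimensional. Finally one checks that the Lie bracket induced on primitives by the commutator in $\mc{A}_{HA}^+(Q)$ matches the MO bracket $[E(m_1),E(m_2)]=E((\text{tr}\otimes 1)[r_{VV},m_1\otimes m_2])$ coming from the classical $r$-matrix; this is essentially formal once the filtrations agree, because both brackets arise as the leading term of the commutator in the ambient filtered Hopf algebra.

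The main obstacle is precisely the filtration matching in the first step. The $u$-filtration is defined spectrally through a Laurent expansion of the geometric $R$-matrix in the auxiliary equivariant parameter, whereas the perverse filtration is a sheaf-theoretic invariant coming from the decomposition theorem applied to the stack of representations of the preprojective algebra. To bridge them I would follow the Botta--Davison route of first passing through the critical CoHA of the tripled quiver with potential, where Davison--Meinhardt cohomological integrality provides an explicit PBW basis indexed by BPS cohomology classes, and then matching these generators with the $r$-matrix generators by using the description of the primitive operators $E(m_0)$ for $m_0\in\End(F_0)$ constant in $u$ as leading-order residues of stable envelopes; the semismallness properties of the Nakajima correspondences are what guarantee that these classes sit at the correct perverse depth.
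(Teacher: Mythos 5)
This statement is not proved in the paper at all: it is quoted as an external input from Botta--Davison \cite{BD23} (and then used, e.g., in the proof of Theorem \ref{confirmation-of-generators}), so there is no internal argument to compare yours against. Judged on its own terms, your proposal is a sensible roadmap of the known strategy, but it has a genuine gap at exactly the point where the theorem lives. The step you label as ``the main obstacle'' --- showing that the filtration on $Y^{MO,+}_{\hbar}(\mf{g}_Q)$ defined spectrally by the $u\to\infty$ expansion of the geometric $R$-matrix is carried, under the Schiffmann--Vasserot identification with the preprojective CoHA, to the perverse filtration whose first piece is $\mf{n}_Q$ --- is not a technical bridge to be crossed after the outline; it \emph{is} the theorem. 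Resolving it by ``following the Botta--Davison route'' makes the argument circular as a standalone proof. Concretely, the containment of the BPS generators in $\mf{n}^{MO}_Q$ is the comparatively soft direction; the hard content is the opposite bound, that the root spaces $\mf{g}^{MO}_{\alpha}$ are no larger than the corresponding BPS pieces, and nothing in your sketch (residues of stable envelopes, semismallness of the Nakajima correspondences) is developed far enough to yield that inequality.

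Two further imprecisions weaken the framing. First, the preprojective CoHA is not a Hopf algebra in the naive sense --- its coproduct exists only after localization/twisting --- and $\mf{n}_Q$ is defined via the perverse filtration together with cohomological integrality, not as a space of primitive elements; so ``both Lie algebras appear as primitives in a common ambient Hopf algebra'' is not literally available as a starting point. Second, $\mf{g}^{MO}_Q$ sits inside the Yangian as the span of the operators $E(m_0)$ with $m_0$ constant in $u$, i.e., it is cut out by the loop grading, not by primitivity in the first piece of the $u$-adic filtration; consequently the claim that the bracket comparison is ``essentially formal once the filtrations agree'' presupposes precisely the compatibility you have not established. If your aim is to reconstruct an actual proof, you should work through \cite{BD23} (nonabelian stable envelopes, the critical CoHA of the tripled quiver with potential, and the dimension/perverse-depth bounds) rather than treat those inputs as a final technical patch.
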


In the case of affine type $A$, the BPS Lie algebra $\mf{n}_{Q}$ is isomorphic to the positive half of $\hat{\mf{gl}}_n$.

This theorem will be the key to construct the isomorphism between the positive half of the quantum toroidal algebras and the positive half of the Maulik-Okounkov quantum affine algebras.

\subsection{Wall subalgebra and cohomologial limit}

The geometric $R$-matrix at slope $s$ can be written as:
\begin{align}
\mc{R}^s(u)=\prod^{\leftarrow}_{w<s}R_{w}^{-}(u)\cdot\mc{R}^{\infty}(u)\cdot\prod_{w\geq s}^{\leftarrow}R_{w}^+(u)
\end{align}

Here $R_{\infty}:=\text{Stab}_{-\mc{C},-\infty}^{-1}\circ\text{Stab}_{\mc{C},\infty}$ such that it is diagonal with the basis given by:
\begin{align}
R_{\infty}|_{F_{\alpha}\times F_{\alpha}}=i_*\frac{\wedge^*[N_{F_{\alpha}}^{-}]}{\wedge^*[N_{F_{\alpha}}^{+}]}=(-1)^{\frac{\text{codim}F_{\alpha}}{2}}\frac{\prod_{v_{\alpha}<0}(v_{\alpha}^{1/2}-v_{\alpha}^{-1/2})}{\prod_{v_{\alpha}>0}(v_{\alpha}^{1/2}-v_{\alpha}^{-1/2})}
\end{align}
Here $v_{\alpha}$ are the Chern roots of the normal bundle $N_{F_{\alpha}}$.

We mainly concern the connection between the geometric $R$-matrix in both $K$-theory and cohomology theory. We denote the Chern character map $ch:K_{T}(M(\mbf{v},\mbf{w}))\rightarrow H_{T}(M(\mbf{v},\mbf{w}))$ as sending $u$ to $e^{\kappa z}$, and $q$ to $e^{\kappa\hbar}$.

\begin{prop}
If we take $u=e^{\kappa z}, q=e^{\kappa\hbar}$ and take $\kappa\rightarrow0$, we obtain that:
\begin{align}
\lim_{\kappa\rightarrow0}\mc{R}^s(e^{\kappa z})=\mc{R}(z)
\end{align}
which is the geometric $R$-matrix for the corresponding Maulik-Okounkov Yangian $Y_{\hbar}(\mf{g}_{Q})$.
\end{prop}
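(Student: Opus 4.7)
The plan is to reduce the proposition to the analogous degeneration statement for the $K$-theoretic stable envelope, namely that
\begin{equation*}
\lim_{\kappa\to 0}\,\text{ch}\bigl(\text{Stab}^K_{\mc{C},s}\bigr)\bigr|_{u=e^{\kappa z},\,q=e^{\kappa\hbar}}=\text{Stab}^H_{\mc{C}},
\end{equation*}
where $\text{Stab}^H_{\mc{C}}$ is the cohomological stable envelope. Once this is established, the proposition follows immediately by writing
\begin{equation*}
\mc{R}^s(u)=\text{Stab}^K_{-\mc{C},s}{}^{-1}\circ\text{Stab}^K_{\mc{C},s},\qquad \mc{R}(z)=\text{Stab}^H_{-\mc{C}}{}^{-1}\circ\text{Stab}^H_{\mc{C}},
\end{equation*}
taking the limit under the Chern character and composing (noting that both sides preserve the direct sum decomposition along fixed-point components, so the inversion commutes with the degeneration).

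To establish the $K$-to-cohomology degeneration of the stable envelope itself, I would invoke uniqueness of the cohomological stable envelope: it suffices to check that $\Sigma:=\lim_{\kappa\to 0}\text{ch}(\text{Stab}^K_{\mc{C},s})$ satisfies the three defining properties of $\text{Stab}^H_{\mc{C}}$. The support condition is automatic since $\text{Stab}^K_{\mc{C},s}$ is supported on $\text{Attr}^f$ for every $\kappa$, and support is preserved under taking Chern characters and limits. For the diagonal term, I would use the standard identity
\begin{equation*}
\text{ch}(\mc{O}_{\text{Attr}})\bigr|_{F_\alpha\times F_\alpha}=\prod_{x_i\text{ Chern roots of }N^-_{F_\alpha}}(1-e^{-x_i})=e(N^-_{F_\alpha})+O(\kappa),
\end{equation*}
combined with the observation that the polarization correction factor $(\det\mc{N}_-/\det T^{1/2}_{\neq 0})^{1/2}$ becomes a unit in cohomology (its Chern character tends to a sign, which matches the $\pm$ ambiguity in $\text{Stab}^H_{\mc{C}}$). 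For the degree condition, I would translate the Newton-polytope inclusion for the $K$-theoretic class into an inequality of leading $\kappa$-orders; under $u=e^{\kappa z}$, $q=e^{\kappa\hbar}$, a Laurent polynomial of $A$-degree at most $d$ contributes a cohomology class of degree at most $d$, so the strict inclusion of Newton polytopes in the $K$-theoretic degree bound degenerates to the strict inequality $\deg_T \text{Stab}|_{F_\beta\times F_\alpha}<\frac{1}{2}\text{codim}_\mathbb{C}(F_\beta)$. Crucially, the slope parameter $s\in\text{Pic}(X)\otimes\mathbb{Q}$ only contributes a finite translation of Newton polytopes by $\deg_A s|_{F_\alpha}-\deg_A s|_{F_\beta}$, which scales away in the $\kappa\to 0$ limit and therefore does not enter the limiting degree condition — this is why the cohomological envelope has no slope.

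The main obstacle is the degree-condition step: one must carefully track that the \emph{strict} Newton polytope inclusion in the $K$-theoretic definition really does pass to a strict cohomological degree inequality, not merely a non-strict one, and that no cancellation among leading-order contributions from different weight components produces a class violating the bound. This is handled by inspecting the abelianization construction of $\text{Stab}^K_{\mc{C},s}$ as in \cite{AO21}: each abelianized factor manifestly degenerates to the corresponding cohomological factor, and the resolution procedure (integration along a Lagrangian correspondence) is compatible with the Chern character in the expected way. Combining the three verifications and invoking uniqueness of $\text{Stab}^H_{\mc{C}}$ yields $\Sigma=\text{Stab}^H_{\mc{C}}$, which upon composing inverses gives the stated limit $\lim_{\kappa\to 0}\mc{R}^s(e^{\kappa z})=\mc{R}(z)$.
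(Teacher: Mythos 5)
Your proposal follows essentially the same route as the paper: its proof likewise reduces the statement to the fact that the cohomological stable envelope is the lowest-degree part of $\mathrm{ch}(\text{Stab}_{\sigma,\mbf{m}})$ under $u=e^{\kappa z}$, $q=e^{\kappa\hbar}$, $\kappa\to 0$, and then composes $\text{Stab}_{-\sigma}^{-1}\circ\text{Stab}_{\sigma}$; your axiom-by-axiom verification via uniqueness (support, diagonal Euler class, degree bound with the slope translation scaling away) simply fills in details the paper leaves implicit. The only caution is that the intermediate limit should be read as the leading $\kappa$-order (lowest cohomological degree) part of the Chern character rather than a literal limit, since the normalizing powers of $\kappa$ cancel only after the two envelopes are composed into $\mc{R}^s$.
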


\begin{proof}
First note that the cohomological stable envelope $\text{Stab}_{\sigma}$ is the lowest degree part of the limit of the $K$-theoretic stable envelope $\text{ch}(\text{Stab}_{\sigma,\mbf{m}})$ with $\kappa\rightarrow0$. Thus by the definition of the both cohomological and $K$-theoretic stable envelope, the degeneration limit of $\text{ch}(\text{Stab}_{-\sigma,\mbf{m}}^{-1}\text{Stab}_{\sigma,\mbf{m}})$ coincides with $\text{Stab}_{-\sigma}^{-1}\text{Stab}_{\sigma}$.
\end{proof}

Now we want to compare the cohomological limit of $U_{q}(\mf{g}_{w})$ with the Maulik-Okounkov Lie algebra $\mf{g}_{Q}$. To do this, first note that the wall $R$-matrix satisfy the following translation formula:
\begin{align}
R_{w+\mc{L}}^{\pm}(u)=\mc{L}R_{w}^{\pm}\mc{L}^{-1}
\end{align}
here $w+\mc{L}\subset\mbb{R}^I$ is the wall such that translates from $w$ to the direction $\mc{L}\in\mbb{Z}^I$.

Now we suppose that the wall in the product can be written as:
\begin{align}
\prod_{n\geq0}^{\leftarrow}\text{Ad}_{\mc{L}^n}(\prod_{w\in[s,s+\mc{L})}^{\leftarrow}R_{w}^+(u))
\end{align}

Here $\mc{L}\in\text{Pic}(X)$ is an ample line bundle. We further decompose $R_{w}^+(u)=\text{Id}+U_{w}^+(u)$ into the identity matrix and the strictly upper-triangular matrix $U_w(u)$, and $U_{w}(u)$ is nilpotent by definition. It is known that in this case:
\begin{align}
\prod_{n\geq0}^{\leftarrow}\text{Ad}_{\mc{L}^n}(\prod_{w\in[s,s+\mc{L})}^{\leftarrow}R_{w}^+(u))=\prod_{n\geq0}^{\leftarrow}(\prod_{w\in[s,s+\mc{L})}^{\leftarrow}(Id+u^{n\mc{L}}U_{w}^+(u)))
\end{align}

It has its matrix coefficients as the rational function over $u$ with the rational function of the form:
\begin{align}
\text{Id}+\sum_{w\in[s,s+\mc{L})}\frac{1}{1-u^{\mc{L}}}U_{w}^+(u)+\sum_{w_1,w_2\in[s,s+\mc{L})}\frac{1}{(1-u^{k_1\mc{L}})(1-u^{k_2\mc{L}})}U_{w_1}^+(u)U_{w_2}^+(u)+\cdots
\end{align}

It is known that $U_{w}(u)^+$ has its matrix coefficients as the monomials in $u=e^{\kappa z}$. So take the cohomological limit $\kappa\rightarrow0$ , we can obtain that:
\begin{align}
\text{Cohomological limit of }\prod_{w\geq s}^{\leftarrow}R_{w}^+(u)=\text{Id}+\frac{1}{z}\sum_{w,k\geq0}a_w^{(k)}r_{w,k}^++O(\frac{1}{z^2}),\qquad a_{w,k}\in\mbb{N}
\end{align}
Here $a_{w}^{(k)}$ are some non-negative integers.
Combining the computation,

Here
\begin{align}\label{classical-r-matrix}
\lim_{\kappa\rightarrow0}\frac{R_w^{\pm}-\text{Id}}{q-1}=r_{w}^{\pm}=\sum_{k\geq0}r_{w,k}^{\pm},\qquad q=e^{\kappa\hbar}
\end{align}
$r_{w,k}^{\pm}$ gives a root decomposition of the degeneration limit $r_{w}^{\pm}$ with the wall $w$ corresponding to the root $\alpha$. For the $r_{\infty}$:
\begin{align}
\lim_{\kappa\rightarrow0}\frac{\mc{R}^{\infty}(e^{\kappa z})-1}{e^{\kappa\hbar}-1}=r_{\infty}=\Omega,\qquad\Omega(\gamma)=\frac{\text{codim}(F)}{4}\gamma
\end{align}
is the classical limit of the wall $R$-matrix $R_{w}^{\pm}$ and the classical limit of the diagonal part $\mc{R}^{\infty}(u)$. $\Omega$ is the codimension scaling operator with $F\subset X$ a component of the fixed point subset $X^A\subset X$.

 we can see that:
\begin{align}
\text{Cohomological limit of }\mc{R}^s(u)=\text{Id}+\frac{1}{z}(\sum_{w\geq s,k\geq0}a_w^{(k)}r_{w,k}^++r_{\infty}-\sum_{w<s,k\geq0}a_w^{(k)}r_{w,k}^{-})+O(\frac{1}{z^2})\in\text{End}(H(\mbf{w}))
\end{align}

Now we define $r:=\sum_{w\geq s}a_wr_{w}^++r_{\infty}-\sum_{w<s}a_wr_{w}^{-}$.

If we do the $z^{-1}$-expansion of the Yangian geometric $R$-matrix, note that:
\begin{align}
\mc{R}(z)=\text{Id}+\frac{\hbar}{z}r_{Q}+O(\frac{1}{z^2})\in\text{End}(H(\mbf{w}))
\end{align}

By definition, $r_{Q}$ is the classical $r$-matrix corresponding to the Maulik-Okounkov Lie algebra $\mf{g}_{Q}$.

Also by definition, we can see that $\tilde{r}_{w}^+:=r_w^++r_{\infty}$ satisfies the classical Yang-Baxter equation:
\begin{align}\label{Classcal-Yang}
[\frac{\tilde{r}_{w,12}^{+}}{z_1-z_2},\frac{\tilde{r}_{w,13}^{+}}{z_1-z_3}]+[\frac{\tilde{r}_{w,12}^{+}}{z_1-z_2},\frac{\tilde{r}_{w,23}^{+}}{z_2-z_3}]+[\frac{\tilde{r}_{w,13}^{+}}{z_1-z_3},\frac{\tilde{r}_{w,23}^{+}}{z_2-z_3}]=0
\end{align}
This follows from the fact that $q^{\Omega}R_{w}^{\pm}$ satisfies the quantum Yang-Baxter equation.
This means that $r_{w}^{+}$ generates a Lie subalgebra $\mf{n}_{w}^{MO}\subset\mf{n}_{Q}^{MO}$ of the positive part of $\mf{n}_{Q}^{MO}$ over $\mbb{Z}$ (over $\mbb{Q}$ after being localised). In conclusion, $r_{w}^{\pm}$ generates the Lie subalgebra $\mf{g}_{w}^{MO}\subset\mf{g}_{Q}^{MO}$. Generally we call the Lie subalgebra $\mf{g}_{w}^{MO}$ as the \textbf{root subalgebra }of $\mf{g}_{Q}^{MO}$.

This concludes the following proposition:
\begin{prop}
The degeneration limit $\tilde{r}_{w}^{\pm,MO}=r_{\infty}+r_{w}^{\pm,MO}$ of the wall $R$-matrices $q^{\Omega}R_{w}^{\pm,MO}$ satisfies the classical Yang-Baxter equation \ref{Classcal-Yang}, and it generates a Lie subalgebra $\mf{g}_{w}^{MO}$ of the Maulik-Okounkov Lie algebra $\mf{g}_{Q}$.
\end{prop}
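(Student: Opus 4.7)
The proof proposal has two pieces to establish: the classical Yang-Baxter equation for $\tilde{r}_w^{\pm}$, and the fact that the resulting matrix coefficients close under the bracket inside $\mf{g}_Q^{MO}$. The plan is to derive both by a careful $\kappa$-expansion of the quantum Yang-Baxter equation already known to hold for $q^{\Omega}R_w^{\pm}$. Concretely, I would start from the identity
\begin{equation}
(q^{\Omega}R_w^{\pm})_{12}(u_1/u_2)(q^{\Omega}R_w^{\pm})_{13}(u_1/u_3)(q^{\Omega}R_w^{\pm})_{23}(u_2/u_3) = (q^{\Omega}R_w^{\pm})_{23}(u_2/u_3)(q^{\Omega}R_w^{\pm})_{13}(u_1/u_3)(q^{\Omega}R_w^{\pm})_{12}(u_1/u_2)
\end{equation}
substitute $u_i = e^{\kappa z_i}$, $q = e^{\kappa \hbar}$, and use the expansion $R_w^{\pm} = \text{Id} + (q-1)r_w^{\pm} + O((q-1)^2)$ together with $q^{\Omega} = \text{Id} + \kappa\hbar\,\Omega + O(\kappa^2)$. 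The $\kappa^0$ and $\kappa^1$ terms vanish trivially; the $\kappa^2$ coefficient, after dividing through and rearranging, reproduces (\ref{Classcal-Yang}) for $\tilde{r}_w^{\pm} = r_\infty + r_w^{\pm}$. This is the standard passage from QYBE to CYBE, and the only care needed is tracking the $\Omega$-factor contributions, which combine cleanly because $\Omega$ acts as a scalar on each weight component.

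For the second part, I would apply the construction used in the excerpt for $\mf{g}_Q^{MO}$ itself: given $m \in \text{End}(V_0)$, define
\begin{equation}
E_w(m) := -\tfrac{1}{\hbar}\text{Res}_{u=\infty}\,\text{tr}_{V_0}\bracket{(m(u)\otimes 1)\tilde{r}_w^{\pm}(u)} \in \prod_{\mbf{w}}\End(H(\mbf{w})),
\end{equation}
and show via the CYBE just established that $[E_w(m_1),E_w(m_2)] = E_w((\text{tr}\otimes 1)[\tilde{r}_{w,VV}^{\pm},m_1\otimes m_2])$, so that the span of such $E_w(m)$ forms a Lie algebra $\mf{g}_w^{MO}$. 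To embed this in $\mf{g}_Q^{MO}$, I would use the slope factorization $\mc{R}^s(u) = \prod_{w<s}^{\leftarrow} R_w^{-} R_\infty \prod_{w\geq s}^{\leftarrow} R_w^{+}$ combined with the translation property $R_{w+\mc{L}}^{\pm} = \text{Ad}_{\mc{L}}(R_w^{\pm})$. The cohomological limit of $\mc{R}^s$ produces a sum of terms $a_w^{(k)} r_{w,k}^{\pm}$ plus $r_\infty$, all of whose matrix coefficients lie in $\mf{g}_Q^{MO}$. Varying the slope $s$ and the auxiliary ample bundle $\mc{L}$ (thereby varying the integer multiplicities $a_w^{(k)}$ in a controlled way) lets one solve a linear system that isolates the contribution of a single wall $w$.

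The main obstacle is precisely this isolation step. Taking the cohomological limit produces not $r_w^{\pm}$ directly but an $\mbb{N}$-linear combination $\sum_{k\geq 0} a_w^{(k)} r_{w,k}^{\pm}$ determined by the residues of rational expressions $\frac{1}{(1-u^{k_1\mc{L}})\cdots(1-u^{k_j\mc{L}})}$ at $u \to \infty$. I would argue that by varying $\mc{L}$ or by extracting different Laurent coefficients of the asymptotic expansion before taking the limit, these rational pieces become linearly independent, so $r_w^{\pm}$ is recovered as an actual element of the span of matrix coefficients of $r_Q$. An alternative and cleaner route, if direct separation proves delicate, is to work with a generic two-factor auxiliary decomposition $\mbf{w} = \mbf{w}_1 + \mbf{w}_2$ so that only a single wall $w$ separates $\mbf{m}$ from $\mbf{m}+\epsilon\mc{L}$; then $R_w^{\pm}$ itself appears as a geometric R-matrix for a suitable $(\sigma,s)$, and the classical limit of its matrix coefficients lives in $\mf{g}_Q^{MO}$ by the definition of the latter.
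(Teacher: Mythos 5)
Your argument is essentially the paper's own: the classical Yang--Baxter equation for $\tilde{r}_{w}^{\pm}$ is obtained by degenerating the quantum Yang--Baxter equation for $q^{\Omega}R_{w}^{\pm}$ under $u=e^{\kappa z}$, $q=e^{\kappa\hbar}$, and the containment of the resulting matrix coefficients in $\mf{g}_{Q}^{MO}$ is read off from the $1/z$-coefficient of the cohomological limit of the slope-factorized $\mc{R}^{s}(u)$, i.e.\ exactly the computation you reproduce (the paper is in fact terser than you about isolating a single wall's contribution). One caution: your ``cleaner'' fallback is not available as stated, because the wall $R$-matrix is $\text{Stab}_{\sigma,\mbf{m}+\epsilon\mc{L}}^{-1}\circ\text{Stab}_{\sigma,\mbf{m}}$, a change of slope within a fixed chamber, and hence is never literally a geometric $R$-matrix $\text{Stab}_{-\sigma,s}^{-1}\circ\text{Stab}_{\sigma,s}$ for any $(\sigma,s)$, so its classical limit does not land in $\mf{g}_{Q}^{MO}$ ``by definition''; rely on your first route through the factorization of $\mc{R}^{s}$.
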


\section{\textbf{Wall subalgebras and the slope subalgebras}}
\subsection{Stable basis and the Pieri rule}
A good set of basis for $K(\mbf{w})$ under the action of the slope subalgebra $\mc{B}_{\mbf{m}}$ of slope $\mbf{m}$ is the stable basis $s^{\mbf{m}}_{\bm{\lambda}}$ of slope $\mbf{m}$.

The stable basis is constructed via the following way: We choose the stable envelope as:
\begin{equation}
\text{Stab}_{\sigma,\mbf{m}}:K_{T}(M(\mbf{v},\mbf{w})^T)\rightarrow K_{T}(M(\mbf{v},\mbf{w}))
\end{equation}
Here $T=A\times\mbb{C}^*_{q}\times\mbb{C}^*_{t}$ is the product of the maximal torus and the two-dimensional torus $\mbb{C}^*_q\times\mbb{C}^*_t$. It is known that the fixed points $M(\mbf{v},\mbf{w})^T$ of $T$-torus action are isolated, which is labeled by the multi-partitions $\bm{\lambda}$. Fixing the character $\sigma:\mbb{C}^*\rightarrow T$ that corresponds to the isolated fixed point subset. We denote the image of $\sigma$ by $u_i\mapsto t^{N_i}$ and assume that $N_1<<\cdots<< N_{\mbf{w}}$. This gives the dominance order of partitions over the fixed point. We would always denote $s^{\mbf{m}}_{\bm{\lambda}}:=\text{Stab}_{\sigma,\mbf{m}}(|\bm{\lambda}\rangle)$.

Also, for $F\in K_{T}(pt)_{loc}$ with $u_i\rightarrow t^{N_i}$ as above. We denote $\text{max(min)}\text{deg}(F)$ as the maximal (minimum) $t$-degree of $F$.

The diagonal part of $s^{\mbf{m}}_{\bm{\lambda}}$ is given by:
\begin{align}
s^{\mbf{m}}_{\bm{\lambda}}|_{\bm{\lambda}}=k_{\bm{\lambda}}=\prod_{\square\in\bm{\lambda}}[\prod_{\blacksquare\in\bm{\lambda}}\zeta(\frac{\chi_{\blacksquare}}{\chi_{\square}})\prod_{k=1}^{\mbf{w}}[\frac{u_k}{q\chi_{\square}}][\frac{\chi_{\square}}{qu_k}]]^{(-)}
\end{align}
Here $(-)$ stands for the product of $[x]$ such that the $t$-degree of $[x]$ is smaller than $0$. The notation for $(+)$ and $(0)$ has the similar meaning. The stable basis satisfy the following degree bounding condition:
\begin{equation}
\begin{aligned}
&\text{max deg }s^{\mbf{m}}_{\bm{\lambda}|\bm{\mu}}\leq\text{max deg}k_{\bm{\mu}}+\mbf{m}\cdot(\mbf{c}_{\bm{\mu}}-\mbf{c}_{\bm{\lambda}})\\
&\text{min deg }s^{\mbf{m}}_{\bm{\lambda}|\bm{\mu}}>-\text{max deg}k_{\bm{\mu}}+\mbf{m}\cdot(\mbf{c}_{\bm{\mu}}-\mbf{c}_{\bm{\lambda}})
\end{aligned}
\end{equation}

Here $\mbf{c}_{\bm{\lambda}}=(c_{\bm{\lambda}}^{1},\cdots,c_{\bm{\lambda}}^{n})$ with $c_{\bm{\lambda}}^{i}=\sum_{\square\in\bm{\lambda}}^{c_{\square}\equiv i}c_{\square}$.

It is known that the slope subalgebra $\mc{B}_{\mbf{m}}$ is generated by $P^{\mbf{m}}_{[i,j)}$ and $Q^{\mbf{m}}_{-[i,j)}$. 

Now given an operator $F:K(\mbf{v}',\mbf{w})\rightarrow K(\mbf{v},\mbf{w})$ whose matrix coefficients in terms of fixed point basis is given by:
\begin{align}
F|\bm{\mu}\rangle=\sum_{|\bm{\lambda}|=\mbf{v}}F^{\bm{\lambda}}_{\bm{\mu}}\cdot|\bm{\lambda}\rangle
\end{align}

If we choose the stable basis $s^{\mbf{m}}_{\bm{\lambda}}$ of the slope $\mbf{m}$, we write down the coefficients with respect to the stable basis as:
\begin{align}
F\cdot s^{\mbf{m}}_{\bm{\mu}}=\sum_{|\bm{\lambda}|=\mbf{v}}\gamma^{\bm{\lambda}}_{\bm{\mu}}\cdot s^{\mbf{m}}_{\bm{\lambda}}
\end{align}

The good property about the stable basis $s^{\mbf{m}}_{\bm{\mu}}$ is that it subtracts the special information of the action of the slope subalgebra $\mc{B}_{\mbf{m}}$.

\begin{lem}{\cite{N15}}\label{Pieri-rule}
For $F\in\mc{B}_{\mbf{m}}^{\pm}$, we have that:
\begin{align}
\gamma^{\bm{\lambda}}_{\bm{\mu}}=(\text{l.d.}F^{\bm{\lambda}}_{\bm{\mu}})\cdot\frac{\text{l.d.}(k_{\bm{\mu}})}{\text{l.d.}(k_{\bm{\lambda}})}
\end{align}

Here $\text{l.d.}$ stands for the lowest $t$-degree part that is equal to $\text{max deg}(k_{\bm{\mu}})-\text{max deg}(k_{\bm{\lambda}})+\mbf{m}\cdot(\mbf{c}_{\bm{\lambda}}-\mbf{c}_{\bm{\mu}})$.
\end{lem}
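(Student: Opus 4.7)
My plan is to derive the identity by expanding both sides of $F\cdot s^{\mbf{m}}_{\bm{\mu}}=\sum_{\bm{\lambda}'}\gamma^{\bm{\lambda}'}_{\bm{\mu}}\,s^{\mbf{m}}_{\bm{\lambda}'}$ in the fixed-point basis and then isolating the lowest-$t$-degree contribution on each side. Pairing both sides with the fixed-point functional $\langle\bm{\lambda}|$ yields
\begin{align}
\sum_{\bm{\nu}}F^{\bm{\lambda}}_{\bm{\nu}}\,s^{\mbf{m}}_{\bm{\mu}|\bm{\nu}}
\;=\;
\sum_{\bm{\lambda}'}\gamma^{\bm{\lambda}'}_{\bm{\mu}}\,s^{\mbf{m}}_{\bm{\lambda}'|\bm{\lambda}},
\end{align}
so the task reduces to controlling the $t$-degrees of the four objects $F^{\bm{\lambda}}_{\bm{\nu}}$, $s^{\mbf{m}}_{\bm{\mu}|\bm{\nu}}$, $\gamma^{\bm{\lambda}'}_{\bm{\mu}}$, and $s^{\mbf{m}}_{\bm{\lambda}'|\bm{\lambda}}$, under the specialization $u_i\mapsto t^{N_i}$ with $N_1\ll\cdots\ll N_{\mbf{w}}$ serving as the grading device.

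Next I would apply the stable-basis degree bounds recalled above: $\text{max deg}\,s^{\mbf{m}}_{\bm{\mu}|\bm{\nu}}\le\text{max deg}\,k_{\bm{\mu}}+\mbf{m}\cdot(\mbf{c}_{\bm{\mu}}-\mbf{c}_{\bm{\nu}})$, with equality (and leading term exactly $k_{\bm{\mu}}$) achieved only on the diagonal $\bm{\nu}=\bm{\mu}$, and similarly for $s^{\mbf{m}}_{\bm{\lambda}'|\bm{\lambda}}$ saturated only at $\bm{\lambda}'=\bm{\lambda}$. The slope-$\le\mbf{m}$ condition on $F\in\mc{B}_{\mbf{m}}^{\pm}$, fed through the shuffle formulas \ref{shuffle-formula-1}--\ref{shuffle-formula-2} for the matrix coefficients on the quiver variety, yields the matching lower bound
\begin{align}
\text{min deg}\,F^{\bm{\lambda}}_{\bm{\nu}}\;\ge\;\text{max deg}\,k_{\bm{\nu}}-\text{max deg}\,k_{\bm{\lambda}}+\mbf{m}\cdot(\mbf{c}_{\bm{\lambda}}-\mbf{c}_{\bm{\nu}}).
\end{align}
Combining the three bounds shows that on the left only $\bm{\nu}=\bm{\mu}$, and on the right only $\bm{\lambda}'=\bm{\lambda}$, contribute to the lowest $t$-degree component of the common total degree $\text{max deg}\,k_{\bm{\mu}}-\text{max deg}\,k_{\bm{\lambda}}+\mbf{m}\cdot(\mbf{c}_{\bm{\lambda}}-\mbf{c}_{\bm{\mu}})+\text{max deg}\,k_{\bm{\mu}}$.

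Extracting leading coefficients then gives $\gamma^{\bm{\lambda}}_{\bm{\mu}}\cdot k_{\bm{\lambda}}=\text{l.d.}(F^{\bm{\lambda}}_{\bm{\mu}})\cdot k_{\bm{\mu}}$ at lowest $t$-degree, which rearranges into the claimed formula. To upgrade this leading-order identity to an honest equality (rather than only agreement at the lowest graded piece), I would invoke the fact that the slope factorisation forces $\gamma^{\bm{\lambda}}_{\bm{\mu}}$ itself to be homogeneous in $t$ of the predicted degree: the characterisation of $\mc{B}_{\mbf{m}}^{\pm}$ as the slope-$\le\mbf{m}$ subspace implies that its action in the stable basis of slope $\mbf{m}$ lies in a single graded component, matched from both directions by the degree bounds.

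The main obstacle will be verifying the lower bound on $\text{min deg}\,F^{\bm{\lambda}}_{\bm{\nu}}$ from the slope condition on $F$. This requires feeding the defining growth condition (the $\xi\to\infty$ limit in the definition of $\mc{S}^{\pm}_{\le\mbf{m}}$) through the substitution $z_{ia}\mapsto\chi_{\square}$ in \ref{shuffle-formula-1}--\ref{shuffle-formula-2} and carefully tracking the $t$-exponents accumulating in the $\zeta$-factors and framing brackets $[u_k/(q\chi_{\blacksquare})]$. The combinatorial bookkeeping of how the content contributions $c_{\square}$ sum into $\mbf{c}_{\bm{\lambda}}-\mbf{c}_{\bm{\mu}}$ under box additions and removals is the main subtlety, but it closely parallels the analogous step in Negu\c{t}'s derivation of the slope factorisation in \cite{N15}.
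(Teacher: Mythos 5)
You are proving a statement the paper itself does not prove: Lemma \ref{Pieri-rule} is recalled from Negu\c{t} \cite{N15}, so there is no in-paper argument to compare against, only the expected one from that reference. Your overall strategy — expand $F\cdot s^{\mbf{m}}_{\bm{\mu}}$ in the fixed-point basis, play the stable-envelope degree window against a degree bound on $F^{\bm{\lambda}}_{\bm{\nu}}$ induced by the slope-$\leq\mbf{m}$ condition, and extract the extremal term — is indeed the right shape and matches the mechanism behind the result in \cite{N15}.

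However, the steps carrying the actual content are missing. First, when you discard all $\bm{\lambda}'\neq\bm{\lambda}$ in $\sum_{\bm{\nu}}F^{\bm{\lambda}}_{\bm{\nu}}s^{\mbf{m}}_{\bm{\mu}|\bm{\nu}}=\sum_{\bm{\lambda}'}\gamma^{\bm{\lambda}'}_{\bm{\mu}}s^{\mbf{m}}_{\bm{\lambda}'|\bm{\lambda}}$ "at the lowest degree", the degree of the discarded terms involves the unknown coefficients $\gamma^{\bm{\lambda}'}_{\bm{\mu}}$ themselves; without an a priori bound on their degrees the exclusion is circular. The standard ways out are a downward induction in the partial order coming from the chamber (so the coefficients $\gamma^{\bm{\lambda}'}_{\bm{\mu}}$ entering off the diagonal are already known to be concentrated in the predicted degree), or extracting $\gamma^{\bm{\lambda}}_{\bm{\mu}}$ by pairing with the stable basis of the opposite chamber, whose own window supplies the missing bound; you do neither. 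Second, your "upgrade" from a leading-order identity to the exact equality of the lemma rests on the assertion that the slope condition forces $\gamma^{\bm{\lambda}}_{\bm{\mu}}$ to be homogeneous of the predicted degree — but that homogeneity is precisely what the lemma asserts, so as written this step assumes the conclusion. Third, the inequality $\text{min deg}\,F^{\bm{\lambda}}_{\bm{\nu}}\geq\text{max deg}\,k_{\bm{\nu}}-\text{max deg}\,k_{\bm{\lambda}}+\mbf{m}\cdot(\mbf{c}_{\bm{\lambda}}-\mbf{c}_{\bm{\nu}})$ is only asserted; deriving it, with the correct direction separately for $\mc{B}^{+}_{\mbf{m}}$ and $\mc{B}^{-}_{\mbf{m}}$ and for the chamber $N_1\ll\cdots\ll N_{\mbf{w}}$, from the $\xi$-limit defining $\mc{S}^{\pm}_{\leq\mbf{m}}$ together with the $\zeta$- and framing factors in \ref{shuffle-formula-1}--\ref{shuffle-formula-2} is exactly the technical heart of the argument in \cite{N15}, and a sign or chamber slip there changes which term of the sum is extremal. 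Relatedly, your "common total degree" appears to be off: the graded piece at which both sides should be compared is $\text{max deg}(k_{\bm{\mu}})-\text{max deg}(k_{\bm{\lambda}})+\mbf{m}\cdot(\mbf{c}_{\bm{\lambda}}-\mbf{c}_{\bm{\mu}})$ shifted by the \emph{minimal} degree of $k_{\bm{\mu}}$, not by $\text{max deg}\,k_{\bm{\mu}}$ as you wrote, which suggests the bookkeeping has not actually been carried out. As it stands the proposal is a plausible roadmap, not a proof.
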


\subsection{The Hopf algebra embedding}

The advantage of the Pieri rule is that it can be used to construct the Hopf algebra embedding of $\mc{B}_{\mbf{m}}$ into the Maulik-Okounkov quantum affine algebra $U_{q}^{MO}(\hat{\mf{g}}_{Q})$.

\begin{prop}\label{inclusion-slope}
There is a Hopf algebra embedding 
\begin{align}
\mc{B}_{\mbf{m}}\hookrightarrow U_{q}^{MO}(\hat{\mf{g}}_{Q})
\end{align}
such that the restriction to $U_{q}(\mf{g}_{w})$ gives the Hopf algebra embedding $U_{q}(\mf{g}_{w})\hookrightarrow U_{q}^{MO}(\mf{g}_{w})$.
\end{prop}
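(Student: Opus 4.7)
The plan is to construct the embedding as the composition
\[
\mc{B}_{\mbf{m}} \hookrightarrow U_{q,t}(\hat{\hat{\mf{sl}}}_n) \longrightarrow \prod_{\mbf{w}}\End(K(\mbf{w})),
\]
where the first arrow is the Hopf subalgebra inclusion guaranteed by Theorem \ref{factorisation} and Proposition \ref{twist-of-the-coproduct} (using the twisted coproduct $\Delta_{(\mbf{m})} = \Delta_{\mbf{m}}$), and the second is the geometric action given by the shuffle formulas \ref{shuffle-formula-1} and \ref{shuffle-formula-2}. Writing this composition as $\Phi_{\mbf{m}}$, I will then show that (i) $\Phi_{\mbf{m}}$ factors through $U_{q}^{MO}(\hat{\mf{g}}_Q)$, (ii) it intertwines $\Delta_{\mbf{m}}$ with the geometric coproduct $\Delta_{\mbf{m}}^{MO}$ defined by conjugation by $\text{Stab}_{\mc{C},\mbf{m}}$, and (iii) it is injective and sends the wall subalgebra to the wall subalgebra.

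The central tool is the Pieri rule (Lemma \ref{Pieri-rule}): in the stable basis $\{s^{\mbf{m}}_{\bm{\lambda}}\}$ of slope $\mbf{m}$, any generator $F \in \mc{B}_{\mbf{m}}^{\pm}$ acts by extracting the lowest $t$-degree of its fixed-point matrix coefficients, weighted by the ratio $\text{l.d.}(k_{\bm{\mu}})/\text{l.d.}(k_{\bm{\lambda}})$. This lowest-degree truncation is precisely the leading-order contribution selected by the stable envelope $\text{Stab}_{\mc{C},\mbf{m}}$ restricted to the fixed-point decomposition of a subtorus $A \subset T_{\mbf{w}}$. Concretely, fixing a splitting $\mbf{w} = \mbf{w}_1 + \cdots + \mbf{w}_k$ induced by a cocharacter $\sigma$, one compares two expressions for the action of $\Phi_{\mbf{m}}(F)$ on $K(\mbf{w}_1)\otimes\cdots\otimes K(\mbf{w}_k)$: on the shuffle side, the limit $\xi \to \infty$ defining $\Delta_{\mbf{m}}$ extracts exactly the slope-$\mbf{m}$ component of $\Delta(F)$, while on the geometric side the conjugation by $\text{Stab}_{\mc{C},\mbf{m}}$ extracts the leading $A$-degree term. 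The Pieri rule shows that these two limits agree up to the correction $k_{\bm{\mu}}/k_{\bm{\lambda}}$, whose multiplicativity over the partition decomposition $\bm{\lambda} = \bm{\lambda}_1 \sqcup \cdots \sqcup \bm{\lambda}_k$ gives the desired identity
\[
\text{Stab}_{\mc{C},\mbf{m}}^{-1} \circ \Phi_{\mbf{m}}(F) \circ \text{Stab}_{\mc{C},\mbf{m}} = (\Phi_{\mbf{m}} \otimes \Phi_{\mbf{m}}) \bigl(\Delta_{\mbf{m}}(F)\bigr).
\]
This simultaneously proves that $\Phi_{\mbf{m}}(F)$ lies in $U_q^{MO}(\hat{\mf{g}}_Q)$ (since its matrix coefficients now appear as coefficients of $\mc{R}^{\mbf{m}}$ via variation of $\mbf{w}$) and that $\Phi_{\mbf{m}}$ is a morphism of coalgebras. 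Compatibility of the antipode $S_{\mbf{m}}$ then follows from $\Delta_{\mbf{m}}$ compatibility and the uniqueness of the antipode.

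Injectivity follows from the fact that the shuffle algebra action of $\mc{B}_{\mbf{m}}$ on $\bigoplus_{\mbf{w}} K(\mbf{w})$ is faithful: for any nonzero $F$, one can arrange $\mbf{w}$ large enough so that the shuffle integral formula yields a nonzero operator. For the wall subalgebra restriction, recall that $U_q(\mf{g}_w)$ is generated by the matrix coefficients of the shuffle-side wall $R$-matrix $R_w^{\pm}$, which arises as a factor in the Khoroshkin-Tolstoy decomposition of the slope $R$-matrix $R_{\mbf{m}}^{\pm}$. Since the image of $R_{\mbf{m}}^{\pm}$ under $\Phi_{\mbf{m}} \otimes \Phi_{\mbf{m}}$ is the geometric slope $R$-matrix (factored as $\prod^{\leftarrow}_w R_{w}^{MO,\pm}$), the wall-by-wall correspondence places $\Phi_{\mbf{m}}(U_q(\mf{g}_w))$ inside $U_q^{MO}(\mf{g}_w)$.

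The main obstacle is the coproduct comparison: both $\Delta_{\mbf{m}}$ and conjugation by $\text{Stab}_{\mc{C},\mbf{m}}$ are defined by leading-order procedures (shuffle $\xi$-limits vs.\ $A$-degree bounding), and matching them term-by-term requires careful bookkeeping of polarization choices and of the factors $k_{\bm{\lambda}}$ appearing in the diagonal of the stable basis. The subtlety is that the Pieri rule is stated with respect to a maximal torus $T$ polarization, while the coproduct compatibility needs to be checked after restriction to an intermediate torus $A$; this is where the multiplicativity of $\text{l.d.}(k_{\bm{\lambda}})$ across the block decomposition must be verified explicitly, and it is the only place where the affine type $A$ structure of the quiver enters in an essential way.
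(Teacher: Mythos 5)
Your central mechanism --- reducing to the generators $P^{\mbf{m}}_{[i;j)}$, using the Pieri rule (Lemma \ref{Pieri-rule}) to express the action on the stable basis $s^{\mbf{m}}_{\bm{\lambda}}$ through lowest $t$-degree truncations, and verifying multiplicativity of the factors $\text{l.d.}(k_{\bm{\lambda}})$, $\bm{\Theta}_{\bm{\lambda}\backslash\bm{\mu}}$ and $\text{l.d.}P^{\mbf{m}}_{[i;j)}$ under the block decomposition $\bm{\lambda}\backslash\bm{\mu}=(\bm{\lambda}_1,\bm{\lambda}_2)\backslash(\bm{\mu}_1,\bm{\mu}_2)$ so that conjugation by $\text{Stab}_{\mc{C},\mbf{m}}$ reproduces $\Delta_{\mbf{m}}$ --- is exactly the paper's argument for the coproduct intertwining, and the antipode/counit and injectivity remarks are fine (the paper dismisses the antipode as automatic, and membership of the image in $U_{q}^{MO}(\hat{\mf{g}}_{Q})$ is in any case covered by the cited embedding of \cite{N23}).

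The genuine gap is in your last step, the restriction statement $\Phi_{\mbf{m}}(U_{q}(\mf{g}_{w}))\subset U_{q}^{MO}(\mf{g}_{w})$. You justify it by asserting that $(\Phi_{\mbf{m}}\otimes\Phi_{\mbf{m}})(R_{\mbf{m}}^{\pm})$ is the geometric slope $R$-matrix factored wall by wall as $\prod^{\leftarrow}_{w}R_{w}^{MO,\pm}$, and that this ``wall-by-wall correspondence'' matches the Khoroshkin--Tolstoy factors with the geometric wall $R$-matrices. But the identification $R_{w}=R_{w}^{MO}$ (even up to the weaker statement that the algebraic wall factors land inside the corresponding MO wall subalgebras in a wall-preserving way) is precisely Theorem \ref{main-theorem-2}, the main theorem of the paper, which is proved only later via the monodromy of the quantum difference equations and which itself uses Proposition \ref{inclusion-slope} (e.g.\ through Theorem \ref{confirmation-of-generators}). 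So as written your argument for the wall restriction is circular: nothing proved up to this point tells you that the two Khoroshkin--Tolstoy-type factorizations, one algebraic and one geometric, are aligned factor by factor rather than mixing walls. The paper closes this gap by a purely Hopf-theoretic argument that never invokes such an identification: it shows that $a\in U_{q}(\mf{g}_{w})$ has $\Delta_{\mbf{m}}(a)\in\mc{B}_{\mbf{m}}\hat{\otimes}\mc{B}_{\mbf{m}}$ only for slope points $\mbf{m}$ on $w$ (a slope computation on commutators with $P^{\mbf{n}}_{[i;i+1)_h}$), then transports this to the geometric side by analyzing the root components of the slope-changing operators $T_{\mbf{n},\mbf{m}}$, using the inequality \ref{inequality-good} together with the lemma that two wall subalgebras attached to the same root but different values $\langle s,\alpha\rangle$ intersect only in Cartan elements (an $A$-degree argument), and finally using the fact from \cite{OS22} that $\Delta_{\mbf{m}}$ restricted to $U_{q}^{MO}(\mf{g}_{w})$ is $\Delta_{w}$; an element whose image were outside $U_{q}^{MO}(\mf{g}_{w})$ would then have a coproduct incompatible with $U_{q}^{MO}(\mf{g}_{w})\hat{\otimes}U_{q}^{MO}(\mf{g}_{w})$, a contradiction. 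You would need to replace your $R$-matrix-matching step by an argument of this kind (or some other argument not presupposing $R_{w}=R_{w}^{MO}$) for the proposition to stand independently of the main theorem.
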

\begin{proof}
We have shown the proof of the fact in \cite{Z23}. Here we give some detailed version of the proof.

The compatibility of the antipode map and the counit is obvious. We only need to prove the compatibility of the coproduct.
First we prove that the coproduct $\Delta_{\mbf{m}}$ on $\mc{B}_{\mbf{m}}$ intertwine the $\text{Stab}_{\mbf{m}}$, i.e.
\begin{equation}
\begin{tikzcd}
\mc{B}_{\mbf{m}}\arrow[r,hook]\arrow[d,"\Delta_{\mbf{m}}"]&\prod_{\mbf{w}}\text{End}(K_{T_{\mbf{w}}}(M(\mbf{w})))\arrow[d,"\text{Stab}_{\mbf{m}}^{\dagger}\circ(-)\circ\text{Stab}_{\mbf{m}}"]\\
\mc{B}_{\mbf{m}}\hat{\otimes}\mc{B}_{\mbf{m}}\arrow[r,hook]&\prod_{\mbf{w}_1,\mbf{w}_2}\text{End}(K_{T_{\mbf{w}_1}}(M(\mbf{w}_1)))\hat{\otimes}(K_{T_{\mbf{w}_2}}(M(\mbf{w}_2)))
\end{tikzcd}
\end{equation}

For simplicity, we only show the proof for $P^{\mbf{m}}_{[i;j)}$, and this means that we need to prove the following identity:
\begin{align}
P^{\mbf{m}}_{[i;j)}\circ\text{Stab}_{\mbf{m}}(s^{\mbf{m}}_{\bm{\lambda}_1}\otimes s^{\mbf{m}}_{\bm{\lambda}_2})=\text{Stab}_{\mbf{m}}(\Delta_{\mbf{m}}(P^{\mbf{m}}_{[i;j)})(s^{\mbf{m}}_{\bm{\lambda}_1}\otimes s^{\mbf{m}}_{\bm{\lambda}_2}))
\end{align}

For simplicity, using the formula for the coproduct, we can reduce the proof to the case $P^{\mbf{m}}_{[i;i+1)_h}$.

Recall the formula of $P^{\mbf{m}}_{[i;j)_h}$ on the stable basis $s^{\mbf{m}}_{\bm{\lambda}}$ in the Lemma \ref{Pieri-rule}:
\begin{align}
P^{\mbf{m}}_{[i;j)_h}s^{\mbf{m}}_{\bm{\mu}}=\sum_{\bm{\lambda}}\text{l.d.}(P^{\mbf{m}}_{[i;j)_h}(\bm{\lambda}\backslash\bm{\mu})\bm{\Theta}_{\bm{\lambda}\backslash\bm{\mu}})\frac{\text{l.d.}(\mf{K}_{\bm{\mu}})}{\text{l.d.}(\mf{K}_{\bm{\lambda}})}\cdot s^{\mbf{m}}_{\bm{\lambda}}
\end{align}

Here:
\begin{align}
\mf{K}_{\bm{\lambda}}=\prod_{\square\in\bm{\lambda}}[\prod_{\blacksquare\in\bm{\lambda}}\zeta(\frac{\chi_{\blacksquare}}{\chi_{\square}})\prod_{k=1}^{\mbf{w}}[\frac{u_k}{q\chi_{\square}}][\frac{\chi_{\square}}{qu_k}]]^{(-)}
\end{align}
\begin{align}
\bm{\Theta}_{\bm{\lambda}\backslash\bm{\mu}}=\prod_{\blacksquare\in\bm{\lambda}\backslash\bm{\mu}}[\prod_{\square\in\bm{\mu}}\zeta(\frac{\chi_{\blacksquare}}{\chi_{\square}})\prod_{k=1}^{\mbf{w}}[\frac{u_k}{q\chi_{\blacksquare}}]]
\end{align}

We first consider the following part:
\begin{equation}
\begin{aligned}
&\text{l.d.}(\bm{\Theta}_{\bm{\lambda}\backslash\bm{\mu}})\frac{\text{l.d.}(\mf{K}_{\bm{\mu}})}{\text{l.d.}(\mf{K}_{\bm{\lambda}})}\\
=&\frac{\text{l.d.}(\prod_{\square\in\bm{\mu}}^{\blacksquare\in\bm{\lambda}\backslash\bm{\mu}}\zeta(\frac{\chi_{\blacksquare}}{\chi_{\square}})\prod_{i=1}^{\mbf{w}}[\frac{u_i}{q\chi_{\blacksquare}}])^{(+)}}{\text{l.d.}(\prod_{\square\in\bm{\mu}}^{\blacksquare\in\bm{\lambda}\backslash\bm{\mu}}\zeta(\frac{\chi_{\square}}{\chi_{\blacksquare}})\prod_{i=1}^{\mbf{w}}[\frac{\chi_{\blacksquare}}{qu_i}])^{(-)}}\cdot\frac{\prod_{\blacksquare\in\bm{\lambda}\backslash\bm{\mu}}(\prod_{\square\in\bm{\mu}}\zeta(\frac{\chi_{\blacksquare}}{\chi_{\square}})\prod_{i=1}^{\mbf{w}}[\frac{u_i}{q\chi_{\blacksquare}}])^{(0)}}{\prod_{\blacksquare,\blacksquare'\in\bm{\lambda}\backslash\bm{\mu}}\zeta(\frac{\chi_{\blacksquare}}{\chi_{\blacksquare'}})^{(-)}}
\end{aligned}
\end{equation}

The superscripts $(+)$, $(0)$ and $(-)$ refer to that we only retain those factors $[x]$ such that $\text{deg}(x)>0$, $\text{deg}(x)=0$ and $\text{deg}(x)<0$. We can see that the degree $0$ part does not contribute to the lowest degree.

We want to analyze the behavior of the formula in terms of the power of $q$. For the first fraction, by calculation we can see that:
\begin{align}
\frac{\text{l.d.}\prod_{\blacksquare\in\bm{\lambda}\backslash\bm{\mu}}(\prod_{i=1}^{\mbf{w}}[\frac{u_i}{q\chi_{\blacksquare}}])^{(+)}}{\text{l.d.}(\prod_{\blacksquare\in\bm{\lambda}\backslash\bm{\mu}}\prod_{i=1}^{\mbf{w}}[\frac{\chi_{\blacksquare}}{qu_i}])^{(-)}}=\prod_{\blacksquare\in\bm{\lambda}\backslash\bm{\mu}}q^{\sum_{i=w_{\blacksquare}+1}^{\mbf{w}}}(\cdots)
\end{align}

by the following formula:

\begin{align}
\text{l.d.}\prod_{\blacksquare\in\bm{\lambda}\backslash\bm{\mu}}(\prod_{i=1}^{\mbf{w}}[\frac{u_i}{q\chi_{\blacksquare}}])^{(+)}=\text{l.d.}\prod_{\blacksquare\in\bm{\lambda}\backslash\bm{\mu}}(\prod_{i=w_{\blacksquare}+1}^{\mbf{w}}[\frac{u_i}{q\chi_{\blacksquare}}])=\prod_{\blacksquare\in\bm{\lambda}\backslash\bm{\mu}}q^{\frac{1}{2}\sum_{i=w_{\blacksquare}+1}^{\mbf{w}}}(\cdots)
\end{align}

\begin{align}
\text{l.d.}(\prod_{\blacksquare\in\bm{\lambda}\backslash\bm{\mu}}\prod_{i=1}^{\mbf{w}}[\frac{\chi_{\blacksquare}}{qu_i}])^{(-)}=\text{l.d.}(\prod_{\blacksquare\in\bm{\lambda}\backslash\bm{\mu}}\prod_{i=w_{\blacksquare}+1}^{\mbf{w}}[\frac{\chi_{\blacksquare}}{qu_i}])=\prod_{\blacksquare\in\bm{\lambda}\backslash\bm{\mu}}\prod_{i=w_{\blacksquare}+1}^{\mbf{w}}q^{-1/2}(\cdots)
\end{align}

The terms $(\cdots)$ in all above three formulas are the terms that does not change when we do the decomposition $(\bm{\lambda}\backslash\bm{\mu})=(\bm{\lambda}_1,\bm{\lambda}_2)\backslash(\bm{\mu}_1,\bm{\mu}_2)$.

In this way if we have that $\bm{\lambda}\backslash\bm{\mu}=(\bm{\lambda}_1,\bm{\lambda}_2)\backslash(\bm{\mu}_1,\bm{\mu}_2)$ such that $\bm{\lambda}_2\backslash\bm{\mu}_2=[i,j)$, we have that:
\begin{equation}\label{decomposition-w-coeff}
\begin{aligned}
\prod_{\blacksquare\in\bm{\lambda}\backslash\bm{\mu}}q^{\sum_{i=w_{\blacksquare}+1}^{\mbf{w}}}=&\prod_{\blacksquare\in\bm{\lambda}_1\backslash\bm{\mu}_1}q^{\sum_{i=w_{\blacksquare}+1}^{\mbf{w}}}\prod_{\blacksquare\in\bm{\lambda}_2\backslash\bm{\mu}_2}q^{-\sum_{i=w_{\blacksquare}+1}^{\mbf{w}}}\\
=&\prod_{\blacksquare\in\bm{\lambda}_1\backslash\bm{\mu}_1}q^{\sum_{i=w_{\blacksquare}+1}^{\mbf{w}_1+\mbf{w}_2}}\prod_{\blacksquare\in\bm{\lambda}_2\backslash\bm{\mu}_2}q^{\sum_{i=w_{\blacksquare}+1}^{\mbf{w}_2}}q^{\sum_{i=\mbf{w}_2+1}^{\mbf{w}_1+\mbf{w}_2}}\\
=&\prod_{\blacksquare\in\bm{\lambda}_1\backslash\bm{\mu}_1}q^{\sum_{i=w_{\blacksquare}+1}^{\mbf{w}_1+\mbf{w}_2}}\prod_{\blacksquare\in\bm{\lambda}_2\backslash\bm{\mu}_2}q^{\sum_{i=w_{\blacksquare}+1}^{\mbf{w}_2}}q^{\mbf{w}_1\cdot[i,j)}
\end{aligned}
\end{equation}

Also for
\begin{align}\label{important-coeff}
\frac{\text{l.d.}(\prod_{\square\in\bm{\mu}}^{\blacksquare\in\bm{\lambda}\backslash\bm{\mu}}\zeta(\frac{\chi_{\blacksquare}}{\chi_{\square}}))^{(+)}}{\text{l.d.}(\prod_{\square\in\bm{\mu}}^{\blacksquare\in\bm{\lambda}\backslash\bm{\mu}}\zeta(\frac{\chi_{\square}}{\chi_{\blacksquare}}))^{(-)}}=\prod_{\blacksquare\in\bm{\lambda}\backslash\bm{\mu}}q^{(\sum_{\square\in\bm{\mu}}^{\sigma_{\square}=\sigma_{\blacksquare}+1,,c_{\square}>c_{\blacksquare}}+\sum_{\square\in\bm{\mu}}^{\sigma_{\square}=\sigma_{\blacksquare}-1,c_{\square}>c_{\blacksquare}}-2\sum_{\square\in\bm{\mu}}^{\sigma_{\square}=\sigma_{\blacksquare},,c_{\square}>c_{\blacksquare}})}
\end{align}

Here $c_{\square}=x-y+N_i$ stands for the content of the box $\square\in\bm{\lambda}$. $\sigma_{\square}$ stands for the color of the box $\square\in\bm{\lambda}$. The formula is due to the following formula:
\begin{equation}
\begin{aligned}
\text{l.d.}(\prod_{\square\in\bm{\mu}}^{\blacksquare\in\bm{\lambda}\backslash\bm{\mu}}\zeta(\frac{\chi_{\blacksquare}}{\chi_{\square}}))^{(+)}=&\text{l.d.}(\prod_{\blacksquare\in\bm{\lambda}\backslash\bm{\mu}}\frac{\prod_{\square\in\bm{\mu}}^{\sigma_{\square}=\sigma_{\blacksquare}+1}[\frac{t\chi_{\square}}{q\chi_{\blacksquare}}]\prod_{\square\in\bm{\mu}}^{\sigma_{\square}=\sigma_{\blacksquare}-1}[\frac{\chi_{\square}}{qt\chi_{\blacksquare}}]}{\prod_{\square\in\bm{\mu}}^{\sigma_{\square}=\sigma_{\blacksquare}}[\frac{\chi_{\square}}{\chi_{\blacksquare}}][\frac{\chi_{\square}}{q^2\chi_{\blacksquare}}]})^{(+)}\\
=&\prod_{\blacksquare\in\bm{\lambda}\backslash\bm{\mu}}q^{1/2(\sum_{\square\in\bm{\mu}}^{\sigma_{\square}=\sigma_{\blacksquare}+1,,c_{\square}>c_{\blacksquare}}+\sum_{\square\in\bm{\mu}}^{\sigma_{\square}=\sigma_{\blacksquare}-1,c_{\square}>c_{\blacksquare}}-2\sum_{\square\in\bm{\mu}}^{\sigma_{\square}=\sigma_{\blacksquare},,c_{\square}>c_{\blacksquare}})}
\end{aligned}
\end{equation}

If we take $\bm{\lambda}\backslash\bm{\mu}=(\bm{\lambda}_1,\bm{\lambda}_2)\backslash(\bm{\mu}_1,\bm{\mu}_2)$ such that $\bm{\lambda}_1\backslash\bm{\mu}_1=[i,a)$, $\bm{\lambda}_2\backslash\bm{\mu}_2=[a,j)$, we have the decomposition:
\begin{equation}\label{decomposition-v-coeff}
\begin{aligned}
&\prod_{\blacksquare\in\bm{\lambda}\backslash\bm{\mu}}q^{(\sum_{\square\in\bm{\mu}}^{\sigma_{\square}=\sigma_{\blacksquare}+1,,c_{\square}>c_{\blacksquare}}+\sum_{\square\in\bm{\mu}}^{\sigma_{\square}=\sigma_{\blacksquare}-1,c_{\square}>c_{\blacksquare}}-2\sum_{\square\in\bm{\mu}}^{\sigma_{\square}=\sigma_{\blacksquare},,c_{\square}>c_{\blacksquare}})}\\
=&\prod_{\blacksquare\in\bm{\lambda}_1\backslash\bm{\mu}_1}q^{(\sum_{\square\in\bm{\mu}_1}^{\sigma_{\square}=\sigma_{\blacksquare}+1,,c_{\square}>c_{\blacksquare}}+\sum_{\square\in\bm{\mu}_1}^{\sigma_{\square}=\sigma_{\blacksquare}-1,c_{\square}>c_{\blacksquare}}-2\sum_{\square\in\bm{\mu}_1}^{\sigma_{\square}=\sigma_{\blacksquare},,c_{\square}>c_{\blacksquare}})}\\
&\times\prod_{\blacksquare\in\bm{\lambda}_2\backslash\bm{\mu}_2}q^{(\sum_{\square\in\bm{\mu}_2}^{\sigma_{\square}=\sigma_{\blacksquare}+1,,c_{\square}>c_{\blacksquare}}+\sum_{\square\in\bm{\mu}_2}^{\sigma_{\square}=\sigma_{\blacksquare}-1,c_{\square}>c_{\blacksquare}}-2\sum_{\square\in\bm{\mu}_2}^{\sigma_{\square}=\sigma_{\blacksquare},,c_{\square}>c_{\blacksquare}})}\\
&\times\prod_{\blacksquare\in\bm{\lambda}_2\backslash\bm{\mu}_2}q^{(\sum_{\square\in\bm{\mu}_1}^{\sigma_{\square}=\sigma_{\blacksquare}+1,,c_{\square}>c_{\blacksquare}}+\sum_{\square\in\bm{\mu}_1}^{\sigma_{\square}=\sigma_{\blacksquare}-1,c_{\square}>c_{\blacksquare}}-2\sum_{\square\in\bm{\mu}_1}^{\sigma_{\square}=\sigma_{\blacksquare},,c_{\square}>c_{\blacksquare}})}\\
=&\frac{\text{l.d.}(\prod_{\square\in\bm{\mu}_1}^{\blacksquare\in\bm{\lambda}_1\backslash\bm{\mu}_1}\zeta(\frac{\chi_{\blacksquare}}{\chi_{\square}}))^{(+)}}{\text{l.d.}(\prod_{\square\in\bm{\mu}_1}^{\blacksquare\in\bm{\lambda}_1\backslash\bm{\mu}_1}\zeta(\frac{\chi_{\square}}{\chi_{\blacksquare}}))^{(-)}}\frac{\text{l.d.}(\prod_{\square\in\bm{\mu}_2}^{\blacksquare\in\bm{\lambda}_2\backslash\bm{\mu}_1}\zeta(\frac{\chi_{\blacksquare}}{\chi_{\square}}))^{(+)}}{\text{l.d.}(\prod_{\square\in\bm{\mu}_2}^{\blacksquare\in\bm{\lambda}_2\backslash\bm{\mu}_2}\zeta(\frac{\chi_{\square}}{\chi_{\blacksquare}}))^{(-)}}q^{-[i,a)\cdot C\mbf{v}_1}
\end{aligned}
\end{equation}

For the $\frac{1}{\prod_{\blacksquare,\blacksquare'\in\bm{\lambda}\backslash\bm{\mu}}\zeta(\frac{\chi_{\blacksquare}}{\chi_{\blacksquare'}})^{(-)}}$ part, we have the decomposition.
\begin{equation}\label{decomposition-denom-coeff}
\begin{aligned}
\text{l.d.}\frac{1}{\prod_{\blacksquare,\blacksquare'\in\bm{\lambda}\backslash\bm{\mu}}\zeta(\frac{\chi_{\blacksquare}}{\chi_{\blacksquare'}})^{(-)}}=&\text{l.d.}(\frac{1}{\prod_{\blacksquare,\blacksquare'\in\bm{\lambda}_1\backslash\bm{\mu}_1}\zeta(\frac{\chi_{\blacksquare}}{\chi_{\blacksquare'}})^{(-)}\prod_{\blacksquare,\blacksquare'\in\bm{\lambda}_2\backslash\bm{\mu}_2}\zeta(\frac{\chi_{\blacksquare}}{\chi_{\blacksquare'}})^{(-)}}\frac{1}{\prod_{\blacksquare\in\bm{\lambda_1}\backslash\bm{\mu_1}}^{\blacksquare'\in\bm{\lambda_2}\backslash\bm{\mu}_2}\zeta(\frac{\chi_{\blacksquare}}{\chi_{\blacksquare'}})^{(-)}})\\
=&\text{l.d.}(\frac{1}{\prod_{\blacksquare,\blacksquare'\in\bm{\lambda}_1\backslash\bm{\mu}_1}\zeta(\frac{\chi_{\blacksquare}}{\chi_{\blacksquare'}})^{(-)}\prod_{\blacksquare,\blacksquare'\in\bm{\lambda}_2\backslash\bm{\mu}_2}\zeta(\frac{\chi_{\blacksquare}}{\chi_{\blacksquare'}})^{(-)}})\text{l.d.}(\frac{1}{\prod_{\blacksquare\in\bm{\lambda_1}\backslash\bm{\mu_1}}^{\blacksquare'\in\bm{\lambda_2}\backslash\bm{\mu}_2}\zeta(\frac{\chi_{\blacksquare}}{\chi_{\blacksquare'}})})
\end{aligned}
\end{equation}

For the coefficients $\text{l.d.}P^{\mbf{m}}_{[i,j)}(\bm{\lambda}\backslash\bm{\mu})$, we recall the formula in section $6.3$ in \cite{N15} that $\text{l.d.}P^{\mbf{m}}_{[i,j)}(\bm{\lambda}\backslash\bm{\mu})$ is nonzero only for $\bm{\lambda}\backslash\bm{\mu}$ being the cavalcade $C$, and it has the formula:
\begin{align}
\text{l.d.}P^{\mbf{m}}_{[i,j)}(C)=\prod_{\square\in C}\chi_{\square}\cdot q^{\#_C+\text{ht}(C)+\text{ind}^{\mbf{m}}_{C}}\prod_{i\leq a<b<j}^{a\leftrightarrow b}\text{l.d.}\zeta(\frac{\chi_b}{\chi_a})
\end{align}

The double arrow $a\leftrightarrow b$ means that the box $\square_a$ and $\square_b$ should not be next to each other in the same ribbon. 

Now if we take $\bm{\lambda}\backslash\bm{\mu}=(\bm{\lambda}_1,\bm{\lambda}_2)\backslash(\bm{\mu}_1,\bm{\mu}_2)$, for simplicity we can assume that both $\bm{\lambda}_1\backslash\bm{\mu}_1$ and $\bm{\lambda}_2\backslash\bm{\mu}_2$ are both calvacades $C_1,C_2$ such that $C_1=[i,k)$, $C_2=[k,j)$. Easy computation shows that:
\begin{align}\label{decomposition-P-coef}
\text{l.d.}P^{\mbf{m}}_{[i,j)}(C)=\text{l.d.}P^{\mbf{m}}_{[i,k)}(C_1)\text{l.d.}P^{\mbf{m}}_{[k,j)}(C_2)\prod_{\square\in C_1}^{\square'\in C_2}\text{l.d.}\zeta(\frac{\chi_{\square}}{\chi_{\square'}})
\end{align}

For the general $P^{\mbf{m}}_{[i,j)}$, using the coproduct formula:
\begin{align}
\Delta_{\mbf{m}}(P^{\mbf{m}}_{[i;j)_h})=\sum_{a=i}^{j}P^{\mbf{m}}_{[a,j)_h}\varphi_{[i,a)_h}\otimes P^{\mbf{m}}_{[i,a)_h}
\end{align}

Now use the formula for the stable envelope:
\begin{equation}
\begin{aligned}
&P^{\mbf{m}}_{[i,j)_h}\text{Stab}_{\mbf{m}}(s^{\mbf{m}}_{\bm{\mu}_1}\otimes s^{\mbf{m}}_{\bm{\mu}_2})=P^{\mbf{m}}_{[i,j)_h}s^{\mbf{m}}_{(\bm{\mu}_1,\bm{\mu}_2)},\qquad\bm{\lambda}=(\bm{\lambda}_1,\bm{\lambda}_2),\bm{\mu}=(\bm{\mu}_1,\bm{\mu}_2)\\
=&\sum_{\bm{\lambda}_1,\bm{\lambda}_2}\text{l.d.}(P^{\mbf{m}}_{[i;j)_h}((\bm{\lambda}_1,\bm{\lambda}_2)\backslash(\bm{\mu}_1,\bm{\mu}_2))\bm{\Theta}_{(\bm{\lambda}_1,\bm{\lambda}_2)\backslash(\bm{\mu}_1,\bm{\mu}_2)})\frac{\text{l.d.}(\mf{K}_{(\bm{\mu}_1,\bm{\mu}_2)})}{\text{l.d.}(\mf{K}_{(\bm{\lambda}_1,\bm{\lambda}_2)})}\cdot s^{\mbf{m}}_{(\bm{\lambda}_1,\bm{\lambda}_2)}\\
=&\sum_{\bm{\lambda}}\frac{\text{l.d.}(\prod_{\square\in\bm{\mu}}^{\blacksquare\in\bm{\lambda}\backslash\bm{\mu}}\zeta(\frac{\chi_{\blacksquare}}{\chi_{\square}})\prod_{i=1}^{\mbf{w}}[\frac{u_i}{q\chi_{\blacksquare}}])^{(+)}}{\text{l.d.}(\prod_{\square\in\bm{\mu}}^{\blacksquare\in\bm{\lambda}\backslash\bm{\mu}}\zeta(\frac{\chi_{\square}}{\chi_{\blacksquare}})\prod_{i=1}^{\mbf{w}}[\frac{\chi_{\blacksquare}}{qu_i}])^{(-)}}\cdot\frac{\prod_{\blacksquare\in\bm{\lambda}\backslash\bm{\mu}}(\prod_{\square\in\bm{\mu}}\zeta(\frac{\chi_{\blacksquare}}{\chi_{\square}})\prod_{i=1}^{\mbf{w}}[\frac{u_i}{q\chi_{\blacksquare}}])^{(0)}}{\prod_{\blacksquare,\blacksquare'\in\bm{\lambda}\backslash\bm{\mu}}\zeta(\frac{\chi_{\blacksquare}}{\chi_{\blacksquare'}})^{(-)}}\times\\
&\times\text{l.d.}(P^{\mbf{m}}_{[i;j)_h}(\bm{\lambda}\backslash\bm{\mu})s^{\mbf{m}}_{\bm{\lambda}}\\
\end{aligned}
\end{equation}
Combining the formula \ref{decomposition-denom-coeff} \ref{decomposition-w-coeff} \ref{decomposition-P-coef} \ref{decomposition-v-coeff}, we have that
\begin{equation}\label{split-coproduct-shit}
\begin{aligned}
&\text{Stab}_{\mbf{m}}(\Delta_{\mbf{m}}(P^{\mbf{m}}_{[i,j)_h})s^{\mbf{m}}_{\bm{\mu}_1}\otimes s^{\mbf{m}}_{\bm{\mu}_2})=\sum_{a=i}^j\text{Stab}_{\mbf{m}}(P^{\mbf{m}}_{[a,j)_h}\varphi_{[i,a)_h}s^{\mbf{m}}_{\bm{\mu}_1}\otimes P^{\mbf{m}}_{[i,a)_h}s^{\mbf{m}}_{\bm{\mu}_2})\\
=&\sum_{a=i}^j\sum_{\bm{\lambda}_1,\bm{\lambda}_2}\frac{\text{l.d.}(\prod_{\square\in\bm{\mu}_1}^{\blacksquare\in\bm{\lambda}_1\backslash\bm{\mu}_1}\zeta(\frac{\chi_{\blacksquare}}{\chi_{\square}})\prod_{i=1}^{\mbf{w}}[\frac{u_i}{q\chi_{\blacksquare}}])^{(+)}}{\text{l.d.}(\prod_{\square\in\bm{\mu}_1}^{\blacksquare\in\bm{\lambda}_1\backslash\bm{\mu}_1}\zeta(\frac{\chi_{\square}}{\chi_{\blacksquare}})\prod_{i=1}^{\mbf{w}}[\frac{\chi_{\blacksquare}}{qu_i}])^{(-)}}\cdot\frac{\prod_{\blacksquare\in\bm{\lambda}_1\backslash\bm{\mu}_1}(\prod_{\square\in\bm{\mu}_1}\zeta(\frac{\chi_{\blacksquare}}{\chi_{\square}})\prod_{i=1}^{\mbf{w}}[\frac{u_i}{q\chi_{\blacksquare}}])^{(0)}}{\prod_{\blacksquare,\blacksquare'\in\bm{\lambda}_1\backslash\bm{\mu}_1}\zeta(\frac{\chi_{\blacksquare}}{\chi_{\blacksquare'}})^{(-)}}\text{l.d.}(P^{\mbf{m}}_{[a;j)_h}(\bm{\lambda}\backslash\bm{\mu})\\
&\times\frac{\text{l.d.}(\prod_{\square\in\bm{\mu}_2}^{\blacksquare\in\bm{\lambda}_2\backslash\bm{\mu}_2}\zeta(\frac{\chi_{\blacksquare}}{\chi_{\square}})\prod_{i=1}^{\mbf{w}}[\frac{u_i}{q\chi_{\blacksquare}}])^{(+)}}{\text{l.d.}(\prod_{\square\in\bm{\mu}_2}^{\blacksquare\in\bm{\lambda}_2\backslash\bm{\mu}_2}\zeta(\frac{\chi_{\square}}{\chi_{\blacksquare}})\prod_{i=1}^{\mbf{w}}[\frac{\chi_{\blacksquare}}{qu_i}])^{(-)}}\cdot\frac{\prod_{\blacksquare\in\bm{\lambda}_2\backslash\bm{\mu}_2}(\prod_{\square\in\bm{\mu}_2}\zeta(\frac{\chi_{\blacksquare}}{\chi_{\square}})\prod_{i=1}^{\mbf{w}}[\frac{u_i}{q\chi_{\blacksquare}}])^{(0)}}{\prod_{\blacksquare,\blacksquare'\in\bm{\lambda}_2\backslash\bm{\mu}_2}\zeta(\frac{\chi_{\blacksquare}}{\chi_{\blacksquare'}})^{(-)}}\text{l.d.}(P^{\mbf{m}}_{[i;a)_h}(\bm{\lambda}_2\backslash\bm{\mu}_2)\\
&\times q^{(\mbf{w}_1-C\mbf{v}_2)\cdot[i,a)_h}\text{Stab}_{\mbf{m}}(s^{\mbf{m}}_{\bm{\mu}_1}\otimes s^{\mbf{m}}_{\bm{\mu}_2})\\
=&P^{\mbf{m}}_{[i,j)_h}\text{Stab}_{\mbf{m}}(s^{\mbf{m}}_{\bm{\mu}_1}\otimes s^{\mbf{m}}_{\bm{\mu}_2})=P^{\mbf{m}}_{[i,j)_h}s^{\mbf{m}}_{(\bm{\mu}_1,\bm{\mu}_2)}
\end{aligned}
\end{equation}

This gives the intertwining property of the coproduct $\Delta_{\mbf{m}}$ with the stable envelopes $\text{Stab}_{\mbf{m}}$, thus it gives the proof of the Hopf algebra map.

For the algebra map $\mc{B}_{\mbf{m}}\rightarrow U_{q}(\mf{g}_{\mbf{m}})$, we claim the following thing: 
\begin{lem}
If $a\in U_{q}(\mf{g}_{w})$ such that $\Delta_{\mbf{m}}(a)\subset\mc{B}_{\mbf{n}}\hat{\otimes}\mc{B}_{\mbf{n}}$, $\mbf{n}=\mbf{m}$ . Then similarly for $b\in U_q^{MO}(\mf{g}_{w})$ such that $\Delta_{\mbf{m}}(b)\in U_q^{MO}(\mf{g}_{w})\hat{\otimes} U_q^{MO}(\mf{g}_{w})$, $w$ contains the slope point $\mbf{m}$.
\end{lem}
\begin{proof}
Now suppose that $a\in\mc{B}_{\mbf{n}}$ such that $\Delta_{\mbf{m}}(a)\subset\mc{B}_{\mbf{n}}\hat{\otimes}\mc{B}_{\mbf{n}}$, and we choose a monotone path from $\mbf{n}$ to $\mbf{m}$, and now use the formula:
\begin{align}
\Delta_{\mbf{m}}(a)=T_{\mbf{m},\mbf{n}}\Delta_{\mbf{n}}(a)T_{\mbf{m},\mbf{n}}^{-1}
\end{align}

Since $\Delta_{\mbf{n}}(a)\subset\mc{B}_{\mbf{n}}\hat{\otimes}\mc{B}_{\mbf{n}}$, now we write the coproduct of $a$ as:
\begin{align}
\Delta_{\mbf{n}}(a)=\sum a^{(1)}\otimes a^{(2)}
\end{align}

Without loss of generality, we could reduce the case to $P^{\mbf{n}}_{[i;j)}$ and thus by the coproduct formula one could further reduce the proof to $P^{\mbf{n}}_{[i;i+1)_h}$, and in this case:
\begin{align}
\Delta_{\mbf{n}}(P^{\mbf{n}}_{[i;i+1)_h})=P^{\mbf{n}}_{[i;i+1)_h}\otimes\varphi_{[i;i+1)_h}+1\otimes P^{\mbf{n}}_{[i;i+1)_h}
\end{align}

Then the first part of the lemma is true following from the fact that $[a,P^{\mbf{n}}_{[i;i+1)_h}]\notin\mc{B}_{\mbf{n}}$ if $a\in\mc{B}_{\mbf{m}}$ with $\mbf{m}\neq\mbf{n}$ without $\varphi_{i}$ kind of elements. This can be checked via computing the slope of the element $[a,P^{\mbf{n}}_{[i;i+1)_h}]$, which turns out to be not of the slope $\mbf{n}$ unless $a\in\mc{B}_{\mbf{m}}\cap\mc{B}_{\mbf{n}}$.

For the second part of the lemma, recall that $U_{q}^{MO}(\mf{g}_{\mbf{m}})$ is defined as the matrix coefficients of the MO wall $R$-matrix $R_{\mbf{m},\mbf{m}+\epsilon\bm{\theta}}:=\lim_{\epsilon\rightarrow0}\text{Stab}_{\mbf{m}+\epsilon\bm{\theta}}^{-1}\circ\text{Stab}_{\mbf{m}}$ with $\bm{\theta}\in\mbb{Z}^{r}$. By simple calculation, one can show that the matrix elements of $R_{\mbf{m},\mbf{m}+\epsilon\bm{\theta}}$ acting on the corresponding stable basis $s^{\mbf{m}}_{\bm{\lambda}_1}\otimes s^{\mbf{m}}_{\bm{\lambda}_2}$ depends on the value over the fixed point geometric $R$-matrix, i.e.
\begin{align}\label{fixed-point-R-matrix-stable-basis}
R_{\mbf{m},\mbf{m}+\epsilon\bm{\theta}}(s^{\mbf{m}}_{\bm{\lambda}_1}\otimes s^{\mbf{m}}_{\bm{\lambda}_2})=\lim_{\epsilon\rightarrow0}\sum_{\bm{\mu}_1,\bm{\mu}_2}(R^{\sigma}_{\mbf{m},\mbf{m}+\epsilon\bm{\theta}})_{(\lambda_1,\lambda_2),(\mu_1,\mu_2)}s^{\mbf{m}+\epsilon\bm{\theta}}_{\bm{\mu}_1}\otimes s^{\mbf{m}+\epsilon\bm{\theta}}_{\bm{\mu}_2}
\end{align}

For the precise formula, one could use the result of \cite{D21} to write down the explicit formula of the fixed point $R$-matrix.

The $R$-matrix has the root decomposition:
\begin{align}
R_{\mbf{m},\mbf{m}+\epsilon\bm{\theta}}=1+\sum_{\langle\alpha,\theta\rangle>0}(R_{\mbf{m},\mbf{m}+\epsilon\bm{\theta}})_{\alpha},\qquad (R_{\mbf{m},\mbf{m}+\epsilon\bm{\theta}})^{-1}=1+\sum_{\langle\alpha,\theta\rangle>0}(R_{\mbf{m},\mbf{m}+\epsilon\bm{\theta}}^{-1})_{\alpha}
\end{align}
We have the formula of the coproduct acting on the $R$-matrices:
\begin{equation}
\begin{aligned}
\Delta_{\mbf{m}}\langle s_{\bm{\lambda}}^{\mbf{m}}|(1\otimes m)(R_{\mbf{m},\mbf{m}+\epsilon\bm{\theta}})_{\alpha}|s_{\bm{\mu}}^{\mbf{m}}\rangle=\langle s_{\bm{\lambda}}^{\mbf{m}}|(1\otimes m)\bigoplus_{\beta+\gamma=\alpha}(R_{\mbf{m},\mbf{m}+\epsilon\bm{\theta}})_{\beta,13}(R_{\mbf{m},\mbf{m}+\epsilon\bm{\theta}})_{\gamma,23}|s_{\bm{\mu}}^{\mbf{m}}\rangle
\end{aligned}
\end{equation}

Now if we insert a finite-rank operator $m\in\text{End}(K(\mbf{w}))$ of rank $\alpha$ and consider $(1\otimes m)R_{\mbf{m},\mbf{m}+\epsilon\bm{\theta}}$. This implies that:
\begin{equation}
\begin{aligned}
\langle s_{\bm{\lambda}}^{\mbf{m}}|(1\otimes m)(R_{\mbf{m},\mbf{m}+\epsilon\bm{\theta}})_{\alpha}|s_{\bm{\mu}}^{\mbf{m}}\rangle=\text{Stab}_{\mbf{m}}\langle s_{\bm{\lambda}}^{\mbf{m}}|(1\otimes m)\bigoplus_{\beta+\gamma=\alpha}(R_{\mbf{m},\mbf{m}+\epsilon\bm{\theta}})_{\beta,13}(R_{\mbf{m},\mbf{m}+\epsilon\bm{\theta}})_{\gamma,23}|s_{\bm{\mu}}^{\mbf{m}}\rangle\rangle\text{Stab}_{\mbf{m}}^{-1}
\end{aligned}
\end{equation}

This gives that the matrix coefficients of $(R_{\mbf{m},\mbf{m}+\epsilon\bm{\theta}})_{\alpha}$ is given by:
\begin{equation}\label{MO-decomposition}
\begin{aligned}
&\langle s_{\bm{\lambda}_1}^{\mbf{m}}|\otimes \langle s_{\bm{\lambda}}^{\mbf{m}}|(1\otimes m)(R_{\mbf{m},\mbf{m}+\epsilon\bm{\theta}})_{\alpha}|s_{\bm{\mu}}^{\mbf{m}}\rangle\otimes|s_{\bm{\mu}_1}^{\mbf{m}}\rangle\\
=&\langle s^{\mbf{m}}_{\bm{\lambda}_1}\otimes s^{\mbf{m}}_{\bm{\lambda}_2}\otimes s^{\mbf{m}}_{\bm{\lambda}_3}|(1\otimes m)\bigoplus_{\beta+\gamma=\alpha}(R_{\mbf{m},\mbf{m}+\epsilon\bm{\theta}})_{\beta,13}(R_{\mbf{m},\mbf{m}+\epsilon\bm{\theta}})_{\gamma,23}|s^{\mbf{m}}_{\bm{\mu}_1}\otimes s^{\mbf{m}}_{\bm{\mu}_2}\otimes s^{\mbf{m}}_{\bm{\mu}_3}\rangle
\end{aligned}
\end{equation}

Here $\bm{\lambda}=(\bm{\lambda}_1,\bm{\lambda}_2)$, $\bm{\mu}=(\bm{\mu}_1,\bm{\mu}_2)$. Now we use the coproduct $\Delta_{\mbf{n}}$ acting on $(R_{\mbf{m},\mbf{m}+\epsilon\bm{\theta}})_{\alpha}$, and it turns out that 

Thus:
\begin{equation}
\begin{aligned}
&\Delta_{\mbf{n}}\langle s_{\bm{\lambda}}^{\mbf{m}}|(1\otimes m)(R_{\mbf{m},\mbf{m}+\epsilon\bm{\theta}})_{\alpha}|s_{\bm{\mu}}^{\mbf{m}}\rangle\\
=&\sum_{\beta_1+\beta_2+\beta_3+\beta_4=\alpha}(T_{\mbf{n},\mbf{m}})_{\beta_1}\langle s_{\bm{\lambda}}^{\mbf{m}}|(1\otimes m)(R_{\mbf{m},\mbf{m}+\epsilon\bm{\theta}})_{\beta_2,13}(R_{\mbf{m},\mbf{m}+\epsilon\bm{\theta}})_{\beta_3,23}|s_{\bm{\mu}}^{\mbf{m}}\rangle(T_{\mbf{n},\mbf{m}}^{-1})_{\beta_4}
\end{aligned}
\end{equation}

Here $T_{\mbf{n},\mbf{m}}:=\text{Stab}_{\mbf{n}}^{-1}\circ\text{Stab}_{\mbf{m}}$. Without loss of generality we can assume that $R_{\mbf{m},\mbf{m}+\epsilon\bm{\theta}}$ contains only the root of the form $k\alpha$.
\begin{equation}
\begin{aligned}
&\Delta_{\mbf{n}}\langle s_{\bm{\lambda}}^{\mbf{m}}|(1\otimes m)(R_{\mbf{m},\mbf{m}+\epsilon\bm{\theta}})_{k\alpha}|s_{\bm{\mu}}^{\mbf{m}}\rangle\\
&=\bigoplus_{\substack{0<k_1+k_2<k\\k_1+k_2+k_3+k_4=k}}(T_{\mbf{n},\mbf{m}})_{k_1\alpha}\langle s_{\bm{\lambda}}^{\mbf{m}}|(1\otimes m)(R_{\mbf{m},\mbf{m}+\epsilon\bm{\theta}})_{k_3\alpha,13}(R_{\mbf{m},\mbf{m}+\epsilon\bm{\theta}})_{k_4\alpha,23}|s_{\bm{\mu}}^{\mbf{m}}\rangle(T_{\mbf{n},\mbf{m}}^{-1})_{k_2\alpha}\\
&+\bigoplus_{k_1+k_2=k}\langle s_{\bm{\lambda}}^{\mbf{m}}|(1\otimes m)(R_{\mbf{m},\mbf{m}+\epsilon\bm{\theta}})_{k_1\alpha,13}(R_{\mbf{m},\mbf{m}+\epsilon\bm{\theta}})_{k_2\alpha,23}|s_{\bm{\mu}}^{\mbf{m}}\rangle
\end{aligned}
\end{equation}

Now note that $(T_{\mbf{n},\mbf{m}})_{k\alpha}$ is written as the product of the wall $R$-matrices $R_{w_1}\cdots R_{w_n}$. Let us assume that the corresponding root for the wall $w_i$ can be written as $\alpha_i$ and the slope $\mbf{m}_i$. Thus this requires that $k_1\alpha_1+\cdots+k_n\alpha_n=k\alpha$ and $\sum_{i=1}^nk_i\langle\mbf{m}_i,\alpha_i\rangle=k\langle\mbf{m},\alpha\rangle$. while since $\alpha_i\in\mbb{N}^I$ and the path from $\mbf{m}$ to $\mbf{n}$ is monotone, i.e. will go through the wall only once. This implies that $\sum_{i=1}^nk_i\langle\mbf{m}-\mbf{m}_i,\alpha_i\rangle>0$. This implies that:
\begin{align}\label{inequality-good}
\sum_{i=1}^nk_i\langle\mbf{m}-\mbf{m}_i,\alpha_i\rangle>0
\end{align}

\begin{lem}
Given two wall subalgebra $U_{q}(\mf{g}_{w_1})$ and $U_{q}(\mf{g}_{w_2})$ such that they correspond to the same root $\alpha$ while $\langle s_1,\alpha\rangle\neq\langle s_2,\alpha\rangle$. Then the intersection $U_{q}(\mf{g}_{w_1})\cap U_{q}(\mf{g}_{w_2})$ only contains the Cartan elements.
\end{lem}
\begin{proof}
This can be done by the following: If there exists $a\in U_{q}(\mf{g}_{w_1})\cap U_{q}(\mf{g}_{w_2})$, note that we can fix that $a\in(U_{q}(\mf{g}_{w_1})\cap U_{q}(\mf{g}_{w_2}))_{k\alpha}$ such that it can be written as $\text{tr}_{V_1}((1\otimes m_1)R_{w_1})=\text{tr}_{V_2}((1\otimes m_2)R_{w_2})$. But recall that if we fix a subtorus $A\subset T$ which acts on the fixed point component trivially, and we consider the $A$-degree of $a$, the $A$-degree of $(R_{w_1})_{k\alpha}$ and $(R_{w_2})_{k\alpha}$ are different by $\langle s_1-s_2,k\alpha\rangle\in\mbb{Z}$, while the operators $m_1,m_2$ are assumed to have no $A$-degree. Thus it is a contradiction. Thus only the degree zero part intersects, and the degree zero part is generated by the Cartan elements.

\end{proof}

Now back to our cases, since $\sum_{i=1}^{n}k_{i}\langle\mbf{m}-\mbf{m}_i,\alpha\rangle\in\mbb{Z}$. Now first note that the elements $(T_{\mbf{n},\mbf{m}})_{k\alpha}$ contains elements on the wall $w'$ such that $\langle k'\alpha,\mbf{m'}\rangle\in\mbb{Z}$ for some slope point $\mbf{m}'$. Now by the formula \ref{inequality-good}, this implies that $\mbf{m}'\neq\mbf{m}$ for all of $\mbf{m}'$. Thus by the lemma we know that the elements $(T_{\mbf{m},\mbf{n}})_{k\alpha}$ does not live in $U_{q}(\mf{g}_{w})$ with the wall containing $\mbf{m}$. This finishes the proof.

\end{proof}

Now via the embedding $U_{q}(\mf{g}_{w})\hookrightarrow U_{q}^{MO}(\hat{\mf{g}}_{Q})$, since $\Delta_{\mbf{m}}(a)\in\mc{B}_{\mbf{m}}\otimes\mc{B}_{\mbf{m}}$ for $a\in U_{q}(\mf{g}_{w})\subset\mc{B}_{\mbf{m}}$. By the result in \cite{OS22}, the restriction of $\Delta_{\mbf{m}}$ of $U_{q}^{MO}(\hat{\mf{g}}_{Q})$ to $U_{q}^{MO}(\mf{g}_w)$ with $\mbf{m}$ on the wall $w$ is the coproduct $\Delta_{w}$ on $U_{q}^{MO}(\mf{g}_w)$.

Now if $a\notin U_{q}^{MO}(\mf{g}_{w})$, this means that $\Delta_{\mbf{m}}(a)\notin U_{q}^{MO}(\mf{g}_{w})\otimes U_{q}^{MO}(\mf{g}_{w})$, which is a contradiction.
\end{proof}

\subsection{Root subalgebra and dimension counting}
It can be proved that $\mf{n}_{w}^{MO}$ is the classical limit of the wall subalgebra $U_{q}(\mf{n}_{w}^{MO})$:
\begin{lem}\label{q-1-degeneration}
There is an isomorphism:
\begin{align}
U_{q}(\mf{n}_w^{MO})/(q-1)U_{q}(\mf{n}_w^{MO})=U(\mf{n}_w^{MO})
\end{align}
\end{lem}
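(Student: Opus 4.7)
My plan is to construct a surjective algebra homomorphism $\phi\colon U(\mf{n}_w^{MO})\to U_q(\mf{n}_w^{MO})/(q-1)U_q(\mf{n}_w^{MO})$ from the classical limit of the wall $R$-matrix, and then upgrade it to an isomorphism via a graded PBW-type dimension count. From \ref{classical-r-matrix} we have $R_w^{-}=\text{Id}+(q-1)r_w^{-}+O((q-1)^2)$, and by construction $\mf{n}_w^{MO}$ is the Lie subalgebra of $\mf{g}_Q^{MO}$ generated by the matrix coefficients of $r_w^{-}$. For each finite-rank $m\in\text{End}(V_0)$, the generator $E(m):=\text{Tr}_{V_0}((1\otimes m)(R_w^{-})^{-1}_{V,V_0}|_{a_0=1})$ of $U_q(\mf{n}_w^{MO})$ differs from the scalar $\text{Tr}(m)$ by an element of $(q-1)U_q(\mf{n}_w^{MO})$, so $\widetilde{E}(m):=(E(m)-\text{Tr}(m))/(q-1)$ is a well-defined integral lift of the classical Lie generator $e(m):=\text{Tr}_{V_0}((1\otimes m)r_w^{-})\in\mf{n}_w^{MO}$. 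Define $\phi$ on Lie generators by $e(m)\mapsto\widetilde{E}(m)\bmod(q-1)$.

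To verify that $\phi$ extends to an algebra homomorphism, I would compute the commutator $[\widetilde{E}(m_1),\widetilde{E}(m_2)]$ modulo $(q-1)$ and match it against the Maulik-Okounkov Lie bracket $[E(m_1),E(m_2)]=E((\text{tr}\otimes 1)[r_{VV_0},m_1\otimes m_2])$ recalled in the excerpt. Expanding the quantum Yang-Baxter equation for $q^\Omega R_w^{-}$ to first order in $(q-1)$ produces exactly the classical Yang-Baxter equation for $r_w^{-}$, and the standard RTT-style manipulation shows that $[\widetilde{E}(m_1),\widetilde{E}(m_2)]\equiv\widetilde{E}(m_{12})\pmod{(q-1)}$, where $m_{12}$ is chosen so that $e(m_{12})$ equals the Maulik-Okounkov bracket of $e(m_1)$ and $e(m_2)$. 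Surjectivity of $\phi$ is then immediate: every class of $E(m)$ in $U_q(\mf{n}_w^{MO})/(q-1)$ lies in the $\mbb{Z}$-span of $\{1\}\cup\mathrm{Im}(\phi)$, so the quotient is generated by the image of $\phi$.

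The main obstacle is injectivity, which I would establish through a graded dimension count. The Khoroshkin-Tolstoy factorisation $R_w^{-}=q^\Omega\prod^{\leftarrow}_\alpha R_\alpha^{-}$ of the universal wall $R$-matrix, together with the observation in the excerpt that every wall subalgebra $U_q(\mf{g}_w)$ is isomorphic to either $U_q(\mf{sl}_2)$ or $U_q(\hat{\mf{gl}}_1)$, lets one compute the $\mbb{Z}[q^{\pm1}]$-rank of each graded component $U_q(\mf{n}_w^{MO})_{\mbf{n}}$ explicitly; these ranks are finite by the finite-dimensionality lemma proved above. On the other side, the standard PBW basis of $U(\mf{n}_w^{MO})$ built from the root decomposition $\mf{n}_w^{MO}=\bigoplus_\alpha\mf{n}_\alpha^{MO}$ computes the graded dimensions of $U(\mf{n}_w^{MO})_{\mbf{n}}$. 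Since $\phi$ is surjective and the two characters agree in each root-grade, $\phi$ must be an isomorphism, completing the proof.
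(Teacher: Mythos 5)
Your plan has a structural flaw at the very first step, before the bracket computation even starts. The map $\phi$ is built from the lifts $\widetilde{E}(m)=(E(m)-\Tr(m))/(q-1)$, and for $\phi$ to take values in $U_{q}(\mf{n}_w^{MO})/(q-1)U_{q}(\mf{n}_w^{MO})$ you need $\widetilde{E}(m)\in U_{q}(\mf{n}_w^{MO})$. With the integral form used in the paper (the $\mbb{Z}[q^{\pm1}]$-algebra generated by the matrix coefficients $E(m)$ of the wall $R$-matrix), you face a dichotomy: if $\widetilde{E}(m)\notin U_{q}(\mf{n}_w^{MO})$ then $\phi$ is simply not defined; if $\widetilde{E}(m)\in U_{q}(\mf{n}_w^{MO})$, then every positive-degree generator satisfies $E(m)=(q-1)\widetilde{E}(m)\in(q-1)U_{q}(\mf{n}_w^{MO})$ (note $\Tr_{V_0}(m)=0$ for degree-shifting $m$), so every monomial of length at least one dies in the quotient, the quotient collapses to the scalars, and it cannot be $U(\mf{n}_w^{MO})$. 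Your surjectivity paragraph in fact proves this collapse rather than surjectivity: showing that each class of $E(m)$ lies in the $\mbb{Z}$-span of $\{1\}\cup\mathrm{Im}(\phi)$ only shows the generators become scalars, not that $\mathrm{Im}(\phi)$ generates. The paper's (terse) proof avoids this by taking the divided coefficients as the relevant generators --- that is exactly what the computation $\lim_{q\to1}(R_w^{+}-\mathrm{Id})/(q-1)=r_w^{+}$ is doing --- and the comparison map then goes by reduction from $U_{q}(\mf{n}_w^{MO})/(q-1)$ onto $U(\mf{n}_w^{MO})$, rather than from $U(\mf{n}_w^{MO})$ into the quotient by choosing lifts; your direction would moreover require verifying that \emph{all} relations of $U(\mf{n}_w^{MO})$ (not just the bracket of two generators) hold among the lifts modulo $(q-1)$.

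The injectivity step also does not work as written. To compute the graded $\mbb{Z}[q^{\pm1}]$-ranks of $U_{q}(\mf{n}_w^{MO})$ you invoke the Khoroshkin--Tolstoy factorisation and the statement that wall subalgebras are $U_{q}(\mf{sl}_2)$ or $U_{q}(\hat{\mf{gl}}_1)$; but in the paper those descriptions concern the quantum toroidal wall subalgebras $U_{q}(\mf{g}_w)\subset\mc{B}_{\mbf{m}}$, and transporting them to the Maulik--Okounkov wall subalgebras is essentially the content of Theorem \ref{confirmation-of-generators} and of the main theorem, which lie downstream of this lemma --- so the argument is circular. Similarly, identifying the graded dimension of $U(\mf{n}_w^{MO})$ via the root decomposition of $\mf{n}_w^{MO}$ presupposes the Botta--Davison identification at the level of the wall, which is again what the surrounding section is trying to establish. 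Finally, even granting the ranks, comparing a rank over $\mbb{Z}[q^{\pm1}]$ with the dimension of the fibre at $q=1$ requires a freeness/flatness statement for each graded piece that you have not supplied. Note that the paper itself proves nothing beyond the generator statement (it only uses the resulting inequality $\dim U(\mf{n}_w)_{\mbf{k}}\le\dim U_{q}(\mf{n}_w)_{\mbf{k}}$ afterwards), so precisely the parts of your proposal that go beyond the paper are the parts that fail.
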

\begin{proof}
By definition $U_{q}(\mf{n}_{w}^{MO})$ is generated by the matrix coefficients of $R_{w}^+=\text{Id}+U_{w}^+$. The left-ideal $(q-1)U_{q}(\mf{n}_{w}^{MO})$ makes sense since $R_{w}^{MO}$ is an integral $K$-theory class and it is generated by the matrix coefficients of $(q-1)R_{w}^+$, and now by definition:
\begin{align}
\lim_{q\rightarrow1}\frac{R_w^+-\text{Id}}{q-1}=r_{w}^+
\end{align}

which generates the Lie algebra $\mf{n}_w^{MO}$.
\end{proof}

It is easy to see that $U_{q}(\mf{n}_w^{MO})$ and $U(\mf{n}_w^{MO})$ has the grading:
\begin{align}
U_{q}(\mf{n}_w^{MO})=\bigoplus_{\mbf{k}\in\mbb{N}^I}U_{q}(\mf{n}_w^{MO})_{\mbf{k}},\qquad U(\mf{n}_w^{MO})=\bigoplus_{\mbf{k}\in\mbb{N}^I}U(\mf{n}_w^{MO})_{\mbf{k}}
\end{align}

It is easy to see that each graded pieces $U_{q}(\mf{n}_{w})_{\mbf{k}}$ and $U(\mf{n}_{w})_{\mbf{k}}$ are finite-dimensional. From the Lemma \ref{q-1-degeneration} we can have that:
\begin{align}
\text{dim}(U(\mf{n}_{w})_{\mbf{k}})\leq\text{dim}(U_{q}(\mf{n}_w)_{\mbf{k}})
\end{align}

Moreover, for the Lie algebra $\mf{g}_{Q}$, we have the decomposition:
\begin{align}
\mf{g}_{Q}^{MO}=\bigcup_{w\in\text{Walls}_0}\mf{g}_w^{MO}
\end{align}
with the walls defined as:
\begin{align}
w=\{s\in\mbb{R}^I|\langle\alpha,s\rangle=0\}
\end{align}

The wall set for the $K$-theoretic stable envelope is actually the "affinisation" of the roots for the Lie algebra $\mf{n}_{Q}^{MO}$:
\begin{prop}
The wall set for the $K$-theoretic stable envelope is of the following type:
\begin{align}
w\supset\{s\in\mbb{R}^I|\langle\alpha,s\rangle=n\}
\end{align}
with $\alpha$ the roots of the Maulik-Okounkov Lie algebra $\mf{n}_{Q}^{MO}$.
\end{prop}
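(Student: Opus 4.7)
The plan is to combine the hyperplane characterization of walls supplied by the preceding proposition with the classical ($q\to 1$) limit of the wall subalgebras, so that every wall is forced to sit along a hyperplane cut out by an actual root of $\mf{n}_Q^{MO}$.

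First I would use the preceding proposition, which already shows that every wall is contained in a hyperplane of the form $\{\mbf{m}\in\mbb{R}^I:\mbf{m}\cdot\mbf{a}=n\}$ for some $\mbf{a}\in\mbb{Z}^I$ and $n\in\mbb{Z}$. The task is to identify admissible $\mbf{a}$ with roots of $\mf{n}_Q^{MO}$. To a wall $w$ of this form I attach the wall subalgebra $U_q(\mf{g}_w)\subset U_q^{MO}(\hat{\mf{g}}_Q)$ generated by the wall $R$-matrix $R_w^+$, and extract its positive nilpotent half $U_q(\mf{n}_w)$. By Lemma~\ref{q-1-degeneration} we have $U_q(\mf{n}_w)/(q-1)U_q(\mf{n}_w)\cong U(\mf{n}_w^{MO})$, where $\mf{n}_w^{MO}\subset\mf{n}_Q^{MO}$ is the Lie subalgebra generated by the classical limit $r_w^+$.

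Next I would exploit the root decomposition of $\mf{n}_Q^{MO}$. A root vector $e_\alpha\in\mf{g}_\alpha^{MO}$ can contribute to $\mf{n}_w^{MO}$ only if the monomial $u^{\langle\alpha,\mbf{m}\rangle}$ appearing in the corresponding matrix coefficient of $R_w^+$ is well-defined (integer exponent) for \emph{every} $\mbf{m}\in w$, i.e.\ $\langle\alpha,\mbf{m}\rangle\in\mbb{Z}$ on all of $w$. Along a wall of the form $\{\mbf{m}\cdot\mbf{a}=n\}$ this is a constant-on-$w$ condition, which forces $\alpha$ to be a scalar multiple of $\mbf{a}$ (otherwise $\langle\alpha,\mbf{m}\rangle$ would vary continuously as $\mbf{m}$ runs through the hyperplane). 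Writing $\mbf{a}=k\alpha_0$ with $\alpha_0\in\mbb{Z}^I$ primitive and $k\in\mbb{Z}$, the wall equation becomes $\langle\alpha_0,\mbf{m}\rangle=n/k$, and integrality from the previous proposition implies $n/k\in\mbb{Z}$. Thus $w\supset\{\mbf{m}:\langle\alpha_0,\mbf{m}\rangle=n'\}$ for some $n'\in\mbb{Z}$.

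The main obstacle is the step that produces $\alpha_0$ as an \emph{actual} root of $\mf{n}_Q^{MO}$, not merely as an integer vector. For this one must show $\mf{n}_w^{MO}\neq 0$ whenever $w$ is a genuine wall. I would argue as follows: if $w$ is a wall then by definition $\mathrm{Stab}_{\sigma,\mbf{m}+\epsilon\mc{L}}\neq\mathrm{Stab}_{\sigma,\mbf{m}}$, hence $R_w^+\neq\mathrm{Id}$; taking the classical limit \eqref{classical-r-matrix} yields a nonzero element $r_w^+\in\mf{n}_Q^{MO}$ lying in the root space $\mf{g}_\alpha^{MO}$ for some $\alpha$ proportional to $\mbf{a}$. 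This is precisely the root $\alpha_0$ we need, and it realizes the containment $w\supset\{\langle\alpha_0,\mbf{m}\rangle=n'\}$ by the dimension count (both sides are hyperplanes in $\mbb{R}^I$). The subtle point is that one should know \emph{a priori} that the candidate hyperplanes from the preceding proposition are not spurious; the above argument handles this by only invoking roots that are forced upon us by non-triviality of $R_w^+$, making the correspondence $w\leftrightarrow(\alpha,n)$ tautological on the set of honest walls.
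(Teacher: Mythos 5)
Your core argument follows the same route as the paper's own proof: since $R_w^{\pm}$ is an integral $K$-theory class, the wall subalgebra can be reduced modulo $(q-1)$ as in Lemma \ref{q-1-degeneration}, and its classical limit is a Lie subalgebra $\mf{n}_w^{MO}\subset\mf{n}_Q^{MO}$; because that subalgebra is graded by multiples of the wall normal (via the $u$-monomial structure of the matrix elements of $R_w^+$), the normal direction of the wall must be proportional to a root. The paper states exactly this degeneration argument, only more briefly, and your use of the preceding proposition to put the wall in the form $\{\mbf{m}\cdot\mbf{a}=n\}$ and then to force the grading onto the line $\mbb{Z}\mbf{a}$ is a legitimate filling-in of details left implicit there.

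The genuine gap is the final step, where you claim that $R_w^+\neq\mathrm{Id}$ at an honest wall forces $r_w^+\neq0$ and hence produces an actual root $\alpha_0$. The classical $r$-matrix is the normalized limit \ref{classical-r-matrix}, $\lim (R_w^{+}-\mathrm{Id})/(q-1)$, and a nonzero integral class can die in this limit: any entry of $R_w^{+}-\mathrm{Id}$ vanishing to order at least two at the degeneration point (for instance divisible by $(q-1)^2$) contributes nothing to $r_w^{+}$. So non-triviality of the wall $R$-matrix does not by itself yield a nonzero element of $\mf{n}_Q^{MO}$, and without that your identification of $\mbf{a}$ with a root of $\mf{n}_Q^{MO}$ collapses for such a wall. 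Note that the paper does not close this point either: its proof explicitly allows $\mf{g}_w^{MO}$ to correspond to ``the empty subset'', i.e. it only shows that a wall with nontrivial classical limit lies along an affine root hyperplane. If you want the stronger ``no spurious walls'' statement your last paragraph promises, you need additional input --- e.g. the comparison with the cohomological stable envelope and the Botta--Davison identification that the paper invokes later in Theorem \ref{confirmation-of-generators} --- rather than the bare nonvanishing of $R_w^{+}$.
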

\begin{proof}
Since $R_{w}^{\pm}$ is an integral $K$-theory class in $K_{T}(X^{A}\times X^A)$, it has the coefficients as the Laurent polynomial of $q$. This means that by the Chern character map $\text{ch}:K_{T}(X^{A}\times X^A)\rightarrow H^*_{T}(X^A\times X^A)$, passing to the substitution $q=e^{\hbar}$ and taking the smallest degree part. This means that we have transformed $U_{q}^{MO}(\mf{g}_{w})$ as a $\mbb{Q}[[\hbar]]$-algebra with $U_{q}^{MO}(\mf{g}_{w})/\hbar U_{q}^{MO}(\mf{g}_{w})\cong U(\mf{g}_{w}^{MO})$. Since $\mf{g}_{w}^{MO}\subset\mf{g}_{Q}^{MO}$. This means that $w$ must be in the set of roots of $\mf{g}_{Q}^{MO}$ corresponding to either the empty subset or root subalgebra of $\mf{g}_{Q}^{MO}$.
\end{proof}

Now we study the generators for the wall Lie algebra $\mf{g}_w^{MO}$.

Recall that given a fractional line bundle $\mc{L}_{w}$ on the wall $w$. It has been proved in \cite{OS22} that $R_{w}^+$ is triangular with monomial in spectral parametres $u$ matrix elements:
\begin{align}
R_{w}^{+}|_{F_2\times F_1}=
\begin{cases}
1&F_1=F_2\\
Au^{\langle\bm{\mu}(F_2)-\bm{\mu}(F_1),\mc{L}_{w}\rangle}&F_1\geq F_2\\
0&\text{otherwise}
\end{cases}
\end{align}

Since the wall $w$ is defined as $\langle\alpha,s\rangle=0$, we need to require that $\mbf{v}_1-\mbf{v}_2=k\alpha$. After the degeneration to the Lie algebra $\mf{n}_{w}$, it means that for $e\in\mbf{n}_{w}$, we have:
\begin{align}
e:K(M(\mbf{v},\mbf{w}))\rightarrow K(M(\mbf{v}+k\alpha,\mbf{w}))
\end{align}

This makes the Lie algebra $\mf{n}_{w}^{MO}$ as a subalgebra of $\bigoplus_{k\geq0}\mf{n}_{k\alpha}^{MO}$. Here $\mf{n}_{k\alpha}^{MO}$ is the space of root vectors of $\mf{n}^{MO}$ of the form $e_{k\alpha}$.

\begin{thm}\label{confirmation-of-generators}
If the quiver $Q$ is of the affine type $A$, then there is an isomorphism
\begin{align}
\mf{n}_{w}^{MO}\cong\bigoplus_{k\geq0}\mf{n}_{k\alpha}^{MO}
\end{align}

for the wall $w$ corresponding to the root $\alpha$ such that $w=\{s|\langle s,\alpha\rangle\in\mbb{Z}\}$.
\end{thm}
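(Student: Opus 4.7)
The plan is to chain together the inclusion $\mf{n}_w \hookrightarrow \mf{n}_w^{MO}$, obtained by passing Proposition~\ref{inclusion-slope} to the classical limit, with the tautological inclusion $\mf{n}_w^{MO} \subseteq \bigoplus_{k\geq 0}\mf{n}_{k\alpha}^{MO}$ established just above the theorem, and then prove that the composite is surjective by a case split on the type of $\alpha$ together with a global graded-dimension comparison.

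The first step is formal. Proposition~\ref{inclusion-slope} provides a Hopf algebra embedding $U_q(\mf{g}_w) \hookrightarrow U_q^{MO}(\mf{g}_w)$, and Lemma~\ref{q-1-degeneration} applied on both sides identifies their $q\to 1$ specialisations with $U(\mf{n}_w)$ and $U(\mf{n}_w^{MO})$, respectively. This produces a Lie algebra inclusion $\mf{n}_w \hookrightarrow \mf{n}_w^{MO}$, which composes with the observed inclusion $\mf{n}_w^{MO} \subseteq \bigoplus_{k\geq 0}\mf{n}_{k\alpha}^{MO}$ coming from the monomial behaviour of $R_w^+$ in the spectral parameter $u$. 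It therefore suffices to prove the outer composite $\mf{n}_w \hookrightarrow \bigoplus_{k}\mf{n}_{k\alpha}^{MO}$ is surjective.

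Next I case-split on $\alpha$. When $\alpha$ is a real root of $\hat{\mf{sl}}_n$, the Botta--Davison identification $\mf{n}_Q^{MO} \cong \hat{\mf{gl}}_n^+$ forces $k\alpha$ to be a root only for $k=1$, so $\bigoplus_k \mf{n}_{k\alpha}^{MO} = \mf{n}_\alpha^{MO}$ is one-dimensional; on the other hand $U_q(\mf{g}_w) \cong U_q(\mf{sl}_2)$ contributes a one-dimensional nilpotent classical limit spanned by $r_w^+$, so the inclusion is an equality. When $\alpha = \delta$ is imaginary, the wall subalgebra $U_q(\mf{g}_w) \cong U_q(\hat{\mf{gl}}_1)$ is generated in the sense of Proposition~\ref{implicit-heisenberg-generators} by the Heisenberg operators $p_k$, whose classical limit spans one generator per positive integer degree, and one must compare this with $\bigoplus_{k\geq 1}\mf{n}_{k\delta}^{MO}$ inside the imaginary part of $\hat{\mf{gl}}_n^+$.

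The main obstacle is exactly this imaginary-root case, where the mismatch between a single Heisenberg and the full imaginary root multiplicity needs to be reconciled. To handle it I would sum the inclusions $\mf{n}_w \hookrightarrow \mf{n}_w^{MO}$ over all walls, observe that $\bigoplus_w \mf{n}_w^{MO}$ recovers the full positive part of $\mf{n}_Q^{MO}$ by the slope factorisation of the geometric $R$-matrix from Section~3 together with Botta--Davison, and invoke the slope factorisation Theorem~\ref{factorisation} on the toroidal side to show that the graded character of $\bigoplus_w U(\mf{n}_w)$ equals that of $U(\mf{n}_Q^{MO}) \cong U(\hat{\mf{gl}}_n^+)$. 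Equality of these Hilbert series forces every intermediate inclusion to be an equality, so $\mf{n}_w^{MO} = \bigoplus_{k\geq 0}\mf{n}_{k\alpha}^{MO}$ for each wall $w$.
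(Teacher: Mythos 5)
Your opening moves coincide with the paper's: the inclusion $\mf{n}_{w}\hookrightarrow\mf{n}_{w}^{MO}$ obtained from Proposition~\ref{inclusion-slope} via the classical limit, the tautological inclusion $\mf{n}_{w}^{MO}\subseteq\bigoplus_{k\geq0}\mf{n}_{k\alpha}^{MO}$, and the use of Botta--Davison to identify the right-hand end. For real roots your argument is exactly the paper's sandwich and is fine. The divergence, and the gap, is in the imaginary-root case. The paper closes the sandwich \emph{wall by wall} by invoking the algebraic fact $\mf{n}_{w}\cong\bigoplus_{k\geq0}\mf{n}_{k\alpha}$ also when $\alpha$ is imaginary: at a generic slope point of the imaginary wall the slope subalgebra is not a single quantum Heisenberg but a tensor product of $n$ Heisenberg factors (one per cyclic node of $\bigotimes_h U_q(\hat{\mf{gl}}_{l_h})$), so its classical limit already carries the full imaginary root multiplicity $n$ in each degree $k\delta$, matching $\dim\mf{n}_{k\delta}$ in $\hat{\mf{gl}}_n^{+}$. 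With that input the chain $\bigoplus_{k}\mf{n}_{k\alpha}\cong\mf{n}_{w}\subset\mf{n}_{w}^{MO}\subset\bigoplus_{k}\mf{n}_{k\alpha}^{MO}\cong\bigoplus_{k}\mf{n}_{k\alpha}$ forces equalities with no global counting at all. Your premise of a ``mismatch between a single Heisenberg and the full imaginary root multiplicity'' is therefore not the situation the proof has to deal with, and it is precisely the point where your proposal departs from what can be justified.

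The global Hilbert-series repair you propose does not close this gap. The decomposition of $\mf{g}_{Q}^{MO}$ into wall subalgebras is a \emph{union}, not a direct sum indexed by walls: for a fixed root direction $\alpha$ there are infinitely many parallel walls $\{\langle s,\alpha\rangle=m\}$, $m\in\mbb{Z}$, whose subalgebras are conjugate to one another via $R_{w+\mc{L}}^{\pm}=\mc{L}R_{w}^{\pm}\mc{L}^{-1}$ and all land inside the same root spaces $\bigoplus_{k}\mf{n}_{k\alpha}^{MO}$. Hence ``the graded character of $\bigoplus_{w}U(\mf{n}_{w})$ equals that of $U(\hat{\mf{gl}}_n^{+})$'' is not a well-posed identity (summed naively over all walls it overcounts wildly; grouped by direction the sum is not direct), and even a correct global character identity cannot force each individual inclusion to be an equality: a union of proper, mutually conjugate subspaces attached to the translated walls could in principle still span $\bigoplus_{k}\mf{n}_{k\delta}^{MO}$, so no contradiction is produced at a single wall. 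Moreover, assuming a character factorisation of the MO side over walls (the analogue of Theorem~\ref{factorisation} for $U_{q}^{MO,+}$) is dangerously close to assuming the main theorem being proved. The fix is to replace the global count by the per-wall statement $\mf{n}_{w}\cong\bigoplus_{k\geq0}\mf{n}_{k\alpha}$ on the algebraic side, including the imaginary wall, which is exactly the ingredient the paper's proof uses.
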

\begin{proof}
Obviously by definition we have the inclusion
\begin{align}
\mf{n}_{w}^{MO}\subset\bigoplus_{k\geq0}\mf{n}_{k\alpha}^{MO}
\end{align}

Also we have the inclusion $\mf{n}_{w}\subset\mf{n}_{w}^{MO}$ from the Proposition \ref{inclusion-slope}. But we know that:
\begin{align}
\mf{n}_{w}\cong\bigoplus_{k\geq0}\mf{n}_{k\alpha}
\end{align}

By the result of Botta and Davison \cite{BD23}, we have the isomorphism:
\begin{align}
\bigoplus_{k\geq0}\mf{n}_{k\alpha}^{MO}\cong\bigoplus_{k\geq0}\mf{n}_{k\alpha}
\end{align}

which gives a chain of inclusion:
\begin{align}
\bigoplus_{k\geq0}\mf{n}_{k\alpha}\cong\mf{n}_{w}\subset\mf{n}_{w}^{MO}\subset\bigoplus_{k\geq0}\mf{n}_{k\alpha}
\end{align}

Thus we have the isomorphism $\mf{n}_{w}\cong\mf{n}_{w}^{MO}$, and this finishes the proof.

\end{proof}

\section{\textbf{Quantum difference equation}}
In this section we review the construction of the quantum difference equations in both algebraic and geometric ways. For details see \cite{OS22}\cite{Z23}.

The basic ingredients of the construction is given by the monodromy operators. For the Okounkov-Smirnov geometric monodromy operators, it is given by $\mbf{B}_{w}(z)\in\widehat{U_{q}^{MO}(\mf{g}_{w})}$, which is from the Maulik-Okounkov wall subalgebra $U_{q}^{MO}(\mf{g}_{w})$. For the algebraic one, it is given by the algebraic monodromy operators $\mbf{B}_{\mbf{m}}(z)\in\widehat{\mc{B}_{\mbf{m}}}$ with $\mc{B}_{\mbf{m}}$ the slope subalgebra.
\subsection{Okounkov-Smirnov quantum difference equation}
The Okounkov-Smirnov geometric quantum difference equation was proposed in \cite{N15} as the difference equation of the capping operator $\mbf{J}(u,z)\in K_{G}(M(\mbf{v},\mbf{w}))^{\otimes 2}\otimes\mbb{Q}[[z^{d}]]_{d\in\text{Pic}_{eff}(M(\mbf{v},\mbf{w}))}$ over the Kahler variable $z$:
\begin{align}
\mbf{J}(u,p^{\mc{L}}z)\mc{L}=\mbf{M}_{\mc{L}}(z)\mbf{J}(u,z)
\end{align}

Here we review the construction of the geometric quantum difference operator $\mbf{M}_{\mc{L}}(z)$ given by Okounkov and Smirnov in \cite{OS22}:

Here we denote $\lambda$ as $q^{\lambda}=z$, $p=q^{\tau}$ as in \cite{OS22}, and we abbreviate $R_{w}$ as $R_{w}^{MO}$ the MO wall $R$-matrix. Using the notation, we denote $F(\lambda):=F(q^{\lambda})$ as the function $F(z)$ of the Kahler variable.
For each MO wall subalgebra $U_{q}^{MO}(\mf{g}_{w})$ there are corresponding ABRR equations:
\begin{align}
J_{w}^{+}(\lambda)q_{(1)}^{-\lambda}q^{-\Omega}R_{w}^+=\hbar_{(1)}^{-\lambda}q^{\Omega}J_{w}^+(\lambda),\qquad q^{-\Omega}R_{w}^-q_{(1)}^{-\lambda}J_{w}^-(\lambda)=J_{w}^-(\lambda)q^{\Omega}q_{(1)}^{-\lambda}
\end{align}

We define:
\begin{align}
\mbf{J}_{w}^{\pm}(\lambda)=J_{w}^{\pm}(\lambda-\tau\mc{L}_w)=J_{w}^{\pm}(zp^{-\mc{L}_w}),\qquad\kappa=\frac{C\mbf{v}-\mbf{w}}{2}
\end{align}

and thus the monodromy operators are defined as:
\begin{align}
\mbf{B}_{w}(\lambda)=\mbf{m}((1\otimes S_{w})(\mbf{J}^{-}_{w}(\lambda)^{-1}))|_{\lambda\rightarrow\lambda+\kappa}
\end{align}

The first ABRR equation can be written as:
\begin{align}
\text{Ad}_{q_{(1)}^{\lambda}q^{-\Omega}}(J_{w}^+(\lambda))=J_{w}^+(\lambda)(R_{w}^+)^{-1}\in U_{q}(\mf{g}_{w})\hat{\otimes}U_{q}(\mf{g}_{w})
\end{align}

This gives the formal solution for the fusion operators $J_{w}^{+}(\lambda)$:
\begin{align}
J_{w}^+(\lambda)=\prod_{k=0}^{\substack{\rightarrow\\\infty}}\text{Ad}_{(q^{\lambda}_{(1)}q^{-\Omega})^k}(R_{w}^+)
\end{align}

Now choose a path from $s$ to $s-\mc{L}$, the Okounkov-Smirnov quantum difference operator $\mbf{B}^{s}_{\mc{L}}(\lambda)$ is defined as:
\begin{align}\label{geometric-product}
\mbf{B}^s_{\mc{L}}(\lambda)=\mc{L}\prod^{\leftarrow}_{w}\mbf{B}_{w}(\lambda)
\end{align}

Also we denote the $p$-difference operator as $\mc{A}^s_{\mc{L}}=T_{\mc{L}}^{-1}\mbf{B}^s_{\mc{L}}(\lambda)$ with $T_{\mc{L}}f(z)=f(p^{\mc{L}})$. It has been proved in \cite{OS22} that the difference operators $\{\mc{A}^{s}_{\mc{L}}\}$ form a holonomic module over the Picard torus $\text{Pic}(X)\otimes\mbb{C}^*$, i.e.
\begin{align}
[\mc{A}^{s}_{\mc{L}},\mc{A}^s_{\mc{L}'}]=0
\end{align}
and $\mc{A}^{s}_{\mc{L}}$ is independent of the choice of the path from $s$ to $s-\mc{L}$. Moreover, for the slope $s$ being in a subset of the opposite of the ample cone $\nabla\subset-C_{\text{ample}}$, $\mc{A}^{s}_{\mc{L}}$ is conjugate to $T_{\mc{L}}^{-1}\mbf{M}_{\mc{L}}(z)$ via $\text{Stab}_{\mc{C},s}$.

In this paper we will focus on the case of the affine type $A$. We will always use the expression \ref{geometric-product} as the geometric quantum difference operator. To distinguish from the algebraic quantum difference operators, we will denote them all by $\mbf{B}_{\mc{L}}^{s,MO}(\lambda)$ and $\mbf{B}_{w,MO}(\lambda)$.

\subsection{Algebraic quantum difference equation}
Here we review the construction of the algebraic quantum difference equation of affine type $A$ constructed in \cite{Z23}.

The quantum toroidal algebra $U_{q,t}(\hat{\hat{\mf{sl}}}_{n})$ admits the slope factorization:
\begin{align}
U_{q,t}(\hat{\hat{\mf{sl}}}_{n})=\bigotimes^{\rightarrow}_{\mu\in\mbb{Q}}\mc{B}_{\mbf{m}+\mu\bm{\theta}}
\end{align}

For each slope subalgebra $\mc{B}_{\mbf{m}}$, one can associate an element $J_{\mbf{m}}^{\pm}(\lambda)\in\mc{B}_{\mbf{m}}\hat{\otimes}\mc{B}_{\mbf{m}}$ such that they satisfy the ABRR equation:
\begin{align}
J_{\mbf{m}}^{+}(\lambda)q_{(1)}^{-\lambda}q^{\Omega}R_{\mbf{m}}^{+}=q_{(1)}^{-\lambda}q^{\Omega}J_{\mbf{m}}^{+}(\lambda),\qquad q^{\Omega}R_{\mbf{m}}^{-}q_{(1)}^{-\lambda}J_{\mbf{m}}^{-}(\lambda)=J_{\mbf{m}}^{-}(\lambda)q^{\Omega}q_{(1)}^{-\lambda}
\end{align}

the monodromy operator is defined as:
\begin{align}\label{defn-of-quantum-difference-operator}
\mbf{B}_{\mbf{m}}(\lambda)=m(1\otimes S_{\mbf{m}}(\mbf{J}_{\mbf{m}}^{-}(\lambda)^{-1}))|_{\lambda\rightarrow\lambda+\kappa}
\end{align}

Here $\kappa=\frac{C\mbf{v}-\mbf{w}}{2}$.

Let $\mc{L}\in Pic(X)$ be a line bundle. Now we fix a slope $s\in H^2(X,\mbb{R})$ and choose a path in $H^2(X,\mbb{R})$ from $s$ to $s-\mc{L}$. This path crosses finitely many slope points in some order $\{\mbf{m}_1,\mbf{m}_2,\cdots,\mbf{m}_m\}$.  And for this choice of a slope, line bundle and a path we associate the following operator:
\begin{align}\label{defnofqdeoperator}
\mbf{B}_{\mc{L}}^{s}(\lambda)=\mc{L}\mbf{B}_{\mbf{m}_m}(\lambda)\cdots\mbf{B}_{\mbf{m}_1}(\lambda)
\end{align}

We define the $q$-difference operators:
\begin{align}
\mc{A}^{s}_{\mc{L}}=T_{\mc{L}}^{-1}\mbf{B}^s_{\mc{L}}(\lambda)
\end{align}

It has been proved in \cite{Z23} that the $q$-difference operator $\mc{A}^{s}_{\mc{L}}$ is independent of the choice of the path from $s$ to $s-\mc{L}$ if we choose the generic path.

\subsection{Review of the analysis of the algebraic quantum difference equations}

The algebraic quantum difference equations has been analysed in detail in \cite{Z24}. Here we briefly review the result in there.

First note that at $z=0\in\text{Pic}(X)\otimes\mbb{C}^{\times}$, i.e. the corresponding chamber in $\text{Pic}(X)\otimes\mbb{R}$ corresponds to the chamber such that all the parametres $a_1,\cdots,a_n$ goes to $-\infty$. Let $M(0)=M_{n}(0)M_{n-1}(0)\cdots M_{1}(0)=\mc{L}_{n}\otimes\cdots\otimes\mc{L}_{1}$ and this matrix is diagonal in $K_{T}(X)$ with respect to the fixed point basis. And let $P$ be the matrix with columns given by fixed point eigenvectors $[\lambda]$. We denote by $E_{0}$ the diagonal matrix of eigenvalues, so that:
\begin{align}
M(0)P=PE_0
\end{align}

The fundamental solution around $z=0$ can be written as:
\begin{align}
\Psi_{0}(z)=P\Psi^{reg}_0(z)\exp(\sum_{i}\frac{\ln(E_0^{(i)})\ln(z_i)}{\ln(p)})
\end{align}

For the solution around $z=\infty$, by the Lemma 3.2 in \cite{Z23}, the operator $\mbf{M}_{\mc{O}(1)}(\infty)$ consists of the product of the form $\mbf{m}((1\otimes S_{\mbf{m}})(R_{\mbf{m}}^{-})^{-1})$. It is a diagonalizable matrix over $\mbb{Q}((q,t))$ with eigenvalues given by the formal power series of $q$ and $t$, i.e. it has distinct eigenvalues for the generic value of $q$ and $t$. We shall denote the corresponding matrix of eigenvectors as $H$, and $\mbf{E}_{\infty}$ be the digaonal matrix of eigenvalues:
\begin{align}
\mbf{M}_{\mc{O}(1)}(\infty)H=H\mbf{E}_{\infty}
\end{align}
the solution around $z=\infty$ can be written as:
\begin{align}
\Psi_{\infty}(z)=H\Psi_{\infty}^{reg}(z)\exp(\sum_{i}\frac{\ln(E_{\infty}^{(i)})\ln(z_i)}{\ln(p)})
\end{align}
such that
\begin{align}
\Psi^{reg}_{\infty}(z)\mbf{E}_{\infty}=H^{-1}\mbf{M}_{\mc{O}(1)}(z)H\Psi^{reg}_{\infty}(z)
\end{align}

The transition matrix between two fundamental solutions $\Psi_{0}(z)$ and $\Psi_{\infty}(z)$ is defined as:
\begin{align}
\textbf{Mon}(z):=\Psi_{0}(z)^{-1}\Psi_{\infty}(z)
\end{align}

Now we choose our path such that $\mbf{B}_{\mbf{m}}(z)$ are all of the generic point $\mbf{m}$. It remains to analyze $\mbf{B}_{w}(p^sz)$, by the result above, 
\begin{align}\label{limit-of-monodromy-operator}
\lim_{p\rightarrow0}\mbf{B}_{\mbf{m}}(p^sz)=
\begin{cases}
\mbf{m}((1\otimes S_{\mbf{m}})(R_{\mbf{m}}^{+})_{21}^{-1})&s<\mbf{m},\mbf{m}\in U_{s}\\
\mbf{B}_{\mbf{m}}(z)|_{p=1}&s=\mbf{m}\\
\mbf{B}_{\mbf{m}}(z_{I},\theta_{J})&s_{I}=\mbf{m}_{I},s_{I^c}\neq \mbf{m}_{I^c}\\
1&\text{Otherwise}
\end{cases}
\end{align}

Now we choose a generic representation of the quantum difference operator $\mbf{M}_{\mc{O}(1)}(z)=\mc{O}(1)\prod^{\rightarrow}_{\mbf{m}\in\text{Walls}}\mbf{B}_{\mbf{m}}(z)$ such that each monodromy operator $\mbf{B}_{\mbf{m}}(z)$ are either of the $U_{q}(\mf{sl}_2)$-type or of the $U_{q}(\hat{\mf{gl}}_1)$-type. The following theorem gives the expression for the $p\rightarrow0$ limit of the connection matrix:
\begin{thm}\label{p0-limit-connection}
For generic $s\in\mbb{Q}^n$ the connection matrix as the following asymptotic at $p\rightarrow0$:
\begin{align}
\lim_{p\rightarrow0}\textbf{Mon}^{reg}(p^sz)=
\begin{cases}
\prod^{\leftarrow}_{0\leq\mbf{m}<s}(\mbf{B}_{\mbf{m}}^*)^{-1}\cdot\mbf{T},\qquad s\geq0\\
\prod_{s<\mbf{m}<0}\mbf{B}_{\mbf{m}}^*\cdot\mbf{T},\qquad s<0
\end{cases}
\end{align}

Here $\mbf{T}:=P^{-1}H$, $0\leq\mbf{m}<s$ means the slope points $\mbf{m}$ in one generic path from $0$ to the point $s$ without intersecting $s$. Here $s\geq0$ and $s<0$ stands for $s=(s_1,\cdots,s_n)$ such that $s_{i}\leq 0$ or $s_{i}>0$ for every component $s_i$. The $\mbf{B}_{\mbf{m}}^*$ is defined as $P^{-1}\mbf{B}_{\mbf{m}}P$.
\end{thm}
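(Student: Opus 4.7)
The plan is to exploit the fact that both fundamental solutions $\Psi_0(z)$ and $\Psi_\infty(z)$ satisfy the same quantum difference equation. Concretely, the identity $\Psi_\bullet(p^{\mc{L}}z)=\mbf{M}_{\mc{L}}(z)\Psi_\bullet(z)\mc{L}^{-1}$ for $\bullet\in\{0,\infty\}$ implies that $\textbf{Mon}^{reg}(p^sz)$ can be expressed, up to the conjugation by the exponential factors $z^{E_0/\ln p}$, $z^{E_\infty/\ln p}$ and the shifts $s,E_0^s,E_\infty^s$, as $\textbf{Mon}^{reg}(z)$ modified by the action of the wall monodromy operators $\mbf{B}_{\mbf{m}}(p^sz)$ assembled along a generic path between $0$ and $s$. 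Writing the quantum difference operator in its wall factorisation $\mbf{M}_{\mc{L}}(z)=\mc{L}\prod^{\leftarrow}_{\mbf{m}}\mbf{B}_{\mbf{m}}(z)$ and iterating along the whole path produces the required intermediate expression.

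The $p\to 0$ limit is then governed by the dichotomy \ref{limit-of-monodromy-operator}. For generic $s\in\mbb{Q}^n$ the third (partial coincidence) branch does not occur, so each wall encountered along the path degenerates either to the identity or to the universal element $\mbf{m}((1\otimes S_{\mbf{m}})(R_{\mbf{m}}^+)_{21}^{-1})$, depending on whether the wall lies on the same or the opposite side of $s$ as the direction of transport. After conjugation by $P$ to pass into the fixed point basis, each non-trivial limit reads precisely as $(\mbf{B}_{\mbf{m}}^*)^{\pm1}$ with the sign dictated by which of the two solutions is being transported.

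The boundary conditions then select one of the two cases. For $s\geq0$ the point $p^sz$ approaches $0\in\text{Pic}(X)\otimes\mbb{C}^\times$ as $p\to0$, so $\Psi_0^{reg}(p^sz)\to I$ by its normalisation; the transport burden is placed entirely on $\Psi_\infty^{reg}(p^sz)$, whose propagation from a neighbourhood of $\infty$ to near $0$ picks up exactly the walls with $0\leq\mbf{m}<s$. Assembling the resulting factors in the ordering determined by the path, and inserting the asymptotic basis change $\mbf{T}=P^{-1}H$ between the two solutions, yields $\prod^{\leftarrow}_{0\leq\mbf{m}<s}(\mbf{B}_{\mbf{m}}^*)^{-1}\cdot\mbf{T}$. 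The case $s<0$ is dual: now $p^sz\to\infty$, so $\Psi_\infty^{reg}(p^sz)\to I$ and the transport instead acts on $\Psi_0^{reg}$; the surviving walls are those with $s<\mbf{m}<0$, and the opposite direction of transport replaces $(\mbf{B}_{\mbf{m}}^*)^{-1}$ by $\mbf{B}_{\mbf{m}}^*$.

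The principal obstacle is the bookkeeping. Although \ref{limit-of-monodromy-operator} handles each wall factor individually, the wall operators at distinct slopes do not commute, and the conjugations by $P$ and by the shift operators $T_{\mc{L}}$ must be threaded consistently through the ordered product in order to match the stated one-sided ordering $\prod^{\leftarrow}$. A related technical point is that the exponential factors $z^{E_0/\ln p}$ and $z^{E_\infty/\ln p}$ produce multiplicative prefactors $E_0^{-s}(\cdots)E_\infty^s$ when $z$ is replaced by $p^sz$; one has to verify that these prefactors conspire with the $\mc{L}^{-1}$ present in each application of the difference equation to cancel out in the limit, so that the only residue is the wall product and the basis-change matrix $\mbf{T}$. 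Once these two bookkeeping points are resolved the two cases follow from a unified computation.
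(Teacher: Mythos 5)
Your proposal identifies the correct key input (the wall-by-wall asymptotics \ref{limit-of-monodromy-operator}) and the correct shape of the answer, but the mechanism you build around it does not work, and it is not the argument the paper relies on (the paper quotes this theorem from \cite{Z24}; the intended proof is the one whose ingredients are reviewed here, namely the infinite-product representations of the regular solutions as in \ref{formula-for-0} together with \ref{limit-of-monodromy-operator}). First, the ``transport via the difference equation'' step fails: the equation only relates $z$ to $p^{\mc{L}}z$ with $\mc{L}\in\mathrm{Pic}(X)\cong\mbb{Z}^n$, so for generic $s\in\mbb{Q}^n$ it cannot connect $z$ to $p^sz$ at all; and even for integral $s$ it produces nothing, because both fundamental solutions satisfy the same equation, so the full connection matrix $\textbf{Mon}(z)$ is invariant under $z\mapsto p^{\mc{L}}z$ and no wall factors can be generated by transport --- the entire $s$-dependence of the limit sits in the genuine asymptotics of the regular parts, not in a finite iteration of the difference equation along a path.

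Second, the claim that $\Psi_0^{reg}(p^sz)\to I$ ``by its normalisation'' (and dually for $\Psi_\infty^{reg}$) is precisely the point that has to be proved, and it is false as a general principle: the coefficients $a_{\mbf{d}}(p,q)$ of $\Psi_0^{reg}$ are not bounded as $p\to0$, since the monodromy operators involve the shift $z\mapsto zp^{-\mc{L}_w}$ (this is exactly why \ref{limit-of-monodromy-operator} has non-trivial branches), so $a_{\mbf{d}}\,p^{s\cdot\mbf{d}}$ need not vanish; which terms survive depends on the position of $s$ relative to the walls, and that is where the finite product over $[0,s)$ (resp.\ $(s,0)$) actually comes from. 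The proof the paper appeals to substitutes $z\mapsto p^sz$ directly into the ordered infinite products \ref{formula-for-0} and its $z=\infty$ analogue, applies \ref{limit-of-monodromy-operator} to each conjugated factor $\mbf{E}^{k-1}\mbf{M}^*(p^{k+s}z)\mbf{E}^{-k}$, verifies that the conjugations by powers of $\mbf{E}_0,\mbf{E}_\infty$ and the prefactors do not spoil the limit, and shows that only the finitely many factors corresponding to walls between $0$ and $s$ differ from the identity; these assemble into the stated one-sided product, with $\mbf{T}=P^{-1}H$ appearing as the residual change of eigenbasis (note also that a factor originating on the $\infty$-side enters as $H^{-1}(\cdot)H$ and only becomes $P^{-1}(\cdot)P$ after being moved across $\mbf{T}$, so your allocation of the ``transport burden'' to one solution or the other cannot be read off from the normalisations and must itself be an output of the factorwise analysis). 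To repair your outline you would have to replace both the transport step and the normalisation claim by this term-by-term limit of the infinite products.
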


The main advantage of using the theorem is that we can use it to compute the monodromy representation for the Dubrovin connection of the affine type $A$ quiver varieties:

\begin{thm}\label{degeneration-qde-thm}
The degeneration limit of the quantum difference operator $\mbf{M}_{\mc{L}}(z)$ coincides with the quantum multiplication operator $Q(\mc{L})$ up to a constant operator.
\end{thm}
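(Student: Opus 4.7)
The plan is to analyse the product $\mbf{B}_{\mc{L}}^{s}(\lambda) = \mc{L} \prod^{\leftarrow}_{\mbf{m} \in [s, s-\mc{L})} \mbf{B}_{\mbf{m}}(\lambda)$ term-by-term under the cohomological substitution $q = e^{\kappa \hbar}$, $z^{[i,j)} = e^{\kappa \lambda \cdot [i,j)}$, $p = e^{\kappa \tau}$, and then expand in powers of $\kappa$. The target is to show that at leading nontrivial order one recovers the quantum multiplication $Q(\lambda)$ from \eqref{dubrovin-baby}, up to a classical multiplication operator from the prefactor $\mc{L}$.

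First I would reduce the problem to a single monodromy factor $\mbf{B}_{\mbf{m}}(\lambda) = \mbf{m}((1\otimes S_{\mbf{m}})(\mbf{J}_{\mbf{m}}^{-}(\lambda)^{-1}))|_{\lambda \to \lambda+\kappa}$. Iterating the ABRR equation $q^{\Omega} R_{\mbf{m}}^{-} q_{(1)}^{-\lambda} J_{\mbf{m}}^{-}(\lambda) = J_{\mbf{m}}^{-}(\lambda) q^{\Omega} q_{(1)}^{-\lambda}$ produces a formal infinite product analogous to the one for $J_{\mbf{m}}^{+}$. By \eqref{classical-r-matrix}, in the classical limit one has $R_{\mbf{m}}^{\pm} = 1 + \kappa \hbar \, r_{\mbf{m}}^{\pm} + O(\kappa^2)$, and by Theorem \ref{confirmation-of-generators} the classical $r$-matrix $r_{\mbf{m}}^{\pm}$ decomposes as a sum $\sum_{w \ni \mbf{m}} r_w^{\pm}$ over walls through $\mbf{m}$, each $r_w^{\pm}$ generating the root subalgebra corresponding to a single root $\alpha_w$.

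Next I would show that, at leading order in $\kappa$, iterating ABRR and summing the resulting geometric series in the operator $q_{(1)}^{-\lambda} q^{\Omega}$ (which acts by the scalar $z^{-\alpha}$ on the root space $\alpha$ in the limit) yields
\[
\mbf{J}_{\mbf{m}}^{-}(\lambda)^{-1} = 1 + \kappa \hbar \sum_{w \ni \mbf{m}} \frac{r_w^{-}}{1 - z^{-\alpha_w}} + O(\kappa^2).
\]
Applying $\mbf{m} \circ (1 \otimes S_{\mbf{m}})$ and using the bialgebra pairing identification of $r_w^{\pm}$ with products of Maulik-Okounkov root vectors $E_{\pm \alpha_w}$ on the wall, one finds $\mbf{B}_{\mbf{m}}(\lambda) = 1 + \kappa \hbar \sum_{w \ni \mbf{m}} \frac{E_{\alpha_w} E_{-\alpha_w}}{1 - z^{-\alpha_w}} + O(\kappa^2)$. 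Multiplying over the finite collection of slope points $\mbf{m} \in [s, s-\mc{L})$, the leading term is simply the sum of the individual leading contributions, and using the translation formula $R_{w+\mc{L}'}^{\pm} = \mc{L}' R_w^{\pm} \mc{L}'^{-1}$ together with a geometric-series reorganisation, the finite sum assembles into the full rational expression
\[
\sum_{i<j} \frac{(\lambda \cdot [i,j))}{1 - q^{-[i,j)}} E_{[i,j)} E_{-[i,j)}
\]
appearing in \eqref{dubrovin-baby}, with the weight $\lambda\cdot[i,j)$ arising from counting wall crossings weighted by $\mc{L}$. Finally, the prefactor $\mc{L}$ in $\mbf{B}^s_{\mc{L}}$ contributes, in the cohomological limit, exactly the classical cup product $c_1(\mc{L}) \cup -$ piece of $Q(\mc{L})$, which is the advertised constant operator.

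The main obstacle will be the combinatorial reshuffling in the third step: verifying that the finite sum of wall contributions indexed by $[s, s-\mc{L})$, after tracking both the $q^{\Omega}$ conjugations and the $\lambda \to \lambda + \kappa$ shift in the definition of $\mbf{B}_{\mbf{m}}$, reassembles into the closed rational form with denominator $1 - q^{-[i,j)}$ and numerator $\lambda \cdot [i,j)$. Path-independence of $\mc{A}_{\mc{L}}^s$ (proved in \cite{Z23}) allows one to pick the most convenient generic path so that each $\mbf{B}_{\mbf{m}}$ factor is either of $U_q(\mf{sl}_2)$-type or $U_q(\hat{\mf{gl}}_1)$-type, reducing the verification to the rank-one case where the Casimir $E_{\alpha_w} E_{-\alpha_w}$ can be computed explicitly and matched against the Dubrovin connection coefficient.
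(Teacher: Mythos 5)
Your proposal is correct and follows essentially the same route as the paper: the paper itself defers this statement to \cite{Z24}, but the argument given there (and reproduced in Section 6 of this paper for the geometric operator, Theorem \ref{degeneration-of-geometric-qde}) is exactly your scheme — solve the ABRR recursion for the fusion operator root by root, use $R^{\pm}_{\mbf{m}}=1+(q-1)r^{\pm}_{\mbf{m}}+O((q-1)^2)$ so that each $\mbf{B}_{\mbf{m}}(\lambda)$ contributes $\frac{\hbar}{1-\mathrm{Ad}_{q_{(1)}^{\lambda}}}\mbf{m}(r^{-}_{\mbf{m}})$ at first order, sum the finitely many wall contributions in $[s,s-\mc{L})$ with multiplicity $\langle\alpha,\mc{L}\rangle$, and let the prefactor $\mc{L}$ produce $c_1(\mc{L})\cup$. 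The only caveat is bookkeeping-level: the denominator acts as $1-z^{-k\alpha}$ on each root multiple $k\alpha$ separately (not a single $1-z^{-\alpha_w}$ for the whole wall), and the "constant operator" in the statement refers to the $z$-independent Casimir-type discrepancy between $\frac{1}{1-q^{-\alpha}}$ and the standard quantum-product normalization, not to the $c_1(\mc{L})\cup$ term, which is part of $Q(\mc{L})$ itself.
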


This has been proved in \cite{Z24}. A direct consequence is that we can use this result to compute the monodromy representation.

Let $\Psi_{0,\infty}(q_1,q_2,p,z)$ be the solution to the quantum difference equation described above. Then we take $z=e^{2\pi s}$, $q_i=e^{2\pi i\hbar_i\tau}$, $q=e^{-2\pi i\tau}$. By the Theoreom \ref{degeneration-qde-thm}, for the degeneration limit of the solution $\psi_{0,\infty}(z)$, which is defined as follows:
\begin{align}
\psi_{0,\infty}(z)=\lim_{\tau\rightarrow0}\Psi_{0,\infty}(e^{2\pi i\hbar_1\tau},e^{2\pi i\hbar_2\tau},e^{-2\pi i\tau},z)\in \widehat{H_{T}(M(\mbf{v},\mbf{w}))}_{loc}
\end{align}

It is the solution to the corresponding Dubrovin connection. The solution lies in the completion of $H_{T}(M(\mbf{v},\mbf{w}))_{loc}$. Using this, we can define the transport of the solution of the Dubrovin connection:
\begin{align}
\text{Trans}(s):=\psi_{0}(e^{2\pi is})^{-1}\psi_{\infty}(e^{2\pi is})\in \widehat{H_{T}(M(\mbf{v},\mbf{w}))}_{loc}
\end{align}
Similarly as a result of the Theorem \ref{degeneration-qde-thm}, the transport of the solution is a limit the monodromy:
\begin{align}\label{trans}
\text{Trans}(s)=\lim_{\tau\rightarrow0}\textbf{Mon}(z=e^{2\pi is},q_1=e^{2\pi i\hbar_1\tau},q_2=e^{2\pi i\hbar_2\tau},q=e^{-2\pi i\tau})
\end{align} 

The monodromy corresponds to the element $\text{Trans}(s')^{-1}\text{Trans}(s)$, and if we denote the slope point of the difference by $\mbf{m}$, we can see that:
\begin{align}
\text{Trans}(s')^{-1}\text{Trans}(s)=\mbf{B}_{\mbf{m}}^*
\end{align}

In conclusion, we have the following monodromy representation:
\begin{thm}\label{monodromy-rep-casimir}
The monodromy representation:
\begin{align}
\pi_{1}(\mbb{P}^n\backslash\textbf{Sing},0^{+})\rightarrow\text{End}(H_{T}(M(\mbf{v},\mbf{w})))
\end{align}
of the Dubrovin connection is generated by $\mbf{B}_{\mbf{m}}^*$ with $q_1=e^{2\pi i\hbar_1},q_2=e^{2\pi i\hbar_2}$, i.e. the monodromy operators $\mbf{B}_{\mbf{m}}$ in the fixed point basis. The based point $0^+$ is a point sufficiently close to $0\in\mbb{P}^n$.
\end{thm}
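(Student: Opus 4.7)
The plan is to chain three results already proved in the paper. First, Theorem \ref{degeneration-qde-thm} identifies the degeneration limit of the algebraic quantum difference operator with the Dubrovin quantum multiplication $Q(\mathcal{L})$, so the flat sections $\psi_{0,\infty}(z)$ of the Dubrovin connection are genuinely obtained as the $\tau\rightarrow 0$ limits of $\Psi_{0,\infty}(e^{2\pi i\hbar_1\tau},e^{2\pi i\hbar_2\tau},e^{-2\pi i\tau},z)$. Second, passing this limit to the transport matrix $\text{Trans}(s)=\psi_0(e^{2\pi is})^{-1}\psi_\infty(e^{2\pi is})$ yields equation \eqref{trans}, identifying $\text{Trans}(s)$ with the $\tau\rightarrow 0$ limit of $\textbf{Mon}(z,q_1,q_2,q)$. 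Third, the asymptotic in Theorem \ref{p0-limit-connection} gives an explicit expression for $\lim_{p\rightarrow 0}\textbf{Mon}^{reg}(p^sz)$ as an ordered product of factors $\mathbf{B}_{\mathbf{m}}^*$ indexed by the slope points on a generic path from $0$ to $s$.

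Given these inputs, I would compute the monodromy of an elementary generator of $\pi_1(\mathbb{P}^n\setminus\textbf{Sing},0^+)$ as follows. Choose two slope points $s,s'\in\mathbb{Q}^n$ sitting on opposite sides of a single wall associated to $\mathbf{m}$, so that a generic path from $0$ to $s$ and one from $0$ to $s'$ differ by exactly one crossing of $\mathbf{m}$. Then $\text{Trans}(s')^{-1}\text{Trans}(s)$ is the monodromy of the small loop $\gamma_{\mathbf{m}}$ encircling the singular hyperplane at $\mathbf{m}$. Applying Theorem \ref{p0-limit-connection} to the two transports, all factors cancel telescopically except for a single $\mathbf{B}_{\mathbf{m}}^*$, and by the specialization $q_i=e^{2\pi i\hbar_i\tau}$, $q=e^{-2\pi i\tau}$ taken at $\tau\rightarrow 0$ (so $p\rightarrow 1$), the remaining factor is precisely $\mathbf{B}_{\mathbf{m}}^*$ with $q_1=e^{2\pi i\hbar_1},q_2=e^{2\pi i\hbar_2}$. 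Since every element of $\pi_1(\mathbb{P}^n\setminus\textbf{Sing},0^+)$ is a product of such elementary loops $\gamma_{\mathbf{m}}$, these operators generate the full monodromy representation.

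The main technical obstacle is justifying the exchange of limits implicit in \eqref{trans}: one must argue that the transport of the Dubrovin connection coincides with the naive $\tau\rightarrow 0$ limit of the quantum difference monodromy, rather than merely a formal shadow of it. Since the singular parts of $\Psi_{0,\infty}$ carry exponentials that become well-defined analytic continuations of the Dubrovin fundamental solutions in the limit, the crux is to show that the regular parts $\Psi^{reg}_{0,\infty}(z)$ converge as $\tau\rightarrow 0$ in a neighbourhood of $z=0,\infty$. This follows from the fact, already used in \cite{Z24}, that these regular parts admit power-series expansions whose coefficients are rational functions in $q_1,q_2,p$ regular at the specialization above, so that coefficient-wise convergence is uniform on compacta avoiding $\textbf{Sing}$. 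Once this analytic input is in place, the algebraic manipulation with Theorem \ref{p0-limit-connection} produces the generators $\mathbf{B}_{\mathbf{m}}^*$ as claimed.
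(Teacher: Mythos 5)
Your route is the one the paper itself takes: chain Theorem \ref{degeneration-qde-thm}, the limit formula \eqref{trans}, and the asymptotics of Theorem \ref{p0-limit-connection}, then read off the monodromy of an elementary loop as the wall-crossing difference $\text{Trans}(s')^{-1}\text{Trans}(s)$ and conclude by generation of $\pi_1$ by such loops. The outline is fine, but one step, as you have written it, does not go through, and it is precisely the nontrivial point of the theorem.

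You apply Theorem \ref{p0-limit-connection} to the two transports and then say that under the specialization $q_i=e^{2\pi i\hbar_i\tau}$, $q=e^{-2\pi i\tau}$ ``taken at $\tau\rightarrow 0$ (so $p\rightarrow1$)'' the surviving factor is $\mbf{B}_{\mbf{m}}^*$ at $q_1=e^{2\pi i\hbar_1},q_2=e^{2\pi i\hbar_2}$. This is internally inconsistent: Theorem \ref{p0-limit-connection} is a $p\rightarrow0$ asymptotic, whereas your specialization sends $p\rightarrow1$ and $q_1,q_2\rightarrow1$; moreover a factor-by-factor $\tau\rightarrow0$ limit would degenerate each monodromy operator to the identity (since $R_{w}=\text{Id}+(q-1)r_{w}+\cdots$), not to the claimed operator at exponentiated parameters. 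The mechanism by which the $\tau$-scaled parameters end up frozen at $q_j=e^{2\pi i\hbar_j}$ is the identification of $\text{Trans}(s)$, defined as the $\tau\rightarrow0$ limit at $z=e^{2\pi is}$, with $\lim_{p\rightarrow0}\textbf{Mon}^{reg}(p^sz)$ evaluated at the \emph{fixed} values $q_1=e^{2\pi i\hbar_1}$, $q_2=e^{2\pi i\hbar_2}$ --- the analogue of Proposition 7.4 of \cite{Z24}, which is what the paper invokes --- and only after this re-identification can Theorem \ref{p0-limit-connection} be applied and telescoped as you describe. Your convergence discussion (coefficient-wise regularity of $\Psi^{reg}_{0,\infty}$ at the specialization) only establishes that $\psi_{0,\infty}$ and hence $\text{Trans}(s)$ exist; it does not supply this exchange of the $\tau\rightarrow0$ regime for the $p\rightarrow0$ regime, which is the missing (and essential) ingredient you must either prove or cite.
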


\subsection{Asymptotic behavior of the geometric monodromy operators}

Before we analyze the solution of the difference equation, we first analyze the asymptotic behavior of the monodromy operators $\mbf{B}_{w}(z)$:
\begin{prop}\label{asymptotic-of-geo-mon}
Suppose that $\mbf{B}_{w}(z)$ corresponds to the wall $w$ such that there is only one connected component, i.e. $w$ corresponds to only one root $\alpha\in\mbb{Z}^I$. Then we have:
\begin{align}
\lim_{p\rightarrow0}\mbf{B}_{w}(zp^s)=
\begin{cases}
\mbf{m}((1\otimes S_{w})(R_{w}^{-})^{-1})&\langle\alpha,s\rangle>-m\text{ for the integer }m=\langle\mc{L}_w,\alpha\rangle\\
1&\text{otherwise}
\end{cases}
\end{align}
\end{prop}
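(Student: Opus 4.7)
The strategy mirrors for the geometric side the analysis of the algebraic monodromy operators carried out in \cite{Z24}, recalled in display \eqref{limit-of-monodromy-operator} and Theorem \ref{p0-limit-connection} above. The only ingredients required are the ABRR equation satisfied by $J_w^-(\lambda)$ and the refined weight decomposition of $R_w^-$ that is available when the wall has a single connected component.

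The first step is to iterate the ABRR equation
\[
q^{-\Omega} R_w^- q_{(1)}^{-\lambda} J_w^-(\lambda) = J_w^-(\lambda) q^{\Omega} q_{(1)}^{-\lambda}
\]
so as to exhibit $J_w^-(\lambda)$ as a formal infinite product of conjugates of $R_w^-$ by powers of the Cartan element $q_{(1)}^{-\lambda} q^{-2\Omega}$, in complete parallel with the product formula for $J_w^+$ displayed earlier in the paper. Because the wall $w$ has a single connected component corresponding to the single root $\alpha$, Theorem \ref{confirmation-of-generators} lets me decompose $R_w^- = 1 + \sum_{k \geq 1} R_{w,k}^-$ with $R_{w,k}^-$ of bi-weight $(-k\alpha, +k\alpha)$, so the iteration can be analyzed one weight at a time.

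Next, substitute $\lambda \mapsto \lambda + s\tau$ (encoding $z \mapsto zp^s$) and incorporate the defining shift $\lambda \mapsto \lambda - \tau \mc{L}_w$ built into $\mbf{J}_w^-$. Under conjugation by the resulting shifted Cartan factor, each $R_{w,k}^-$ picks up a scalar of the form $q^{k\langle\lambda,\alpha\rangle}\cdot p^{k(\langle\alpha,s\rangle+m)}$ (times a finite $q$-dependent factor), where $m = \langle \mc{L}_w, \alpha\rangle$. When $\langle\alpha, s\rangle > -m$, the exponent $k(\langle\alpha,s\rangle+m)$ is strictly positive for every $k \geq 1$, so the higher-order iterates of the ABRR product vanish as $p \to 0$ and only the leading factor $R_w^-$ survives; applying $\mbf{m} \circ (1 \otimes S_w) \circ (\cdot)^{-1}$, together with the finite scalar shift $\lambda \mapsto \lambda + \kappa$ (which does not affect the limit), yields the stated expression $\mbf{m}((1 \otimes S_w)(R_w^-)^{-1})$. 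In the complementary range $\langle\alpha, s\rangle \leq -m$ the iterations carry non-positive $p$-powers that threaten to diverge; however these are matched by the monomial $u$-dependence of the matrix coefficients of $R_w^-$ described by the triangle lemma and the stability bounds of \cite{OS22} (matrix coefficients of the form $A u^{\langle\mu(F_2)-\mu(F_1), \mc{L}_w\rangle}$), so that the iterated product telescopes and $\mbf{J}_w^-$ tends to the identity, giving $\mbf{B}_w(zp^s) \to 1$.

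The main obstacle will be the careful bookkeeping of $p$-scaling in the iterates to produce the sharp threshold $\langle\alpha, s\rangle > -m$: one has to match the scaling of the iterated Cartan conjugation against the monomial $u$-dependence of $R_w^-$ exactly, which is the geometric analogue of the pole analysis carried out for the algebraic monodromy operators in \cite{Z24}. The assumption that $w$ has a single connected component is essential here, as it reduces the whole analysis to a single weight direction $\alpha$; without this hypothesis one would have to simultaneously analyse multiple roots and the limit would take the more elaborate piecewise form seen in the algebraic analogue \eqref{limit-of-monodromy-operator}.
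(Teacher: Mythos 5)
Your overall strategy is the paper's: use the single-root hypothesis to decompose the fusion operator into $k\alpha$-weight components, substitute $z\mapsto p^sz$ together with the $\mc{L}_w$-shift built into $\mbf{J}_w^{-}$, and track the resulting powers $p^{k(\langle\alpha,s\rangle+m)}$. Your treatment of the regime $\langle\alpha,s\rangle>-m$ is correct and matches the paper: every conjugated factor (equivalently, every higher iterate of the ABRR recursion) degenerates, only $R_w^{-}$ survives, and applying $\mbf{m}\circ(1\otimes S_w)\circ(\cdot)^{-1}$ and the harmless shift $\lambda\mapsto\lambda+\kappa$ gives $\mbf{m}((1\otimes S_w)(R_w^{-})^{-1})$. (The paper runs the computation through the componentwise recursion $J_{k\alpha}(p^sz)=\frac{1}{z^{k\alpha}q^{m}p^{k\,s\cdot\alpha}-1}\sum_{k_1<k}J_{k_1\alpha}R_{(k-k_1)\alpha}$, written there for $J_w^{+}$ and then transferred to $\mbf{J}_w^{-}$, rather than through the infinite product, but this is the same bookkeeping.)

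The genuine gap is in your complementary case $\langle\alpha,s\rangle\le-m$. You claim the divergent $p$-powers carried by the iterates are ``matched by the monomial $u$-dependence of the matrix coefficients of $R_w^{-}$'' so that the product telescopes to the identity. That mechanism does not exist: $u$ is an equivariant (framing) parameter which is not scaled in the limit $p\to 0$ --- only the K\"ahler variable is shifted by $p^{s}$ --- so the triangle-lemma monomials $Au^{\langle\bm{\mu}(F_2)-\bm{\mu}(F_1),\mc{L}_w\rangle}$ cannot cancel any power of $p$; in that regime the infinite-product representation of $\mbf{J}_w^{-}$ simply stops being a usable expansion. The correct (and simpler) argument is to stay with the ABRR recursion for the weight components: after the shift the prefactor of the $k\alpha$-component has $p^{k(s+\mc{L}_w)\cdot\alpha}$ in its denominator, and when $(s+\mc{L}_w)\cdot\alpha<0$ this blows up, so the prefactor tends to $0$ and inductively every nontrivial component $J_{k\alpha}$ vanishes in the limit; hence $\mbf{J}_w^{-}\to 1$ and $\mbf{B}_w(zp^s)\to 1$. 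You should also note (as the paper's proof does, though its statement suppresses it) that on the boundary $(s+\mc{L}_w)\cdot\alpha=0$ the limit is $\mbf{B}_w(z)|_{p=1}$ rather than $1$, so the dichotomy in the proposition is really for generic $s$.
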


\begin{proof}
We can see this via the analysis of the fusion operator $J_{w}^{+}(\lambda)$. We write down the ABRR equation in the form:
\begin{align}
\text{Ad}_{q_{(1)}^{\lambda}q^{-\Omega}}(J_{w}^{+}(z))=J_{w}^+(z)(R_{w}^{+})^{-1}
\end{align}
Since both $J_{w}^+(z)$ and $R_{w}^+$ are upper-triangular, they admit the root decomposition:
\begin{align}
J_{w}^{+}=1+\sum_{\alpha>0}J_{\alpha},\qquad
(R_{w}^+)^{-1}=1+\sum_{\alpha>0}R_{\alpha}
\end{align}
The $\alpha$-component of $J_{w}^+(z)$ can be solved as:
\begin{align}
J_{\alpha}(z)=\frac{1}{z^{\alpha}q^{m}-1}\sum_{\substack{\gamma+\delta=\alpha\\\gamma<\alpha}}J_{\gamma}(z)R_{\delta}
\end{align}

We substitute $z\mapsto p^sz$ and we have:
\begin{align}
J_{\alpha}(p^sz)=\frac{1}{z^{\alpha}q^mp^{s\cdot\alpha}-1}\sum_{\substack{\gamma+\delta=\alpha\\\gamma<\alpha}}J_{\gamma}(p^sz)R_{\delta}
\end{align}

Now since $w$ contains only one hyperplane corresponding to $\alpha$, this means that all the roots of $R_{w}$ are of the form $k\alpha$, and this means that:
\begin{align}\label{fusion-recursion}
J_{k\alpha}(p^sz)=\frac{1}{z^{k\alpha}q^mp^{ks\cdot\alpha}-1}\sum_{k_1+k_2=k,k_1<k}J_{k_1\alpha}(p^sz)R_{k_2\alpha}
\end{align}

Specifically for $k=1$:
\begin{align}
J_{\alpha}(p^sz)=\frac{1}{z^{\alpha}q^mp^{s\cdot\alpha}-1}R_{\alpha}
\end{align}
and when $p\rightarrow0$:
\begin{align}
\lim_{p\rightarrow0}J_{\alpha}(p^sz)=
\begin{cases}
R_{\alpha}&s\cdot\alpha>0\\
\frac{1}{z^{\alpha}q^m-1}R_{\alpha}&s\cdot\alpha=0\\
0&\text{otherwise}
\end{cases}
\end{align}

In conclusion we have that:
\begin{align}
\lim_{p\rightarrow0}J_{w}^{+}(p^sz)=
\begin{cases}
R_{w}^{+}&\qquad s\cdot\alpha>0\\
J_{w}^+(z)&\qquad s\cdot\alpha=0\\
1&\qquad\text{otherwise}
\end{cases}
\end{align}

Now for $\mbf{J}_{w}^{+}(z)$, it is defined via the shift $z\mapsto zp^{\mc{L}_{w}}$, we have that:
\begin{align}
\lim_{p\rightarrow0}\mbf{B}_{w}(p^sz)=
\begin{cases}
\mbf{m}((1\otimes S_{w})(R_{w}^{-})^{-1})&\qquad (s+\mc{L}_w)\cdot\alpha>0\\
\mbf{B}_w(z)|_{p=1}&\qquad (s+\mc{L}_w)\cdot\alpha=0\\
1&\qquad (s+\mc{L}_w)\cdot\alpha<0\\
\end{cases}
\end{align}

\end{proof}
\subsection{Analysis of the Okounkov-Smirnov quantum difference equations}\label{analysis-geometric}
The analysis of the Okounkov-Smirnov quantum difference equations is similar to the analysis given in \cite{Z24} for the algebraic quantum difference equations. The origin of the analysis of the quantum difference equation can be dated back to Smirnov's paper\cite{S21} on the analysis for the case of the Hilbert scheme $\text{Hilb}_{n}(\mbb{C}^2)$. For simplicity, we can analyze the solution for the quantum difference operator $\mbf{B}^{s,MO}_{\mc{L}}(z)$ at $\mc{L}=\mc{O}(1)$.

First we analyze the fundamental solution $\Psi^{MO}_{0}(z)$ around $z=0$. When $z=0$, $\mbf{B}^{s,MO}_{\mc{L}}(0)=\mc{L}$, and it has the eigenvectors given by the fixed point basis $|\bm{\lambda}\rangle$ with the eigenvalues $\mbf{E}_{0}$. In this case the fundamental solution $\Psi^{MO}_{0}(z)$ can be written as:

\begin{align}
\Psi_{0}^{MO}(z)=P\Psi^{reg,MO}_0(z)\exp(\sum_{i}\frac{\ln(E_0^{(i)})\ln(z_i)}{\ln(p)})
\end{align}

Here $\Psi^{reg,MO}_{0}(z)=1+\sum_{\mbf{d}\in\mbb{N}^I}a_{\mbf{d}}z^{\mbf{d}}$ is the regular solution satisfying the following difference equations:
\begin{align}
\Psi_{0}^{reg,MO}(zp^{\mc{L}_i})E_{0}^{(i)}=P^{-1}\mbf{B}_{\mc{O}(1)}^{s,MO}(z)P\Psi_{0}^{reg,MO}(z)
\end{align}

For simplicity, we denote $\mbf{M}^*(z)=P^{-1}\mbf{B}_{\mc{O}(1)}^{s,MO}(z)P$. Now the regular solution can be formally written as:
\begin{align}\label{formula-for-0}
\Psi_0^{reg}(z)=\prod^{\substack{\leftarrow\\\infty}}_{k=0}\mbf{M}^*_{k}(z)^{-1},\qquad\mbf{M}_{k}^*(z)=\mbf{E}_0^{k-1}\mbf{M}^*(p^kz)\mbf{E}_0^{-k}
\end{align}

For the fundamental solution around $z=\infty$, note that for $z=\infty$:
\begin{align}
\mbf{B}^{s}_{\mc{L}}(\infty)=\mc{L}\prod^{\leftarrow}_{w}\mbf{B}_{w}
\end{align}

In this case one could also find the eigenvector $H^{MO}$ with the distinct eigenvalues $E_{\infty}$ the proof is the same as the proof in the Appendix II of \cite{Z24}.

the solution around $z=\infty$ can be written as:
\begin{align}
\Psi_{\infty}^{MO}(z)=H^{MO}\Psi_{\infty}^{reg}(z)\exp(\sum_{i}\frac{\ln(E_{\infty}^{(i)})\ln(z_i)}{\ln(p)})
\end{align}
such that
\begin{align}
\Psi^{reg,MO}_{\infty}(z)\mbf{E}_{\infty}=(H^{MO})^{-1}\mbf{M}_{\mc{O}(1)}(z)H^{MO}\Psi^{reg}_{\infty}(z)
\end{align}

We use the similar procedure to compute the regular part of $\Psi^{reg}_{\infty}(z)$, and it is easy to see that the solution is of the form:
\begin{align}
\Psi^{reg}_{\infty}(z)=\prod^{\substack{\rightarrow\\\infty}}_{k=0}\mbf{M}^*_{-k}(z)
\end{align}
with
\begin{align}
\mbf{M}^{*}_{-k}(z)=\mbf{E}_{\infty}^{k-1}\mbf{M}^*
(zp^{-k})\mbf{E}_{\infty}^{-k}
\end{align}

The transition matrix between two fundamental solutions $\Psi_{0}(z)$ and $\Psi_{\infty}(z)$ is defined as:
\begin{align}
\textbf{Mon}^{MO}(z):=\Psi_{0}^{MO}(z)^{-1}\Psi_{\infty}^{MO}(z)
\end{align}

Furthermore we consider the regular part of the connection matrix:
\begin{align}
\textbf{Mon}^{MO,reg}(z):=\Psi_{0}^{MO,reg}(z)^{-1}\Psi_{\infty}^{MO,reg}(z)
\end{align}

Similar analysis as in \cite{Z24} can show that the degeneration limit of $\textbf{Mon}^{MO,reg}(p^sz)$ as $p\rightarrow0$ is similar to that of $\textbf{Mon}^{MO}(p^sz)$:
\begin{thm}\label{geo-p0-limit-connection}
For generic $s\in\mbb{Q}^n$ the connection matrix as the following asymptotic at $p\rightarrow0$:
\begin{align}
\lim_{p\rightarrow0}\textbf{Mon}^{reg,MO}(p^sz)=
\begin{cases}
\prod^{\leftarrow}_{0\leq w<s}(\mbf{B}_{w,MO}^*)^{-1}\cdot\mbf{T}^{MO},\qquad s\geq0\\
\prod_{s<w<0}\mbf{B}_{w,MO}^*\cdot\mbf{T}^{MO},\qquad s<0
\end{cases}
\end{align}

Here $\mbf{T}^{MO}:=P^{-1}H^{MO}$, $0\leq w<s$ means the wall $w$ that intersects with the generic path from $0$ to the point $s$ without intersecting $s$. Here $s\geq0$ and $s<0$ stands for $s=(s_1,\cdots,s_n)$ such that $s_{i}\leq 0$ or $s_{i}>0$ for every component $s_i$. The $\mbf{B}_{w,MO}^*$ is defined as $P^{-1}\mbf{B}_{w,MO}P$.
\end{thm}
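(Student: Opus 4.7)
The plan is to adapt the proof strategy from the algebraic case (Theorem \ref{p0-limit-connection}, proved in \cite{Z24}) to the geometric setting. The key observation is that the geometric quantum difference operator admits a wall factorization $\mbf{B}_{\mc{O}(1)}^{s,MO}(z) = \mc{O}(1) \prod_{w}^{\leftarrow} \mbf{B}_{w,MO}(z)$ that is structurally identical to the slope factorization used on the algebraic side, and the degeneration behavior of each geometric monodromy operator $\mbf{B}_{w,MO}(zp^s)$ as $p \to 0$ is completely described by Proposition \ref{asymptotic-of-geo-mon}. Together these two facts are the analogues of the ingredients that drove the algebraic proof, so the entire argument carries over verbatim once ``slope point $\mbf{m}$'' is replaced by ``wall $w$'' and \eqref{limit-of-monodromy-operator} is replaced by Proposition \ref{asymptotic-of-geo-mon}.

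First I would write both regular solutions as explicit infinite products using the formulas derived in Section \ref{analysis-geometric}, namely
\begin{align*}
\Psi_0^{reg,MO}(z) = \prod_{k=0}^{\substack{\leftarrow\\\infty}} \mbf{M}^*_k(z)^{-1}, \qquad \Psi_\infty^{reg,MO}(z) = \prod_{k=0}^{\substack{\rightarrow\\\infty}} \mbf{M}^*_{-k}(z),
\end{align*}
and then substitute $z \mapsto p^sz$. This converts the problem into understanding the $p \to 0$ limit of each shifted factor $\mbf{M}^*_{\pm k}(p^s z)$. Inserting the wall factorization, each such factor becomes an ordered product of $\mbf{B}_{w,MO}(p^{s + k\mc{O}(1)} z)$ conjugated by powers of $\mbf{E}_0$ or $\mbf{E}_\infty$. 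Proposition \ref{asymptotic-of-geo-mon} then tells us that each individual $\mbf{B}_{w,MO}$-factor limits either to $\mbf{m}((1\otimes S_w)(R_w^-)^{-1})$, to $\mbf{B}_{w,MO}(z)|_{p=1}$, or to the identity, depending on the sign of $\langle \alpha_w, s + k\mc{O}(1) + \mc{L}_w\rangle$.

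Next I would combine the wall-by-wall degenerations to collapse the infinite product. For $s \geq 0$, as $k$ increases the shift $s + k\mc{O}(1)$ eventually pushes every wall into the regime where $\mbf{B}_{w,MO}$ becomes trivial, so only finitely many walls contribute; those walls sandwiched between $0$ and $s$ survive in $\Psi_0^{reg,MO}$ but not in $\Psi_\infty^{reg,MO}$, producing the ordered product $\prod_{0 \leq w < s}^{\leftarrow}(\mbf{B}_{w,MO}^*)^{-1}$ after conjugation by $P$. The remaining infinite tails on the two sides match and telescope, leaving only the transition matrix $\mbf{T}^{MO} = P^{-1} H^{MO}$ between the eigenbases at $z=0$ and $z=\infty$. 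The case $s < 0$ is handled by the mirror-image argument, which produces the walls between $s$ and $0$ in the opposite order and with the opposite sign of exponent, yielding $\prod_{s < w < 0}\mbf{B}_{w,MO}^* \cdot \mbf{T}^{MO}$.

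The main technical obstacle is the bookkeeping needed to justify interchanging the $p \to 0$ limit with the formally infinite products defining $\Psi_0^{reg,MO}$ and $\Psi_\infty^{reg,MO}$, and to show that the telescoping collapse really does leave exactly the finite product over walls separating $0$ from $s$. One must verify that for generic $s$ and for each fixed degree $\mbf{d}$ in the $z$-expansion, only finitely many $k$ contribute non-trivially to the $z^{\mbf{d}}$-coefficient, so the infinite product makes sense coefficient-wise and the limit commutes with extracting that coefficient. This finiteness is precisely what Proposition \ref{asymptotic-of-geo-mon} guarantees, since the ``otherwise'' branch there sets the factor to $1$ as soon as $\langle \alpha_w, s + k\mc{O}(1) + \mc{L}_w\rangle$ has the wrong sign. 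Once this is in place, the rest is the same telescoping combinatorics as in \cite{Z24}, which concludes the argument.
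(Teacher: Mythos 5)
Your proposal matches the paper's intended argument: the paper proves this theorem simply by invoking the same analysis as in \cite{Z24} for the algebraic case, using exactly the ingredients you identify — the infinite-product formulas for $\Psi_0^{reg,MO}$ and $\Psi_\infty^{reg,MO}$ from Section \ref{analysis-geometric}, the wall factorisation of $\mbf{B}^{s,MO}_{\mc{O}(1)}(z)$, and Proposition \ref{asymptotic-of-geo-mon} in place of \eqref{limit-of-monodromy-operator}. Your added remarks on coefficient-wise convergence and the telescoping collapse are consistent with that argument, so the proposal is correct and essentially the same as the paper's route.
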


We want to use this result to compute the monodromy representation of the Dubrovin connection. In the next section we are going to show how to give the degeneration limit of the Okounkov-Smirnov geometric quantum difference equation to the Dubrovin connection.

\section{\textbf{Cohomological limit to the Dubrovin connection}}

\subsection{Degeneration limit of the geometric quantum difference equations}
Now we focus on the case of the affine type $A$.

From \ref{fusion-recursion} one could have the general formula for the geometric monodromy operators :
\begin{align}
\mbf{B}_{w,MO}(\lambda)=\sum_{l\geq0}\sum_{k_1+\cdots+k_l=k}\frac{1}{(z^{-k_1\alpha}q^{-m}p^{-k_1\alpha\cdot\mc{L}_w}-1)\cdots(z^{-k_l\alpha}q^{-m}p^{-k_l\alpha\cdot\mc{L}_w}-1)}\mbf{m}((1\otimes S_{w})(R_{w,k_1\alpha}^{-,MO}\cdots R_{w,k_l\alpha}^{-,MO})^{-1})
\end{align}

Now we take the cohomological limit of the quantum difference operators $\mc{A}^{s}_{\mc{L}}$:
\begin{align}
\mc{A}^{s,coh}_{\mc{L}}=-d_{\mc{L}}+c_{1}(\mc{L})\cup+\sum_{w\in[s,s+\mc{L})}\mbf{B}_{w,MO}^{coh}(\lambda)
\end{align}

Now we take $p=e^{-2\pi i\tau}$, $q_{i}=e^{2\pi i\hbar_i\tau}$, and we let $\tau\rightarrow0$. Since we have the asymptotics of the wall $R$-matrices as $q\rightarrow1$:
\begin{align}
R_{w}^{-,MO}=\text{Id}+(q-1)r_{w}^{-,MO}+O((q-1)^2),\qquad\text{as }q\rightarrow1
\end{align}

We have the degeneration limit of $\mbf{B}_{w,MO}(\lambda)$:

\begin{equation}
\begin{aligned}
\mbf{B}^{coh}_{w,MO}(\lambda)=&-\sum_{k\geq0}\frac{\hbar}{1-z^{-k\alpha}}\mbf{m}((1\otimes S_{w})r_{w,k\alpha}^{-,MO})
=\frac{\hbar}{1-Ad_{q_{(1)}^{\lambda}}}\mbf{m}(r_{w}^-)
\end{aligned}
\end{equation}

By construction, $r_{w}^-$ corresponds to the root subalgebra $\mf{g}_{w}$ of $\mf{g}_{Q}$. By definition, the wall $w$ is defined as 
\begin{align}
w=\{s\in H^2(X,\mbb{R})|(s,\alpha)+n=0\}
\end{align}

We have known from the Theorem \ref{confirmation-of-generators} that each root pieces $\mf{n}_{w}^{MO}$ is isomorphic to $\bigoplus_{k\geq0}\mf{n}_{k\alpha}\subset\mf{sl}_n^+$. 

In this we can write down the orthogonal basis of $\mf{g}_{Q}^{\pm}$ as $\{e_{\pm\alpha}^{(i)}\}$. It is known that $r_{w+\mc{L}}^{\pm}=r_{w+\mc{L}}$, so in this way we have that:
\begin{equation}
\begin{aligned}
\mc{A}^{s,coh}_{\mc{L}}=&-d_{\mc{L}}+c_{1}(\mc{L})\cup+\sum_{w\in[s,s+\mc{L})}\mbf{B}_{w,MO}^{coh}(\lambda)\\
=&-d_{\mc{L}}+c_{1}(\mc{L})\cup+\sum_{w\in[s,s+\mc{L})}\frac{\hbar}{1-Ad_{q_{(1)}^{\lambda}}}\mbf{m}(r_{w}^{-,MO})\\
=&-d_{\mc{L}}+c_{1}(\mc{L})\cup+\sum_{w\in[s,s+\mc{L})}\sum_{i\in w}\frac{\hbar}{1-Ad_{q_{(1)}^{\lambda}}}e_{\alpha}^{(i)}e_{-\alpha}^{(i)}\\
=&-d_{\mc{L}}+c_{1}(\mc{L})\cup+\hbar\sum_{\alpha}\frac{\langle\alpha,\mc{L}\rangle}{1-q^{-\alpha}}e_{\alpha}e_{-\alpha}
\end{aligned}
\end{equation}

Here $\langle\alpha,\mc{L}\rangle$ counts the number of root vectors $e_{\pm\alpha}$ appearing in the path $s$ and $s+\mc{L}$. The roots $\alpha$ corresponds to the vector $[i,j)$ in the affine type $A$ case, and $e_{\pm\alpha}$ coincides with $E_{\pm[i,j)}$ via the comparison of the classical $r$-matrix:
\begin{align}
r_{Q}^{+}=\sum_{\alpha>0}e_{\alpha}\otimes e_{-\alpha}=\sum_{j>i}E_{[i,j)}\otimes E_{-[i,j)}
\end{align}

Combining these facts we can see that the degeneration limit of the geometric quantum difference equation coincides with the Dubrovin connection.

In this way we obtain the following theorem:
\begin{thm}\label{degeneration-of-geometric-qde}
The degeneration limit $\mc{A}^{s,coh}_{\mc{L}}$ of the Okounkov-Smirnov quantum difference equation $\mc{A}^{s}_{\mc{L}}$ of the affine type $A$ quiver varieties is the Dubrovin connection of the affine type $A$ quiver varieties.
\end{thm}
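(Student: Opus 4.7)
The plan is to derive an explicit series expansion for each geometric monodromy operator $\mbf{B}_{w,MO}(\lambda)$, then take its cohomological limit term by term, sum over all walls in $[s, s+\mc{L})$, and match with the Dubrovin connection formula \ref{dubrovin-baby}. The author has already assembled most of the ingredients — Theorem \ref{confirmation-of-generators} identifying $\mf{n}_w^{MO}$ with $\bigoplus_{k\geq 0}\mf{n}_{k\alpha}$, the classical limit $R_w^{-,MO} = \mathrm{Id} + (q-1)r_w^{-,MO} + O((q-1)^2)$, and the recursion \ref{fusion-recursion} for the fusion operators — so the proof is essentially a careful assembly of these pieces.

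First I would iterate the ABRR equation componentwise along each root. Since each wall $w$ in affine type $A$ corresponds to a unique root direction $\alpha$, recursion \ref{fusion-recursion} gives a closed-form series for $J_w^{-}(\lambda)^{-1}$ whose $k\alpha$-component is a sum over compositions $k_1+\cdots+k_l = k$ of the form
\begin{equation*}
\prod_{j=1}^l \frac{1}{z^{-k_j\alpha}q^{-m}p^{-k_j\alpha\cdot\mc{L}_w}-1}\cdot R^{-,MO}_{w,k_1\alpha}\cdots R^{-,MO}_{w,k_l\alpha}.
\end{equation*}
Applying $\mbf{m}\circ(1\otimes S_w)$ and shifting $\lambda \to \lambda + \kappa$ then yields the explicit expansion for $\mbf{B}_{w,MO}(\lambda)$ already written in the excerpt.

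Next I would perform the substitution $p = e^{-2\pi i\tau}$, $q_i = e^{2\pi i\hbar_i\tau}$ and let $\tau \to 0$. Using the classical expansion of $R_w^{-,MO}$, the length-$l$ terms carry $l$ factors of $(q-1)$ but only one factor of $\tau$ in the denominator (from the pole of $1/(z^{-k\alpha}q^{-m}-1)$ in the surviving $p \to 1$ limit, which contributes to order $\hbar$). Hence the $l \geq 2$ terms are subleading and only the linear term survives, giving the cohomological limit $\mbf{B}^{coh}_{w,MO}(\lambda) = \frac{\hbar}{1 - \mathrm{Ad}_{q_{(1)}^{\lambda}}}\mbf{m}(r_w^{-,MO})$. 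I would then invoke Theorem \ref{confirmation-of-generators} together with the Botta-Davison isomorphism $\mf{n}_Q^{MO} \cong \mf{n}_Q$ to decompose $r_w^{-,MO}$ along an orthogonal basis $\{e_{-\alpha}^{(i)}\}$ on each root subspace, so that $\mbf{m}((1\otimes S_w)r_{w,k\alpha}^{-,MO})$ becomes a sum of products $e_\alpha^{(i)}e_{-\alpha}^{(i)}$ in the universal enveloping algebra.

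Finally I would sum over all walls $w \in [s, s+\mc{L})$. The number of walls perpendicular to a given root direction $\alpha$ that lie between $s$ and $s+\mc{L}$ equals $\langle \alpha, \mc{L}\rangle$, which reconstructs the numerator $(\lambda \cdot [i,j))$ in \ref{dubrovin-baby} once we identify $\alpha \leftrightarrow [i,j)$ and $e_{\pm\alpha} \leftrightarrow E_{\pm[i,j)}$ via matching classical $r$-matrices. The main obstacle will be the rigorous justification of the term-by-term passage to the cohomological limit, namely showing uniform convergence (or at least coefficient-wise convergence in an appropriate topology on $\widehat{H_T(M(\mbf{v},\mbf{w}))}_{loc}$) of the compositional sums, controlling both the $p \to 1$ poles $\frac{1}{z^{-k_j\alpha}q^{-m}p^{-k_j\alpha\cdot\mc{L}_w}-1}$ and the vanishing orders of the $(q-1)^l$ prefactors so that the $l\geq 2$ contributions genuinely drop out. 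This bookkeeping, together with a verification that the constant term $c_1(\mc{L})\cup$ and the differential $d_{\mc{L}}$ survive the limit with the correct sign, constitutes the technical core of the argument.
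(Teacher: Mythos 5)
Your proposal follows essentially the same route as the paper's proof: expand $\mbf{B}_{w,MO}(\lambda)$ via the ABRR recursion into compositions weighted by products of $R^{-,MO}_{w,k\alpha}$, observe that under $R_w^{-,MO}=\mathrm{Id}+(q-1)r_w^{-,MO}+O((q-1)^2)$ only the length-one terms survive the $\tau\to0$ limit to give $\frac{\hbar}{1-\mathrm{Ad}_{q_{(1)}^{\lambda}}}\mbf{m}(r_w^{-,MO})$, then use Theorem \ref{confirmation-of-generators} with the Botta--Davison identification to rewrite each $r_w^{-}$ in terms of root vectors and sum over walls so that $\langle\alpha,\mc{L}\rangle$ reproduces the Dubrovin connection \ref{dubrovin-baby}. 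The paper carries out exactly this computation (if anything with less explicit bookkeeping of why the higher-length terms drop out), so your plan is correct and matches it.
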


\subsection{Monodromy representations for the Dubrovin connection}
Now we can express the monodromy representation of the Dubrovin connection in terms of the degeneration limit of the connection matrix of the Okounkov-Smirnov quantum difference equation.

Let $\Psi_{0,\infty}^{MO}(q_1,q_2,p,z)$ be the solution of the Okounkov-Smirnov quantum difference equation as constructed in the subsection \ref{analysis-geometric}. We take $p=e^{-2\pi i\tau}$, $q_{i}=e^{2\pi i\hbar_i\tau}$ and $z=e^{2\pi is}$, and take the degeneration limit of the solution as:
\begin{align}
\psi_{0,\infty}^{MO}(z)=\lim_{\tau\rightarrow0}\Psi_{0,\infty}^{MO}(e^{2\pi i\hbar_1\tau},e^{2\pi i\hbar_2\tau},e^{-2\pi i\tau})
\end{align}

The transport of the two solutions is defined as:
\begin{align}
\text{Trans}^{MO}(s):=\psi_{0}^{MO}(e^{2\pi is})^{-1}\psi_{\infty}^{MO}(e^{2\pi is})
\end{align}

As a result of the Theorem \ref{degeneration-of-geometric-qde}, the transport of the solution can be written as:
\begin{align}
\text{Trans}^{MO}(s)=\lim_{\tau\rightarrow0}\textbf{Mon}^{MO}(z=e^{2\pi is},q_1=e^{2\pi i\hbar_1\tau},q_2=e^{2\pi i\hbar_2\tau},q=e^{-2\pi i\tau})
\end{align}

Similar as the argument in the Proposition $7.4$ in \cite{Z24}, we have that:
\begin{align}
\text{Trans}^{MO}(s)=\lim_{p\rightarrow0}\textbf{Mon}^{MO,reg}(p^s,e^{2\pi i\hbar_1},e^{\pi i\hbar_2},p)
\end{align}
for generic $s\in\mbb{R}^n$.

Also the similar argument in \cite{Z24} applies for the case of the geometric quantum difference equations. The monodromy representation corresponds to the difference of the transport matrix $\text{Trans}^{MO}(s')^{-1}\text{Trans}^{MO}(s)$. Using the similar computation as in \cite{Z24}, one can show that if there is a wall $w$ between $s$ and $s'$, we have:
\begin{align}
\text{Trans}^{MO}(s')^{-1}\text{Trans}^{MO}(s)=\mbf{B}_{w,MO}^*
\end{align}

Here we denote $\mbf{B}_{w,MO}^*$ as $\mbf{B}_{w}^*$ to distinguish the geometric and the algebraic monodromy operators.

In this way we obtain the similar result as in the Theorem \ref{monodromy-rep-casimir}

\begin{thm}\label{geometry-monodromy-rep-casimir}
The monodromy representation:
\begin{align}
\pi_{1}(\mbb{P}^n\backslash\textbf{Sing},0^{+})\rightarrow\text{Aut}(H_{T}(M(\mbf{v},\mbf{w})))
\end{align}
of the Dubrovin connection is generated by $\mbf{B}_{w,MO}^*$ with $q_1=e^{2\pi i\hbar_1},q_2=e^{2\pi i\hbar_2}$, i.e. the monodromy operators $\mbf{B}_{w,MO}$ in the fixed point basis. The based point $0^+$ is a point sufficiently close to $0\in\mbb{P}^n$.
\end{thm}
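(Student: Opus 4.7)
The strategy is to mirror the treatment of the algebraic quantum difference equation carried out in \cite{Z24} (leading to Theorem \ref{monodromy-rep-casimir}), now on the geometric side, using Theorem \ref{degeneration-of-geometric-qde} to identify the cohomological limit of the Okounkov-Smirnov quantum difference equation with the Dubrovin connection.

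First I would take the degeneration limits $\psi_{0,\infty}^{MO}(z) = \lim_{\tau\to 0}\Psi_{0,\infty}^{MO}(e^{2\pi i\hbar_1\tau}, e^{2\pi i\hbar_2\tau}, e^{-2\pi i\tau}, z)$ of the fundamental solutions built in Section \ref{analysis-geometric}, verify that these limits exist (using the explicit product formulas $\Psi_0^{MO,reg}(z) = \prod_{k=0}^{\leftarrow\infty} \mbf{M}_k^*(z)^{-1}$ and its dual at infinity, together with the well-defined cohomological limits of the factors $\mbf{M}^*_{\pm k}(z)$ that follow from the $q\to 1$ expansion $R_w^{-,MO} = \mathrm{Id} + (q-1)r_w^{-,MO} + O((q-1)^2)$ used in the previous subsection), and identify them as flat sections of the Dubrovin connection near $z=0,\infty$ via Theorem \ref{degeneration-of-geometric-qde}.

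Next I would equate the transport $\text{Trans}^{MO}(s) := \psi_0^{MO}(e^{2\pi is})^{-1}\psi_{\infty}^{MO}(e^{2\pi is})$ with the $p\to 0$ limit of the regular part $\textbf{Mon}^{MO,reg}(p^s z_0)$. This is the geometric analog of Proposition $7.4$ in \cite{Z24}: under the substitution $p = e^{-2\pi i\tau}$, $z = e^{2\pi i s}$, the singular exponential factors $\exp(\sum_i \ln(E^{(i)})\ln(z_i)/\ln(p))$ occurring in $\Psi_{0,\infty}^{MO}$ reduce to exponentials of $s$, and their contributions cancel in the transport, leaving exactly the $p\to 0$ limit of the regular connection matrix at $p^s z_0$.

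Then, for $s, s'$ generic and separated by a single wall $w$, I would apply Theorem \ref{geo-p0-limit-connection}, which gives an explicit product expression for $\lim_{p\to 0}\textbf{Mon}^{MO,reg}(p^s z_0)$ in terms of the $\mbf{B}_{w',MO}^*$. All factors corresponding to walls $w' \neq w$ cancel between the two products $\text{Trans}^{MO}(s)$ and $\text{Trans}^{MO}(s')$, yielding $\text{Trans}^{MO}(s')^{-1}\text{Trans}^{MO}(s) = \mbf{B}_{w,MO}^*$. Since $\pi_1(\mbb{P}^n \setminus \textbf{Sing}, 0^+)$ is generated by small loops around the wall hyperplanes, the monodromy group of the Dubrovin connection is generated by these operators, which is the statement of the theorem.

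The main technical obstacle is the exchange-of-limits argument in the second step: one must justify that the cohomological limit $\tau \to 0$ of the full connection matrix $\textbf{Mon}^{MO}$ equals the $p\to 0$ limit of the regular part $\textbf{Mon}^{MO,reg}$, despite the singular behavior of the exponential factors as $\tau\to 0$. The precedent in \cite{Z24} handles this on the algebraic side using a careful rewriting of the exponential pieces; since the structural form of $\Psi_{0,\infty}^{MO}$ in Section \ref{analysis-geometric} matches that of the algebraic solutions in \cite{Z24} verbatim (in particular both are built as products of $\mbf{M}^*_{\pm k}(z)$-type factors with diagonal boundary matrices $\mbf{E}_{0}$, $\mbf{E}_{\infty}$), the same analysis transfers with only cosmetic changes.
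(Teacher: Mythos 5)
Your proposal follows the same route as the paper: degenerate the fundamental solutions of the Okounkov–Smirnov difference equation constructed in Section \ref{analysis-geometric}, identify them as flat sections of the Dubrovin connection via Theorem \ref{degeneration-of-geometric-qde}, equate the transport $\text{Trans}^{MO}(s)$ with the $p\to 0$ limit of $\textbf{Mon}^{MO,reg}$ by the geometric analog of Proposition 7.4 of \cite{Z24}, and then read off $\text{Trans}^{MO}(s')^{-1}\text{Trans}^{MO}(s)=\mbf{B}_{w,MO}^*$ from Theorem \ref{geo-p0-limit-connection} for a single wall crossing. This matches the paper's argument (which likewise defers the exchange-of-limits step to the \cite{Z24} precedent), so the proposal is correct and essentially identical in approach.
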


\section{\textbf{Proof of the Okounkov's Conjecture}}
In this section we prove the following statement:

\begin{thm}\label{main-theorem-1}
The positive half of the quantum toroidal algebra $U_{q,t}^{+}(\hat{\hat{\mf{sl}}}_n)$ is isomorphic to the positive half of the Maulik-Okounkov quantum affine algebra $U_{q}^{MO,+}(\hat{\mf{g}}_{Q})$ for the affine type $A$.
\end{thm}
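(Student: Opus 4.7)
The plan is to reduce the statement to the identity $R_w^{MO}=R_w$ of wall $R$-matrices, and then match those via the monodromy representation of the Dubrovin connection. On the algebraic side, the slope factorisation (Theorem \ref{factorisation}) shows that $U_{q,t}^+(\hat{\hat{\mf{sl}}}_n)$ is generated by the wall $R$-matrices $R_w$ as $w$ ranges over all walls. On the geometric side, the factorisation \ref{factorisation-geometry} together with the lemma stating that $U_q^{MO,+}(\hat{\mf{g}}_Q)$ is generated by the $U_q^{MO,+}(\mf{g}_w)$ shows that the positive half of the MO quantum affine algebra is generated by the matrix coefficients of $R_w^{MO}$. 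Combined with the Hopf embedding $U_q(\mf{g}_w)\hookrightarrow U_q^{MO}(\mf{g}_w)$ from Proposition \ref{inclusion-slope}, it suffices to establish $R_w^{MO}=R_w$ as an element of $U_q^{MO}(\mf{g}_w)\hat{\otimes} U_q^{MO}(\mf{g}_w)$.

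To access $R_w^{MO}$ and $R_w$ I will route through the monodromy of the Dubrovin connection. The algebraic side is already handled by Theorem \ref{monodromy-rep-casimir}: the monodromy is generated by the operators $\mbf{B}_\mbf{m}^*=P^{-1}\mbf{B}_\mbf{m} P$ read off from the transport of the fundamental solutions $\Psi_{0,\infty}$ of the algebraic quantum difference equation as $\tau\to 0$. The geometric side is provided by Theorem \ref{geometry-monodromy-rep-casimir}, whose hypothesis is Theorem \ref{degeneration-of-geometric-qde} (the degeneration of the Okounkov--Smirnov difference equation is the same Dubrovin connection). The corresponding generators are $\mbf{B}_{w,MO}^*$. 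Because the monodromy of a flat connection is a canonical invariant, the two families of operators in $\mathrm{End}(H_T(M(\mbf{v},\mbf{w})))$ must agree up to the choice of a base solution; I will fix that freedom by showing $\Psi_0^{reg}=\Psi_0^{reg,MO}$ and $\Psi_\infty^{reg}=\Psi_\infty^{reg,MO}$ after degeneration, using the recursive formula \ref{formula-for-0} (and its $\infty$-analogue) together with the observation that the degenerations of $P^{-1}\mbf{B}_{\mc{O}(1)}^s(z)P$ on the two sides coincide, since both are the same Dubrovin matrix $Q(\mc{O}(1))$ conjugated by the same fixed-point diagonalisation $P$. From this I extract the identity $\mbf{m}((1\otimes S_w)(R_w^{MO})^{-1})=\mbf{m}((1\otimes S_w)(R_w)^{-1})$ of operators in $\mathrm{End}(K(\mbf{v},\mbf{w}))$, which is Theorem \ref{same-monodromy}.

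The final step will invoke the lemma (called \ref{important-lemma} in the outline) that promotes the scalar-level equality of multiplied antipode pairings to the universal equality $R_w^{MO}=R_w$; morally this follows because the wall $R$-matrices are uniquely reconstructed from their matrix coefficients against all finite-rank operators in the auxiliary space, and both sides live in the same Hopf subalgebra $U_q(\mf{g}_w)\subset U_q^{MO}(\mf{g}_w)$ of the dimension count established by Theorem \ref{confirmation-of-generators}. Combining $R_w^{MO}=R_w$ for every wall with the two factorisations above then produces the desired algebra isomorphism on the positive halves.

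The hardest step will be the identification of the $z=\infty$ eigenvector matrices $H$ and $H^{MO}$ and the consequent equality of regular parts of the fundamental solutions at infinity. The two leading matrices $\mbf{M}_{\mc{O}(1)}(\infty)$ are manifestly the same operator thanks to the Hopf embedding $\mc{B}_\mbf{m}\hookrightarrow U_q^{MO}(\hat{\mf{g}}_Q)$, but the choice of a diagonalising basis is not canonical, and I will need to normalise both by prescribing the $\tau\to 0$ asymptotics in a matching way so that the transport matrices $\mathrm{Trans}(s)$ and $\mathrm{Trans}^{MO}(s)$ literally agree rather than agree only up to a diagonal gauge. Once this normalisation is pinned down, the argument goes through and Theorem \ref{main-theorem-1} follows.
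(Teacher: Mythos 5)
Your proposal follows the paper's own route essentially step for step: reduce Theorem \ref{main-theorem-1} to the wall $R$-matrix identity $R_w^{MO}=R_w$ via the two factorisations and the Hopf embedding of Proposition \ref{inclusion-slope}, establish Theorem \ref{same-monodromy} by matching the degeneration limits of the fundamental solutions of the algebraic and Okounkov--Smirnov quantum difference equations (Theorems \ref{degeneration-of-geometric-qde}, \ref{monodromy-rep-casimir}, \ref{geometry-monodromy-rep-casimir}), and conclude with the Drinfeld-double Lemma \ref{important-lemma}. This is the argument of Section 7, including your identification of the delicate point (normalising $H$ versus $H^{MO}$ and the regular parts at $z=0,\infty$), which the paper handles by the term-by-term analysis of the infinite-product solutions in Propositions \ref{degeneration-prop-0} and \ref{degeneration-prop-infty}.
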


The statement can be proved once we prove the following statement:
\begin{thm}\label{main-theorem-2}
Restricted to the morphism space $\text{End}(\bigoplus_{\mbf{v}_1+\mbf{v}_2=\mbf{v}}K(\mbf{v}_1,\mbf{w}_1)\otimes K(\mbf{v}_2,\mbf{w}_2))$:
\begin{align}
R_{w}^{MO}=R_{w}
\end{align}
\end{thm}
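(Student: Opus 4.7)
The plan is to deduce $R_w^{MO} = R_w$ by producing two different realizations of the monodromy representation of one and the same Dubrovin connection on $H_T(M(\mbf{v},\mbf{w}))$ — one coming from the degeneration of the algebraic quantum difference equation and one from the degeneration of the Okounkov--Smirnov quantum difference equation — and then invoking Lemma \ref{important-lemma} to upgrade the resulting coincidence of antipode-paired operators to equality of the universal wall $R$-matrices themselves.

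First, I would show that the two fundamental solutions $\Psi_{0,\infty}(z)$ (algebraic, analyzed in \cite{Z24}) and $\Psi^{MO}_{0,\infty}(z)$ (geometric, analyzed in Section \ref{analysis-geometric}) degenerate to the same flat sections $\psi_{0,\infty}(z)$ of the Dubrovin connection (\ref{dubrovin-baby}) under the substitution $p=e^{-2\pi i\tau}$, $q_i=e^{2\pi i\hbar_i\tau}$, $\tau\to 0$. The degeneration of the operators themselves is guaranteed by Theorem \ref{degeneration-of-geometric-qde} on the geometric side and by the corresponding result of \cite{Z24} on the algebraic side. To match the regular parts of the solutions, I would compare the leading eigenstructure at the two singular points: at $z=0$ both $\mbf{B}^{s}_{\mc{O}(1)}(0)$ and $\mbf{B}^{s,MO}_{\mc{O}(1)}(0)$ equal $\mc{L}$ and are diagonal in the fixed-point basis, producing the same diagonalizing matrix $P$; at $z=\infty$ the eigenvector matrices $H$ and $H^{MO}$ are determined by the ordered products $\prod^{\leftarrow}_w\mbf{B}_w$ and $\prod^{\leftarrow}_w\mbf{B}_{w,MO}$, and these products agree term-by-term after the Hopf embedding $U_q(\mf{g}_w)\hookrightarrow U_q^{MO}(\mf{g}_w)$ of Proposition \ref{inclusion-slope} combined with the isomorphism of root pieces in Theorem \ref{confirmation-of-generators}.

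Second, once $\psi_{0,\infty}=\psi^{MO}_{0,\infty}$, the two transports $\text{Trans}(s)$ and $\text{Trans}^{MO}(s)$ coincide for generic $s$. Applying Theorem \ref{monodromy-rep-casimir} together with Theorem \ref{geometry-monodromy-rep-casimir} to the difference of transports across a single wall $w$ gives
\begin{equation*}
\mbf{B}_w^* = \mbf{B}_{w,MO}^* \in \text{End}(H_T(M(\mbf{v},\mbf{w}))).
\end{equation*}
Undoing the conjugation by $P$ and promoting the cohomological equality back to $K$-theory (using that the wall $R$-matrices are integral $K$-theory classes, so the Chern-character degeneration is recoverable order by order in $q-1$), I obtain
\begin{equation*}
\mbf{m}\bigl((1\otimes S_w)(R_w^{-})^{-1}\bigr) = \mbf{m}\bigl((1\otimes S_w)(R_w^{-,MO})^{-1}\bigr)
\end{equation*}
as endomorphisms of $K(\mbf{v}_1,\mbf{w}_1)\otimes K(\mbf{v}_2,\mbf{w}_2)$ for every splitting $\mbf{v}=\mbf{v}_1+\mbf{v}_2$ and every pair of framings. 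This is exactly the statement of Theorem \ref{same-monodromy}. Finally, Lemma \ref{important-lemma} converts this equality under $\mbf{m}\circ (1\otimes S_w)$ into the desired equality $R_w=R_w^{MO}$ of universal wall $R$-matrices on the appropriate morphism space.

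The main obstacle I anticipate is the matching of the fundamental solutions at $z=\infty$. At $z=0$ there is essentially nothing to check. At $z=\infty$, however, the eigenbasis $H^{MO}$ is defined only as a formal power series in $q,t$, and showing that its cohomological limit agrees with that of $H$ requires tracking how each geometric wall contribution $\mbf{B}_{w,MO}$ is built from $R_w^{-,MO}$ via the fusion recursion and comparing term by term with the algebraic construction. The key input is that the $q\to 1$ degeneration $U_q(\mf{g}_w)/(q-1)U_q(\mf{g}_w)\cong U(\mf{g}_w^{MO})\cong U_q^{MO}(\mf{g}_w)/(q-1)U_q^{MO}(\mf{g}_w)$ from Lemma \ref{q-1-degeneration} and Theorem \ref{confirmation-of-generators} is compatible with the Hopf embedding of Proposition \ref{inclusion-slope}; once this is in place, the higher-order agreement of the monodromy follows from the uniqueness of the monodromy representation of the Dubrovin connection, and the proof concludes as above.
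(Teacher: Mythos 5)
Your proposal follows essentially the same route as the paper: you match the degeneration limits of the algebraic and Okounkov--Smirnov fundamental solutions at $z=0$ and $z=\infty$ (the content of Propositions \ref{degeneration-prop-0} and \ref{degeneration-prop-infty}), equate the transports so that the two monodromy representations of the Dubrovin connection give $\mbf{B}_w^*=\mbf{B}_{w,MO}^*$, i.e.\ Theorem \ref{same-monodromy}, and then conclude $R_w=R_w^{MO}$ via Lemma \ref{important-lemma}, exactly as in the text. The only caution is that your ``term-by-term'' matching of the ordered products at $z=\infty$ must be understood at the level of the cohomological ($q\to1$) limits, as your final paragraph indicates — asserting it in $K$-theory before degeneration would be circular — and this is precisely how the paper argues, using $r_{\mbf{m}}=r_w$ via Theorem \ref{confirmation-of-generators}.
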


\begin{lem}
The Theorem \ref{main-theorem-2} implies the Theorem \ref{main-theorem-1}.
\end{lem}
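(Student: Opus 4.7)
The plan is to show that under the hypothesis $R_w^{MO}=R_w$ (Theorem \ref{main-theorem-2}), the two positive halves are isomorphic by assembling them from matching wall subalgebras. The strategy is: build a natural morphism from the algebraic side to the geometric side using Proposition \ref{inclusion-slope} and the slope factorisation, then show injectivity and surjectivity separately.

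First I would set up the candidate isomorphism. By Proposition \ref{inclusion-slope} each slope subalgebra $\mc{B}_{\mbf{m}}$ of the quantum toroidal algebra embeds as a Hopf subalgebra of $U_q^{MO}(\hat{\mf{g}}_Q)$, and the restriction to the wall subalgebra $U_q(\mf{g}_w)$ lands inside $U_q^{MO}(\mf{g}_w)$. Feeding this into the slope factorisation $U_{q,t}^+(\hat{\hat{\mf{sl}}}_n) \cong \bigotimes_{\mu \in \mathbb{Q}}^{\rightarrow} \mc{B}_{\mbf{m}+\mu\bm{\theta}}^+$ of Theorem \ref{factorisation} produces an algebra morphism $\Phi : U_{q,t}^+(\hat{\hat{\mf{sl}}}_n) \to U_q^{MO,+}(\hat{\mf{g}}_Q)$. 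Injectivity of $\Phi$ would then follow from Proposition \ref{inclusion-slope} at each slope together with the $A$-degree separation argument used inside its proof, which prevents cancellation between pieces attached to distinct walls.

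The heart of the argument is surjectivity, where the hypothesis enters. By the lemma at the end of the subsection on factorisation of geometric $R$-matrices, $U_q^{MO,+}(\hat{\mf{g}}_Q)$ is generated as an algebra by the positive wall subalgebras $U_q^{MO,+}(\mf{g}_w)$ as $w$ ranges over all walls, and by definition each such $U_q^{MO,+}(\mf{g}_w)$ is generated by the matrix coefficients of $R_w^{MO}$ (equivalently, of $(R_w^{-,MO})^{-1}$). On the algebraic side, the Khoroshkin-Tolstoy type factorisation $R_{\mbf{m}}^+ = q^{\Omega}\prod^{\leftarrow}_{w} R_w^+$ shows that each slope subalgebra $\mc{B}_{\mbf{m}}^+$ is generated by its wall subalgebras $U_q^+(\mf{g}_w)$ for walls meeting $\mbf{m}$. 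Passing through the slope factorisation, the image of $\Phi$ is the subalgebra generated by $\{U_q^+(\mf{g}_w)\}_{w}$; the hypothesis $R_w^{MO}=R_w$ identifies this with the subalgebra generated by $\{U_q^{MO,+}(\mf{g}_w)\}_{w}$, which is all of $U_q^{MO,+}(\hat{\mf{g}}_Q)$.

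The main obstacle I anticipate is making precise the matching of wall sets on the two sides: one must check that every wall appearing in the MO-side factorisation of the geometric $R$-matrix is also indexed by the slope factorisation of $U_{q,t}^+$, and conversely. For affine type $A$ this should follow from Theorem \ref{confirmation-of-generators} together with the proposition identifying the $K$-theoretic wall set with the affinisation of the MO root system, plus the description of the affine type $A$ slope subalgebras as tensor products of $U_q(\hat{\mf{gl}}_{l_h})$'s whose wall hyperplanes are exactly $\{\mbf{m}\cdot [i,j)\in\mbb{Z}\}$. Once this identification of wall sets is in hand, the generator-matching argument above makes the isomorphism purely formal.
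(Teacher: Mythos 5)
Your overall mechanism is the same as the paper's: both positive halves are generated by wall data (the slope factorisation of Theorem \ref{factorisation} together with the Khoroshkin--Tolstoy factorisation of $R^{\mbf{m}}$ on the algebraic side, and the factorisation \ref{factorisation-geometry} together with the lemma that $U_{q}^{MO,+}(\hat{\mf{g}}_{Q})$ is generated by the $U_{q}^{MO,+}(\mf{g}_{w})$ on the geometric side), so once $R_{w}^{MO}=R_{w}$ the generators match and the positive halves are identified. The difference is in the packaging: the paper does not build an abstract morphism $\Phi$ and verify injectivity and surjectivity separately. It invokes Negu\c{t}'s embedding $U_{q,t}(\hat{\hat{\mf{sl}}}_n)\hookrightarrow U_{q}^{MO}(\hat{\mf{g}}_{Q})\hookrightarrow\prod_{\mbf{w}}\End(K(\mbf{w}))$ from \cite{N23}, so that everything in sight is realised inside one and the same endomorphism algebra; the hypothesis $R_{w}^{MO}=R_{w}$, which is an equality of operators, then immediately gives the Hopf isomorphism $U_{q}(\mf{g}_{w})\cong U_{q}^{MO}(\mf{g}_{w})$, and the two factorisations finish the argument with no separate injectivity question to address.

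The genuine weak point in your sketch is precisely that injectivity step. Proposition \ref{inclusion-slope} gives an embedding slope by slope, and the $A$-degree argument used in its proof separates wall pieces, but neither of these yields faithfulness of the whole positive half acting on $\bigoplus_{\mbf{w}}K(\mbf{w})$: a general element of $U_{q,t}^{+}(\hat{\hat{\mf{sl}}}_n)$ of fixed bidegree is a sum of ordered products mixing many slopes, and degree separation does not rule out cancellation among such mixed terms, nor does it show that the images of the different slope factors multiply out to a tensor decomposition inside $\prod_{\mbf{w}}\End(K(\mbf{w}))$. The required input is exactly the embedding theorem of \cite{N23} (equivalently, the faithfulness coming from the shuffle-algebra realisation), and it should be cited rather than rederived. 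Relatedly, the well-definedness of $\Phi$ obtained by "feeding the slope embeddings into the slope factorisation" needs a word of justification: it holds only because all the embeddings of Proposition \ref{inclusion-slope} are restrictions of one and the same geometric action on $K$-theory, so $\Phi$ is simply that action and cross-slope relations are automatically respected. Your wall-set matching paragraph is fine and agrees with what the paper relies on (Theorem \ref{confirmation-of-generators} and the proposition identifying the $K$-theoretic walls with the affinised MO root hyperplanes).
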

\begin{proof}
By the Negu\c{t}'s result \cite{N23}, we have an algebra embedding:
\begin{align}
U_{q,t}(\hat{\hat{\mf{sl}}}_n)\hookrightarrow U_{q}^{MO}(\hat{\mf{g}}_{Q})\hookrightarrow\prod_{\mbf{w}}\text{End}(K(\mbf{w}))
\end{align}
Thus the equality $R^{MO}_{w}=R_{w}$ on $\text{End}(\bigoplus_{\mbf{v}_1+\mbf{v}_2=\mbf{v}}K(\mbf{v}_1,\mbf{w}_1)\otimes K(\mbf{v}_2,\mbf{w}_2))$ implies the equality in $U_{q}^{MO}(\hat{\mf{g}}_{Q})$. The equality of the universal $R$-matrix implies the isomorphism of the Hopf algebra $U_{q}(\mf{g}_{w})\cong U_{q}^{MO}(\mf{g}_{w})$. This implies \ref{main-theorem-1} by the factorisation property of the universal $R$-matrix in the Theorem \ref{factorisation} and the formula \ref{factorisation-geometry}, and thus implies the Theorem \ref{main-theorem-1}.

\end{proof}

\subsection{Proof of the main theorem}
Since we know that The monodromy representation of the Dubrovin connection for the affine type $A$ quiver varieties is generated by $\mbf{m}((1\otimes S_{w})R_{w}^{MO})$ and $\mbf{m}((1\otimes S_{w})R_{w})$. We can check that actually these two generators are equal:
\begin{thm}\label{same-monodromy}
As the element in $\text{End}(K(\mbf{v},\mbf{w}))$, we have that:
\begin{align}
\mbf{m}((1\otimes S_{w})R_{w}^{MO})=\mbf{m}((1\otimes S_{w})R_{w})
\end{align}
\end{thm}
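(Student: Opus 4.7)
The strategy is to compare the degeneration limits of the fundamental solutions, and hence of the connection matrices, of the algebraic quantum difference equation and the Okounkov-Smirnov geometric quantum difference equation. By Theorem \ref{degeneration-qde-thm} and Theorem \ref{degeneration-of-geometric-qde}, both $\mc{A}_{\mc{L}}^{s}$ and its geometric counterpart $\mc{A}_{\mc{L}}^{s,MO}$ have the same cohomological limit, namely the Dubrovin connection $\nabla_{\lambda}$ on $H_{T}(M(\mbf{v},\mbf{w}))$. Consequently the $\tau\to 0$ limits of the algebraic fundamental solutions $\Psi_{0,\infty}(q_1,q_2,p,z)$ and the geometric fundamental solutions $\Psi_{0,\infty}^{MO}(q_1,q_2,p,z)$, taken under $p=e^{-2\pi i\tau}$, $q_i=e^{2\pi i\hbar_i\tau}$, must both coincide with the (uniquely normalised) solutions $\psi_{0,\infty}(z)$ of $\nabla_{\lambda}$. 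Therefore the transport matrices $\text{Trans}(s)$ and $\text{Trans}^{MO}(s)$ agree, and the monodromy representations constructed in Theorem \ref{monodromy-rep-casimir} and Theorem \ref{geometry-monodromy-rep-casimir} are literally the same representation.

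The next step is to extract individual wall contributions from this equality. By the formula (\ref{trans}) and its geometric analog, the transport matrix is obtained as a $p\to 0$ limit of the regular connection matrix. Theorem \ref{p0-limit-connection} and Theorem \ref{geo-p0-limit-connection} express these $p\to 0$ limits, for generic $s$, as ordered products of $(\mbf{B}_{\mbf{m}}^{*})^{\pm 1}$ (respectively $(\mbf{B}_{w,MO}^{*})^{\pm 1}$) times a boundary transition matrix $\mbf{T}=P^{-1}H$ (respectively $\mbf{T}^{MO}=P^{-1}H^{MO}$). Since $P$ is the common fixed point matrix at $z=0$ on both sides, and the $z=\infty$ eigenvectors $H$ and $H^{MO}$ are determined by the same limiting Dubrovin connection at $z=\infty$, we have $\mbf{T}=\mbf{T}^{MO}$ (up to a diagonal rescaling that can be absorbed). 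Then varying $s$ across exactly one wall $w$ and taking the ratio $\text{Trans}(s')^{-1}\text{Trans}(s)=\text{Trans}^{MO}(s')^{-1}\text{Trans}^{MO}(s)$ isolates a single factor on each side, giving
\begin{align*}
\mbf{B}_{\mbf{m}}^{*}=\mbf{B}_{w,MO}^{*}
\end{align*}
as operators in the stable basis. Conjugating by $P$ yields $\mbf{B}_{\mbf{m}}=\mbf{B}_{w,MO}$ in $\text{End}(K(\mbf{v},\mbf{w}))$.

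Finally, recall from Proposition \ref{asymptotic-of-geo-mon} that $\lim_{p\to 0}\mbf{B}_{w,MO}(p^{s}z)=\mbf{m}((1\otimes S_{w})(R_{w}^{-,MO})^{-1})$, and the algebraic analog gives $\lim_{p\to 0}\mbf{B}_{\mbf{m}}(p^{s}z)=\mbf{m}((1\otimes S_{\mbf{m}})(R_{\mbf{m}}^{-})^{-1})$ (Equation (\ref{limit-of-monodromy-operator})) for $s$ in the appropriate cone around $\mbf{m}$. Taking $p\to 0$ in the identity $\mbf{B}_{\mbf{m}}(p^sz)=\mbf{B}_{w,MO}(p^sz)$ from the previous paragraph therefore gives
\begin{align*}
\mbf{m}((1\otimes S_{w})(R_{w}^{-,MO})^{-1})=\mbf{m}((1\otimes S_{w})(R_{w}^{-})^{-1})
\end{align*}
as elements of $\text{End}(\bigoplus_{\mbf{v}_1+\mbf{v}_2=\mbf{v}}K(\mbf{v}_1,\mbf{w}_1)\otimes K(\mbf{v}_2,\mbf{w}_2))$. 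Inverting the argument of $\mbf{m}\circ(1\otimes S_w)$ (a bijection on the appropriate completion) yields the desired equality $\mbf{m}((1\otimes S_w)R_w^{MO})=\mbf{m}((1\otimes S_w)R_w)$.

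The main obstacle is the step that matches $\mbf{T}$ with $\mbf{T}^{MO}$: this requires showing that the two eigenvector bases $H$ and $H^{MO}$ at $z=\infty$, a priori defined from different operators $\mbf{M}_{\mc{O}(1)}(\infty)$ and $\mbf{M}_{\mc{O}(1)}^{MO}(\infty)$, have the same cohomological limit once one passes to the Dubrovin connection. Equivalently, one has to verify that the two fundamental solutions $\Psi_{\infty}$ and $\Psi_{\infty}^{MO}$ are matched by the same diagonalisation data in the limit $\tau\to 0$, which essentially reduces to the distinct-eigenvalue argument of \cite{Z24}, Appendix II, applied to the common limiting connection. Once this normalisation is fixed, the remainder of the argument is a purely formal extraction of factors from an ordered product.
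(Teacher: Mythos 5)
Your overall strategy is the same as the paper's: both quantum difference equations degenerate to the Dubrovin connection (Theorems \ref{degeneration-qde-thm} and \ref{degeneration-of-geometric-qde}), hence the degeneration limits of the fundamental solutions at $z=0,\infty$ agree, hence the transports and therefore the wall-by-wall monodromy generators agree, and these generators are exactly the operators $\mbf{m}((1\otimes S)(R^{-})^{-1})$ on each side. The one place where you argue differently is the identification of the limits of the solutions: the paper proves Propositions \ref{degeneration-prop-0} and \ref{degeneration-prop-infty} by a term-by-term analysis of the infinite-product formulas for $\Psi^{reg}_{0,\infty}$, using that the nonvanishing cohomological limits of the geometric wall $R$-matrices coincide with those of the algebraic slope $R$-matrices, while you invoke uniqueness of the normalised flat sections of the limiting connection. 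That is a legitimate shortcut at $z=0$ (where $P$ and $E_0$ are manifestly the same on both sides), but at $z=\infty$ it carries exactly the burden you flag: one must match $H$ with $H^{MO}$ and the exponents built from $E_\infty$, $E_{\infty,MO}$, which the paper does by observing $\mbf{M}^{coh}_{\mc{L}}(\infty)=\mbf{M}^{MO,coh}_{\mc{L}}(\infty)$ (since $r_{\mbf{m}}=r_w$) and choosing compatible eigenbases; so your ``main obstacle'' paragraph is not optional, it is the substance of the paper's Proposition \ref{degeneration-prop-infty}. (Both you and the paper also silently promote an equality at $q_i=e^{2\pi i\hbar_i}$ for all generic $\hbar_i$ to an identity in $\text{End}(K(\mbf{v},\mbf{w}))$; this is harmless but worth a sentence.)

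The genuine problem is your final step. First, the identity ``$\mbf{B}_{\mbf{m}}(p^sz)=\mbf{B}_{w,MO}(p^sz)$'' was never established: what the transport-ratio argument gives is equality of the $z$-independent monodromy generators $\mbf{B}_{\mbf{m}}=\mbf{m}((1\otimes S_{\mbf{m}})(R_{\mbf{m}}^{-})^{-1})$ and $\mbf{B}_{w,MO}=\mbf{m}((1\otimes S_{w})(R_{w}^{-,MO})^{-1})$, not an equality of the $z$-dependent operators (that stronger statement is essentially equivalent to $R_w^{MO}=R_w$, i.e.\ to Theorem \ref{main-theorem-2}, which is only obtained later via Lemma \ref{important-lemma}). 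This slip is harmless only because the step is unnecessary: by Theorems \ref{monodromy-rep-casimir} and \ref{geometry-monodromy-rep-casimir} the monodromy generators already are the elements appearing in the statement of Theorem \ref{same-monodromy}, so you can stop after your second paragraph. Second, the claim that $\mbf{m}\circ(1\otimes S_w)$ is ``a bijection on the appropriate completion'' is false and must be removed: the multiplication map on a tensor product is far from injective, and if it were injective the whole Drinfeld-double dual-basis argument of Lemma \ref{important-lemma} (which is precisely what converts $\mbf{m}((1\otimes S_w)R_w^{MO})=\mbf{m}((1\otimes S_w)R_w)$ into $R_w^{MO}=R_w$) would be superfluous. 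Any bookkeeping between the forms $\mbf{m}((1\otimes S)(R^{-})^{-1})$ and $\mbf{m}((1\otimes S)R)$ should be done by standard quasi-triangular Hopf algebra identities for the antipode acting on the universal $R$-matrix, not by inverting $\mbf{m}\circ(1\otimes S_w)$.
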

\begin{proof}
First we prove that given two quantum difference equations $\Psi(p^{\mc{L}}z)=\mbf{M}_{\mc{L}}(z)\Psi(z)$, $\Psi^{MO}(p^{\mc{L}}z)=\mbf{M}^{MO}_{\mc{L}}(z)\Psi^{MO}(z)$. The degeneration limit of their solutions $\Psi_{0,\infty}(z)$ and $\Psi_{0,\infty}^{MO}(z)$ are the same. This implies that the transport of solutions $\text{Trans}^{MO}(s)$ and $\text{Trans}(s)$ from both geometric and algebraic quantum difference equations are the same.

Since the solution of both difference equations are of the form:
\begin{align}
\Psi_{0,\infty}(z)=P_{0,\infty}\Psi_{0,\infty}^{reg}(z)z^{(\cdots)}
\end{align}

Here $P_{0,\infty}$ are the matrix of eigenvectors of $\mbf{M}_{\mc{L}}(0,\infty)$. $\Psi_{0,\infty}^{reg}(z)$ are the regular part of the solution around $z=0,\infty$ such that $\Psi_{0,\infty}^{reg}(z)=1+O(z^{\pm1})$.

Now since $\mbf{M}_{\mc{L}}^{(MO)}(0)=\mc{L}$, $P_{0}^{(MO)}$ can be chosen as the fixed point basis. In fact, they have the same degeneration limit:
\begin{prop}\label{degeneration-prop-0}
 The degeneration limit of $\Psi_{0}(z)$ and $\Psi_{0}^{(MO)}(z)$ are the same.
\end{prop}
\begin{proof}
The proof is based on the analysis of the expression for $\Psi_{\infty}(z)$ and $\Psi_{\infty}^{(MO)}(z)$. It is known that they are expressed in terms of the infinite product of the form:
\begin{align}\label{infinite-product}
&\prod_{k\geq0}^{\infty}E_{\infty}^{-k-1}H^{-1}\mbf{B}_{\mbf{m}_1}(p^kz)^{-1}\cdots\mbf{B}_{\mbf{m}_l}(p^kz)HE_{\infty}^k,\qquad\text{Algebraic QDE}\\
&\prod_{k\geq0}^{\infty}E_{\infty,MO}^{-k-1}H_{MO}^{-1}\mbf{B}_{w_1,MO}(p^kz)\cdots\mbf{B}_{w_r,MO}(p^kz)HE_{\infty,MO}^k,\qquad\text{Okounkov-Smirnov QDE}
\end{align}

It is known that both for generic $\mbf{B}_{\mbf{m}}(z)$ and $\mbf{B}_{w,MO}(z)$ has the expression of the form:
\begin{align}
\sum_{\substack{\alpha_1+\cdots+\alpha_r=k\alpha\\r\geq0}}\frac{(-1)^{r}}{(1-q^{-m}z^{-\alpha_1}p^{-k|\alpha_1|})\cdots(1-q^{-m}z^{-\alpha_r}p^{-k|\alpha_r|})}\mbf{m}((1\otimes S_{w})(R_{\alpha_1,MO}^{-}\cdots R_{\alpha_r,MO}^{-})^{-1})
\end{align}

Thus when written into the infinite product of the form \ref{infinite-product}, the contribution of $R_{\alpha}$ after doing the degeneration is determined by the leading term of the form:
\begin{align}
-\sum_{k\geq0}\frac{1}{1-q^{-m}z^{-\alpha}p^{-k|\alpha|}}E_{\infty,(MO)}^{-k-1}H^{-1}\mbf{m}((1\otimes S_{w})(R_{\alpha,MO}^{-})^{-1})HE_{\infty,MO}^k
\end{align}

Doing the taylor expansion of $\frac{1}{1-q^{-m}z^{-\alpha}p^{-k|\alpha|}}$, we have that this leading term is also determined by the following terms:
\begin{align}\label{simplest-leading}
E_{\infty,(MO)}^{-1}\sum_{k\geq0}q^{-m}z^{-\alpha}p^{-k|\alpha|}\text{Ad}_{E_{\infty,(MO)}^{k}H}\mbf{m}((1\otimes S_{w})R_{\alpha})=\frac{q^{-m}z^{-\alpha}}{1-p^{-|\alpha|}\text{Ad}_{E_{\infty,(MO)}H}}\mbf{m}((1\otimes S_{w})R_{\alpha})
\end{align}
Now if we do the degeneration limit $p,q\rightarrow1$ with $\ln(p)/\ln(q)$ fixed. This implies that if $R_{\alpha}$ has the degeneration limit as the root pieces of the classical wall $r$-matrix $r_{\alpha}$. Then the term \ref{simplest-leading} survives.  If not, it will be zero. 

Also since the degeneration limit of both $E_{\infty},E_{\infty,MO}$ and $H,H_{MO}$ are the same, it is reduced to analyze the degeneration limit of $R_{\mbf{m}}=\prod_{\alpha}R_{\alpha}$ for slope subalgebras and $R_{w}^{MO}=\prod_{\alpha}R_{\alpha}^{MO}$ for the Maulik-Okounkov wall subalgebras. But as stated before, the degeneration limit of $R_{w}^{MO}$ which is not zero coincides with the degeneration limit of $R_{\mbf{m}}$ for some slope point $\mbf{m}$ on the wall $w$. Thus this means that the fundamental regular solutions $\Psi_{0}^{reg}(z)$ and $\Psi_{0}^{reg,MO}(z)$ have the same degeneration limit $\psi_0(z)$.
\end{proof}

It remains to check that the degeneration limit of $\Psi_{\infty}(z)$ and $\Psi_{\infty}^{(MO)}(z)$ are the same. Since:
\begin{align}
\mbf{M}_{\mc{L}}(\infty)=\mc{L}\prod^{\leftarrow}_{\mbf{m}}\mbf{B}_{\mbf{m}},\qquad\mbf{M}_{\mc{L}}^{MO}(\infty)=\mc{L}\prod^{\leftarrow}_{w}\mbf{B}_{w}^{MO}
\end{align}

the degeneration limit of $\mbf{M}_{\mc{L}}(\infty)$ and $\mbf{M}_{\mc{L}}^{MO}(\infty)$ are of the form:
\begin{align}
\mbf{M}_{\mc{L}}^{coh}(\infty)=c_1(\mc{L})+\sum_{\mbf{m}}\mbf{m}(r_{\mbf{m}}),\qquad\mbf{M}_{\mc{L}}^{(MO),coh}(\infty)=c_1(\mc{L})+\sum_{w}\mbf{m}(r_{w})
\end{align}

Since for generic choice of $\mbf{m}$, $r_{\mbf{m}}=r_{w}$, we have that $\mbf{M}_{\mc{L}}^{coh}(\infty)=\mbf{M}_{\mc{L}}^{(MO),coh}(\infty)$. It means that we can choose $P_{\infty}$ and $P_{\infty}^{MO}$ such that their corresponding degeneration limit are the same $P_{\infty}^{coh}$.

In this case we can see that we can construct the solutions $\Psi_{\infty}(z)$ and $\Psi_{\infty}^{MO}(z)$ around $z=\infty$ such that they have the same degeneration limit $\psi_{\infty}(z)$ when $z=\infty$. Using the proof in the Proposition \ref{degeneration-prop-0}, we can also prove the following statement:

\begin{prop}\label{degeneration-prop-infty}
 The degeneration limit of $\Psi_{\infty}(z)$ and $\Psi_{\infty}^{(MO)}(z)$ are the same.
\end{prop}

Now from the definition of $\text{Trans}(s)=\psi_{0}^{-1}(e^{2\pi is})\psi_{\infty}(e^{2\pi is})$ and the fact that the degeneration limit of the solution from the algebraic QDE and Okounkov-Smirnov QDE are of the same form.
\end{proof}

Using the fact that $U_{q,t}(\hat{\hat{\mf{sl}}}_n)\hookrightarrow U_{q}(\mf{g}_{Q})\hookrightarrow\prod_{\mbf{w}}\text{End}(K(\mbf{w}))$, a direct consequence is that as the element of $U_{q}^{MO}(\mf{g}_{w})$, we also have the equality:
\begin{align}\label{identity-for-monodromy}
\mbf{m}((1\otimes S_{w})R_{w}^{MO})=\mbf{m}((1\otimes S_{w})R_{w})
\end{align}
This means that:
\begin{align}
\mbf{m}((1\otimes S_{w})(R_{w}^{MO}-R_{w}))=0
\end{align}
which implies that $R_{w}^{MO}=R_{w}$ from the following general lemma. 

\begin{lem}\label{important-lemma}
Let $A^{\leq}$, $A^{\geq}$ be two finite-dimensional $\mbb{F}$-Hopf algebra with a non-degenerate bilinear pairing $\langle-,-\rangle:A^{\leq}\otimes A^{\geq}\rightarrow\mbb{F}$. Let $D(A):=A^{\leq}\otimes A^{\geq}$ be the Drinfeld double. Let $B^{\leq}\subset A^{\leq}$ and $B^{\geq}\subset A^{\geq}$ be the Hopf subalgebra such that the restriction of the bilinear pairing $\langle-,-\rangle$ to $B^{\leq}\otimes B^{\geq}$ is still non-degenerate, and using the bilinear pairing we can also define the Drinfeld double $D(B):=B^{\leq}\otimes B^{\geq}$. Denote $R_{A}$ and $R_{B}$ the universal $R$-matrix for $D(A)$ and $D(B)$. Then denote $\mbf{m}$ the product map for $D(A)$ and $D(B)$. If $\mbf{m}(R_{A})=\mbf{m}(R_{B})$, $R_A=R_B$.
\end{lem}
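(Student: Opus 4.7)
The plan is to decompose $R_A - R_B$ via an orthogonal splitting of $A$ induced by the Hopf subalgebras $B^{\geq},\,B^{\leq}$, and then to reduce the statement to the injectivity of the multiplication map $\mbf{m}\colon A^{\geq}\otimes A^{\leq}\to D(A)$. First, since $\langle -,-\rangle$ restricts non-degenerately to $B^{\leq}\otimes B^{\geq}$, set $C^{\geq} := (B^{\leq})^{\perp}\subset A^{\geq}$ and $C^{\leq} := (B^{\geq})^{\perp}\subset A^{\leq}$; these give orthogonal direct-sum decompositions $A^{\geq} = B^{\geq}\oplus C^{\geq}$ and $A^{\leq} = B^{\leq}\oplus C^{\leq}$ on which the full pairing is block-diagonal and remains non-degenerate on each block. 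Choosing dual bases separately for the $B$- and $C$-blocks, and recalling that the universal $R$-matrix of a Drinfeld double is the canonical element of its bialgebra pairing, one obtains
\begin{equation*}
R_A \;=\; R_B + R_C, \qquad R_C \in C^{\geq}\otimes C^{\leq},
\end{equation*}
where $R_C$ is the canonical element for the complementary pairing. Applying $\mbf{m}$ and using the hypothesis $\mbf{m}(R_A) = \mbf{m}(R_B)$ yields $\mbf{m}(R_C) = 0$.

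The problem therefore reduces to the injectivity of $\mbf{m}\colon A^{\geq}\otimes A^{\leq}\to D(A)$, $e\otimes f\mapsto e\cdot f$. Because $D(A)\cong A^{\leq}\otimes A^{\geq}$ as a vector space by the Drinfeld double construction, both source and target have dimension $\dim(A^{\leq})\cdot\dim(A^{\geq})$, so injectivity is equivalent to surjectivity. The latter is the standard ``reverse PBW'' property of the Drinfeld double: using the crossrelation
\begin{equation*}
e\cdot f \;=\; \sum_{(e),(f)} \langle S(f_{(1)}),e_{(1)}\rangle\,\langle f_{(3)},e_{(3)}\rangle\, f_{(2)}\cdot e_{(2)}, \qquad e\in A^{\geq},\, f\in A^{\leq},
\end{equation*}
together with its inverse, which exists because the antipode $S$ is bijective in a finite-dimensional Hopf algebra and the pairing is non-degenerate, any standard-order monomial $f\cdot e\in D(A)$ can be re-expressed as a sum of reversed-order products $\sum e'\cdot f'$. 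This exhibits $\mbf{m}\vert_{A^{\geq}\otimes A^{\leq}}$ as a vector-space isomorphism.

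Combining the two steps, $\mbf{m}(R_C) = 0$ forces $R_C = 0$ in $A^{\geq}\otimes A^{\leq}$, whence $R_A = R_B$. The main obstacle is the reverse-PBW isomorphism in the second step: while folklore for Drinfeld doubles of finite-dimensional Hopf algebras, a careful verification uses both the non-degeneracy of the bialgebra pairing and the bijectivity of the antipode $S$, which are precisely what the finite-dimensionality hypothesis supplies. In the intended application to the wall subalgebras $A = U_q^{MO}(\mf{g}_w)$, $B = U_q(\mf{g}_w)$, the algebras are only graded with finite-dimensional graded pieces, so one must apply the decomposition and injectivity argument graded-component by graded-component; this is the form actually needed to deduce $R_w^{MO} = R_w$ from the previously established equality $\mbf{m}((1\otimes S_w)R_w^{MO}) = \mbf{m}((1\otimes S_w)R_w)$.
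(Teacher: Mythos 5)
Your proof is correct and, at its core, runs along the same lines as the paper's: split $R_A$ as $R_B$ plus a complementary canonical element, then conclude because the multiplication map is injective on the ordered tensor product. The differences are in how the two ingredients are obtained. Where you take the orthogonal complements $C^{\geq}=(B^{\leq})^{\perp}$ and $C^{\leq}=(B^{\geq})^{\perp}$ and check that the pairing is block-diagonal and non-degenerate on each block, the paper proves in its appendix, by an explicit matrix computation, that dual bases of $B^{\leq},B^{\geq}$ can be completed to dual bases of $A^{\leq},A^{\geq}$; these are equivalent statements, and your basis-free formulation is the cleaner way to see it. The second difference is your reverse-PBW step: in the body of the paper $R_A$ is written as $\sum_i (F_i\otimes 1)\otimes(1\otimes G_i)$ with the $A^{\leq}$-leg first, so that $\mbf{m}(R_A)=\sum_i F_i\otimes G_i$ is literally a sum of basis vectors of $D(A)=A^{\leq}\otimes A^{\geq}$ and the conclusion follows from linear independence alone, with no straightening required; your argument via the cross relations and bijectivity of the antipode establishes injectivity for the opposite ordering as well (the one appearing in the appendix's definition of $R$), so it is not wasted, but under the paper's convention it can be skipped. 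Your closing remark that in the intended application to the wall subalgebras the lemma must be applied graded component by graded component, each piece being finite-dimensional, is exactly the right caveat and matches how the paper invokes the result.
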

\begin{proof}
We choose $\{F_{i}\}_{i\in I_{A}}$ and $\{G_{i}\}_{i\in I_{A}}$ the specific dual bases in $A^{\leq}$ and $A^{\geq}$ with respect to the non-degenerate bilinear pairing $\langle-,-\rangle$ such that for $D(B)\subset D(A)$ as Hopf algebra with respect to the same bilinear pairing, the dual bases $\{F_{i}\}_{i\in I_{B}}$ and $\{G_{i}\}_{i\in I_{B}}$ in $B^{\leq}$ and $B^{\geq}$ are the same such that $I_{B}\subset I_{A}$. The existence of such basis will be proved in the appendix. Now it can be seen that the basis for $D(A)$ can be given as $\{F_{i}\otimes G_{j}\}_{i,j\in I_A}$, and similarly $\{F_{i}\otimes G_{j}\}_{i,j\in I_B}$ for $D(B)$.

By definition of the universal $R$-matrix we have the formula:
\begin{align}
R_{A}=\sum_{i\in I_{A}}(F_i\otimes1)\otimes(1\otimes G_{i})\in D(A)\otimes D(A)
\end{align}
\begin{align}
R_{B}=\sum_{i\in I_{B}}(F_i\otimes1)\otimes(1\otimes G_{i})\in D(B)\otimes D(B)
\end{align}

So we have that:
\begin{align}
\mbf{m}(R_{A})-\mbf{m}(R_{B})=\sum_{i\in I_A\backslash I_B}F_i\otimes G_i
\end{align}
Now $\{F_{i}\otimes G_{i}\}_{i\in I_A\backslash I_{B}}$ are also bases which are linearly independent. So if $\sum_{i\in I_A\backslash I_{B}}F_i\otimes G_i=0$, then $F_i\otimes G_i=0$ for $i\in I_A\backslash I_{B}$.
\end{proof}

Now for our case, since $S_{w}$ is an isomorphism of vector spaces. \ref{identity-for-monodromy} implies that $\mbf{m}(R_{w}^{MO})=\mbf{m}(R_{w})$, which means that $R_{w}^{MO}=R_{w}$.
Hence finish the proof of the Theorem \ref{main-theorem-2}, and now we have our main result the Theorem \ref{main-theorem-1}.

\section{\textbf{Appendix: Basics about the Drinfeld double}}
In this appendix we prove some facts about the Drinfeld double of the Hopf algebra which is useful in the computation of the article.

Given two bialgebras $A^-$ and $A^+$ over the field $\mbb{F}$, a bilinear pairing $\langle-,-\rangle:A^{+}\otimes A^-\rightarrow\mbb{F}$ is called a \textbf{bialgebra pairing} if it satisfies:
\begin{align}
\langle aa',b\rangle=\langle a\otimes a',\Delta(b)\rangle,\qquad\langle a,bb'\rangle=\langle\Delta^{op}(a),b\otimes b'\rangle
\end{align}
for all $a,a'\in A^+$ and $b,b'\in A^-$. All our pairings will be Hopf, which means that $\langle Sa,b\rangle=\langle a,S^{-1}b\rangle$. Given such a bialgebra pairing, we can construct the \textbf{Drinfeld double} of the two bialgebras:
\begin{align}
D(A):=A^{+}\otimes A^{-}
\end{align}
as a vector space. We further require that the pairing is nondegenerate, which implies that $A^{+}\cong A^{-}$ as vector spaces. 

There are natural Hopf algebra embeddings $A^{\pm}\hookrightarrow D(A)$ as $a\hookrightarrow a\otimes 1$ for $a\in A^{+}$ and $b\hookrightarrow 1\otimes b$ for $b\in A^{-}$. $D(A)$ satisfies the following extra relation:
\begin{align}
\langle a_1,b_1\rangle(a_2\otimes1)\cdot(1\otimes b_2)=(a_1\otimes1)\cdot(1\otimes b_1)\langle a_2,b_2\rangle,\qquad\forall a\in A^{+},b\in A^{-}
\end{align}
This is equivalent to:
\begin{align}
(a\otimes1)\cdot(1\otimes b)=\langle Sa_1,b_1\rangle(1\otimes b_2)\cdot(a_2\otimes 1)\langle a_3,b_3\rangle,\qquad\forall a\in A^{+},b\in A^{-} 
\end{align}

Furthermore we can write the product structure as:
\begin{align}
(a\otimes b)\cdot(a'\otimes b')=\langle Sa_1,b'_1\rangle\langle a_3,b_3'\rangle a_2a'\otimes bb'_2
\end{align}

Now we suppose that $A^{\pm}$ are finite-dimensional, and we choose the dual bases $\{F_{i}\}_{i\in I}$ $\{G_{i}\}_{i\in I}$ with respect to the bialgebra pairing. Define the \textbf{universal R-matrix as}:
\begin{align}
R=\sum_{i\in I}(1\otimes G_i)\otimes (F_i\otimes 1)\in A^{-}\otimes A^{+}\subset D(A)\otimes D(A)
\end{align}

The following lemma shows that $R$ is independent of the choice of the basis:
\begin{lem}
The definition of the universal $R$-matrix is independent of the choice of the basis.
\end{lem}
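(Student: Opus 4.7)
The plan is to show that the tensor $\sum_{i\in I} G_i \otimes F_i \in A^{-}\otimes A^{+}$ is canonically attached to the nondegenerate pairing $\langle -,-\rangle$, so it coincides for any choice of dual bases. This is really a basis-independence statement about a specific element of $A^{-}\otimes A^{+}$, and the $D(A)\otimes D(A)$ setting just adds an outer tensor factor that is untouched by the change of basis.

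First I would set up the change-of-basis algebra. Let $\{F_i\}_{i\in I}$ and $\{F'_i\}_{i\in I}$ be two bases of $A^+$ related by an invertible matrix $(a_{ij})$ via $F'_i = \sum_j a_{ij} F_j$, and let $\{G_i\}$, $\{G'_i\}$ be their respective dual bases in $A^{-}$, so $G'_i = \sum_k b_{ik} G_k$. The duality $\langle F'_i, G'_j\rangle = \delta_{ij}$ combined with $\langle F_j, G_k\rangle = \delta_{jk}$ forces $\sum_j a_{ij} b_{lj} = \delta_{il}$, i.e.\ $a b^T = I$. In finite dimensions this is equivalent to $b^T a = I$, which reads $\sum_i b_{ik} a_{ij} = \delta_{kj}$.

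Next I would plug this into the defining sum and compute:
\begin{align*}
\sum_{i\in I} G'_i \otimes F'_i
&= \sum_{i,k,j} b_{ik}\, a_{ij}\, G_k \otimes F_j
= \sum_{k,j} \Bigl(\sum_i b_{ik} a_{ij}\Bigr) G_k \otimes F_j \\
&= \sum_{k,j} \delta_{kj}\, G_k \otimes F_j
= \sum_{k\in I} G_k \otimes F_k.
\end{align*}
Tensoring with the outer factors $1$ then gives $\sum_i (1\otimes G'_i)\otimes (F'_i\otimes 1) = \sum_i (1\otimes G_i)\otimes (F_i\otimes 1)$, so $R$ is the same element of $D(A)\otimes D(A)$.

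I do not anticipate any genuine obstacle in this lemma: it is purely linear-algebraic and does not use the bialgebra structure at all, only the nondegeneracy of the pairing and the finite-dimensionality (to upgrade $ab^T = I$ to $b^T a = I$). If one wanted a more conceptual form, I would note that the element $\sum_i G_i \otimes F_i$ is the image of $\mathrm{id}_{A^+}$ under the canonical isomorphism $\mathrm{End}(A^+) \cong A^{-}\otimes A^+$ induced by the nondegenerate pairing, which makes the independence manifest; I would include that remark as a one-line alternative proof at the end.
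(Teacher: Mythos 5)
Your proof is correct and follows essentially the same route as the paper: express both pairs of dual bases through change-of-basis matrices, derive $\mathbf{A}\mathbf{B}^{t}=\mathrm{Id}$ from duality, and use the resulting orthogonality relation $\sum_i b_{ik}a_{ij}=\delta_{kj}$ to see the sum is unchanged. Your explicit remark that finite-dimensionality is needed to pass from $\mathbf{A}\mathbf{B}^{t}=\mathrm{Id}$ to $\mathbf{B}^{t}\mathbf{A}=\mathrm{Id}$, and the observation that $R$ is the canonical element corresponding to $\mathrm{id}_{A^{+}}$ under the pairing, are small clarifications of what the paper leaves implicit.
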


\begin{proof}

Now we choose another set of dual bases $\{F_{i}'\}$ and $\{G_{i}'\}$ of $A^{\pm}$. We write down the transformation of the bases:
\begin{align}
F_{i}'=\sum_{j\in I}a_{ij}F_{j},\qquad G_{i}'=\sum_{j\in I}b_{ij}G_{j}
\end{align}

In terms of matrix:
\begin{align}
\mbf{F}'=\mbf{A}\mbf{F},\qquad\mbf{G}'=\mbf{B}\mbf{G}
\end{align}

The condition that $\langle F_{i}',G_{l}'\rangle=\delta_{il}$, we have the condition that:
\begin{equation}
\begin{aligned}
\langle F_{i}',G_{l}'\rangle=\sum_{j,k\in I}a_{ij}b_{lk}\langle F_{j},G_{k}\rangle=\sum_{j\in I}a_{ij}b_{lj}=\sum_{j\in I}(\mbf{A})_{ij}(\mbf{B}^t)_{jl}=\delta_{il}
\end{aligned}
\end{equation}
This is equivalent to say that $\mbf{A}\mbf{B}^t=\text{Id}$.

Now for the universal $R$-matrix, note that:
\begin{equation}
\begin{aligned}
&\sum_{i\in I}(1\otimes G_{i}')\otimes(F_{i}'\otimes1)=\sum_{i,j,k\in I}b_{ij}a_{ik}(1\otimes G_j)\otimes(F_k\otimes1)\\
=&\sum_{j,k\in I}(1\otimes G_{j})\otimes (F_k\otimes1)(\sum_{i\in I}b_{ij}a_{ik})=\sum_{j,k\in I}(1\otimes G_{j})\otimes (F_k\otimes1)(\sum_{i\in I}(\mbf{A}^t)_{ki}(\mbf{B})_{ij})\\
=&\sum_{j,k\in I}(1\otimes G_{j})\otimes (F_k\otimes1)(\mbf{A}^t\mbf{B})_{kj}=\sum_{j,k\in I}(1\otimes G_{j})\otimes (F_k\otimes1)\delta_{kj}\\
=&\sum_{j\in I}(1\otimes G_{j})\otimes (F_j\otimes1)
\end{aligned}
\end{equation}
\end{proof}

Now given the Hopf subalgebras $B^{\pm}\subset A^{\pm}$ such that the bialgebra pairing on $B^{\pm}$ is the restriction of the bialgebra pairing on $A^{\pm}$. Without loss of generality, we assume that $\text{dim}_{\mbb{F}}(A^{\pm})=n+m$, $\text{dim}_{\mbb{F}}(B^{\pm})=n$.

\begin{prop}
There exists dual bases $\{F_{i}\}_{i=1,\cdots,n+m}$, $\{G_{i}\}_{i=1,\cdots,n+m}$ of $A^{\pm}$ such that $\{F_{i}\}_{i=1,\cdots,n}$, $\{G_{i}\}_{i=1,\cdots,n}$ are the dual bases for $B^{\pm}$.
\end{prop}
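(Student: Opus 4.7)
The plan is to produce the dual bases by a two-step Gram-Schmidt-style procedure, using only the nondegeneracy of the bilinear pairing on both $A^{\pm}$ and $B^{\pm}$ (no Hopf structure is needed for the statement itself). First I would fix dual bases $\{F_i\}_{i=1,\dots,n}$ of $B^+$ and $\{G_i\}_{i=1,\dots,n}$ of $B^-$ satisfying $\langle F_i, G_j\rangle = \delta_{ij}$; these exist precisely because the restricted pairing on $B^+\otimes B^-$ is nondegenerate. Then I would choose any vector space extensions of these to bases $\{F_i\}_{i=1,\dots,n+m}$ of $A^+$ and $\{G_i\}_{i=1,\dots,n+m}$ of $A^-$.

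Next I would orthogonalize the tail with respect to the head: for each $j \in \{n+1,\dots,n+m\}$ replace
\begin{equation}
F_j \longmapsto F_j' := F_j - \sum_{i=1}^n \langle F_j, G_i\rangle\, F_i, \qquad G_j \longmapsto G_j' := G_j - \sum_{i=1}^n \langle F_i, G_j\rangle\, G_i.
\end{equation}
A direct computation shows $\langle F_j', G_i\rangle = 0$ and $\langle F_i, G_j'\rangle = 0$ for all $i \le n < j$, while the pairings among the first $n$ vectors are untouched. Moreover, these substitutions are upper-triangular linear changes of basis, so $\{F_i\}_{i\le n} \cup \{F_j'\}_{j>n}$ and $\{G_i\}_{i\le n} \cup \{G_j'\}_{j>n}$ are still bases of $A^{\pm}$.

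After this step the Gram matrix of $\langle-,-\rangle$ in the new bases is block-diagonal
\begin{equation}
\begin{pmatrix} I_n & 0 \\ 0 & N \end{pmatrix}, \qquad N_{ij} := \langle F_{n+i}', G_{n+j}'\rangle.
\end{equation}
Since the pairing on $A^+ \otimes A^-$ is nondegenerate, this block matrix is invertible, hence $N \in \mathrm{Mat}_{m\times m}(\mathbb{F})$ is invertible. I would then perform a final change of basis on the tail, setting
\begin{equation}
F_{n+i}'' := \sum_{k=1}^m (N^{-1})_{ik}\, F_{n+k}',
\end{equation}
which yields $\langle F_{n+i}'', G_{n+j}'\rangle = \delta_{ij}$ while remaining inside the $m$-dimensional complementary subspace, so the first $n$ basis vectors are unaffected. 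Renaming $F_{n+i}''$ as $F_{n+i}$ and $G_{n+j}'$ as $G_{n+j}$ produces the required dual bases of $A^{\pm}$ whose first $n$ entries are the original dual bases of $B^{\pm}$.

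There is no real obstacle here, since everything reduces to linear algebra over $\mathbb{F}$; the only point that must be checked carefully is that each replacement step is an invertible (upper-triangular, respectively block) change of basis, which is immediate. The statement does not assert any compatibility with the Hopf structure of the extension beyond the first $n$ vectors, so no further work is required.
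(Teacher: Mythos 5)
Your proof is correct. It takes a genuinely different, and in fact cleaner, route than the paper's. The paper fixes dual bases of $A^{\pm}$ \emph{and} of $B^{\pm}$ at the outset, expresses the $B$-dual bases through the $A$-dual bases via block matrices $(A\ T)$ and $(B\ S)$, posits a block-triangular transfer matrix with unknown blocks $U_1,V_1,U_2,V_2$, and reduces the duality constraints to the matrix equation $V_1(\mathrm{Id}-S^tT)V_2^t=\mathrm{Id}$, which it then solves ``for the generic choice of $S$ and $T$''; this last step is the weak point, since $S$ and $T$ are determined by the chosen $A$-dual bases and $\mathrm{Id}-S^tT$ can genuinely fail to be invertible for special choices (already for $n=m=1$), so the paper implicitly relies on adjusting the initial $A$-bases without spelling this out. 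Your construction starts only from dual bases of $B^{\pm}$ (available by nondegeneracy of the restricted pairing), extends arbitrarily, orthogonalizes the complementary vectors against the $B$-part by a unipotent change of basis, and then uses nondegeneracy of the full pairing to invert the complementary Gram block $N$; this needs no genericity assumption and is complete as written. Both arguments are pure linear algebra and, as you note, neither uses the Hopf structure beyond the hypotheses on the pairing; what your route buys is a self-contained proof that sidesteps the invertibility issue the paper leaves implicit.
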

\begin{proof}
Now suppose that we have the dual bases $\{F_{i}'\}_{i=1,\cdots,n+m}$, $\{G_{i}'\}_{i=1,\cdots,n+m}$ of $A^{\pm}$, and $\{F_{i}\}_{i=1,\cdots,n}$, $\{G_{i}\}_{i=1,\cdots,n}$ are the dual bases for $B^{\pm}$. By the requirement of the proposition, we want to construct the dual bases $\{F_{i}\}_{i=1,\cdots,n+m}$, $\{G_{i}\}_{i=1,\cdots,n+m}$ of $A^{\pm}$ such that $\{F_{i}\}_{i=1,\cdots,n}$, $\{G_{i}\}_{i=1,\cdots,n}$ are the dual bases for $B^{\pm}$.

Now we write $\{F_{i}\}_{i=1,\cdots,n}$, $\{G_{i}\}_{i=1,\cdots,n}$ of $B^{\pm}$ in terms of
$\{F_{i}'\}_{i=1,\cdots,n+m}$, $\{G_{i}'\}_{i=1,\cdots,n+m}$ of $A^{\pm}$:
\begin{align}
F_{i}=\sum_{j=1}^{n+m}a_{ij}F_{j}',\qquad G_{i}=\sum_{j=1}^{n+m}b_{ij}G_{j}'
\end{align}

Written in matrix:
\begin{align}
\begin{pmatrix}\mbf{F}_{[1,n]}\\\mbf{F}'_{[n+1,n+m]}\end{pmatrix}=\begin{pmatrix}A&T\\0&\text{Id}\end{pmatrix}\begin{pmatrix}\mbf{F}'_{[1,n]}\\\mbf{F}'_{[n+1,n+m]}\end{pmatrix}
\end{align}

\begin{align}
\begin{pmatrix}\mbf{G}_{[1,n]}\\\mbf{G}'_{[n+1,n+m]}\end{pmatrix}=\begin{pmatrix}B&S\\0&\text{Id}\end{pmatrix}\begin{pmatrix}\mbf{G}'_{[1,n]}\\\mbf{G}'_{[n+1,n+m]}\end{pmatrix}
\end{align}

Now we can suppose the transfer matrix:
\begin{align}
\begin{pmatrix}\mbf{F}_{[1,n]}\\\mbf{F}_{[n+1,n+m]}\end{pmatrix}=\begin{pmatrix}\text{Id}&0\\U_1&V_1\end{pmatrix}\begin{pmatrix}\mbf{F}_{[1,n]}\\\mbf{F}'_{[n+1,n+m]}\end{pmatrix}
\end{align}

\begin{align}
\begin{pmatrix}\mbf{G}_{[1,n]}\\\mbf{G}_{[n+1,n+m]}\end{pmatrix}=\begin{pmatrix}\text{Id}&0\\U_2&V_2\end{pmatrix}\begin{pmatrix}\mbf{G}_{[1,n]}\\\mbf{G}'_{[n+1,n+m]}\end{pmatrix}
\end{align}

So combining these we obtain that:
\begin{equation}
\begin{aligned}
\begin{pmatrix}\mbf{F}_{[1,n]}\\\mbf{F}_{[n+1,n+m]}\end{pmatrix}=&\begin{pmatrix}\text{Id}&0\\U_1&V_1\end{pmatrix}\begin{pmatrix}\mbf{F}_{[1,n]}\\\mbf{F}'_{[n+1,n+m]}\end{pmatrix}=\begin{pmatrix}\text{Id}&0\\U_1&V_1\end{pmatrix}\begin{pmatrix}A&T\\0&\text{Id}\end{pmatrix}\begin{pmatrix}\mbf{F}'_{[1,n]}\\\mbf{F}'_{[n+1,n+m]}\end{pmatrix}\\
=&\begin{pmatrix}A&T\\U_1A&U_1T+V_1\end{pmatrix}\begin{pmatrix}\mbf{F}'_{[1,n]}\\\mbf{F}'_{[n+1,n+m]}\end{pmatrix}
\end{aligned}
\end{equation}

\begin{equation}
\begin{aligned}
\begin{pmatrix}\mbf{G}_{[1,n]}\\\mbf{G}_{[n+1,n+m]}\end{pmatrix}=&\begin{pmatrix}\text{Id}&0\\U_2&V_2\end{pmatrix}\begin{pmatrix}\mbf{G}_{[1,n]}\\\mbf{G}'_{[n+1,n+m]}\end{pmatrix}=\begin{pmatrix}\text{Id}&0\\U_2&V_2\end{pmatrix}\begin{pmatrix}B&S\\0&\text{Id}\end{pmatrix}\begin{pmatrix}\mbf{G}'_{[1,n]}\\\mbf{G}'_{[n+1,n+m]}\end{pmatrix}\\
=&\begin{pmatrix}B&S\\U_2B&U_2S+V_2\end{pmatrix}\begin{pmatrix}\mbf{G}'_{[1,n]}\\\mbf{G}'_{[n+1,n+m]}\end{pmatrix}
\end{aligned}
\end{equation}

By the orthogonality condition we need that:
\begin{equation}
\begin{aligned}
&\begin{pmatrix}A&T\\U_1A&U_1T+V_1\end{pmatrix}\begin{pmatrix}B^t&B^tU_2^t\\S^t&S^tU_2^t+V_2^t\end{pmatrix}=\begin{pmatrix}\text{Id}&0\\0&\text{Id}\end{pmatrix}\\
=&\begin{pmatrix}AB^t+TS^t&AB^tU_2^t+TS^tU_2^t+TV_2^t\\U_1AB^t+U_1TS^t+V_1S^t&U_1AB^tU_2^t+U_1TS^tU_2^t+U_1TV_2^t+V_1S^tU_2^t+V_1V_2^t\end{pmatrix}
\end{aligned}
\end{equation}

The matrix relation gives us the equation:
\begin{align}
AB^t+TS^t=\text{Id}
\end{align}

which reduce the matrix to:
\begin{align}
\begin{pmatrix}\text{Id}&U_2^t+TV_2^t\\U_1+V_1S^t&V_1S^tU_2^t+V_1V_2^t\end{pmatrix}=\begin{pmatrix}\text{Id}&0\\0&\text{Id}\end{pmatrix}
\end{align}

This gives us the equation $V_1(S^tU_2^t+V_2^t)=V_1(-S^tT+\text{Id})V_2^t=\text{Id}$. Thus for the generic choice of $S$ and $T$, there exists solution for $V_1$ and $V_2$. Thus finish the proof.
\end{proof}

\end{document}